\documentclass[12pt]{amsbook}
\usepackage[all,cmtip]{xy}

\usepackage{a4wide}
\usepackage{pxfonts}
\usepackage{wasysym}
\usepackage{amssymb,amsxtra,mathrsfs}
\usepackage{eucal}
\usepackage{setspace,upgreek}
\usepackage{mathtools}
\usepackage{url}
\usepackage{hyperref}

\newcommand{\mf}[1]{\mathfrak{#1}}
\newcommand{\mc}[1]{\mathcal{#1}}
\newcommand{\mh}[1]{\mathscr{#1}}

\newcommand{\mr}[1]{\mathrm{#1}}

\newcommand{\ov}{\overline}

\newcommand{\lr}[2]{\langle #1,#2\rangle}

\newcommand{\ze}{\mathbf{0}} 
\newcommand{\un}{\mathbf{1}}

\newcommand{\K}{\mathbb{K}}
\newcommand{\R}{\mathbb{R}}

\newcommand{\C}{\mathbb{C}}
\newcommand{\Z}{\mathbb{Z}}

\newtheorem{theorem}{Theorem}
\newtheorem{definition}[theorem]{Definition}
\newtheorem{lemma}[theorem]{Lemma}
\newtheorem{proposition}[theorem]{Proposition}
\newtheorem{corollary}[theorem]{Corollary}
\newtheorem{assumption}[theorem]{Assumption}
\newtheorem{convention}[theorem]{Convention}
\newtheorem{notations}[theorem]{Notation}

\theoremstyle{definition}

\newtheorem{remark}
[theorem]
{Remark}

\newtheorem{example}
[theorem]
{Example}

\numberwithin{subsection}{section}
\numberwithin{theorem}{section}
\numberwithin{equation}{section}

\begin{document}
\title{Universal operatorial Noether current associated with a quasi-complete locally convex linear space valued Lagrangian 
defined on an open set of a product of normed spaces.}
\author{Benedetto Silvestri}
\email{bndsilvestri@proton.me}
\address{440 W Kelly Ave Apt 6 Jackson, WY 83001-8544 USA}
\address{PO BOX 14845 Jackson, WY 83002-4845 USA}
\keywords{Operatorial Noether currents; 
topological linear space valued Noether currents; 
symmetries and conservation laws in topological linear spaces;
variational problems for topological linear space valued Lagangians;
locally convex linear space valued Lagrangians; 
locally convex linear space valued measures.}
\subjclass[2020]{58C20, 46G05, 46G10, 37K06, 37K58, 58E30, 58E50, 58J70, 46E40, 58C25, 49Sxx.}
\begin{abstract}
In the context of locally convex vector valued integrable tensors on open sets of a $n$-dimensional manifold $M$ developped in 
our previous work \cite{sil09261621}, in this paper we construct a framework in which to address both a variational problem 
associated with a Lagrangian $\mh{L}$, and a conservation law on-shell associated with actions $\phi$ and $\omega$. 
$\mh{L}$ is a $C^{p}$-map valued in a quasi-complete Hausdorff locally convex space $\mc{Z}$ and defined on an open set $O$ of a 
normed space isometric to the product of $n+1$ copies of a normed space $Y_{0}$; $\phi$ is a $C^{p}$-action parametrized by an 
open set $A$ of a normed space $T$, acting over the space $\mf{Z}^{O}$ of the fields of the problem, fields valued in $Y_{0}$ 
and defined on $W$ the domain of a chart of $M$; $\omega$ is a $C^{p}$-action on $W$ parametrized by $A$.
With the above described data and by employing a result in \cite{sil09261621}, 
we construct a map $\mf{A}:\mf{Z}^{O}\times A\to\mc{M}(W,\mc{Z}_{\R})$ where for $u\in\mf{Z}^{O}$ and $\eta\in A$ we interpret 
$\mf{A}(u,\eta)$ as the action functional associated with the Lagrangian $\mh{L}$, the field $u$ and the perturbation parameter 
$\eta$. Here $\mc{M}(W,\mc{Z}_{\R})=\mc{L}_{s}(\mc{H}(W),\mc{Z}_{\R})$ is the locally convex space of vector 
measure defined on $W$ and valued in $\mc{Z}_{\R}$ endowed by the topology of simple convergence. 
By letting $\mc{A}(u):A\to\mc{M}(W,\mc{Z}_{\R})$ be such that $\mc{A}(u)(\eta)=\mf{A}(u,\eta)$, 
we establish that $\mc{A}(u)$ is differentiable off-shell, determine its differential map off and then on-shell.
After having defined the main object of the paper: the universal operatorial Noether current $\mh{N}[u]$ relative to $u$, 
a $\mc{L}_{b}(T,\mc{Z}_{\R})$-valued $(p-1)$-times continuously differentiable vector field of $M$ on $U$ the range of the chart 
on $W$, we establish the main result: if the differential map of $\mc{A}(u)$ on $\ze$ is $\ze$, then $\mh{N}[u]$ is conserved 
on-shell. Here on-shell means that $u\in\mr{EL}_{W}(\mh{L})$ i.e. $u$ satisfies the operatorial Euler-Lagrange equation 
associated with $\mh{L}$. In the second main result we prove that if in addition $p=\infty$, then 
the conservation on-shell of $\mh{N}[u]$ can be exhibited by stating that the divergence of $\mh{N}[u]$ equals $\ze$, 
where the divergence of a general locally convex space valued vector field is another object defined and analyzed in 
\cite{sil09261621}. 
Finally by letting $\mf{a}(\eta):\mf{Z}^{O}\to\mc{M}(W,\mc{Z}_{\R})$, such that $\mf{a}(\eta)(u)\coloneqq\mf{A}(u,\eta)$, 
among many other results we provides a sufficient condition ensuring the differentiability of $\mf{a}(\ze)$ and establishes its 
differential map; we display a characterization of $\mr{vp}(\mf{a}(\ze))$, solution set of the variational problem defined
by $\mf{a}(\ze)$, in terms of the solution set $\mr{sEL}_{W}(\mh{L})$ of a functional differential equation; we state that 
$\mr{sEL}_{W}(\mh{L})$ and then $\mr{vp}(\mf{a}(\ze))$ is a subset of $\mr{EL}_{W}(\mh{L})$ and establish a sufficient condition 
ensuring that the invariance $\mf{a}(\eta)=\mf{a}(\ze)$ would imply that $\phi_{\eta}$ is a symmetry of the solution set 
$\mr{sEL}_{W}(\mh{L})$.
Three are the preeminent original features that permit the introduction of other new components and grant the novel 
contributions above outlined.
Firstly by employing the result in our previous work \cite[Thm.3.57]{sil09261621}, we construct the map $\mf{A}$ 
defined on $\mf{Z}^{O}\times A$ with values in the locally convex linear space of $\mc{Z}_{\R}$-valued measures on $W$ and 
such that $\mc{A}(u)$ and under additional conditions also $\mf{a}(\eta)$ are differentiable. 
Here recall that $\mc{Z}$, is a general quasi-complete Hausdorff locally convex space, while $T$ and $Y_{0}$ are 
possibly infinite dimensional normed spaces. 
Secondly and foremostly we construct the $\mc{L}_{b}(T,\mc{Z}_{\R})$-valued vector field $\mh{N}[u]$ that 
naturally encodes in a unique object all the possible conservation laws associated with $\mh{L}$ and $\phi$, each one of them 
obtained evaluating $\mh{N}[u]$ over an element of $T$.
Thirdly by employing two results in our previous work \cite[Cor.3.41 and Cor.3.48]{sil09261621},
we construct the \emph{divergence of the operator valued vector field $\mh{N}[u]$}.
\end{abstract}

\maketitle

\begin{notations}
\label{12152204}
In the present Notation section we let $X$ be a topological space, $\K\in\{\R,\C\}$, $Z$, $Q$ and $R$ be linear topological 
spaces on $\K$, $E$ and $F$ be a locally convex spaces on $\K$, $\Gamma_{E}$ be a fundamental set of seminorms of $E$. 
Furthermore let $N$ be a normed space on $\K$ whose norm is $\|\cdot\|_{N}$, 
\par
If $A,B$ are sets we let $\mc{F}(A,B)$ be the set of maps defined on $A$ and with values in $B$, if $C\subseteq B$, then we let 
$\imath_{C}^{B}$ or $\mr{Id}_{C\to B}$ be the identity injection of $C$ into $B$ and set $\mr{Id}_{C}=\mr{Id}_{C\to C}$.
If $f:A\to B$, then we let $f_{\natural}:A\to f(A)$, $x\mapsto f(a)$.
If $S_{0}\subseteq\mc{F}(A,B)$, $S$ is a topological space whose underlying set equals $S_{0}$ and $a\in A$, then we let 
$\mr{ev}_{a}^{S}:S\to B$, $f\mapsto f(a)$. 
If $x\in X$, then we let $\mf{T}_{x}^{X}$ be the filter of neighbourhoods of $x$ in $X$, if $P$ is a subset of $X$, then we let 
$\partial P$ be the frontier in $X$ of $P$.
We let $\mc{L}(Z,Q)$ be the linear space on $\K$ of $\K$-linear and continuous maps defined on $Z$ and valued in $Q$, set 
$\mc{L}(Z)=\mc{L}(Z,Z)$. Let $\updelta^{\K}:\R\to\K$ be the ring morphism $\lambda\mapsto\lambda\un_{\K}$, we let $Z_{\R}$ be the 
linear topological space on $\R$ underlying $Z$ whose underlying linear space on $\R$ equals $\updelta_{\star}^{\K}(Z)$ see 
\cite[Notation 0.1]{sil09261621} and whose topology equals that of $Z$. $Z_{\R}$ is well-defined since the topology on $Z$ is 
compatible also with the structure of linear space on $\R$ above defined. This implies that if $T\in\mc{L}(Z,Q)$ and 
$S\in\mc{L}(Q,R)$, then 
\begin{equation}
\label{12152119}
\begin{aligned}
\updelta_{\star}^{\K}(T)&\in\mc{L}(Z_{\R},Q_{\R})
\\
\updelta_{\star}^{\K}(S\circ T)&=\updelta_{\star}^{\K}(S)\circ\updelta_{\star}^{\K}(T).
\end{aligned}
\end{equation}
Furthermore if $\K=\C$, then let 
\begin{equation*}
\upeta_{Z}:Z\to Z,\,x\mapsto ix 
\end{equation*}
thus, $\upeta_{Z}\in\mc{L}(Z)$ and $\upeta_{Z}\circ\upeta_{Z}=-\mr{Id}_{Z}$ in particular $\upeta_{Z}$ is an isomorphism of 
topological linear spaces on $\C$ thus, by \eqref{12152119} we obtain
\begin{equation}
\label{12152056}
\begin{aligned}
\updelta_{\star}^{\K}(\upeta_{Z})&\in\mc{L}(Z_{\R})
\\
\updelta_{\star}^{\K}(\upeta_{Z})^{-1}&\in\mc{L}(Z_{\R}).
\end{aligned}
\end{equation}
Now the discussion present in \cite[II.65]{EVT} states what in our language reads as follows
\begin{equation*}
\updelta_{\star}^{\K}(\mc{L}(Z,Q))
=
\left\{\phi-\updelta_{\star}^{\K}(\upeta_{Q})\circ\phi\circ\updelta_{\star}^{\K}(\upeta_{Z})\,\vert\,\phi\in\mc{L}(Z_{\R},Q_{\R})\right\},
\end{equation*}
in particular we have the following strict inclusion of linear spaces on $\R$
\begin{equation}
\label{12152057}
\updelta_{\star}^{\K}(\mc{L}(Z,Q))\subset\mc{L}(Z_{\R},Q_{\R}).
\end{equation}
Next $\Gamma_{E}$ is a fundamental set of seminorms of $E_{\R}$. 
We let $\mc{C}(X,Z)$ be the linear space on $\K$ of continuous maps defined on $X$ and with values in $Z$, 
and let $\mc{C}_{c}(X,Z)$ be the linear subspace on $\K$ of those maps in $\mc{C}(X,Z)$ with compact support. If $A$ is a 
uniform space, then we let $\mc{C}_{s}(A,E)$ be the linear topological locally convex space on $\K$ of continuous maps
defined on $A$ and valued in $E$ provided by the topology of pointwise convergence which is locally convex and generated by the 
set of seminorms $\{q_{a}^{p}\,|\,a\in A,p\in\Gamma_{E}\}$ with $q_{a}^{p}(f)\coloneqq p(f(a))$.
We let $\mc{L}_{s}(Z,E)$ be the topological subspace of the those maps $f\in\mc{C}_{s}(Z,E)$ which are linear.
We let $B_{r}^{N}(\ze)\coloneqq\{v\in N\,\vert\,\|v\|_{N}<r\}$, let $B_{r}^{N}(p)\coloneqq p+B_{r}^{N}(\ze)$ and let 
$\ov{B}_{r}^{N}(p)$ denote $\ov{B_{r}^{N}(p)}$ for every $p\in N$.
We let $\mr{Bnd}(F)$ be the set of bounded subsets of $F$. If $n\in\Z_{>0}$ and $E_{i}$ is a locally convex space on $\K$ for 
every $i\in[1,n]\cap\Z$, then we let $\mc{L}(E_{1},\dots,E_{n};F)$ be the linear space on $\K$ of continuous $\K$-multilinear
maps defined on $\prod_{i=1}^{n}E_{i}$ and valued in $F$, while let $\mc{L}_{b}^{n}(E_{1},\dots,E_{n};F)$ be the space 
$\mc{L}(E_{1},\dots,E_{n};F)$ provided by the topology of uniform 
convergence over bounded sets which is locally convex and generated by the following set of seminorms: If 
$\Gamma_{F}$ and $\Gamma_{E_{i}}$ are fundamental sets of seminorms of $F$ and $E_{i}$ respectively with $i\in[1,n]\cap\Z$, then
$\{p_{B}\,\vert\,B\in\prod_{i=1}^{n}\mr{Bnd}(E_{i})\}$ is a fundamental set of seminorms of $\mc{L}_{b}^{n}(E_{1},\dots,E_{n};F)$ where
\begin{equation*}
p_{B}(f)\coloneqq\sup_{x\in\prod_{i=1}^{n}B_{i}}(p\circ f)(x).
\end{equation*}
If in addition $N_{i}$ is a normed space on $\K$ for every $i\in[1,n]\cap\Z$, then $\{\overset{n}{p}\,\vert\,p\in\Gamma_{F}\}$ 
is also a fundamental set of seminorms of $\mc{L}_{b}^{n}(N_{1},\dots,N_{n};F)$ where $\overset{n}{p}\coloneqq p_{B}$ with 
$B_{i}=\ov{B}_{1}^{N_{i}}(\ze)$ for every $i\in[1,n]\cap\Z$. We let 
$\Gamma_{\lr{N,2}{F}}\in\mc{L}(\mc{L}_{b}(N,\mc{L}_{b}(N,F)),\mc{L}_{b}(N,N;F))$ be such that 
$\Gamma_{\lr{N,2}{F}}(T)(a,b)\coloneqq T(a)(b)$ for every $(a,b)\in N\times N$.
\par
In this work we employ measure theory as developped in \cite{IntBourb}, we remark what follows.
Since the discussion in \cite[VI.40 end of $n^{\circ}$10]{IntBourb} and that in \cite[II.65]{EVT} every definition given in
\cite{IntBourb} regarding locally convex spaces over $\R$ can be generalized to the case of locally convex spaces over $\C$ 
therefore, when we quote \cite{IntBourb} we always assume implicitly quoted the discussion in 
\cite[VI.40 end of $n^{\circ}$10]{IntBourb} and that in \cite[II.65]{EVT}. 
\par
\emph{Now assume for the remaining of this Notation section that the locally convex spaces $E$ and $F$ are Hausdorff.} 
\par
If $S$ is a locally compact space, $P\subseteq S$, then let $\mc{H}(S,P;E)$ be the linear space on $\K$ of compactly supported 
continuous maps defined on $S$, valued in $E$ and whose support is contained in $P$. If $K$ is a compact set of $S$, then we 
provide $\mc{H}(S,K;E)$ by the locally convex topology of uniform convergence in $K$. Next let $\mc{H}(S;E)$ be the locally 
convex linear space on $\K$ of compactly supported continuous maps defined on $S$ and valued in $E$ provided by the locally 
convex final topology of the family 
$\{\mc{H}(S,K;E)\}_{K\in\mr{Cmp}(S)}$ and the family of $\K$-linear maps $\{\imath_{\mc{H}(S,K;E)}^{\mc{H}(S;E)}\}_{K\in\mr{Cmp}(S)}$. 
The general definition of a locally convex final topology is provided in \cite[II.29 Prp.5]{EVT}. We set 
\begin{equation*}
\mc{H}(S)\coloneqq\mc{H}(S;\R).
\end{equation*}
Next let $\mc{M}(S,E)$ be the locally convex linear space on $\K$ of measures on $S$ and valued in $E$ defined in 
\cite[VI.18 Def.1]{IntBourb}, provided by the vague topology namely the topology of simple convergence. In other words 
\begin{equation*}
\mc{M}(S,E)\coloneqq\mc{L}_{s}(\mc{H}(S;\K),E).
\end{equation*}
\par
If $n\in\Z_{>0}$, then we let $\{e_{k}[n]\}_{k=1}^{n}$ or simply $\{e_{k}\}_{k=1}^{n}$ whenever it will not cause confusion
be the standard basis of $\R^{n}$. If $F$ is a linear space over $\K$, then we let $\mathbb{M}_{n}(F)\coloneqq F^{[1,n]\times[1,n]}$ 
the $\K$-linear space of the square matrices over $F$ of order $n$; if in addition $F$ is a normed space, then we provide
$\mathbb{M}_{n}(F)$ with the $\max$-norm by identifying $\mathbb{M}_{n}(F)$ with $F^{n^{2}}$. Furthermore we let $\mr{det}_{n}$ and 
$\mr{Tr}_{n}$, or simply $\mr{det}$ and $\mr{Tr}$ whenever $n$ is understood, be the determinant and the trace maps on 
$\mathbb{M}_{n}(\K)$ respectively. 
We let $\mf{p}:\mathbb{M}_{n}(\K)\times\mathbb{M}_{n}(\K)\to\mathbb{M}_{n}(\K)$ denote the standard product of matrices, while 
let $\mathbb{GL}_{n}(\K)$ denote the group of the invertible elements of $\mathbb{M}_{n}(\K)$ and let 
$\mr{Inv}:\mathbb{GL}_{n}(\K)\to\mathbb{GL}_{n}(\K),a\mapsto a^{-1}$.
\par
\emph{Assume for the remaining of this Introduction that the normed space $N$ and the locally convex spaces $E$ and $F$ are all 
on $\R$.} 
\par
We employ the bounded differential calculus of maps defined on a open set of a real normed space and valued in a real Hlcs.
We mention the following fact: under the restriction that say $B$ is an open set of a normed space, we have that 
any $C^{1}$-map from $B$ to $E$ in the sense of bounded differential calculus is a $C^{1}$-map from $B$ to $E$ 
in the sense of Bastiani calculus, while $C^{\infty}$-maps from $B$ to $E$ in the sense of Bastiani calculus are $C^{\infty}$-maps 
from $B$ to $E$ in the sense of bounded calculus.
We opted for the bounded calculus because it is one of the direct generalizations of the standard Frechet calculus 
(between normed spaces) which considers the differential map as valued in the space of linear and continous operators endoved 
by the topology of uniform convergence over bounded sets. Bastiani calculus for maps defined and valued in locally convex spaces
is developped in \cite{glo} see also \cite{kll}, while for Bastiani calculus involving also maps between convergence spaces see 
\cite{bas}, finally the bounded calculus is treated in \cite{yam}. Said that here we only recall what follows.
If $A$ is an open set of $N$, $x\in A$ and $f:A\to E$, then we say that $f$ is differentiable at $x$ iff there exists a 
(necessarily unique) $u\in\mc{L}(N,E)$ such that 
\begin{equation*}
\lim_{h\to\ze,h\in\complement_{B_{r}(\ze)}\{\ze\}}\frac{f(x+h)-f(x)-u(h)}{\|h\|}=\ze,
\end{equation*}
with $r>0$ such that $x+B_{r}(\ze)\subseteq A$. The unique $u$ is denoted by $f^{1}(x)$. 
If $C\subseteq A$, then $f$ is differentiable on $C$ iff it is differentiable at $x$ for every $x\in C$, while 
$f$ is differentiable iff $f$ is differentiable on $A$. If $f$ is differentiable, then we let 
\begin{equation*}
f^{(1)}:A\to\mc{L}_{b}(N,E) 
\end{equation*}
be the differential map of $f$. Therefore higher differentiation and differentials are defined inductively as 
usual and in case $N$ is a product of a finite family of normed spaces on $\R$, then the partial differentiations and partial
differentials are defined as usual in terms of differentiation of suitable maps.
As already mentioned and since $N$ is normed, smoothness with differentiation defined as above is equivalent to smoothness in 
the sense of Bastiani calculus. For any $p\in\Z_{\geq 0}\cup\{\infty\}$ let $\mc{C}^{p}(A,E)$ be the set of $p$-continuously 
differentiable maps defined on $A$ and valued in $E$, where as usual $\mc{C}^{0}(A,E)=\mc{C}(A,E)$.
In this work we shall deal with locally convex spaces on different fields thus, we need to specify what follows.
Let $W$ be an open set of $F$, and let $G$ be a Hausdorff locally convex on $\C$. Since \eqref{12152056} we have for every 
$p\in\Z_{\geq 0}\cup\{+\infty\}$ that 
\begin{equation}
\label{12121915b}
f\in\mc{C}^{p}(W,G_{\R})\Leftrightarrow\updelta_{\star}^{\K}(\upeta_{G})\circ f\in\mc{C}^{p}(W,G_{\R})
\end{equation}
Now \eqref{12121915b} permits to define an action of $\C$ on $\mc{C}^{p}(W,G_{\R})$ as follows
\begin{equation}
\label{12152125}
(\cdot):\C\times\mc{C}^{p}(W,G_{\R})\to\mc{C}^{p}(W,G_{\R}),\,(\lambda,f)\mapsto\updelta_{\star}^{\K}(\upeta_{G})\circ f.
\end{equation}
If $H$ is a Hausdorff locally convex space on $\K_{a}\in\{\R,\C\}$, $W$ is an open set of $H$, and $G$ is a 
Hausdorff locally convex space on $\K_{b}\in\{\R,\C\}$, then $W$ is also an open set of $H_{\R}$ and in the present work by abuse 
of language we
\begin{equation}
\label{12201914}
\textbf{let $\mc{C}^{p}(W,G)$ denote the $\K_{b}$-linear space $\mc{C}^{p}(W,G_{\R})$,} 
\end{equation}
\textbf{where in case $\K_{b}=\C$ the $\K_{b}$-action is provided in \eqref{12152125}}. 
Furthermore by saying that a map $f:W\to G$ is differentiable at $x$ with $x\in W$, we always mean that
$W$ is an open set of $H_{\R}$ and that the map $f:W\to G_{\R}$ is differentiable at $x$ with differential 
\begin{equation*}
f^{1}(x)\in\mc{L}(H_{\R},G_{\R}).
\end{equation*}
\par
If $\mh{Z}=\{(Z_{j},\|\cdot\|_{j})\}_{j=p}^{q}$ with $p<q$ is a family of normed spaces on $\K_{a}\in\{\R,\C\}$ and if 
$P=\prod_{j\in[p,q]\cap\Z}Z_{j}$, then for every $k\in[p,q]\cap\Z$ we let $\Pr_{k}^{\mh{Z}}:P\to Z_{k}$, $v\mapsto v_{k}$ and 
$\mf{i}_{k}^{\mh{Z}}:Z_{k}\to P$ be such that $\Pr_{i}^{\mh{Z}}\circ\mf{i}_{k}^{\mh{Z}}$ equals $\mr{Id}_{Z_{k}}$ if $i=k$, and 
$\Pr_{i}^{\mh{Z}}\circ\mf{i}_{k}^{\mh{Z}}$ equals the constant map $Z_{k}\to Z_{i}$, $w\mapsto\ze_{i}$ if $i\neq k$. Furthermore we let
$\|\cdot\|_{\max}^{\mh{Z}}$, be the norm on $P$ defined by $\|v\|_{\max}^{\mh{Z}}\coloneqq\max_{j\in[p,q]\cap\Z}\|v_{j}\|_{j}$. We notice 
that $P_{\R}=\prod_{j\in[p,q]\cap\Z}(Z_{j})_{\R}$.
If $Z_{i}=\R$ for every $i\in[p,q]\cap\Z$, then for any $p\in P$ and $r>0$, we let $B_{r}(p)$ denote $B_{r}^{\|\cdot\|_{\max}^{\mh{Z}}}(p)$.
If $G$ is a Hausdorff locally convex space on $\K_{b}\in\{\R,\C\}$, $C\subseteq B$ where $B$ is an open set of $P$, and 
$f:B\to G$ is differentiable on $C$ (so by the above convention $f:B\to G_{\R}$ is differentiable on $C$ where we consider $B$ an
open set of $P_{\R}$), then we let 
\begin{equation*}
\frac{\partial f}{\partial Z_{j}}:C\to\mc{L}_{b}((Z_{j})_{\R},G_{\R})
\end{equation*}
be the partial differential map of $f:B\to G_{\R}$ with respect to the subspace $(Z_{j})_{\R}$ with $j\in[p,q]\cap\Z$. 
If $f$ is $2$-times differentiable on $C$, then we let $f^{(2)}:C\to\mc{L}_{b}(P,\mc{L}_{b}(P,E))$ be the $2$-differential map of 
$f$. If $w$ is any differentiable Hausdorff locally convex space valued map defined on an open set $U$ of $\R^{n}$, then 
for any $l\in[1,n]\cap\Z$ we let $\partial_{l}w$ denote $\frac{\partial w}{\partial e_{l}[n]}$ (which denotes by abuse of language
the partial derivative not the partial differential) and let $\partial_{0}w$ denote $w$.
\par
A smooth manifold is assumed finite dimensional, if $M$ is a smooth manifold, then $\mr{Crt}(M)$ is the set of all smooth charts
of $M$. If $N$ is a smooth manifold, then we let $\mc{C}^{\infty}(M,N)$ be the set of smooth maps defined on $M$ and valued in $N$.
If $(W,\xi)$ is a smooth chart of $M$ and $n$ is the dimension of $M$ and $\mu\in[1,n]\cap\Z$, then we let 
$\xi_{\mu}\coloneqq\Pr_{\mu}\circ\imath_{\xi(W)}^{\R^{n}}\circ\xi$. 
If $E$ is a Hausdorff locally convex linear space over $\K$, then let $\mc{C}^{\infty}(M,E)$ be the linear space 
$\mc{B}^{\infty}(M,E)$ over $\K$ of smooth maps defined on $M$ and with values in $E$ as defined in 
\cite[Definition 1.1]{sil09261621}. 
If $i\in[1,n]\cap\Z$, then we let $\partial_{i}^{\xi,E}:\mc{C}^{\infty}(W,E)\to\mc{C}^{\infty}(W,E)$ be the map defined in 
\cite[Definition 1.4]{sil09261621}.
We convein to denote $\partial_{i}^{\xi,E}$ simply by $\partial_{i}^{\xi}$ whenever it is clear by the context 
which space $E$ is involved  moreover we let $\partial_{0}^{\xi}g=g$. 
Define $\mc{C}_{c}^{\infty}(M,E)\coloneqq\mc{C}^{\infty}(M,E)\cap\mc{C}_{c}(M,E)$.
\par
If $U$ is a locally compact subspace of $\R^{n}$, then we let 
$\lambda_{U}$ be the restriction at $U$ of the Lebesgue measure on $\R^{n}$. $\R^{n}$ is assumed oriented by its standard volume 
form $\wedge_{i=1}^{n}d\Pr_{i}$. For any normed space $T$, any open set $A$ of $T$, any $n\in\Z_{>0}$, any open set $U$ of $\R^{n}$ 
and any differentiable locally convex space valued map $g$ defined on $A\times U$ we let $D_{1}g$ denote 
$\frac{\partial g}{\partial T}$ id est the partial differential of $g$ w.r.t. the subspace $T$, and let $D_{2}g$ denote 
$\frac{\partial g}{\partial\R^{n}}$ id est the partial differential of $g$ w.r.t. the subspace $\R^{n}$ moreover 
for any $i\in[1,n]\cap\Z$ we let $\frac{\partial g}{\partial e_{i}^{2}[n]}$ denote $D_{(\ze,e_{i}[n])}g$, where $D_{v}$ is the 
directional derivative w.r.t. the direction $v$. Then we obtain for any $(\eta,x)\in A\times U$ 
\begin{equation}
\label{08092146}
\frac{\partial g}{\partial e_{i}^{2}[n]}(\eta,x)
=
g^{(1)}(\eta,x)(\ze,e_{i}[n])
=
(D_{2}g)(\eta,x)(e_{i}[n]).
\end{equation}
\end{notations}
\begin{assumption}
\label{12152203}
In this paper we let $\K\in\{\R,\C\}$ and $\K_{1}\in\{\R,\C\}$. 
If $S$ is a locally compact space, then we let $\mc{H}(S)$ be $\mc{H}(S;\K)$.
Let $\mc{Z}$ be a \textbf{quasi-complete} Hausdorff locally convex space over $\K$, let $\Gamma$ be a fundamental system of 
seminorms of $\mc{Z}$. Let $T$ be a normed space over $\R$ and $A$ be an open set of $T$. Let $p\in\Z_{>1}$ and $n\in\Z_{>0}$. 
Let $M$ be an $n$-dimensional oriented smooth manifold with $n\in\Z_{>0}$, $(W,\xi)$ be a positively oriented chart of $M$ and 
$U=\xi(W)$. Let $\omega:A\to\mc{C}^{\infty}(W,\R^{n})$ be such that for every $\eta\in A$ we have that $(W,\omega_{\natural}^{\eta})$ 
is a positively oriented chart of $M$, $\omega^{\eta}(W)=U$ and $\omega_{\natural}^{\ze}=\xi$. Let $X$ be a Banach space over 
$\K_{1}$, $Y$ be the constant family on $[0,n]\cap\Z$ equal to $X$, let $\mh{Y}\coloneqq\prod_{\nu\in[0,n]\cap\Z}Y_{\nu}$, 
$\mf{Y}\coloneqq\prod_{\nu\in[1,n]\cap\Z}Y_{\nu}$ and $\mc{Y}\coloneqq Y_{0}\times\mf{Y}$ all of them provided by the $\max$-norm, 
let $O$ be an open set of $\mc{Y}$. Furthermore let $\mc{B}$ be the family obtained by restricting $Y$ to $[1,n]\cap\Z$, id est 
$\mc{B}_{\nu}=Y_{\nu}$ for any $\nu\in[1,n]\cap\Z$, while $\mc{E}$ be the family on $[1,2]\cap\Z$ such that $\mc{E}_{1}=Y_{0}$ and 
$\mc{E}_{2}=\mf{Y}$. Corrispondently we have the projectors over the components given by
$\Pr_{i}^{Y}:\mh{Y}\to Y_{i}$ for $i\in[0,n]\cap\Z$, $\Pr_{\nu}^{\mc{B}}:\mf{Y}\to Y_{\nu}$ for $\nu\in[1,n]\cap\Z$ 
and by $\Pr_{r}^{\mc{E}}:\mc{Y}\to\mc{E}_{r}$ for $r\in[1,2]\cap\Z$ such that for every $c\in\mh{Y}$, $a\in Y_{0}$ and 
$b\in\mf{Y}$ we have $\Pr_{i}^{Y}c=c_{i}$, $\Pr_{\nu}^{\mc{B}}b=b_{\nu}$, and $\Pr_{1}^{\mc{E}}(a,b)=a$, $\Pr_{2}^{\mc{E}}(a,b)=b$.
\end{assumption}
\section{Introduction}
In this introduction we employ Notation \ref{12152204} and assume Assumption \ref{12152203}. 
The following are the principal original contributions of the present paper:
\begin{itemize}
\item
Def.\ref{08280414} of the map $\mf{A}:\mf{Z}^{O}\times A\to\mc{M}(W,\mc{Z}_{\R})$ which reads explicitly as in \eqref{10011549}.
We call $\mf{A}(u,\eta)$ \emph{the action functional associated with the Lagrangian $\mh{L}\in\mc{C}^{p}(O,\mc{Z})$, the field 
$u\in\mf{Z}^{O}$ and the perturbation paramenter $\eta\in A$}. Here it is worth recalling that $\mc{Z}$ is a quasi-complete 
Hausdorff locally convex space, $O$ an open set of a normed space isometric to a product of $n+1$ copies of the normed space 
$Y_{0}$, $W$ is an open set domain of a chart of an $n$-dimensional manifold $M$, $\mf{Z}^{O}$ is an open set depending by $O$
of the space $\mf{Z}$ of fields defined on $W$ and valued in $Y_{0}$, finally $A$ is an open set of the normed 
space $T$. Furthermore $\mc{M}(W,\mc{Z}_{\R})=\mc{L}_{s}(\mc{H}(W),\mc{Z}_{\R})$ is the locally convex space of 
\textbf{vector measures defined on $W$ and valued in $\mc{Z}_{\R}$} endowed by the topology of simple convergence.
Here we employ a result in our recent work \cite[Thm.3.57]{sil09261621} which associate a $\mc{Z}_{\R}$-valued measure to 
every $\mc{Z}_{\R}$-valued locally integrable compactly supported form of $M$ defined on $W$.
\item
Thm.\ref{07222143} stating that $\mc{A}(u):A\to\mc{M}(W,\mc{Z}_{\R})$ is \emph{differentiable} and determining its differential 
map with $\mc{A}(u)(\eta)\coloneqq\mf{A}(u,\eta)$ for any $u\in\mf{Z}^{O}$ and $\eta\in A$; 
\item
Def.\ref{09302310} the main concept of the paper: the \textbf{universal operatorial Noether current $\mh{N}[u]$ relative to $u$}
which is a $\mc{L}_{b}(T,\mc{Z}_{\R})$-valued $(p-1)$-times continuously differentiable vector field of $M$ on $U$ which under a
certain condition is conserved on shell see Cor.\ref{08110158} and Cor.\ref{12142339};
\item
Cor.\ref{12131752} establishing the \textbf{divergence of $\mh{N}[u]$}.
Here we employ the divergence of a locally convex space valued $C^{p}$-vector field which is constructed and respectively 
displayed in a special feature in the following results of our recent work \cite[Cor.3.41 and Cor.3.48]{sil09261621}.
\item
Thm.\ref{08070358} exhibits the differential of $\mc{A}(u)$ at $\eta=\ze$ and on-shell i.e. by taking $u\in\mr{EL}_{W}(\mh{L})$
the set of solutions defined on $U=\xi(W)$ of the operatorial Euler-Lagrange equation associated with $\mc{L}$ 
Def.\ref{07230430};
\item
\textbf{Cor.\ref{08110158}} the first main result establishing the conservation of the universal operatorial Noether current 
$\mh{N}[u]$ on-shell under the condition that the differential map of $\mc{A}(u)$ on $\ze$ is $\ze$ namely 
$\mc{A}(u)^{(1)}(\ze)=\ze$;
\item
\textbf{Cor.\ref{12142339}} the second main result establishing that under the hypothesis $p=\infty$, $\mc{A}(u)^{(1)}(\ze)=\ze$ 
implies that the divergence of the universal operatorial Noether current $\mh{N}[u]$ on-shell is $\ze$;
\item
Cor.\ref{08110158} and Cor.\ref{12142339} communicate a fundamental message:
\textbf{$\mh{N}[u]$ naturally encodes in a unique object all the possible conservation laws associated with $\mh{L}$ and $\phi$, 
each one of them obtained by evaluating $\mh{N}[u]$ over an element of $T$.}
\item
Cor.\ref{05112017} stating the conservation of operatorial Noether charges relative to $u$ on-shell;
\item
Thm.\ref{08280333} under the hypothesis that $\mf{a}(\ze)$ is differentiable, states that 
if $\mf{a}(\eta)=\mf{a}(\ze)$, then $\phi_{\eta}$ is a symmetry of $\mr{vp}(\mf{a}(\ze))$,
with $\mf{a}(\eta):\mf{Z}^{O}\to\mc{M}(W,\mc{Z}_{\R})$, such that $\mf{a}(\eta)(u)\coloneqq\mf{A}(u,\eta)$,
for any $\eta\in A$ and $u\in\mf{Z}^{O}$; 
\item
Thm.\ref{09142108} provides a sufficient condition ensuring the \emph{differentiability} of $\mf{a}(\ze)$ and establishes its 
differential map;
\item
Cor.\ref{10011525} establishes a characterization of $\mr{vp}(\mf{a}(\ze))$ in terms of the solution set $\mr{sEL}_{W}(\mh{L})$ 
of the functional differential equation defined in Def.\ref{11011831};
\item
Thm.\ref{10301959} states that $\mr{sEL}_{W}(\mh{L})$ and then $\mr{vp}(\mf{a}(\ze))$ is a subset of $\mr{EL}_{W}(\mh{L})$;
\item
Cor.\ref{11031537} establishes a sufficient condition ensuring that the invariance $\mf{a}(\eta)=\mf{a}(\ze)$ would imply that 
$\phi_{\eta}$ is a symmetry of the solution set $\mr{sEL}_{W}(\mh{L})$.
\end{itemize}
Next we provide a coherent outline of the above listed contributions.
\par
Section \ref{09030751}. In Def.\ref{08310159} we define the normed spaces of fields involved in particular the space $\mf{Z}$
a subspace of the normed space $\mc{C}_{c}^{\infty}(W,Y_{0})$, then we construct an open set $\mf{Z}^{O}$ of $\mf{Z}$. 
Next we fix a general action $\phi:A\to\mc{F}(\mf{Z},\mf{Z})$ preserving $\mf{Z}^{O}$ and a map $\psi:A\to\mc{F}(\mf{Z},\mf{T})$ 
satisfying $\psi_{\eta}\circ\imath_{\mf{Z}^{O}}^{\mf{Z}}=\partial^{\xi}\circ\phi_{\eta}\circ\imath_{\mf{Z}^{O}}^{\mf{Z}}$ for every 
$\eta\in A$. Fix $p\in\Z_{>0}\cup\{+\infty\}$. Then we let the Lagrangian $\mh{L}$ be such that 
\begin{equation*}
\mh{L}\in\mc{C}^{p}(O,\mc{Z}),
\end{equation*}
assume $\phi_{\ze}=\mr{Id}_{\mf{Z}}$, and that the following maps are $C^{p}$-functions: the map $\Upphi(u)$ for every $u\in\mf{Z}$ 
and the map $\widehat{\chi}$, both defined on $A\times U$ and induced, via the chart $\xi$, by $\phi$ and respectively by 
$\omega$ see Def.\ref{12141626} and Def.\ref{10051149}, 
\begin{equation*}
\Upphi(u)\in\mc{C}^{p}(A\times U,Y_{0}),
\end{equation*}
and $\imath_{U}^{\R^{n}}\circ\widehat{\chi}\in\mc{C}^{p}(A\times U,\R^{n})$.
As a result the map $\Uppsi(u)$, induced by the action $\psi$, is such that 
\begin{equation*}
\Uppsi(u)\in\mc{C}^{p-1}(A\times U,\mf{Y}).
\end{equation*}
\par
As we shall see the regularity of the maps $\mh{L}$, $\Upphi(u)$, $\Uppsi(u)$ and $\widehat{\chi}$ will be a starting point 
for establishing the differentiability of sections of the following map whose \textbf{Def.\ref{08280414}} employs
\cite[Thm.3.57]{sil09261621}, a result in our previous work
\begin{equation*}
\mf{A}:\mf{Z}^{O}\times A\to\mc{M}(W,\mc{Z}_{\R}),
\end{equation*}
where we recall that $\mc{M}(W,\mc{Z}_{\R})=\mc{L}_{s}(\mc{H}(W),\mc{Z}_{\R})$ is the locally convex space of vector-measures 
defined on $W$ and valued in $\mc{Z}_{\R}$ endowed by the topology of simple convergence.
\par
Next for any $u\in\mf{Z}^{O}$ let $\mc{A}(u):A\to\mc{M}(W,\mc{Z}_{\R})$ be such that $\mc{A}(u)(\eta)\coloneqq\mf{A}(u,\eta)$
for every $\eta\in A$. Instead for any $\eta\in A$ let $\mf{a}(\eta):\mf{Z}^{O}\to\mc{M}(W,\mc{Z}_{\R})$ be such that 
$\mf{a}(\eta)(u)\coloneqq\mf{A}(u,\eta)$ for every $u\in\mf{Z}^{O}$.
For every $u\in\mf{Z}^{O}$ and $\eta\in A$, and for every $f\in\mc{H}(W)$ we have
\begin{equation}
\label{10011549}
\mc{A}(u)(\eta)(f)
=
\int
(\xi^{-1})^{\ast}(f)(x)G_{u}(x,\eta)d\lambda_{U}(x),
\end{equation}
where
\begin{equation*}
\begin{aligned}
G:\mf{Z}^{O}&\to\mc{F}(A\times U,\mc{Z}),
\\
u&\mapsto G_{u}\coloneqq F_{u}\cdot\widehat{J},
\\
F:\mf{Z}^{O}&\to\mc{F}(A\times U,\mc{Z}),
\\
u&\mapsto F_{u}\coloneqq\mh{L}\circ\bigl(\Upphi(u)\boxtimes\Uppsi(u)\bigr)\circ(\beta\boxtimes\widehat{\chi}),
\end{aligned}
\end{equation*}
where $\widehat{J}:A\times U\to\R$ is such that $\widehat{J}(\eta,\cdot)$ is the Jacobian of $\widehat{\chi}(\eta,\cdot)$ for 
every $\eta\in A$. 
\par
We call $\mc{A}(u)(\eta)$ 
\emph{the action functional associated with the Lagrangian $\mh{L}$, the field $u$ and the perturbation paramenter $\eta$} 
and whose action over the function space $\mc{H}(W)$ is given by \eqref{10011549}.
\par
Now the regularity of the map $G_{u}$ and the general result Cor.\ref{01031245b} in Appendix on the commutation 
between differentiation and integration with respect to a general measure, permits in 
\textbf{Thm.\ref{07222143}} to prove that $\mc{A}(u)$ is differentiable and to display its differential map. 
After having introduced in Def.\ref{07230430} the operatorial Euler-Lagrange equation relative to $\mh{L}$ 
and the set $\mr{EL}_{W}(\mh{L})$ of those $u\in\mf{Z}^{O}$ satisfying this equation, we construct in 
\textbf{Def.\ref{09302310}} the main concept of this paper 
namely \textbf{the universal operatorial Noether current relative to $u$} as a 
$\mc{L}_{b}(T,\mc{Z}_{\R})$-valued $(p-1)$-times continuously differentiable vector field of $M$ on $U$ namely 
\begin{equation*}
\mh{N}[u]
\in\mc{C}^{p-1}(U,\mc{L}_{b}(T,\mc{Z}_{\R}))\otimes_{\mc{C}^{\infty}(U)}\Gamma(U,TM).
\end{equation*}
The divergence on a chart of a smooth locally convex vector valued vector field provided in \cite[Cor.3.48]{sil09261621}
permits to exhibit in \textbf{Cor.\ref{12131752}} the divergence of $\mh{N}[u]$ in the special case $p=+\infty$,
where the divergence of a general $q$-times continuously differentiable locally convex 
vector valued vector field is established in \cite[Cor.3.41]{sil09261621} where $q\in\Z_{>0}\cup\{+\infty\}$.
Now in \textbf{Thm.\ref{08070358}} we specialize the result in Thm.\ref{07222143} by obtaining the 
differential of $\mc{A}(u)$ at $\eta=\ze$ and on-shell namely by taking $u\in\mr{EL}_{W}(\mh{L})$, as a consequence we 
establish in \textbf{Cor.\ref{08110158}} the first main result of this paper namely 
\emph{the conservation of the universal operatorial Noether current under the condition that the differential map of $\mc{A}(u)$
evaluated in $\ze$ is $\ze$ namely $\mc{A}(u)^{(1)}(\ze)=\ze$} and in \textbf{Cor.\ref{12142339}} the second main result of the 
paper we state that \emph{under the hypothesis $p=\infty$, $\mc{A}(u)^{(1)}(\ze)=\ze$ implies that the divergence of the 
universal operatorial Noether current is $\ze$}. As an application of Cor.\ref{08110158} we obtain in 
\textbf{Cor.\ref{05112017}} the conservation of operatorial Noether charges relative to $u$. 
\par
Now we want to emphasize what follows.
Three are the preeminent original features that permit the introduction of other new components and grant the novel 
contributions outlined in this Introduction. 
\begin{enumerate}
\item The construction of the map $\mf{A}$ with values in the locally convex linear space of 
\textbf{$\mc{Z}_{\R}$-valued measures on $W$} and defined on $\mf{Z}^{O}\times A$ with the property that $\mc{A}(u)$ and under 
additional conditions also $\mf{a}(\eta)$ are differentiable. Here it is worth recalling that $\mc{Z}$, is a general 
quasi-complete Hausdorff locally convex space, while $T$ and $Y_{0}$ are possibly infinite dimensional normed spaces, 
with the property that $\mc{A}(u)$ and under additional conditions also $\mf{a}(\eta)$ are differentiable. 
In the construction we employ a result in our recent work \cite[Thm.3.57]{sil09261621} which associate a $\mc{Z}_{\R}$-valued 
measure to every $\mc{Z}_{\R}$-valued locally integrable compactly supported form of $M$ defined on $W$.
\item
The construction of the $\mc{L}_{b}(T,\mc{Z}_{\R})$-valued vector field $\mh{N}[u]$ that, as a consequence of our main results
Cor.\ref{08110158} and Cor.\ref{12142339},
\textbf{naturally encodes in a unique object all the possible conservation laws associated with $\mh{L}$ and $\phi$, 
each one of them obtained evaluating $\mh{N}[u]$ over an element of $T$.}
\label{10041037}
\item
The construction of the \textbf{divergence} of the operator valued vector field $\mh{N}[u]$.
Here we employ the divergence of a locally convex space valued $C^{p}$-vector field which is constructed and respectively 
displayed in a special feature in the following results of our recent work: \cite[Cor.3.41 and Cor.3.48]{sil09261621}.
\end{enumerate}
Concerning item \eqref{10041037} we can be more specific. Indeed in Rmk.\ref{11051131} we reobtain as special cases of our 
setting the classical results for instance those in \cite{bog2}, by letting $\mc{Z}=\R$, $\mr{dim}Y_{0}<\infty$, $T=\R^{s}$ with 
$s\in\Z_{>0}$ and by evaluating $\mh{N}[u]$ over each element of the standard basis of $\R^{s}$. Notice that \cite{bog2} has been 
for decades the standard reference for the Lagrangian approach to the quantum field theories where the classical fields after 
quantization become operator-valued maps. In this approach given a Lagrangian say $L$, the Nother currents corresponding to the 
symmetries of the action functional defined by $L$, and their associated conserved field-depending charges are of paramount 
importance. Indeed when the classical fields are replaced by their quantized version, these charges represent the physical 
observables of the quantum theory relative to $L$.
\par
Section \ref{11050951} analyzes symmetries of $\mr{vp}(\mf{a}(\ze))$. More precisely under conditions ensuring the 
differentiability of the map $\mf{a}(\ze)$ and for any differentiable map $g:\mf{Z}^{O}\to\mc{M}(W,\mc{Z}_{\R})$ by letting 
$\mr{vp}(g)$ be the set of $u\in\mf{Z}^{0}$ for which $g^{(1)}(u)=\ze$ where $g^{(1)}$ is the differential map of $g$, we search 
for functions mapping $\mr{vp}(\mf{a}(\ze))$ into itself.
Now under the hypothesis that $\mf{a}(\ze)$ is differentiable, 
\textbf{Thm.\ref{08280333}} states that an invariance of the action $\mf{a}$ induces a symmetry of $\mr{vp}(\mf{a}(\ze))$, 
more exactly if $\mf{a}(\eta)=\mf{a}(\ze)$, then $\phi_{\eta}$ is a symmetry of $\mr{vp}(\mf{a}(\ze))$.
\par
Section \ref{09171942}. We have seen that Thm.\ref{08280333} requires the differentiability of $\mf{a}(\ze)$.
Now \textbf{Thm.\ref{09142108}} provides a sufficient condition ensuring the differentiability of 
$\mf{a}(\ze)$ and establishes its differential map; \textbf{Cor.\ref{10011525}} displays a characterization of 
$\mr{vp}(\mf{a}(\ze))$ in terms of the solution set $\mr{sEL}_{W}(\mh{L})$ of a functional differential equation 
\textbf{Def.\ref{11011831}}; while \textbf{Thm.\ref{10301959}} establishes that $\mr{sEL}_{W}(\mh{L})$ and then 
$\mr{vp}(\mf{a}(\ze))$ is a subset of $\mr{EL}_{W}(\mh{L})$.
As a result of Cor.\ref{10011525}, Thm.\ref{09142108} and Thm.\ref{08280333} we obtain in \textbf{Cor.\ref{11031537}} 
a sufficient condition ensuring that the invariance $\mf{a}(\eta)=\mf{a}(\ze)$ would imply that $\phi_{\eta}$ is a symmetry of the 
solution set $\mr{sEL}_{W}(\mh{L})$.
Finally Cor.\ref{10042025} summarizes the consequences of the main results of the paper under the hypothesis that 
$\mf{a}$ is a constant map.
\par
In the final section \ref{08300401} we apply our framework to a normed space valued generalization of the Dirac equation.
\section{Universal operatorial Noether current}
\label{09030751}
\begin{definition}
Let $\mf{d}:\mc{Y}\to\mh{Y}$ be the isometry such that $\Pr_{0}^{Y}\circ\mf{d}=\Pr_{1}^{\mc{E}}$ and 
$\Pr_{\nu}^{Y}\circ\mf{d}=\Pr_{\nu}^{\mc{B}}\circ\Pr_{2}^{\mc{E}}$ for any $\nu\in[1,n]\cap\Z$.
Furthermore define 
\begin{equation*}
\begin{aligned}
\mf{e}&\coloneqq\mf{d}^{-1}
\\
\mf{d}_{O}&\coloneqq\mf{d}\circ\imath_{O}^{\mc{Y}}
\\
\mf{e}_{O}&\coloneqq\mf{e}\circ\imath_{\mf{d}(O)}^{\mh{Y}},
\end{aligned}
\end{equation*}
and if $\mh{L}:O\to\mc{Z}$, then define 
\begin{equation*}
\mh{L}_{\mf{e}}\coloneqq\mh{L}\circ(\mf{e}_{O})_{\natural}.
\end{equation*}
\end{definition}
We have $(\Pr_{0}^{Y}\circ\mf{d})(a,b)=a$ and $(\Pr_{\nu}^{Y}\circ\mf{d})(a,b)=b_{\nu}$ for every $a\in Y_{0}$ and $b\in\mf{Y}$. 
In the following lemma we employ to the special case of the category of sets and maps the operator $(\cdot)^{\dagger}$ defined in 
the general case in \cite[p.12]{sil}. 
\begin{lemma} 
\label{10181658}
Let $C\subseteq O$ and $\mh{L}:O\to\mc{Z}$ be differentiable on $C$, then 
\begin{equation}
\label{10181658a}
\begin{aligned}
\mh{L}^{(1)}
&=
(\Pr_{1}^{\mc{E}})^{\dagger}\circ\frac{\partial\mh{L}}{\partial Y_{0}}
+
(\Pr_{2}^{\mc{E}})^{\dagger}\circ\frac{\partial\mh{L}}{\partial\mf{Y}}
\\
&=
\sum_{i\in[0,n]\cap\Z}(\Pr_{i}^{Y}\circ\mf{d})^{\dagger}\circ\frac{\partial\mh{L}_{\mf{e}}}{\partial Y_{i}}\circ(\mf{d}_{O})_{\natural}
\end{aligned}
\end{equation}
in particular
\begin{equation}
\label{10181658b}
\frac{\partial\mh{L}}{\partial\mf{Y}}
=
\sum_{\nu\in[1,n]\cap\Z}(\Pr_{\nu}^{\mc{B}})^{\dagger}\circ\frac{\partial\mh{L}_{\mf{e}}}{\partial Y_{\nu}}\circ(\mf{d}_{O})_{\natural}.
\end{equation}
\end{lemma}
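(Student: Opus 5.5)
This is the standard decomposition of a Fréchet (bounded) differential on a finite product of normed spaces, combined with the chain rule for the isometry $\mf{d}$. I read $(\cdot)^{\dagger}$ in the category of sets as precomposition: for a linear $P$, $P^{\dagger}(\varphi)=\varphi\circ P$. With this reading, $(\Pr_{1}^{\mc{E}})^{\dagger}\circ\frac{\partial\mh{L}}{\partial Y_{0}}$ evaluated at $x\in C$ is the map $h\mapsto\frac{\partial\mh{L}}{\partial Y_{0}}(x)(\Pr_{1}^{\mc{E}}h)$. Since $\Pr_{r}^{\mc{E}}$ is continuous and linear, $P^{\dagger}$ is a continuous linear map $\mc{L}_{b}(\mc{E}_{r},\mc{Z}_{\R})\to\mc{L}_{b}(\mc{Y},\mc{Z}_{\R})$. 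Indeed, $\Pr_{r}^{\mc{E}}$ maps bounded sets to bounded sets, so every seminorm $p_{B}$ pulls back to a seminorm of the same kind.

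For the first equality, fix $x\in C$. I would recall that $\frac{\partial\mh{L}}{\partial\mc{E}_{r}}(x)=\mh{L}^{1}(x)\circ\mf{i}_{r}^{\mc{E}}$; this is the definition of the partial differential as the differential of $\mh{L}\circ(x+\mf{i}_{r}^{\mc{E}}(\cdot))$, together with the chain rule for an affine map. I would then use the linear identity $\mr{Id}_{\mc{Y}}=\mf{i}_{1}^{\mc{E}}\circ\Pr_{1}^{\mc{E}}+\mf{i}_{2}^{\mc{E}}\circ\Pr_{2}^{\mc{E}}$. Composing it with $\mh{L}^{1}(x)$ on the left yields the first line pointwise.

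For the second line, I would write $\mh{L}=\mh{L}_{\mf{e}}\circ(\mf{d}_{O})_{\natural}$ on $O$. Because $\mf{d}$ is a linear isometry, $\mf{d}(O)$ is open and $\mh{L}_{\mf{e}}$ is differentiable on $\mf{d}(C)$. I would check the latter directly from the definition, using $\|\mf{e}(k)\|=\|k\|$ so that the difference quotients correspond exactly. The chain rule then gives $\mh{L}^{1}(x)=\mh{L}_{\mf{e}}^{1}(\mf{d}x)\circ\mf{d}$. Applying the same product decomposition to $\mh{Y}=\prod_{i}Y_{i}$, namely $\mr{Id}_{\mh{Y}}=\sum_{i}\mf{i}_{i}^{Y}\circ\Pr_{i}^{Y}$, I obtain
\begin{equation*}
\mh{L}^{1}(x)=\sum_{i\in[0,n]\cap\Z}\frac{\partial\mh{L}_{\mf{e}}}{\partial Y_{i}}(\mf{d}x)\circ\Pr_{i}^{Y}\circ\mf{d},
\end{equation*}
and this is the second line read through $(\cdot)^{\dagger}$.

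For \eqref{10181658b}, I would compute $\frac{\partial\mh{L}}{\partial\mf{Y}}(x)=\mh{L}^{1}(x)\circ\mf{i}_{2}^{\mc{E}}$ and insert the second line. Since $\Pr_{0}^{Y}\circ\mf{d}\circ\mf{i}_{2}^{\mc{E}}=\Pr_{1}^{\mc{E}}\circ\mf{i}_{2}^{\mc{E}}=0$, the $i=0$ term drops out. For $\nu\geq1$ we have $\Pr_{\nu}^{Y}\circ\mf{d}\circ\mf{i}_{2}^{\mc{E}}=\Pr_{\nu}^{\mc{B}}\circ\Pr_{2}^{\mc{E}}\circ\mf{i}_{2}^{\mc{E}}=\Pr_{\nu}^{\mc{B}}$, which gives the claim.

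The main obstacle is bookkeeping rather than analysis: the target is a general Hlcs $\mc{Z}_{\R}$ and the differential is the bounded one. I would therefore verify explicitly two facts in this setting. First, the chain rule holds for precomposition with continuous linear or affine maps between normed spaces. Second, the partial-differential identity $\partial_{r}\mh{L}(x)=\mh{L}^{1}(x)\circ\mf{i}_{r}$ holds. Both follow by applying each seminorm $q\in\Gamma$ to the remainder quotient, where the usual normed-space argument goes through unchanged.
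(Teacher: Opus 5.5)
Your proposal is correct and follows the paper's proof essentially step by step. The paper gets the first line from the product decomposition of the differential (its Cor.~\ref{02211942}, which you re-derive via $\mr{Id}_{\mc{Y}}=\sum_{r}\mf{i}_{r}^{\mc{E}}\circ\Pr_{r}^{\mc{E}}$); it gets the second line from the chain rule $\mh{L}^{(1)}(t)(s)=\mh{L}_{\mf{e}}^{(1)}(\mf{d}(t))(\mf{d}(s))$ for the isometry $\mf{d}$, followed by the same decomposition on $\mh{Y}$; and it gets \eqref{10181658b} by precomposing with $\mf{i}_{2}^{\mc{E}}$, exactly as you do. Your seminorm-by-seminorm checks of the chain rule and of $\frac{\partial\mh{L}}{\partial\mc{E}_{r}}(x)=\mh{L}^{1}(x)\circ\mf{i}_{r}^{\mc{E}}$ simply stand in for the paper's citations of Prp.~\ref{813}, Prp.~\ref{05231704}, Thm.~\ref{821} and Cor.~\ref{02211942}.
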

\begin{proof}
The 1th equality in \eqref{10181658a} follows directly since Cor.\ref{02211942} applied to $\mh{L}$. 
Next $\mh{L}_{\mf{e}}$ is differentiable since Prp.\ref{813}, Prp.\ref{05231704} and Thm.\ref{821}
but $\mh{L}=\mh{L}_{\mf{e}}\circ(\mf{d}_{O})_{\natural}$ so, since Prp.\ref{813}, Prp.\ref{05231704} and Thm.\ref{821} we obtain 
for any $s\in\mc{Y}$ that $\mh{L}^{(1)}(t)(s)=\mh{L}_{\mf{e}}^{(1)}(\mf{d}(t))(\mf{d}(s))$, then the 2th equality in 
\eqref{10181658a} follows since Cor.\ref{02211942} applied to $\mh{L}_{\mf{e}}$. Next by Cor.\ref{02211942} applied to $\mh{L}$ we
have that $\frac{\partial\mh{L}}{\partial\mf{Y}}=(\mf{i}_{2}^{\mc{E}})^{\dagger}\circ\mh{L}^{(1)}$ so, \eqref{10181658b} follows by 
\eqref{10181658a}.
\end{proof}
Under the hypothesis of Lemma \ref{10181658} explicitly it states that for every $t\in C$ and for every $a\in Y_{0}$ and 
$b\in\mf{Y}$ we have 
\begin{equation*}
\begin{aligned}
\mh{L}^{(1)}(t)(a,b)
&=
\frac{\partial\mh{L}}{\partial Y_{0}}(t)(a)
+
\frac{\partial\mh{L}}{\partial\mf{Y}}(t)(b)
\\
&=
\frac{\partial\mh{L}_{\mf{e}}}{\partial Y_{0}}(\mf{d}(t))(a)
+
\sum_{\nu\in[1,n]\cap\Z}
\frac{\partial\mh{L}_{\mf{e}}}{\partial Y_{\nu}}(\mf{d}(t))(b_{\nu}),
\end{aligned}
\end{equation*}
and 
\begin{equation*}
\frac{\partial\mh{L}}{\partial\mf{Y}}(t)(b)
=
\sum_{\nu\in[1,n]\cap\Z}
\frac{\partial\mh{L}_{\mf{e}}}{\partial Y_{\nu}}(\mf{d}(t))(b_{\nu}).
\end{equation*}
\begin{definition}
Define 
\begin{equation*}
\mc{C}_{c,1}^{\infty}(M,E)\coloneqq\{f\in\mc{C}_{c}^{\infty}(M,E)\,\vert\,(\forall\,(V,\rho)\in\mr{Crt}(M))
(\forall\nu\in[1,n]\cap\Z)(\mr{supp}(\partial_{\nu}^{\rho}f)\in\mr{Cmp}(V)\}.
\end{equation*}
\end{definition}
\begin{definition}
\label{08310159}
Let 
\begin{enumerate}
\item
$\mf{Z}_{U}^{\emptyset}$ be the $\K_{1}$-normed space whose underlying linear space is $\mc{C}_{c}(U,Y_{0})$ and whose norm is 
\begin{equation*}
q_{U}^{\emptyset}(f)\coloneqq\sup_{x\in U}\|f(x)\|_{Y_{0}};
\end{equation*}
\item
$\mf{Z}^{\emptyset}$ be the $\K_{1}$-normed space whose underlying linear space is $\mc{C}_{c}(W,Y_{0})$ and whose norm is 
\begin{equation*}
q^{\emptyset}(g)\coloneqq\sup_{x\in W}\|g(x)\|_{Y_{0}};
\end{equation*}
\item
$\mf{Z}_{U}$ be the $\K_{1}$-normed space whose underlying linear space is $\mc{C}_{c,1}^{\infty}(U,Y_{0})$ and whose norm is 
\begin{equation*}
\widehat{q}_{U}(f)\coloneqq\max_{\nu\in[0,n]\cap\Z}\sup_{x\in U}\|(\partial_{\nu}f)(x)\|_{Y_{0}};
\end{equation*}
\item
$\mf{Z}$ be the $\K_{1}$-normed space whose underlying linear space is $\mc{C}_{c,1}^{\infty}(W,Y_{0})$ and whose norm is 
\begin{equation*}
\widehat{q}(g)\coloneqq\max_{\nu\in[0,n]\cap\Z}\sup_{x\in W}\|(\partial_{\nu}^{\xi}g)(x)\|_{Y_{0}};
\end{equation*}
\item
$\mf{T}$ be the $\K_{1}$-normed space whose underlying linear space is $\mc{C}_{c}^{\infty}(W,\mf{Y})$ and whose norm is 
\begin{equation*}
\widehat{t}(l)\coloneqq\max_{\mu\in[1,n]\cap\Z}\sup_{x\in W}\|(\Pr_{\mu}^{\mc{B}}\circ l)(x)\|_{Y_{\mu}}.
\end{equation*}
\end{enumerate}
\end{definition}
\begin{definition}
For any $i\in[0,n]\cap\Z$ we let $\widehat{\partial_{i}^{\xi}}:\mf{Z}\to\mf{Z}^{\emptyset}$, $g\mapsto\partial_{i}^{\xi}g$
and $\widehat{\partial}_{i}:\mf{Z}_{U}\to\mf{Z}_{U}^{\emptyset}$, $f\mapsto\partial_{i}f$.
\end{definition}
\begin{definition}
For any $y\in W$ and $x\in U$ define $\mf{e}_{y}^{\mf{Z}}:\mf{Z}\to\mc{Y}$, $\mf{e}_{x}^{\mf{Z}_{U}}:\mf{Z}_{U}\to\mc{Y}$ such that 
\begin{equation*}
\begin{aligned}
\mf{e}_{y}^{\mf{Z}}
&\coloneqq
\mf{d}^{-1}\circ\prod_{\nu\in[0,n]\cap\Z}^{\boxtimes}\mr{ev}_{y}^{\mf{Z}^{\emptyset}}\circ\widehat{\partial_{\nu}^{\xi}},
\\
\mf{e}_{x}^{\mf{Z}_{U}}
&\coloneqq
\mf{d}^{-1}\circ\prod_{\nu\in[0,n]\cap\Z}^{\boxtimes}\mr{ev}_{x}^{\mf{Z}_{U}^{\emptyset}}\circ\widehat{\partial}_{\nu};
\end{aligned}
\end{equation*}
moreover define the sets $\mf{Z}^{O}\subseteq\mf{Z}$ and $\mf{Z}_{U}^{O}\subseteq\mf{Z}_{U}$ 
such that 
\begin{equation*}
\begin{aligned}
\mf{Z}^{O}&\coloneqq\bigcap_{y\in W}\overset{-1}{\overbrace{\mf{e}_{y}^{\mf{Z}}}}(O),
\\
\mf{Z}_{U}^{O}&\coloneqq\bigcap_{x\in U}\overset{-1}{\overbrace{\mf{e}_{x}^{\mf{Z}_{U}}}}(O).
\end{aligned}
\end{equation*}
\end{definition}
\begin{lemma}
\label{09171929}
$\mf{Z}^{O}$ is an open set of $\mf{Z}$ and $\mf{Z}_{U}^{O}$ is an open set of $\mf{Z}_{U}$.
\end{lemma}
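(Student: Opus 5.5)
The plan is to exhibit, for each $g\in\mf{Z}^{O}$, a radius $r>0$ with $B_{r}^{\mf{Z}}(g)\subseteq\mf{Z}^{O}$. The key point is that the set $\{\mf{e}_{y}^{\mf{Z}}(g)\,\vert\,y\in W\}$ is relatively compact in $O$, so it has positive distance from $\complement O$. The argument for $\mf{Z}_{U}^{O}$ is the same word for word, with $\xi$, $W$, $\partial_{\nu}^{\xi}$ replaced by $\mr{Id}_{U}$, $U$, $\partial_{\nu}$. I therefore only describe the case of $\mf{Z}$.

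First step: regularity of $y\mapsto\mf{e}_{y}^{\mf{Z}}(g)$. Fix $g\in\mf{Z}^{O}$ and consider the map $E_{g}:W\to\mc{Y}$, $y\mapsto\mf{e}_{y}^{\mf{Z}}(g)$. By definition, $\mf{d}(E_{g}(y))=\bigl((\partial_{\nu}^{\xi}g)(y)\bigr)_{\nu\in[0,n]\cap\Z}$. Each $\partial_{\nu}^{\xi}g$ is smooth, hence continuous, by \cite[Definition 1.4]{sil09261621}, and $\mf{d}^{-1}$ is an isometry. Hence $E_{g}$ is continuous. Since $g\in\mc{C}_{c,1}^{\infty}(W,Y_{0})$, the set $K\coloneqq\bigcup_{\nu\in[0,n]\cap\Z}\mr{supp}(\partial_{\nu}^{\xi}g)$ is a compact subset of $W$. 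For $y\in W\setminus K$ we get $E_{g}(y)=\ze$. Thus $E_{g}(W)\subseteq E_{g}(K)\cup\{\ze\}$, where $\ze$ is included only if $W\setminus K\neq\emptyset$. In that case $\ze=E_{g}(y)\in O$ because $g\in\mf{Z}^{O}$. Hence $C\coloneqq E_{g}(K)\cup(E_{g}(W\setminus K))$ is a compact subset of $O$: it is a continuous image of a compact set, possibly together with one extra point of $O$.

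Second step: choice of the radius. In the normed space $\mc{Y}$, the function $z\mapsto\mr{dist}(z,\complement O)$ is continuous and strictly positive on $O$, since $O$ is open. It therefore attains a positive minimum $r$ on the compact set $C$. If $\complement O=\emptyset$, any $r$ works.

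Third step: the estimate. Let $h\in\mf{Z}$ with $\widehat{q}(h-g)<r$. Since $\mf{d}$ is an isometry for the $\max$-norms, for every $y\in W$ we have
\begin{equation*}
\|\mf{e}_{y}^{\mf{Z}}(h)-\mf{e}_{y}^{\mf{Z}}(g)\|_{\mc{Y}}
=\max_{\nu\in[0,n]\cap\Z}\|(\partial_{\nu}^{\xi}(h-g))(y)\|_{Y_{0}}\leq\widehat{q}(h-g)<r .
\end{equation*}
This uses the linearity of $\partial_{\nu}^{\xi}$. Since $\mf{e}_{y}^{\mf{Z}}(g)\in C$, it follows that $\mf{e}_{y}^{\mf{Z}}(h)\in O$ for every $y\in W$, that is, $h\in\mf{Z}^{O}$.

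The only step needing care is the compactness in the first step. The intersection defining $\mf{Z}^{O}$ runs over infinitely many $y$, so openness cannot come from continuity of each $\mf{e}_{y}^{\mf{Z}}$ alone. What makes it work is the uniform distance obtained from compactness, which relies on the compact support of all the $\partial_{\nu}^{\xi}g$, i.e. on the defining property of $\mc{C}_{c,1}^{\infty}$.
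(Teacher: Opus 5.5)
Your proof is correct, and it takes a different and more careful route than the paper.

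\textbf{What the paper does.} It only observes that each $\mf{e}_{y}^{\mf{Z}}$ (and each $\mf{e}_{x}^{\mf{Z}_{U}}$) is continuous. These maps are built from the bounded operators $\widehat{\partial_{\nu}^{\xi}}$, the evaluations $\mr{ev}_{y}^{\mf{Z}^{\emptyset}}$ and the isometry $\mf{d}^{-1}$. The paper then invokes the preimage characterization of continuity together with openness of $O$. That gives openness of each single preimage of $O$ under $\mf{e}_{y}^{\mf{Z}}$. But $\mf{Z}^{O}$ is the intersection of these preimages over all $y\in W$, and an arbitrary intersection of open sets need not be open. Continuity of each $\mf{e}_{y}^{\mf{Z}}$ separately does not suffice, so the paper's argument as written leaves this step unjustified.

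\textbf{What your argument adds.} You supply the two uniformities that are missing there.
\begin{enumerate}
\item Your estimate $\|\mf{e}_{y}^{\mf{Z}}(h)-\mf{e}_{y}^{\mf{Z}}(g)\|_{\mc{Y}}\leq\widehat{q}(h-g)$ shows that the family $\{\mf{e}_{y}^{\mf{Z}}\}_{y\in W}$ is uniformly $1$-Lipschitz. Hence a single radius serves all $y$.
\item The compact supports coming from $\mc{C}_{c,1}^{\infty}$ make $\{\mf{e}_{y}^{\mf{Z}}(g)\,\vert\,y\in W\}$ a compact subset of $O$. That set is therefore at positive distance from $\complement O$.
\end{enumerate}

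\textbf{Comparison.} The paper's route is shorter but incomplete. Yours is complete, and it shows where the compact-support hypothesis in the definitions of $\mf{Z}$ and $\mf{Z}_{U}$ is actually used. Without it, the values $\mf{e}_{y}^{\mf{Z}}(g)$ could accumulate at the frontier of $O$, and openness could genuinely fail.
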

\begin{proof}
For any $y\in W$ and $x\in U$ the maps $\mr{ev}_{y}^{\mf{Z}^{\emptyset}}$ and $\mr{ev}_{x}^{\mf{Z}_{U}^{\emptyset}}$ are linear bounded 
operators so continuous, and $\widehat{\partial_{\nu}^{\xi}}$ and $\widehat{\partial}_{\nu}$ are linear and bounded operators so 
continuous, therefore the maps $\mf{e}_{y}^{\mf{Z}}$ and $\mf{e}_{x}^{\mf{Z}_{U}}$ are continuous since \cite[I.25 Prp.1]{BourGT} and 
since $\mf{d}^{-1}$ is an isometry thus, the statement follows since \cite[I.9 Thm.1]{BourGT} and the fact that $O$ is an open 
set of $\mc{Y}$.
\end{proof}
\begin{definition}
If $\eta\in A$ and $V\subseteq\R^{n}$, define $\mf{o}_{\eta}^{V}:V\to A$, $x\mapsto\eta$.
\end{definition}
We convein to let $\mf{o}_{\eta}$ denote $\mf{o}_{\eta}^{U}$.
\begin{lemma}
\label{08080256} 
Let $E$ be a Hausdorff locally convex space, $(\eta,x)\in A\times U$, $f:A\times U\to E$ be differentiable at $(\eta,x)$ and 
let $k\in[1,n]\cap\Z$. Then
\begin{equation*}
\frac{\partial f}{\partial e_{k}^{2}[n]}\circ(\mf{o}_{\eta}\boxtimes\mr{Id}_{U})(x)
=
\frac{\partial}{\partial e_{k}[n]}\bigl(f\circ(\mf{o}_{\eta}\boxtimes\mr{Id}_{U})\bigr)(x).
\end{equation*}
\end{lemma}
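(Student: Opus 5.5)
The plan is to reduce the identity to the chain rule for the bounded calculus applied to the composite $f\circ(\mf{o}_{\eta}\boxtimes\mr{Id}_{U})$. First observe that $\mf{o}_{\eta}\boxtimes\mr{Id}_{U}:U\to A\times U$, $y\mapsto(\eta,y)$, is the restriction to $U$ of the affine map $\R^{n}\to T\times\R^{n}$, $y\mapsto(\eta,\ze)+(\ze,y)$. Hence it is differentiable at $x$, and its differential is the constant linear injection $\mf{j}:\R^{n}\to T\times\R^{n}$, $v\mapsto(\ze,v)$; the map $\mf{j}$ is continuous since $T\times\R^{n}$ carries the max-norm.

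Since $f$ is differentiable at $(\eta,x)=(\mf{o}_{\eta}\boxtimes\mr{Id}_{U})(x)$, the chain rule for differentiable maps from open sets of normed spaces into Hausdorff locally convex spaces applies; this is the Thm.\ref{821} already invoked in the proof of Lemma \ref{10181658}, together with Prp.\ref{813} and Prp.\ref{05231704}. It gives that $f\circ(\mf{o}_{\eta}\boxtimes\mr{Id}_{U})$ is differentiable at $x$ with
\begin{equation*}
\bigl(f\circ(\mf{o}_{\eta}\boxtimes\mr{Id}_{U})\bigr)^{1}(x)=f^{1}(\eta,x)\circ\mf{j}.
\end{equation*}
If one prefers to avoid the general chain rule, this can be checked directly, because the inner map is affine. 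Indeed, for $h\in\R^{n}$ small,
\begin{equation*}
f(\eta,x+h)-f(\eta,x)-f^{1}(\eta,x)(\ze,h)
\end{equation*}
is exactly the remainder of $f$ at $(\eta,x)$ in the increment $(\ze,h)$, and $\|(\ze,h)\|_{\max}=\|h\|$. So the defining limit for differentiability of $f$, restricted to increments of the form $(\ze,h)$, yields the claim.

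It remains to evaluate at $e_{k}[n]$. The right-hand side of the statement is by definition the partial derivative, i.e.\ the directional derivative $\bigl(f\circ(\mf{o}_{\eta}\boxtimes\mr{Id}_{U})\bigr)^{1}(x)(e_{k}[n])=f^{1}(\eta,x)(\ze,e_{k}[n])$. By \eqref{08092146} this equals $\frac{\partial f}{\partial e_{k}^{2}[n]}(\eta,x)$, which is the left-hand side evaluated at $x$.

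The only delicate point is bookkeeping rather than analysis. One must make sure that $\frac{\partial f}{\partial e_{k}^{2}[n]}$ is defined at $(\eta,x)$ under mere differentiability at that point, and not on a neighbourhood. One must also check that \eqref{08092146} holds pointwise there, since the notation section states it assuming $f$ differentiable. I would handle this by reading \eqref{08092146} at the single point $(\eta,x)$, where $D_{(\ze,e_{k})}f(\eta,x)$ exists and equals $f^{1}(\eta,x)(\ze,e_{k})$ by the standard fact that Fréchet-type differentiability implies existence of directional derivatives.
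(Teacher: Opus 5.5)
Your proposal is correct and follows essentially the same route as the paper. Both apply the chain rule (Thm.\ref{821}) to get $\bigl(f\circ(\mf{o}_{\eta}\boxtimes\mr{Id}_{U})\bigr)^{(1)}(x)(t)=f^{(1)}(\eta,x)(\ze,t)$, then use Prp.\ref{08091051} on the right-hand side and the first equality in \eqref{08092146} on the left. The only differences are cosmetic. The paper computes the inner differential componentwise via Prp.\ref{815}, Prp.\ref{812} and Prp.\ref{813}, whereas you treat $\mf{o}_{\eta}\boxtimes\mr{Id}_{U}$ as an affine map. Your direct remainder argument also has the small advantage of not needing the continuity hypothesis on $f$ that Thm.\ref{821} carries.
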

\begin{proof}
Let $t\in\R^{n}$ thus,
\begin{equation}
\label{08080256a}
\begin{aligned}
\left(f\circ(\mf{o}_{\eta}\boxtimes\mr{Id}_{U})\right)^{(1)}(x)(t)
&=
f^{(1)}((\mf{o}_{\eta}\boxtimes\mr{Id}_{U})(x))((\imath_{A}^{T}\circ\mf{o}_{\eta})^{(1)}(x)t,\mr{Id}_{U\to\R^{n}}^{(1)}(x)t)
\\
&=
f^{(1)}((\mf{o}_{\eta}\boxtimes\mr{Id}_{U})(x))(\ze,t),
\end{aligned}
\end{equation}
where the 1th equality follows since Thm.\ref{821} and Prp.\ref{815}, the 2th one since Prp.\ref{812} and Prp.\ref{813}.
Next by the 1th equality in \eqref{08092146} we have
\begin{equation}
\label{08080256b}
\frac{\partial f}{\partial e_{k}^{2}[n]}\circ(\mf{o}_{\eta}\boxtimes\mr{Id}_{U})(x)
=
f^{(1)}((\mf{o}_{\eta}\boxtimes\mr{Id}_{U})(x))(\ze,e_{k}[n]);
\end{equation}
while by Prp.\ref{08091051} we have 
\begin{equation}
\label{08080256c}
\frac{\partial}{\partial e_{k}[n]}\left(f\circ(\mf{o}_{\eta}\boxtimes\mr{Id}_{U})\right)(x)
=
\left(f\circ(\mf{o}_{\eta}\boxtimes\mr{Id}_{U})\right)^{(1)}(x)(e_{k}[n]).
\end{equation}
The statement then follows since (\ref{08080256a},\ref{08080256b},\ref{08080256c}).
\end{proof}
\begin{definition}
For any $(V,\gamma)\in\mr{Crt}(M)$ such that $V=W$ define 
\begin{equation*}
\partial^{\gamma}:\mf{Z}\to\mf{T},
\end{equation*}
such that for every $g\in\mf{Z}$ and $i\in[1,n]\cap\Z$ we have 
\begin{equation*}
\Pr_{i}^{\mc{B}}\circ(\partial^{\gamma}g)\coloneqq\partial_{i}^{\gamma}g.
\end{equation*}
Next define  
\begin{equation*}
\partial:\mf{Z}_{U}\to\mf{T}_{U},
\end{equation*}
such that for every $f\in\mf{Z}_{U}$ and $i\in[1,n]\cap\Z$
\begin{equation*}
\Pr_{i}^{\mc{B}}\circ(\partial f)=\frac{\partial}{\partial e_{i}}f.
\end{equation*}
\end{definition}
The above definition is well-set indeed the property of product of sets ensures that $\partial^{\xi}g$ and $\partial f$ 
respectively are well-defined maps on $W$ and on $U$ respectively and valued in $\mf{Y}$, while Cor.\ref{8126} ensures that 
$\partial^{\xi}g$ and $\partial f$ are smooth. 
\begin{definition}
\label{10051149}
Let $i\in[1,n]\cap\Z$, define 
\begin{equation*}
\begin{cases}
\chi:A\to\mc{C}^{\infty}(U,U),
\\
\widehat{\chi}:A\times U\to U,
\\
\chi_{i}:A\to\mc{C}^{\infty}(U,\R),
\\
\widehat{\chi}_{i}:A\times U\to\R,
\\
M:A\to\mc{C}^{\infty}(U,\mathbb{M}_{n}(\R)),
\\
\widehat{M}:A\times U\to\mathbb{M}_{n}(\R),
\\
J:A\to\mc{C}^{\infty}(U,\R),
\\
\widehat{J}:A\times U\to\R,
\\
\sigma:A\to\mc{C}^{\infty}(U,U),
\\
\widehat{\sigma}:A\times U\to U,
\\
\upzeta:A\to\mc{C}^{\infty}(W,W),
\\
\widehat{\upzeta}:A\times W\to W,
\\
\widehat{\omega}:A\times W\to U;
\end{cases}
\end{equation*}
such that for every $\eta\in A$, $x\in U$, $y\in W$, and $\mu,\nu\in[1,n]\cap\Z$ we have 
\begin{equation*}
\begin{aligned}
\chi(\eta)
&\coloneqq
\xi\circ(\omega_{\natural}^{\eta})^{-1},
\\
\widehat{\chi}(\eta,x)
&\coloneqq
\chi(\eta)(x),
\\
\chi_{i}(\eta)
&\coloneqq
\Pr_{i}\circ\imath_{U}^{\R^{n}}\circ\xi\circ(\omega_{\natural}^{\eta})^{-1},
\\
\widehat{\chi}_{i}
&\coloneqq
\Pr_{i}\circ\imath_{U}^{\R^{n}}\circ\widehat{\chi},
\\
M(\eta)(x)_{\mu,\nu}
&\coloneqq
\frac{\partial\chi_{\mu}(\eta)}{\partial e_{\nu}[n]}(x),
\\
\widehat{M}(\eta,x)_{\mu,\nu}
&\coloneqq
\frac{\partial\widehat{\chi}_{\mu}}{\partial e_{\nu}^{2}[n]}(\eta,x),
\\
J_{\eta}
&\coloneqq
\mr{det}\circ(M(\eta)),
\\
\widehat{J}
&\coloneqq
\mr{det}\circ\widehat{M},
\\
\sigma(\eta)
&\coloneqq
\omega_{\natural}^{\eta}\circ\xi^{-1},
\\
\widehat{\sigma}(\eta,x)
&\coloneqq
\sigma(\eta)(x),
\\
\upzeta(\eta)
&\coloneqq
\xi^{-1}\circ\omega_{\natural}^{\eta},
\\
\widehat{\upzeta}(\eta,y)
&\coloneqq
\upzeta(\eta)(y),
\\
\widehat{\omega}(\eta,y)
&\coloneqq
\omega^{\eta}(y).
\end{aligned}
\end{equation*}
\end{definition}
Notice that for any $\eta\in A$, $J_{\eta}$ is the Jacobian of $(\omega_{\natural}^{\eta})^{-1}$ with respect to the charts 
$(U,\mr{Id}_{U})$ and $(W,\xi)$ and also the Jacobian of $\chi(\eta)$ with respect to $(U,\mr{Id}_{U})$. Since positively 
oriented charts are consistently oriented we obtain that  $J_{\eta}$ is a strictly positive map. 
\begin{assumption}
For the remaining of the paper we let $\phi:A\to\mc{F}(\mf{Z},\mf{Z})$ such that 
$\phi_{\eta}(\mf{Z}^{O})\subseteq\mf{Z}^{O}$ for every $\eta\in A$ and $\psi:A\to\mc{F}(\mf{Z},\mf{T})$ such that 
\begin{equation}
\label{08161354}
(\forall\eta\in A)(\psi_{\eta}\circ\imath_{\mf{Z}^{O}}^{\mf{Z}}=\partial^{\xi}\circ\phi_{\eta}\circ\imath_{\mf{Z}^{O}}^{\mf{Z}}).
\end{equation}
\end{assumption}
\begin{definition}
\label{12141626}
Let $\eta\in A$, $u\in\mf{Z}$ define 
\begin{equation*}
\begin{aligned}
u^{\xi}
&\coloneqq
u\circ\xi^{-1}
\\
\phi_{\eta}^{\xi}(u)
&\coloneqq
\phi_{\eta}(u)\circ\xi^{-1};
\end{aligned}
\end{equation*}
furthermore let $i\in[1,n]\cap\Z$ and define 
\begin{equation*}
\begin{aligned}
\Upphi:
\mf{Z}
&\to
\mc{F}(A\times U,Y_{0}),\,u\mapsto((\eta,x)\mapsto\phi_{\eta}^{\xi}(u)(x))
\\
\psi_{\eta}^{\xi}(u)  
&\coloneqq
\psi_{\eta}(u)\circ\xi^{-1}
\\
\Uppsi:
\mf{Z}
&\to
\mc{F}(A\times U,\mf{Y}),\,u\mapsto((\eta,x)\mapsto\psi_{\eta}^{\xi}(u)(x))
\\
\Uppsi_{i}:
\mf{Z}
&\to
\mc{F}(A\times U,X),\,\Uppsi_{i}(u)\coloneqq\Pr_{i}^{\mc{B}}\circ(\Uppsi(u)).
\end{aligned}
\end{equation*}
\end{definition}
\begin{convention}
Given any two maps $f:K\to L$ and $g:N\to P$ with $f(K)\subseteq N$, in order to simplify the notation, sometime and by abuse of
language we let $g\circ f$ denote $g\circ\imath_{f(K)}^{N}\circ f_{\natural}$.
\end{convention}
\begin{remark}
\label{10161240}
Let $u\in\mf{Z}^{O}$, $\eta\in A$ and $y\in W$ then, $(\phi_{\eta}(u)\boxtimes\psi_{\eta}(u))(y)=\mf{e}_{y}^{\mf{Z}}(\phi_{\eta}(u))$
but $\phi_{\eta}(u)\in\mf{Z}^{O}$ then by definition of $\mf{Z}^{O}$ we obtain $(\phi_{\eta}(u)\boxtimes\psi_{\eta}(u))(y)\in O$.
In particular since for every $x\in U$ we have 
$\bigl(\Upphi(u)\boxtimes\Uppsi(u)\bigr)(\eta,x)=(\phi_{\eta}(u)\boxtimes\psi_{\eta}(u))(\xi^{-1}(x))$, then 
$\bigl(\Upphi(u)\boxtimes\Uppsi(u)\bigr)(\eta,x)\in O$. 
\end{remark}
This fact permits to provide the following:
\begin{definition}
\label{09030748}
Define $\beta:A\times U\to A,\,(\eta,x)\mapsto\eta$ moreover for any $\mh{L}:O\to\mc{Z}$ define 
\begin{equation*}
\begin{aligned}
F:\mf{Z}^{O}&\to\mc{F}(A\times U,\mc{Z}),
\\
u&\mapsto F_{u}\coloneqq\mh{L}\circ\bigl(\Upphi(u)\boxtimes\Uppsi(u)\bigr)\circ(\beta\boxtimes\widehat{\chi}),
\\
G:\mf{Z}^{O}&\to\mc{F}(A\times U,\mc{Z}),
\\
u&\mapsto G_{u}\coloneqq F_{u}\cdot\widehat{J}.
\end{aligned}
\end{equation*}
\end{definition}
\begin{definition}
\label{05101630}
We call $\lr{\mh{L},\phi}{\omega}$ an appropriate transformation system on $X$ iff
\begin{enumerate}
\item
\begin{equation*}
\mh{L}\in\mc{C}^{p}(O,\mc{Z}),
\end{equation*}
\label{09261410a}
\item
\begin{equation*}
\phi_{\ze}=\mr{Id}_{\mf{Z}};
\end{equation*}
\item
\begin{equation*}
\Upphi(\mf{Z})\subseteq\mc{C}^{p}(A\times U,Y_{0});
\end{equation*}
\label{051016303}
\item
\begin{equation*}
\imath_{U}^{\R^{n}}\circ\widehat{\chi}\in\mc{C}^{p}(A\times U,\R^{n}).
\end{equation*}
\label{06291612}
\end{enumerate}
\end{definition}
\begin{remark}
\label{051016304}
Def.\ref{05101630}\eqref{051016303} and \eqref{08161354} imply that 
\begin{equation*}
\Uppsi(\mf{Z})\subseteq\mc{C}^{p-1}(A\times U,\mf{Y}).
\end{equation*}
\end{remark}
\begin{assumption}
\label{08101151a}
For the remaining of the present subsection \ref{09030751} we let $\lr{\mh{L},\phi}{\omega}$ be an appropriate 
transformation system on $X$. 
\end{assumption}
\begin{remark}
\label{06132050}
Let $u\in\mf{Z}^{O}$ so, $\phi_{\ze}^{\omega}(u)=u^{\xi}$ and $\psi_{\ze}^{\omega}(u)=\partial u^{\xi}$ next let $\eta\in A$, then 
\begin{equation*}
\begin{aligned}
(\forall i\in[1,n]\cap\Z)(\forall x\in U)
(\Uppsi_{i}(u)(\eta,x)=\frac{\partial(\Upphi(u))}{\partial e_{i}^{2}[n]
}(\eta,x))
&\Leftrightarrow
\\
\psi_{\eta}(u)\circ\xi^{-1}=\partial(\phi_{\eta}(u)\circ\xi^{-1})
&\Leftrightarrow
\\
\psi_{\eta}(u)\circ\xi^{-1}=\partial^{\xi}(\phi_{\eta}(u))\circ\xi^{-1}
&\Leftrightarrow
\\
\psi_{\eta}(u)=\partial^{\xi}(\phi_{\eta}(u));
\end{aligned}
\end{equation*}
where the second equivalence follows since we have for any chart $(W,\gamma)$ of $M$ whose domain is $W$ and such that 
$\gamma(W)=U$ that 
\begin{equation}
\label{08060435}
(\forall u\in\mf{Z})((\partial^{\gamma}u)\circ\gamma^{-1}=\partial(u\circ\gamma^{-1})).
\end{equation}
\end{remark}
\begin{remark}
\label{09132217}
Define $\delta:A\times W\to A$, $(\eta,y)\mapsto\eta$.
Let $\eta\in A$ and $u\in\mf{Z}$. Since \eqref{08060435} we have 
\begin{equation*}
\partial^{\omega_{\natural}^{\eta}}(u\circ\upzeta(\eta))
=
\partial^{\xi}(u)\circ\upzeta(\eta),
\end{equation*}
therefore we have for every $y\in W$ and every $u\in\mf{Z}^{O}$ 
\begin{equation*}
\left[\left(\Upphi(u)\boxtimes\Uppsi(u)\right)\circ(\delta\boxtimes\widehat{\omega})\right](\eta,y)
=
\left[\left(\phi_{\eta}(u)\circ\upzeta(\eta)\right)
\boxtimes
\partial^{\omega_{\natural}^{\eta}}\left(\phi_{\eta}(u)\circ\upzeta(\eta)\right)\right](y).
\end{equation*}
\end{remark}
\begin{definition}
\label{05101649}
If $u:W\to Y_{0}$, $v:W\to\mf{Y}$, and $w:U\to Y_{0}$, $z:U\to\mf{Y}$ such that $(u\boxtimes v)(W)\subseteq O$ and 
$(w\boxtimes z)(U)\subseteq O$, then define $\mf{L}(u,v)\coloneqq\mh{L}\circ(u\boxtimes v)$ and 
$\mr{L}(w,z)\coloneqq\mh{L}\circ(w\boxtimes z)$.
\end{definition}
Let us provide the following general
\begin{lemma}
\label{10161358}
Let $S$ and $Z$ be locally compact spaces, let $\mu\in\mc{M}(S,\mc{Z}_{\R})$ and $L:S\to Z$ be such that for every 
$\gamma\in\mc{L}(\mc{Z}_{\R},\R)$ we have that $L$ is $\gamma\circ\mu$-measurable and for every $g\in\mc{H}(Z)$ the map 
$g\circ L$ is essentially $\mu$-integrable. Then the map $L(\mu):\mc{H}(Z)\to\mc{Z}_{\R}$ defined by 
\begin{equation}
\label{10161406}
(\forall g\in\mc{H}(Z))(L(\mu)(g)\coloneqq\mu(g\circ L)). 
\end{equation}
is such that $L(\mu)\in\mc{M}(Z,\mc{Z}_{\R})$. 
\end{lemma}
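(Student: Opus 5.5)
The plan is to check, in order, that $L(\mu)$ is well defined with values in $\mc{Z}_{\R}$, that it is linear, that it is weakly continuous, and finally that it is continuous for the given topologies. The guiding idea is to reduce everything to scalar measures through the functionals $\gamma\in\mc{L}(\mc{Z}_{\R},\R)$, and then to go back from weak to strong continuity using the Mackey (indeed Banach) nature of the spaces $\mc{H}(Z,K;\K)$.

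\emph{Well-definedness and linearity.} Let $g\in\mc{H}(Z)$. By hypothesis $g\circ L$ is essentially $\mu$-integrable. In the sense of \cite[VI]{IntBourb} this means two things: $g\circ L$ is essentially $\gamma\circ\mu$-integrable for every $\gamma\in\mc{L}(\mc{Z}_{\R},\R)$, and the linear form $\gamma\mapsto\int g\circ L\,d(\gamma\circ\mu)$ is represented by an element $\mu(g\circ L)\in\mc{Z}_{\R}$. Thus \eqref{10161406} defines a map $\mc{H}(Z)\to\mc{Z}_{\R}$, characterized by
\begin{equation*}
(\forall\gamma\in\mc{L}(\mc{Z}_{\R},\R))\bigl(\gamma(L(\mu)(g))=\int g\circ L\,d(\gamma\circ\mu)\bigr).
\end{equation*}
Linearity of $L(\mu)$ then follows from linearity of each scalar integral, because $\mc{Z}_{\R}$ is Hausdorff and so its dual separates points. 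When $\K=\C$ the same argument is applied to real and imaginary parts, using the conventions recalled in Notation \ref{12152204} for $\mc{H}(Z;\C)$.

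\emph{Weak continuity.} Fix $\gamma$ and set $\nu=\gamma\circ\mu$, a real Radon measure on $S$. The map $L$ is $\nu$-measurable and $g\circ L$ is essentially $\nu$-integrable for every $g\in\mc{H}(Z)$. Hence $L$ is $\nu$-adapted in the sense of \cite[V, \S6]{IntBourb}, and $g\mapsto\int g\circ L\,d\nu$ is the image measure $L(\nu)$, which is a real Radon measure on $Z$. Consequently $\gamma\circ L(\mu)=L(\gamma\circ\mu)$ is continuous on $\mc{H}(Z)$ for every $\gamma$, that is, $L(\mu)$ is continuous into $\mc{Z}_{\R}$ endowed with its weak topology $\sigma(\mc{Z}_{\R},\mc{Z}_{\R}')$.

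\emph{Continuity and conclusion.} For $K\in\mr{Cmp}(Z)$ the space $\mc{H}(Z,K;\K)$ is normed and complete, hence barrelled and in particular a Mackey space. A linear map from a Mackey space into a Hausdorff locally convex space that is weakly continuous is continuous for the Mackey topology of the target. The original topology of $\mc{Z}_{\R}$ is coarser than its Mackey topology, so $L(\mu)\circ\imath_{\mc{H}(Z,K;\K)}^{\mc{H}(Z)}$ is continuous for every $K$. By the defining property of the locally convex final topology \cite[II.29 Prp.5]{EVT}, $L(\mu)$ is continuous on $\mc{H}(Z)$. This gives $L(\mu)\in\mc{L}(\mc{H}(Z),\mc{Z}_{\R})\subseteq\mc{L}_{s}(\mc{H}(Z),\mc{Z}_{\R})=\mc{M}(Z,\mc{Z}_{\R})$.

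The step I expect to be the main obstacle is the weak-continuity step. One must verify carefully that the hypotheses, namely $\gamma\circ\mu$-measurability of $L$ together with essential integrability of every $g\circ L$, are exactly what \cite{IntBourb} requires for the image measure $L(\gamma\circ\mu)$ to exist as a Radon measure. If the adaptedness criterion there is phrased differently, I would fall back on a direct argument. For $g$ supported in $K$ one has $|g\circ L|\le\|g\|_{\infty}\,\chi_{L^{-1}(K)}$, with $\chi_{L^{-1}(K)}$ essentially $|\nu|$-integrable; this follows by choosing $g_{0}\in\mc{H}(Z)$ with $g_{0}\ge\chi_{K}$. That bound yields $|\int g\circ L\,d\nu|\le C_{K}\|g\|_{\infty}$ directly.
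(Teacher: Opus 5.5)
Your proof is correct and follows the same route as the paper, which also derives well-definedness and linearity from the essential $\mu$-integrability of each $g\circ L$, identifies $\gamma\circ L(\mu)$ with the scalar image measure $L(\gamma\circ\mu)$ (using that $L$ is $\gamma\circ\mu$-proper in the sense of \cite[Def.1 V.63]{IntBourb}, which is the notion you call adapted), and then passes back to a $\mc{Z}_{\R}$-valued measure. The only difference is in that last passage: the paper simply cites \cite[VI.19 Rmk.3]{IntBourb}, while you prove it yourself, using that the Banach spaces $\mc{H}(Z,K;\K)$ are Mackey spaces and that $\mc{H}(Z)$ carries the locally convex final topology.
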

\begin{proof}
The map $L(\mu)$ as in \eqref{10161406} is well-defined and it is linear since $g\circ L$ is essentially $\mu$-integrable 
for every $g\in\mc{H}(Z)$. Let $\gamma\in\mc{L}(\mc{Z}_{\R},\R)$ thus, $g\circ L$ is essentially $\gamma\circ\mu$-integrable 
for every $g\in\mc{H}(Z)$ and $L$ is $\gamma\circ\mu$-measurable so, $L$ is $\gamma\circ\mu$-proper \cite[Def.1 V.63]{IntBourb} 
therefore we have 
\begin{equation}
\label{10161351}
\begin{aligned}
L(\gamma\circ\mu):\mc{H}(Z)\ni g&\mapsto\int(g\circ L)\,d(\gamma\circ\mu);
\\
L(\gamma\circ\mu)&\in\mc{M}(Z,\R).
\end{aligned}
\end{equation}
Next by \eqref{10161406} and the 1th relation in \eqref{10161351} we have $\gamma\circ(L(\mu))=L(\gamma\circ\mu)$, then we 
conclude that $L(\mu)\in\mc{M}(Z,\mc{Z}_{\R})$ by the 2th relation in \eqref{10161351} and since \cite[VI.19 Rmk.3]{IntBourb}.
\end{proof}
Now in order to provide the next fundamental definition we need a result in \cite{sil09261621} to which we refer
the reader for the notation there employed. That said we have that \cite[Thm.3.57]{sil09261621} and Lemma \ref{10161358} 
legitimate the following
\begin{definition}
\label{08280414}
Define $\mf{A}:\mf{Z}^{O}\times A\to\mc{M}(W,\mc{Z}_{\R})$ such that for every $(u,\eta)\in\mf{Z}^{O}\times A$
\begin{equation*}
\mf{A}(u,\eta)\doteqdot
\upzeta(\eta)(\mf{m}_{\mf{L}(\phi_{\eta}(u),\psi_{\eta}(u))\otimes\wedge_{i=1}^{n}d\xi_{i}}).
\end{equation*}
For every $u\in\mf{Z}^{O}$, define $\mc{A}(u):A\to\mc{M}(W,\mc{Z}_{\R})$, such that $\mc{A}(u)(\eta)\coloneqq\mf{A}(u,\eta)$ for 
every $\eta\in A$ and define $\mf{a}(\eta):\mf{Z}^{O}\to\mc{M}(W,\mc{Z}_{\R})$, such that $\mf{a}(\eta)(u)\coloneqq\mf{A}(u,\eta)$ 
for every $u\in\mf{Z}^{O}$.
\end{definition}
The above definition is well-set. Indeed Rmk.\ref{10161240} and Def.\ref{05101630}\eqref{09261410a} ensure that 
$\mf{L}(\phi_{\eta}(u),\psi_{\eta}(u))$ is well-defined, while it belongs to $\mc{C}^{p-1}(W,\mc{Z})$ namely to 
$\mc{C}^{p-1}(W,\mc{Z}_{\R})$ by Def.\ref{05101630} and Rmk.\ref{051016304} thus, 
$\mf{L}(\phi_{\eta}(u),\psi_{\eta}(u))\otimes\wedge_{i=1}^{n}d\xi_{i}\in\mr{Alt}_{loc}^{n}(W;\mc{Z}_{\R},\lambda)$ 
since \cite[(1.6)]{sil09261621} with 
$\mr{Alt}_{loc}^{n}(W;\mc{Z}_{\R},\lambda)=\mf{L}_{loc}^{1}(W,\mc{Z}_{\R},\lambda)\otimes_{\mc{A}(U)}\mr{Alt}^{n}(W,M)$
see \cite[Def.1.14]{sil09261621} for $\mf{L}_{loc}^{1}(W,\mc{Z}_{\R},\lambda)$,
then $\mf{m}_{\mf{L}(\phi_{\eta}(u),\psi_{\eta}(u))\otimes\wedge_{i=1}^{n}d\xi_{i}}\in\mc{M}(W,\mc{Z}_{\R})$ since \cite[Thm.3.57]{sil09261621} 
and the fact that $\mc{Z}_{\R}$ is quasi-complete, finally 
$\upzeta(\eta)(\mf{m}_{\mf{L}(\phi_{\eta}(u),\psi_{\eta}(u))\otimes\wedge_{i=1}^{n}d\xi_{i}})\in\mc{M}(W,\mc{Z}_{\R})$
since Lemma \ref{10161358}. 
\begin{convention}
In the present paper we convein to simplify the notation by letting $\int fT$ denote $\int f\otimes T$ for every 
$f\in\mf{L}_{c}^{1}(W,\mc{Z}_{\R},\lambda)$ (see \cite[Def.1.16]{sil09261621}) and $T\in\mr{Alt}^{n}(W,M)$.
\end{convention}
As a consequence of the above convention we have for every $f\in\mc{H}(W)$ that 
\begin{equation}
\label{08280414a}
\mf{A}(u,\eta)(f)=\int(\upzeta(\eta))^{\ast}(f)\mf{L}(\phi_{\eta}(u),\psi_{\eta}(u))\,\wedge_{i=1}^{n}d\xi_{i}.
\end{equation}  
\begin{lemma}
\label{06121742}
Let $u\in\mf{Z}^{O}$ and $\eta\in A$ thus, for every $f\in\mc{H}(W)$ we have 
\begin{equation*}
\mc{A}(u)(\eta)(f)  
=
\int
(\xi^{-1})^{\ast}(f)
\mr{L}\left(\phi_{\eta}^{\xi}(u)\circ\chi(\eta),\psi_{\eta}^{\xi}(u)\circ\chi(\eta)\right)
J_{\eta}d\lambda_{U}.
\end{equation*}
\end{lemma}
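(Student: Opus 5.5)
I will start from the explicit formula \eqref{08280414a}, namely
\begin{equation*}
\mc{A}(u)(\eta)(f)=\mf{A}(u,\eta)(f)=\int(\upzeta(\eta))^{\ast}(f)\,\mf{L}(\phi_{\eta}(u),\psi_{\eta}(u))\,\wedge_{i=1}^{n}d\xi_{i},
\end{equation*}
which itself comes from Def.\ref{08280414}, Lemma \ref{10161358} (the image measure acts as $g\mapsto\mu(g\circ\upzeta(\eta))$) and the convention on $\int fT$. The goal is then to transport this integral on $W$ to an integral on $U$ with respect to $\lambda_{U}$. I will do this in two steps: a change of chart from $\xi$ to $\omega_{\natural}^{\eta}$, followed by the expression of the integral in the chart $\omega_{\natural}^{\eta}$, whose image is again $U$.

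\textbf{First step: rewrite the integrand.} The integrand is the $\mc{Z}_{\R}$-valued locally integrable compactly supported $n$-form $h\,\wedge_{i}d\xi_{i}$, where $h=(f\circ\upzeta(\eta))\cdot\mf{L}(\phi_{\eta}(u),\psi_{\eta}(u))$. Since $\upzeta(\eta)=\xi^{-1}\circ\omega_{\natural}^{\eta}$, I have $f\circ\upzeta(\eta)=(\xi^{-1})^{\ast}(f)\circ\omega_{\natural}^{\eta}$. I will then express $\wedge_{i}d\xi_{i}$ in terms of $\wedge_{i}d\omega^{\eta}_{i}$. Because $\xi=\chi(\eta)\circ\omega_{\natural}^{\eta}$, the standard Jacobian identity gives
\begin{equation*}
\wedge_{i}d\xi_{i}=(J_{\eta}\circ\omega_{\natural}^{\eta})\,\wedge_{i}d\omega^{\eta}_{i},
\end{equation*}
and $J_{\eta}>0$ by the orientation remark after Def.\ref{10051149}. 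By the module structure $\mf{L}^{1}_{loc}\otimes_{\mc{A}(U)}\mr{Alt}^{n}$ of \cite{sil09261621}, the scalar factor $J_{\eta}\circ\omega^{\eta}_{\natural}$ moves into the integrand.

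\textbf{Second step: compute in the chart $\omega^{\eta}_{\natural}$.} Since $(W,\omega_{\natural}^{\eta})$ is a positively oriented chart with image $U$, the definition/characterization of the integral of a compactly supported $\mc{Z}_{\R}$-valued form on a chart domain (the construction behind \cite[Thm.3.57]{sil09261621}) gives
\begin{equation*}
\int g\,\wedge_{i}d\omega^{\eta}_{i}=\int g\circ(\omega_{\natural}^{\eta})^{-1}\,d\lambda_{U}.
\end{equation*}
Composing with $(\omega^{\eta}_{\natural})^{-1}$, the factor $(\xi^{-1})^{\ast}(f)\circ\omega_{\natural}^{\eta}$ becomes $(\xi^{-1})^{\ast}(f)$ and the Jacobian factor becomes $J_{\eta}$. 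For the Lagrangian factor, $(\omega^{\eta}_{\natural})^{-1}=\xi^{-1}\circ\chi(\eta)$, so
\begin{equation*}
\phi_{\eta}(u)\circ(\omega^{\eta}_{\natural})^{-1}=\phi_{\eta}^{\xi}(u)\circ\chi(\eta),
\end{equation*}
and likewise for $\psi$. Hence $\mf{L}(\phi_{\eta}(u),\psi_{\eta}(u))\circ(\omega^{\eta}_{\natural})^{-1}=\mr{L}(\phi_{\eta}^{\xi}(u)\circ\chi(\eta),\psi_{\eta}^{\xi}(u)\circ\chi(\eta))$, which is well defined by Def.\ref{05101649} and Rmk.\ref{10161240}. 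This yields the claimed formula.

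\textbf{Main obstacle.} The delicate point is justifying the change-of-variables and chart-independence for vector-valued forms: the identity $\wedge d\xi_{i}=(J_{\eta}\circ\omega^{\eta})\wedge d\omega^{\eta}_{i}$ inside $\mr{Alt}^{n}_{loc}(W;\mc{Z}_{\R},\lambda)$, and the fact that the integral computed in either positively oriented chart agrees. I would try to cite the corresponding statements of \cite{sil09261621} directly. Failing that, I would reduce to the scalar case by composing with every $\gamma\in\mc{L}(\mc{Z}_{\R},\R)$ and applying the classical change-of-variables formula for Lebesgue measure. Since the continuous linear forms separate points of the Hausdorff space $\mc{Z}_{\R}$, and both sides are weak integrals commuting with $\gamma$, the equality then follows.
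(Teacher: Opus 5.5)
Your proposal is correct and follows essentially the paper's route. You start from \eqref{08280414a}, transport the integral to $U$ through the positively oriented chart $\omega_{\natural}^{\eta}$ (the paper phrases this as pulling back by $(\omega_{\natural}^{\eta})^{-1}$, which produces the factor $J_{\eta}>0$), and identify $\mf{L}(\phi_{\eta}(u),\psi_{\eta}(u))\circ(\omega_{\natural}^{\eta})^{-1}$ with $\mr{L}(\phi_{\eta}^{\xi}(u)\circ\chi(\eta),\psi_{\eta}^{\xi}(u)\circ\chi(\eta))$. The only difference is that the paper makes your fallback its main device: it applies each $\phi\in(\mc{Z}_{\R})'$ via \cite[Prp.3.55]{sil09261621} to reduce to the classical scalar change-of-variables results, and then concludes because the dual separates points.
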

\begin{proof}
For every $\phi\in(\mc{Z}_{\R})^{\prime}$ we have 
\begin{equation*}
\begin{aligned}
\phi(\mc{A}(u)(\eta)(f))
&=
\int
(\upzeta(\eta))^{\ast}(f)\phi_{\ast}(\mf{L}(\phi_{\eta}(u),\psi_{\eta}(u)))\,\wedge_{i=1}^{n}d\xi_{i}
\\
&=
\int  
((\omega_{\natural}^{\eta})^{-1})^{\ast}
\left(
\upzeta(\eta))^{\ast}(f)
\phi_{\ast}(\mf{L}(\phi_{\eta}(u),\psi_{\eta}(u)))\,\wedge_{i=1}^{n}d\xi_{i}\right)
\\
&=
\int  
(\xi^{-1})^{\ast}(f)
\bigl(\phi_{\ast}(\mf{L}(\phi_{\eta}(u),\psi_{\eta}(u)))\circ(\omega_{\natural}^{\eta})^{-1}\bigr)J_{\eta}\,\wedge_{i=1}^{n}
d(\Pr_{i}\circ\imath_{U}^{\R^{n}})
\\
&=
\int
(\xi^{-1})^{\ast}(f)
\bigl(\phi_{\ast}(\mf{L}(\phi_{\eta}(u),\psi_{\eta}(u)))\circ\xi^{-1}\circ\xi\circ(\omega_{\natural}^{\eta})^{-1}\bigr)J_{\eta}
\,\wedge_{i=1}^{n}d(\Pr_{i}\circ\imath_{U}^{\R^{n}})
\\
&=
\phi\left(\int
(\xi^{-1})^{\ast}(f)
\bigl(\mf{L}(\phi_{\eta}(u),\psi_{\eta}(u))\circ\xi^{-1}\circ\xi\circ(\omega_{\natural}^{\eta})^{-1}\bigr)J_{\eta}
\,\wedge_{i=1}^{n}d(\Pr_{i}\circ\imath_{U}^{\R^{n}})\right);
\end{aligned}
\end{equation*}
where $\phi_{\ast}$ is defined in \cite[Def.2.39]{sil09261621}.
Here the 1th equality follows by \eqref{08280414a} and since \cite[Prp.3.55]{sil09261621}, with $\phi_{\times}$ defined in 
\cite[Def.2.41]{sil09261621}, the 2th one follows since \cite[Prp.16.6(d)]{lee} and by $\mr{sgn}(J_{\eta})=1$, while the 3th one 
by \cite[Prp. 14.20]{lee} both results generalized: the first to compactly supported continuous forms, the second one to 
continuous forms. The 5th equality follows since \cite[Prp.3.55]{sil09261621}. 
Then, the statement follows since $(\mc{Z}_{\R})^{\prime}$ separates the points of $\mc{Z}_{\R}$, being $\mc{Z}_{\R}$ of Hausdorff.
\end{proof}
\begin{proposition}
\label{08161352}
Let $\eta\in A$. If there exists a set $V\subset U$ such that $V$ is locally $\lambda_{U}$-negligible and 
$(\forall x\in\complement_{U}V)(\widehat{J}(\eta,x)=1)$, then 
\begin{equation*}
\begin{aligned}
\mr{L}\left(\phi_{\eta}^{\xi}(u),\partial(\phi_{\eta}^{\xi}(u))\right)\circ\chi(\eta)\circ\imath_{\complement_{U}V}^{U}
&=
\mr{L}\bigl(\phi_{\ze}^{\xi}(u),\partial(\phi_{\ze}^{\xi}(u))\bigr)\circ\imath_{\complement_{U}V}^{U}
\\
&\Rightarrow
\\
\mc{A}(u)(\eta)
&=
\mc{A}(u)(\ze).
\end{aligned}
\end{equation*}
\end{proposition}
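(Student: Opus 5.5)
The plan is to compare $\mc{A}(u)(\eta)(f)$ with $\mc{A}(u)(\ze)(f)$ for an arbitrary $f\in\mc{H}(W)$, using the explicit formula of Lemma \ref{06121742}. The idea is that the two integrands agree $\lambda_{U}$-almost everywhere, so the two $\mc{Z}_{\R}$-valued integrals coincide.

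\textbf{Rewriting the integrand at $\eta$.} First I rewrite the Lagrangian factor. By \eqref{08161354} and Rmk.\ref{06132050}, $\psi_{\eta}^{\xi}(u)=\partial^{\xi}(\phi_{\eta}(u))\circ\xi^{-1}=\partial(\phi_{\eta}^{\xi}(u))$, using \eqref{08060435}. Hence
\[
\mr{L}\bigl(\phi_{\eta}^{\xi}(u)\circ\chi(\eta),\psi_{\eta}^{\xi}(u)\circ\chi(\eta)\bigr)
=\mr{L}\bigl(\phi_{\eta}^{\xi}(u),\partial(\phi_{\eta}^{\xi}(u))\bigr)\circ\chi(\eta).
\]
This holds because $\mr{L}(w,z)=\mh{L}\circ(w\boxtimes z)$ and $(w\boxtimes z)\circ\chi(\eta)=(w\circ\chi(\eta))\boxtimes(z\circ\chi(\eta))$. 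The right-hand side is well defined by Rmk.\ref{10161240}, since $\chi(\eta)(U)\subseteq U$.

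\textbf{Rewriting the integrand at $\ze$.} At $\eta=\ze$ we have $\omega_{\natural}^{\ze}=\xi$, so $\chi(\ze)=\mr{Id}_{U}$ and $J_{\ze}=1$. Also $\widehat{J}(\eta,x)=J_{\eta}(x)$: by Lemma \ref{08080256}, $\widehat{M}(\eta,\cdot)=M(\eta)$. Therefore
\[
\mc{A}(u)(\ze)(f)=\int(\xi^{-1})^{\ast}(f)\,\mr{L}\bigl(\phi_{\ze}^{\xi}(u),\partial(\phi_{\ze}^{\xi}(u))\bigr)\,d\lambda_{U}.
\]
The hypotheses then give, on $\complement_{U}V$, that the integrand of $\mc{A}(u)(\eta)(f)$ equals
\[
(\xi^{-1})^{\ast}(f)\,\mr{L}\bigl(\phi_{\ze}^{\xi}(u),\partial(\phi_{\ze}^{\xi}(u))\bigr),
\]
since $J_{\eta}=1$ there. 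In other words, the two integrands coincide outside $V$.

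\textbf{Concluding.} $V$ is locally $\lambda_{U}$-negligible, and both integrands are essentially integrable: they are continuous (Rmk.\ref{051016304}) and compactly supported because $f$ is. So the two functions are equal locally almost everywhere and have the same integral. To avoid handling vector-valued integrals directly, I would argue scalarly, as in the proof of Lemma \ref{06121742}. For every $\gamma\in(\mc{Z}_{\R})'$, composing with $\gamma$ gives real functions that agree off $V$, so their integrals against $\lambda_{U}$ coincide. Since $(\mc{Z}_{\R})'$ separates points, $\mc{A}(u)(\eta)(f)=\mc{A}(u)(\ze)(f)$ for all $f$, hence $\mc{A}(u)(\eta)=\mc{A}(u)(\ze)$.

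\textbf{Main obstacle.} The step needing most care is the identification $\psi_{\eta}^{\xi}(u)=\partial(\phi_{\eta}^{\xi}(u))$ together with the commutation of $\chi(\eta)$ with the $\boxtimes$-composition, so that the hypothesis matches the integrand of Lemma \ref{06121742} exactly. The measure-theoretic part reduces via scalarization to the classical fact that integrals against $\lambda_{U}$ ignore locally negligible sets.
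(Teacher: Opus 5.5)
Your proof is correct and takes the same approach as the paper. The paper's proof simply cites Lemma \ref{06121742}, \eqref{08161354}, \eqref{08060435} and the fact that integrals over locally $\lambda_{U}$-negligible sets vanish; you spell out details it leaves implicit, namely $\widehat{J}(\eta,\cdot)=J_{\eta}$, $\chi(\ze)=\mr{Id}_{U}$ with $J_{\ze}=1$, and the scalarization through $(\mathcal{Z}_{\mathbb{R}})'$.
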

\begin{proof}
Since Lemma \ref{06121742}, \eqref{08161354}, \eqref{08060435} and the fact that the integral of a locally 
$\lambda_{U}$-negligible map is zero.
\end{proof}
\begin{lemma}
\label{07181052}
Let $u\in\mf{Z}^{O}$ and $\eta\in A$ thus, for every $f\in\mc{H}(W)$ we have 
\begin{equation*}
\mc{A}(u)(\eta)(f)
=
\int
(\xi^{-1})^{\ast}(f)(x)G_{u}(x,\eta)d\lambda_{U}(x).
\end{equation*}
\end{lemma}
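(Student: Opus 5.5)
The plan is to start from Lemma \ref{06121742}, which already gives, for every $f\in\mc{H}(W)$,
\begin{equation*}
\mc{A}(u)(\eta)(f)=\int(\xi^{-1})^{\ast}(f)\,\mr{L}\left(\phi_{\eta}^{\xi}(u)\circ\chi(\eta),\psi_{\eta}^{\xi}(u)\circ\chi(\eta)\right)J_{\eta}\,d\lambda_{U}.
\end{equation*}
It then suffices to identify the integrand pointwise on $U$ with $x\mapsto(\xi^{-1})^{\ast}(f)(x)G_{u}(x,\eta)$. Here $G_{u}(x,\eta)$ is read as $G_{u}(\eta,x)$, since $G_{u}$ is defined on $A\times U$.

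\textbf{Step 1: the Jacobian.} We show $J_{\eta}(x)=\widehat{J}(\eta,x)$. By Def.\ref{10051149}, $\widehat{\chi}_{\mu}(\eta,\cdot)=\chi_{\mu}(\eta)$. By Lemma \ref{08080256} applied to $f=\widehat{\chi}_{\mu}$, which is differentiable thanks to Def.\ref{05101630}\eqref{06291612} with $p\geq1$, we get
\begin{equation*}
\frac{\partial\widehat{\chi}_{\mu}}{\partial e_{\nu}^{2}[n]}(\eta,x)=\frac{\partial}{\partial e_{\nu}[n]}\bigl(\widehat{\chi}_{\mu}\circ(\mf{o}_{\eta}\boxtimes\mr{Id}_{U})\bigr)(x)=\frac{\partial\chi_{\mu}(\eta)}{\partial e_{\nu}[n]}(x).
\end{equation*}
Hence $\widehat{M}(\eta,x)=M(\eta)(x)$, and taking determinants gives $\widehat{J}(\eta,x)=J_{\eta}(x)$.

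\textbf{Step 2: the Lagrangian factor.} Unwinding Def.\ref{09030748} and Def.\ref{12141626},
\begin{equation*}
F_{u}(\eta,x)=\mh{L}\bigl(\Upphi(u)(\eta,\widehat{\chi}(\eta,x)),\Uppsi(u)(\eta,\widehat{\chi}(\eta,x))\bigr)=\mh{L}\bigl(\phi_{\eta}^{\xi}(u)(\chi(\eta)(x)),\psi_{\eta}^{\xi}(u)(\chi(\eta)(x))\bigr).
\end{equation*}
By Def.\ref{05101649} this is exactly $\mr{L}\left(\phi_{\eta}^{\xi}(u)\circ\chi(\eta),\psi_{\eta}^{\xi}(u)\circ\chi(\eta)\right)(x)$. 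The argument lies in $O$ by Rmk.\ref{10161240}, so everything is well defined.

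\textbf{Conclusion and main obstacle.} Multiplying Steps 1 and 2 gives $G_{u}(\eta,x)=F_{u}(\eta,x)\widehat{J}(\eta,x)$, which is the integrand in Lemma \ref{06121742}. The two integrals of the same function then coincide, which proves the statement.

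The only real point to check is Step 1. Lemma \ref{08080256} needs joint differentiability of $\widehat{\chi}_{\mu}$ at $(\eta,x)$, and this comes from the $C^{p}$ hypothesis together with composition with the continuous linear map $\Pr_{\mu}$. Everything else is bookkeeping of definitions.
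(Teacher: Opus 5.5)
Your proposal is correct and is essentially the paper's proof, which consists only of citing Lemma \ref{06121742}; you have made explicit the definitional unwinding that the paper leaves implicit, namely reading $G_{u}(x,\eta)$ as $G_{u}(\eta,x)$, $F_{u}(\eta,\cdot)=\mr{L}\bigl(\phi_{\eta}^{\xi}(u)\circ\chi(\eta),\psi_{\eta}^{\xi}(u)\circ\chi(\eta)\bigr)$ and $\widehat{J}(\eta,\cdot)=J_{\eta}$. Incidentally, Step 1 does not need Lemma \ref{08080256} or any joint differentiability: since $\frac{\partial\widehat{\chi}_{\mu}}{\partial e_{\nu}^{2}[n]}(\eta,x)$ is by definition the directional derivative along $(\ze,e_{\nu}[n])$, it coincides with $\frac{\partial\chi_{\mu}(\eta)}{\partial e_{\nu}[n]}(x)$ directly.
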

\begin{proof}
Since Lemma \ref{06121742}.
\end{proof}
\begin{definition}
\label{11022206}
If $E,F,G$ are topological linear spaces over $\K$, then we let 
\begin{equation*}
\mf{t}_{E,F,G}:\mc{L}_{b}(F,G)\times\mc{L}_{b}(E,F)\to\mc{L}_{b}(E,G), (b,a)\mapsto b\circ a;
\end{equation*}
and 
\begin{equation*}
\mf{s}_{E,F}:\mc{L}_{b}(E,F)\times E\to F, (S,e)\mapsto S(e).
\end{equation*}
\end{definition}
\begin{lemma}
\label{10221256}
If $N,P$ are normed spaces and $F$ is a locally convex space all over $\K$, then $\mf{s}_{N,F}$ and 
$\mf{t}_{N,P,F}$ are continuous and bilinear id est $\mf{s}_{N,F}\in\mc{L}(\mc{L}_{b}(N,F),N;F)$ and 
$\mf{t}_{N,P,F}\in\mc{L}(\mc{L}_{b}(P,F),\mc{L}_{b}(N,P);\mc{L}_{b}(N,F))$.
\end{lemma}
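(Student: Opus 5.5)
Bilinearity of both maps is immediate from the pointwise definitions: $\mf{s}_{N,F}(S,e)=S(e)$ is linear in $S$ by the definition of the linear structure of $\mc{L}(N,F)$ and linear in $e$ because $S$ is linear. Likewise $\mf{t}_{N,P,F}(b,a)=b\circ a$ is linear in $b$ pointwise and linear in $a$ because $b$ is linear. What remains is continuity. For a bilinear map between locally convex spaces it suffices to show that every continuous seminorm of the target is dominated by a product of continuous seminorms of the factors. I will use the fundamental systems of seminorms recalled in Notation \ref{12152204}. For $\mc{L}_{b}(N,F)$ with $N$ normed these are the seminorms $\overset{1}{p}(S)=\sup_{\|x\|_{N}\leq 1}p(S(x))$ with $p\in\Gamma_{F}$. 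For $\mc{L}_{b}(N,P)$ with both spaces normed, the operator norm $\|\cdot\|$ alone is a fundamental system.

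For $\mf{s}_{N,F}$, fix $p\in\Gamma_{F}$. Homogeneity gives, for every $(S,e)$, the estimate $p(S(e))\leq\overset{1}{p}(S)\,\|e\|_{N}$. This is checked by writing $e=\|e\|_{N}\,(e/\|e\|_{N})$ when $e\neq\ze$; the case $e=\ze$ is trivial.

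For $\mf{t}_{N,P,F}$, fix $p\in\Gamma_{F}$ and take $x\in\ov{B}_{1}^{N}(\ze)$. Then $a(x)$ has norm at most $\|a\|$, so the same homogeneity argument yields $p(b(a(x)))\leq\overset{1}{p}(b)\,\|a\|$, where now $\overset{1}{p}$ denotes the seminorm on $\mc{L}_{b}(P,F)$. Taking the supremum over $x$ gives $\overset{1}{p}(b\circ a)\leq\overset{1}{p}(b)\,\|a\|$. Before this can be used, one must first check that $b\circ a$ lies in $\mc{L}(N,F)$, which holds as a composition of continuous linear maps.

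With these domination inequalities, continuity follows in the standard way. Consider the increment $\mf{s}(S,e)-\mf{s}(S_{0},e_{0})$ and split it as $(S-S_{0})(e-e_{0})+(S-S_{0})(e_{0})+S_{0}(e-e_{0})$. Each term is small in $p$ whenever $\overset{1}{p}(S-S_{0})$ and $\|e-e_{0}\|_{N}$ are small, and the same decomposition handles $\mf{t}$. Alternatively, one can cite the classical criterion that a bilinear map is continuous iff it is continuous at the origin \cite{EVT}.

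The only point needing care is the case $\K=\C$ versus $\R$, together with the fact that the seminorms $\overset{1}{p}$ indeed form a fundamental system. The latter is stated in Notation \ref{12152204}, so I expect no real obstacle.
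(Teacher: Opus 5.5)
Your proof is correct and follows essentially the same route as the paper, which proves exactly the two domination estimates $p(\mf{s}_{N,F}(f,x))\leq\|x\|\,\overset{1}{p}(f)$ (via homogeneity, writing $x=\|x\|\,(x\|x\|^{-1})$) and $\overset{1}{p}(\mf{t}_{N,P,F}(f,g))\leq\|g\|\,\overset{1}{p}(f)$ (by applying the first estimate to $\mf{s}_{P,F}(f,g(y))$), and then concludes continuity from bilinearity. Your three-term splitting of the increment just spells out the standard fact that the paper invokes with the phrase ``continuous being bilinear''.
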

\begin{remark}
\label{10231117}
Let $h\in N$ and let $k_{h}:\mc{L}_{b}(N,F)\to N$, $f\mapsto h$, then 
$\mr{ev}_{h}^{\mc{L}_{b}(N,F)}=\mf{s}_{N,F}\circ(\mr{Id}_{\mc{L}_{b}(N,F)}\boxtimes k_{h})$ so the continuity of $\mr{ev}_{h}^{\mc{L}_{b}(N,F)}$ 
follows since that of $\mf{s}_{N,F}$ stated in Lemma \ref{10221256}.
\end{remark}
\begin{proof}[Proof of Lemma \ref{10221256}]
Let $\Gamma_{F}$ be a fundamental set of seminorms of $F$, $x\in N$ such that $x\neq\ze$, $f\in\mc{L}(N,F)$ and $p\in\Gamma_{F}$, 
then 
\begin{equation*}
p(\mf{s}_{N,F}(f,x))=\|x\|(p\circ f)(x\|x\|^{-1}))\leq\|x\|\cdot\overset{1}{p}(f),
\end{equation*}
so, $\mf{s}_{N,F}$ is continuous being bilinear. Next let $g\in\mc{L}_{b}(N,P)$ and $f\in\mc{L}_{b}(P,F)$ then, 
\begin{equation*}
\begin{aligned}
\overset{1}{p}(\mf{t}_{N,P,F}(f,g))
&=
\sup_{y\in B_{1}^{N}(\ze)}p(f(g(y)))
\\
&=
\sup_{y\in B_{1}^{N}(\ze)}
p(\mf{s}_{P,F}(f,g(y)))
\\
&\leq
\sup_{y\in B_{1}^{N}(\ze)}
\|g(y)\|\cdot\overset{1}{p}(f)
\\
&=
\|g\|\cdot
\overset{1}{p}(f),
\end{aligned}
\end{equation*}
so, $\mf{t}_{N,P,F}$ is continuous being bilinear.
\end{proof}
\begin{definition}
\label{10231103}
Let $E$ be a topological linear space over $\K$, $\mr{s}_{E}:\K\times E\to E$ be the product by scalars in $E$
and $P$ be a normed space over $\K$. Define $\mf{e}_{P,E}:E\times\mc{L}_{b}(P,\K)\to\mc{L}_{b}(P,E)$ such that 
\begin{equation*}
(\forall z\in E)(\forall b\in\mc{L}_{b}(P,\K))(\mf{e}_{P,E}(z,b)\coloneqq\mr{s}_{E}\circ(b\boxtimes c_{z})),
\end{equation*}
where $c_{z}:P\to E$, $h\mapsto z$.
\end{definition}
Since the locally convex spaces fixed in Assumption \ref{12152203} are over different fields, we convein to simplify the 
forthcoming notation by adopting the following
\begin{convention}
If $E$ is a topological linear space over $\R$, $F$ is a topological linear space over $\K_{a}\in\{\R,\K\}$ and 
$G$ is a topological linear space over $\K_{b}\in\{\R,\K\}$, then we let $\mf{t}_{E,F,G}$ denote $\mf{t}_{E,F_{\R},G_{\R}}$.
Furthermore let $\mf{g}_{i}$ denote $\mf{s}_{(Y_{i})_{\R},\mc{Z}_{\R}}$ for every $i\in[1,n]\cap\Z$ and let $\mf{h}$ denote 
$\mf{s}_{\mf{Y}_{\R},\mc{Z}_{\R}}$ in addition for any $h\in T$ we convein to let $\mr{ev}_{h}$ denote $\mr{ev}_{h}^{\mc{L}_{b}(T,\mc{Z}_{\R})}$.
If $F$ is a topological space over $\K$ and $P$ is a normed space over $\R$, then we let $\mf{e}_{P,F}$ denote $\mf{e}_{P,F_{\R}}$.
\end{convention}
\begin{lemma}
\label{10221931}
Let $P$ be a normed space and $E$ be a linear topological space both over $\K$.
Then, $\mf{e}_{P,E}$ is a well defined bilinear map. Furthermore if $E$ is locally convex, then $\mf{e}_{P,E}$ is continuous namely
$\mf{e}_{P,E}\in\mc{L}(E,\mc{L}_{b}(P,\K);\mc{L}_{b}(P,E))$.
\end{lemma}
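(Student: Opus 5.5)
The argument has two parts: first check that $\mf{e}_{P,E}$ is well defined and bilinear, then bound each seminorm of the image by a product of seminorms of the arguments, following the model of the proof of Lemma \ref{10221256}.

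\emph{Well-definedness.} Fix $z\in E$ and $b\in\mc{L}_{b}(P,\K)$. By definition $\mf{e}_{P,E}(z,b)(h)=b(h)z$ for every $h\in P$.
\begin{itemize}
\item Linearity in $h$ follows from the linearity of $b$ and the axioms of scalar multiplication in $E$.
\item For continuity, write $\mf{e}_{P,E}(z,b)=\mr{s}_{E}\circ(b\boxtimes c_{z})$. The map $b$ is continuous and $c_{z}$ is constant, so $b\boxtimes c_{z}$ is continuous by \cite[I.25 Prp.1]{BourGT}.
\item Scalar multiplication $\mr{s}_{E}$ is continuous because $E$ is a topological linear space.
\end{itemize}
Hence $\mf{e}_{P,E}(z,b)\in\mc{L}(P,E)$.

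\emph{Bilinearity.} The identities
\begin{equation*}
\mf{e}_{P,E}(\alpha z+\beta z',b)(h)=\alpha\,\mf{e}_{P,E}(z,b)(h)+\beta\,\mf{e}_{P,E}(z',b)(h)
\end{equation*}
and the analogous identity in $b$ follow directly from the vector space axioms, evaluated pointwise in $h$.

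\emph{Continuity when $E$ is locally convex.} Let $\Gamma_{E}$ be a fundamental set of seminorms of $E$. As recalled in the Notation section, $\{\overset{1}{q}\,\vert\,q\in\Gamma_{E}\}$ is a fundamental set of seminorms of $\mc{L}_{b}(P,E)$. For $q\in\Gamma_{E}$ we compute
\begin{equation*}
\overset{1}{q}(\mf{e}_{P,E}(z,b))
=\sup_{h\in\ov{B}_{1}^{P}(\ze)}q(b(h)z)
=\sup_{h\in\ov{B}_{1}^{P}(\ze)}|b(h)|\,q(z)
=\|b\|\,q(z),
\end{equation*}
where $\|b\|$ is the operator norm, i.e. the norm of $\mc{L}_{b}(P,\K)$. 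A bilinear map between locally convex spaces that satisfies, for each seminorm on the target, an estimate of the form $q'(\mf{e}(z,b))\le C\,q(z)\|b\|$ is continuous. Indeed, such an estimate gives continuity at the origin, and bilinearity then transfers continuity to every point via
\begin{equation*}
\mf{e}(z,b)-\mf{e}(z_{0},b_{0})=\mf{e}(z-z_{0},b)+\mf{e}(z_{0},b-b_{0}),
\end{equation*}
exactly as in the proof of Lemma \ref{10221256}. This yields $\mf{e}_{P,E}\in\mc{L}(E,\mc{L}_{b}(P,\K);\mc{L}_{b}(P,E))$.

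The only point needing care is the field convention. When $E$ is over $\C$ but $P$ is treated over $\R$, the identity $q(\lambda z)=|\lambda|q(z)$ must hold for complex scalars $\lambda$. It does, since the seminorms of $E$ are $\K$-seminorms, and $\Gamma_{E}$ is also fundamental for $E_{\R}$. No other obstacle arises.
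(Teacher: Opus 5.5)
Your proof is correct and follows the paper's argument essentially step for step. You obtain continuity of $\mf{e}_{P,E}(z,b)$ from the composite $\mr{s}_{E}\circ(b\boxtimes c_{z})$, note that bilinearity is trivial, and conclude from the identity $\overset{1}{q}(\mf{e}_{P,E}(z,b))=\|b\|\,q(z)$ that the bilinear map is continuous; the only addition is that you spell out why such a seminorm estimate implies continuity of a bilinear map, which the paper leaves implicit.
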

\begin{proof}
$\mf{e}_{P,E}(z,b)$ is trivially linear moreover it is continuous since $\mr{s}_{E}$, $b$ and $c_{z}$ are continuous maps namely
$\mf{e}_{P,E}(z,b)\in\mc{L}(P,E)$ so, $\mf{e}_{P,E}$ is well-defined and trivially bilinear. Next assume that $E$ is locally convex
and let $q\in\Gamma_{E}$ where $\Gamma_{E}$ is a fundamental set of seminorms of $E$, then
\begin{equation*}
\begin{aligned}
\overset{1}{q}(\mf{e}_{P,E}(z,b))
&=
\sup_{h\in B_{1}^{P}(\ze)}
q(b(h)z)
\\
&=
\sup_{h\in B_{1}^{P}(\ze)}
|b(h)|\cdot q(z)
\\
&=
\|b\|\cdot q(z),
\end{aligned}
\end{equation*}
then $\mf{e}_{P,E}$ is continuous being bilinear.
\end{proof}
\begin{remark}
\label{10231439} 
Let $E$ be a topological linear space over $\K$, $P$ be a normed space over $\K$, $B$ a set, $f:B\to E$ and
$g:B\to\mc{L}_{b}(P,\K)$, then for every $h\in P$ we have 
\begin{equation*}
\mr{ev}_{h}^{\mc{L}_{b}(P,E)}\circ\mf{e}_{P,E}\circ(f\boxtimes g)=\mr{s}_{E}\circ((\mr{ev}_{h}^{\mc{L}_{b}(P,\K)}\circ g)\boxtimes f).
\end{equation*}
\end{remark}
\begin{convention} 
Let $E$ be a topological linear space over $\K$. For every set $B$, and every map $r:B\to\K$ and $t:B\to E$ 
we let $r\cdot t$ and $t\cdot r$ denote $\mr{s}_{E}\circ(r\boxtimes t)$.
\end{convention}
\begin{lemma}
\label{07220225}
$\beta$ is differentiable and $\beta^{1}(\eta,x)(h,t)=h$ for every $(\eta,x)\in A\times U$ and $(h,t)\in T\times\R^{n}$.
\end{lemma}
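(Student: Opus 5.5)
The map $\beta$ is the restriction to the open set $A\times U$ of the first canonical projection of the product normed space $T\times\R^{n}$, composed with the natural corestriction to $A$. The plan is therefore to write
\[
\imath_{A}^{T}\circ\beta=\Pr_{1}\circ\imath_{A\times U}^{T\times\R^{n}},
\]
where $\Pr_{1}:T\times\R^{n}\to T$, $(h,t)\mapsto h$. The map $\Pr_{1}$ is linear and continuous with respect to the max-norm on the product, since $\|\Pr_{1}(h,t)\|\leq\|(h,t)\|_{\max}$.

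The first step is the standard fact that a continuous linear map is differentiable everywhere, with differential equal to itself at every point. Here the increment quotient $\bigl(\Pr_{1}((\eta,x)+(h,t))-\Pr_{1}(\eta,x)-\Pr_{1}(h,t)\bigr)/\|(h,t)\|$ vanishes identically. This is presumably the content of Prp.\ref{812}/Prp.\ref{813} invoked earlier in the proof of Lemma \ref{08080256}.

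The second step is to use that differentiability at a point is a local property and $A\times U$ is open in $T\times\R^{n}$. Restricting $\Pr_{1}$ to $A\times U$ therefore preserves differentiability and the differential, as in Prp.\ref{05231704}, which the paper already uses for restrictions. Since $A$ is open in $T$, differentiability of a map with values in $A$ is by definition that of its composition with $\imath_{A}^{T}$. This gives $\beta^{1}(\eta,x)=\Pr_{1}$, that is, $\beta^{1}(\eta,x)(h,t)=h$ for all $(\eta,x)\in A\times U$ and $(h,t)\in T\times\R^{n}$.

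There is no real obstacle here. The only care needed is to match the notational conventions: $\beta$ is valued in $A$ rather than $T$, so one passes through $\imath_{A}^{T}$, and one must use the max-norm on the product when checking continuity of $\Pr_{1}$.
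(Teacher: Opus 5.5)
Your proof is correct, but it takes a different route from the paper's.

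The paper argues componentwise. It observes that $D_{1}(\beta)(\eta,x)=\mr{Id}_{T}$ and $D_{2}(\beta)(\eta,x)=\ze$; both partial differentials are constant, hence continuous. It then invokes the partial-differential expansion Cor.\ref{02211942} to conclude that $\beta$ is continuously differentiable with $\beta^{1}(\eta,x)=\mr{Id}_{T}\circ\Pr_{1}+\ze\circ\Pr_{2}$, i.e.\ $\beta^{1}(\eta,x)(h,t)=h$.

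You instead treat $\beta$ globally as the restriction to the open set $A\times U$ of the continuous linear projection $\Pr_{1}:T\times\R^{n}\to T$. Then Prp.\ref{813} together with Prp.\ref{05231704} gives the result at once. Cor.\ref{03110819} does this in a single step, since it states exactly that $u\circ\imath_{A}^{E}$ is differentiable with differential $u$. The relevant citation is Prp.\ref{813}, not Prp.\ref{812}.

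Your route is more direct and avoids the expansion corollary altogether. The paper's ``trivial'' computation of $D_{1}(\beta)$ and $D_{2}(\beta)$ already relies on the same linear- and constant-map facts applied to the partial maps; you simply apply them once to the whole map. The paper's route matches the pattern it uses throughout, for instance in the proof of Lemma \ref{07220226}, where it repeatedly passes between $f^{(1)}$ and $D_{1}f$, $D_{2}f$ via Cor.\ref{02211942}. It also delivers continuity of $\beta^{(1)}$ as part of its conclusion, though in your argument this is equally immediate because the differential is constant.
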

\begin{proof}
Trivially $D_{1}(\beta)(\eta,x)=\mr{Id}_{T}$ and $D_{2}(\beta)(\eta,x)=\ze$ so the statement follows since Cor.\ref{02211942}.
\end{proof}
\begin{lemma}
\label{07220226}
Let $u\in\mf{Z}^{O}$, then $F_{u}\in\mc{C}^{p-1}(A\times U,\mc{Z})$,
in particular $D_{1}(F_{u})\in\mc{C}^{p-2}(A\times U,\mc{L}_{b}(T,\mc{Z}_{\R}))$ and it is such that 
\begin{multline*}
D_{1}(F_{u})=
\mf{t}_{T,Y_{0},\mc{Z}}
\circ\left\{
\bigl[
\frac{\partial\mh{L}}{\partial Y_{0}}
\circ
(\Upphi(u)
\boxtimes
\Uppsi(u))
\bigr]
\boxtimes
D_{1}(\Upphi(u))
\right\}
\circ(\beta\boxtimes\widehat{\chi})
+
\\
\mf{t}_{T,\mf{Y},\mc{Z}}
\circ\left\{
\bigl[
\frac{\partial\mh{L}}{\partial\mf{Y}}
\circ
(\Upphi(u)
\boxtimes
\Uppsi(u))
\bigr]
\boxtimes
D_{1}(\Uppsi(u))
\right\}
\circ(\beta\boxtimes\widehat{\chi})
+
\\
\sum_{i=1}^{n}
\mf{e}_{T,\mc{Z}}
\circ
\left\{
\bigl[
\frac{\partial}{\partial e_{i}^{2}[n]}
\bigl(
\mh{L}
\circ
(\Upphi(u)
\boxtimes
\Uppsi(u))
\bigr)
\circ(\beta\boxtimes\widehat{\chi})
\bigr]
\boxtimes
D_{1}(\widehat{\chi}_{i})
\right\}.
\end{multline*}
\end{lemma}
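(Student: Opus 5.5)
The plan is to write $F_{u}=H_{u}\circ(\beta\boxtimes\widehat{\chi})$ with $H_{u}\coloneqq\mh{L}\circ(\Upphi(u)\boxtimes\Uppsi(u))$, a map on $A\times U$, and then apply the chain rule (Thm.\ref{821}) twice.

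\textbf{Regularity.} By Def.\ref{05101630}\eqref{051016303} we have $\Upphi(u)\in\mc{C}^{p}(A\times U,Y_{0})$, and by Rmk.\ref{051016304} we have $\Uppsi(u)\in\mc{C}^{p-1}(A\times U,\mf{Y})$. Hence $\Upphi(u)\boxtimes\Uppsi(u)$ is $C^{p-1}$ as a map into $\mc{Y}$; its values lie in $O$ by Rmk.\ref{10161240}. Since $\mh{L}\in\mc{C}^{p}(O,\mc{Z})$, the chain rule for $C^{k}$ maps gives $H_{u}\in\mc{C}^{p-1}$. The map $\beta\boxtimes\widehat{\chi}$ is $C^{p}$ by Lemma \ref{07220225} and Def.\ref{05101630}\eqref{06291612}, and it takes values in $A\times U$. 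Therefore $F_{u}\in\mc{C}^{p-1}(A\times U,\mc{Z})$. The partial differential $D_{1}(F_{u})=F_{u}^{(1)}\circ(\mf{i}_{1})^{\dagger}$ is then $C^{p-2}$ with values in $\mc{L}_{b}(T,\mc{Z}_{\R})$, because precomposition with a fixed continuous linear map is continuous linear on $\mc{L}_{b}$.

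\textbf{Formula.} Fix $(\eta,x)$ and $h\in T$, and set $(\eta,y)=(\beta\boxtimes\widehat{\chi})(\eta,x)$. The chain rule gives
\begin{equation*}
D_{1}(F_{u})(\eta,x)(h)=H_{u}^{(1)}(\eta,y)\bigl(h,D_{1}\widehat{\chi}(\eta,x)h\bigr),
\end{equation*}
using Lemma \ref{07220225} for the first component. By Cor.\ref{02211942} we split this as
\begin{equation*}
D_{1}H_{u}(\eta,y)(h)+\sum_{i=1}^{n}D_{1}(\widehat{\chi}_{i})(\eta,x)(h)\,\frac{\partial H_{u}}{\partial e_{i}^{2}[n]}(\eta,y).
\end{equation*}
The sum is exactly the evaluation at $h$ of the $\mf{e}_{T,\mc{Z}}$ terms, by Rmk.\ref{10231439}.

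Next, $D_{1}H_{u}(\eta,y)(h)=\mh{L}^{(1)}(\cdot)\bigl(D_{1}\Upphi(u)h,\;D_{1}\Uppsi(u)h\bigr)$ by the chain rule once more. Lemma \ref{10181658} splits this into
\begin{equation*}
\frac{\partial\mh{L}}{\partial Y_{0}}(\cdot)\bigl(D_{1}\Upphi(u)h\bigr)+\frac{\partial\mh{L}}{\partial\mf{Y}}(\cdot)\bigl(D_{1}\Uppsi(u)h\bigr).
\end{equation*}
These are precisely the evaluations at $h$ of the two $\mf{t}$ terms. Since both sides are elements of $\mc{L}_{b}(T,\mc{Z}_{\R})$ that agree on every $h$, they are equal.

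\textbf{Main obstacle.} The delicate point is the loss of regularity when $p-1$ is small. We have $\Uppsi(u)$ only $C^{p-1}$ with $p>1$, so $H_{u}$ is at least $C^{1}$; this is exactly enough for the chain rule to apply. We also need to justify continuity of the compositions with $\mf{t}$ and $\mf{e}$. For this I would invoke Lemma \ref{10221256} and Lemma \ref{10221931}: they make $\mf{t}$ and $\mf{e}$ continuous bilinear, hence smooth, so the composed expressions carry the stated $C^{p-2}$ regularity.
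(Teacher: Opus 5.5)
Your proof is correct and is essentially the paper's argument: the chain rule (Thm.~\ref{821}), the partial-differential splittings of Cor.~\ref{02211942} and Lemma~\ref{10181658}, and regularity from Thm.~\ref{81210}. The only difference is bookkeeping. You apply the chain rule to $H_{u}\circ(\beta\boxtimes\widehat{\chi})$ first, so the $\mf{e}_{T,\mc{Z}}$ terms appear directly as $D_{2}H_{u}$. The paper instead expands $\mh{L}^{(1)}$ first and then regroups the $D_{2}\Upphi(u)$ and $D_{2}\Uppsi(u)$ pieces into the $x$-differential of $\mr{L}(\phi_{\eta}^{\xi}(u),\psi_{\eta}^{\xi}(u))$. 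One small notational slip: the composition should read $D_{1}(F_{u})=(\mf{i}_{1})^{\dagger}\circ F_{u}^{(1)}$, not the reverse order.
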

\begin{remark}
\label{07220226b}
Let $u\in\mf{Z}^{O}$ thus, for every $h\in T$ we have by Rmk.\ref{10231439} 
\begin{multline*}
\mr{ev}_{h}\circ D_{1}(F_{u})=
\mf{g}_{0}
\circ\left\{
\bigl[
\frac{\partial\mh{L}}{\partial Y_{0}}
\circ
(\Upphi(u)
\boxtimes
\Uppsi(u))
\bigr]
\boxtimes
\bigl[
\mr{ev}_{h}^{\mc{L}_{b}(T,(Y_{0})_{\R})}\circ(D_{1}(\Upphi(u)))
\bigr]
\right\}
\circ(\beta\boxtimes\widehat{\chi})
+
\\
\mf{h}
\circ\left\{
\bigl[
\frac{\partial\mh{L}}{\partial\mf{Y}}
\circ
(\Upphi(u)
\boxtimes
\Uppsi(u))
\bigr]
\boxtimes
\bigl[
\mr{ev}_{h}^{\mc{L}_{b}(T,\mf{Y}_{\R})}\circ(D_{1}(\Uppsi(u)))
\bigr]
\right\}
\circ(\beta\boxtimes\widehat{\chi})
+
\\
\sum_{i=1}^{n}
\bigl[
\frac{\partial}{\partial e_{i}^{2}[n]}
\bigl(
\mh{L}
\circ
(\Upphi(u)
\boxtimes
\Uppsi(u))
\bigr)
\circ(\beta\boxtimes\widehat{\chi})
\bigr]
\cdot
(\mr{ev}_{h}^{\mc{L}_{b}(T,\R)}\circ D_{1}(\widehat{\chi}_{i}));
\end{multline*}
equivalently for every $(\eta,x)\in A\times U$ we have by \eqref{08030349}, since Lemma \ref{08080256} and by 
\begin{equation*}
\beta\boxtimes\widehat{\chi}
=
(\mf{o}_{\eta}\boxtimes\mr{Id}_{U})\circ\widehat{\chi},
\end{equation*}
that 
\begin{multline*}
D_{1}(F_{u})(\eta,x)(h)=
\frac{\partial\mh{L}}{\partial Y_{0}}
\bigl(\phi_{\eta}^{\xi}(u)(\widehat{\chi}(\eta,x)),\psi_{\eta}^{\xi}(u)(\widehat{\chi}(\eta,x))\bigr)
\bigl(D_{1}(\Upphi(u))(\eta,\widehat{\chi}(\eta,x))(h)\bigr)
+
\\
\frac{\partial\mh{L}}{\partial\mf{Y}}
\bigl(\phi_{\eta}^{\xi}(u)(\widehat{\chi}(\eta,x)),\psi_{\eta}^{\xi}(u)(\widehat{\chi}(\eta,x))\bigr)
\bigl(D_{1}(\Uppsi(u))(\eta,\widehat{\chi}(\eta,x))(h)\bigr)
+
\\
\sum_{i=1}^{n}\frac{\partial\mr{L}(\phi_{\eta}^{\xi}(u),\psi_{\eta}^{\xi}(u))}{\partial e_{i}[n]}
(\widehat{\chi}(\eta,x))(D_{1}(\widehat{\chi}_{i})(\eta,x)(h)).
\end{multline*}
\end{remark}
\begin{proof}[Proof of Lemma \ref{07220226}]
$F_{u}\in\mc{C}^{p-1}(A\times U,\mc{Z})$ follows by Def.\ref{05101630} and since Thm.\ref{81210}, then
$D_{1}(F_{u})\in\mc{C}^{p-2}(A\times U,\mc{L}_{b}(T,\mc{Z}_{\R}))$ follows since Cor.\ref{04221226}.
Next let $(\eta,x)\in A\times U$, $(h,t)\in T\times\R^{n}$, $u\in\mf{Z}^{O}$ and in the present proof let 
\begin{equation*}
\begin{aligned}
\gamma
&\coloneqq
\bigl(((\Upphi(u)\boxtimes\Uppsi(u))\circ(\beta\boxtimes\widehat{\chi})\bigr)(\eta,x)
\\
\delta
&\coloneqq
(\beta\boxtimes\widehat{\chi})(\eta,x)
\\
\theta
&\coloneqq
(\beta^{(1)}(\eta,x)(h,t),
(\widehat{\chi})^{(1)}(\eta,x)(h,t))
\\
\rho
&\coloneqq
(\widehat{\chi})^{(1)}(\eta,x)(h,\ze).
\end{aligned}
\end{equation*}
Thus, by Thm.\ref{821} and Prp.\ref{815} we have that
\begin{equation}
\label{08030309a}
(F_{u})^{(1)}(\eta,x)(h,t)=\mh{L}^{(1)}(\gamma)
\bigl((\Upphi(u))^{(1)}(\delta)(\theta),(\Uppsi(u))^{(1)}(\delta)(\theta)\bigr).
\end{equation}
Next by Cor.\ref{02211942} we have 
\begin{equation}
\label{08030309b}
D_{1}(F_{u})(\eta,x)(h)=(F_{u})^{(1)}(\eta,x)(h,\ze),
\end{equation}
therefore we obtain
\begin{equation}
\label{07210844}
\begin{aligned}
D_{1}(F_{u})(\eta,x)(h)
&=
\mh{L}^{(1)}(\gamma)\bigl((\Upphi(u))^{(1)}(\delta)(h,\rho),(\Uppsi(u))^{(1)}(\delta)(h,\rho)\bigr)
\\
&=
\frac{\partial\mh{L}}{\partial Y_{0}}(\gamma)
\bigl((\Upphi(u))^{(1)}(\delta)(h,\rho)\bigr)
+
\frac{\partial\mh{L}}{\partial\mf{Y}}(\gamma)
\bigl((\Uppsi(u))^{(1)}(\delta)(h,\rho)\bigr)
\\
&=
\frac{\partial\mh{L}}{\partial Y_{0}}(\gamma)
\bigl((D_{1}\Upphi(u))(\delta)(h)+(D_{2}\Upphi(u))(\delta)(\rho)\bigr)
\\
&+
\frac{\partial\mh{L}}{\partial\mf{Y}}(\gamma)
\bigl((D_{1}\Uppsi(u))(\delta)(h)+(D_{2}\Uppsi(u))(\delta)(\rho)\bigr)
\\
&=
\frac{\partial\mh{L}}{\partial Y_{0}}(\gamma)
\bigl((D_{1}\Upphi(u))(\delta)(h)\bigr)
+
\frac{\partial\mh{L}}{\partial\mf{Y}}(\gamma)
\bigl((D_{1}\Uppsi(u))(\delta)(h)\bigr)
\\
&
+
\frac{\partial\mh{L}}{\partial Y_{0}}(\gamma)
\bigl((D_{2}\Upphi(u))(\delta)(\rho)\bigr)
+
\frac{\partial\mh{L}}{\partial\mf{Y}}(\gamma)
\bigl((D_{2}\Uppsi(u))(\delta)(\rho)\bigr).
\end{aligned}
\end{equation}
Here the 1th equality follows by \eqref{08030309a}, \eqref{08030309b} and Lemma \ref{07220225}, the 2th one by Cor.\ref{02211942}
applied to $\mh{L}$, the 3th one since Cor.\ref{02211942} applied to $\Upphi(x)$ and $\Uppsi(x)$. Next we have 
\begin{equation}
\label{08030349}
\mr{L}(\phi_{\eta}^{\xi}(u),\psi_{\eta}^{\xi}(u))=
\mh{L}\circ(\Upphi(u)\boxtimes\Uppsi(u))\circ(\mf{o}_{\eta}\boxtimes\mr{Id}_{U}),
\end{equation}
therefore we obtain for every $y\in U$ and $r\in\R^{n}$ 
\begin{equation}
\label{07210833}
\begin{aligned}
\mr{L}(\phi_{\eta}^{\xi}(u),\psi_{\eta}^{\xi}(u))^{(1)}(y)(r)
=
\bigl(\mh{L}\circ(\Upphi(u)\boxtimes\Uppsi(u))\bigr)^{(1)}(\eta,y)(\ze,r);
\end{aligned}
\end{equation}
where the equality follows by \eqref{08030349}, Thm.\ref{821}, Prp.\ref{815} and the fact that 
$(\imath_{A}^{T}\circ\mf{o}_{\eta})^{(1)}=\ze$, while $\mr{Id}_{U\to\R^{n}}^{(1)}$ is the constant map equal to $\mr{Id}_{\R^{n}}$.
Therefore for every $y\in U$ and $r\in\R^{n}$ we have 
\begin{equation*}
\begin{aligned}
\mr{L}(\phi_{\eta}^{\xi}(u),\psi_{\eta}^{\xi}(u))^{(1)}(y)(r)
&=
\mh{L}^{(1)}\left(\phi_{\eta}^{\xi}(u)(y),\psi_{\eta}^{\xi}(u)(y)\right)
\bigl((\Upphi(u))^{(1)}(\eta,y)(\ze,r),(\Uppsi(u))^{(1)}(\eta,y)(\ze,r)\bigr)
\\
&=
\frac{\partial\mh{L}}{\partial Y_{0}}
\left(\phi_{\eta}^{\xi}(u)(y),\psi_{\eta}^{\xi}(u)(y)\right)
\bigl((\Upphi(u))^{1}(\eta,y)(\ze,r)\bigr)
\\
&+
\frac{\partial\mh{L}}{\partial\mf{Y}}
\left(\phi_{\eta}^{\xi}(u)(y),\psi_{\eta}^{\xi}(u)(y)\right)
\bigl((\Uppsi(u))^{1}(\eta,y)(\ze,r)\bigr)
\\
&=
\frac{\partial\mh{L}}{\partial Y_{0}}
\left(\phi_{\eta}^{\xi}(u)(y),\psi_{\eta}^{\xi}(u)(y)\right)
\bigl(D_{2}(\Upphi(u))(\eta,y)(r)\bigr)
\\
&+
\frac{\partial\mh{L}}{\partial\mf{Y}}
\left(\phi_{\eta}^{\xi}(u)(y),\psi_{\eta}^{\xi}(u)(y)\right)
\bigl(D_{2}(\Uppsi(u))(\eta,y)(r)\bigr),
\end{aligned}
\end{equation*}
where the 1th equality follows since \eqref{07210833}, Thm.\ref{821} and Prp.\ref{815}; the 2th one since Cor.\ref{02211942}
applied to $\mh{L}$; the 3th one since Cor.\ref{02211942} applied to $\Upphi(u)$ and $\Uppsi(u)$. Therefore 
\begin{equation}
\label{07210845}
\frac{\partial\mh{L}}{\partial Y_{0}}(\gamma)
\bigl((D_{2}\Upphi(u))(\delta)(\rho)\bigr)
+
\frac{\partial\mh{L}}{\partial\mf{Y}}(\gamma)
\bigl((D_{2}\Uppsi(u))(\delta)(\rho)\bigr)
=
\mr{L}(\phi_{\eta}^{\xi}(u),\psi_{\eta}^{\xi}(u))^{(1)}(\widehat{\chi}(\eta,x))(\rho).
\end{equation}
Now \eqref{07210844} and \eqref{07210845} yield 
\begin{equation}
\label{07210849}
\begin{aligned}
D_{1}(F_{u})(\eta,x)(h)
&=
\frac{\partial\mh{L}}{\partial Y_{0}}(\gamma)
\bigl((D_{1}\Upphi(u))(\delta)(h)\bigr)
+
\frac{\partial\mh{L}}{\partial\mf{Y}}(\gamma)
\bigl((D_{1}\Uppsi(u))(\delta)(h)\bigr)
\\
&+
\mr{L}(\phi_{\eta}^{\xi}(u),\psi_{\eta}^{\xi}(u))^{(1)}(\widehat{\chi}(\eta,x))(\rho).
\end{aligned}
\end{equation}
The statement then follows by Cor.\ref{02211942} applied to $L(\phi_{\eta}^{\xi}(u),\psi_{\eta}^{\xi}(u))$, Prp.\ref{815} applied 
to $\widehat{\chi}$ and Cor.\ref{02211942} applied to $\widehat{\chi}_{i}$ for any $i\in[1,n]\cap\Z$.
\end{proof}
\begin{definition}
\label{10221817}
Let $u\in\mf{Z}^{O}$, then for any $k\in[1,4]\cap\Z$ define $\Omega_{k}[u]\in\mc{C}^{p-2}(A\times U,\mc{L}_{b}(T,\mc{Z}_{\R}))$ such 
that 
\begin{equation*}
\begin{aligned}
\Omega_{1}[u]
&\coloneqq
\mf{t}_{T,Y_{0},\mc{Z}}
\circ\bigl\{
\bigl[
\frac{\partial\mh{L}}{\partial Y_{0}}
\circ
(\Upphi(u)
\boxtimes
\Uppsi(u))
\bigr]
\boxtimes
D_{1}(\Upphi(u))
\bigr\};
\\
\Omega_{2}[u]
&\coloneqq
\sum_{i=1}^{n}
\mf{t}_{T,Y_{i},\mc{Z}}
\circ\bigl\{
\bigl[
\frac{\partial\mh{L}_{\mf{e}}}{\partial Y_{i}}
\circ
(\mf{d}_{O})_{\natural}
\circ
(\Upphi(u)
\boxtimes
\Uppsi(u))
\bigr]
\boxtimes
\bigl[\frac{\partial}{\partial e_{i}^{2}[n]}(D_{1}(\Upphi(u)))\bigr]
\bigr\};
\\
\Omega_{3}[u]
&\coloneqq
\sum_{i=1}^{n}
\mf{e}_{T,\mc{Z}}
\circ
\bigl\{
\bigl[
\frac{\partial}{\partial e_{i}^{2}[n]}
\bigl(
\mh{L}
\circ
(\Upphi(u)
\boxtimes
\Uppsi(u))
\bigr)
\bigr]
\boxtimes
[D_{1}(\widehat{\chi}_{i})\circ(\beta\boxtimes\widehat{\sigma})]
\bigr\};
\\
\Omega_{4}[u]
&\coloneqq
\mf{e}_{T,\mc{Z}}
\circ
\bigl\{F_{u}\boxtimes(D_{1}\widehat{J})\big\}.
\end{aligned}
\end{equation*}
Furthermore define $\mh{B}[u]:A\to\mc{M}(W,\mc{L}_{b}(T,\mc{Z}_{\R}))$ and $\mh{E}[u]:A\to\mc{M}(W,\mc{L}_{b}(T,\mc{Z}_{\R}))$ 
such that for every $\eta\in A$ and $f\in\mc{H}(W)$ we have 
\begin{equation*}
\begin{aligned}
\mh{B}[u](\eta)(f)
&\coloneqq
\int((\chi(\eta)\circ\xi)^{-1})^{\ast}(f)\cdot
\left[\sum_{i=1}^{3}\Omega_{i}[u]\circ(\mf{o}_{\eta}\boxtimes\mr{Id}_{U})\right]\,d\lambda_{U};
\\
\mh{E}[u](\eta)(f)
&\coloneqq
\int(\xi^{-1})^{\ast}(f)\cdot\left[\Omega_{4}[u]\circ(\mf{o}_{\eta}\boxtimes\mr{Id}_{U})\right]\,d\lambda_{U}.
\end{aligned}
\end{equation*}
\end{definition}
\begin{proposition}
\label{10221814}
Def.\ref{10221817} is well-set more specifically for any $u\in\mf{Z}^{O}$, $\eta\in A$ and $k\in[1,4]\cap\Z$, we have 
\begin{enumerate}
\item
$\Omega_{k}[u]\in\mc{C}^{p-2}(A\times U,\mc{L}_{b}(T,\mc{Z}_{\R}))$;
\label{10221814a}
\item
$\mh{B}[u](\eta)\in\mc{M}(W,\mc{L}_{b}(T,\mc{Z}_{\R}))$ and $\mh{E}[u](\eta)\in\mc{M}(W,\mc{L}_{b}(T,\mc{Z}_{\R}))$.
\label{10221814b}
\end{enumerate}
\end{proposition}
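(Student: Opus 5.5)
Part (1) reduces to the chain and product rules plus the continuity of the bilinear maps from Lemma \ref{10221256} and Lemma \ref{10221931}; part (2) is obtained by integrating continuous compactly supported integrands and then applying Lemma \ref{10161358}.

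For item \eqref{10221814a} I treat each $\Omega_{k}[u]$ separately. By Def.\ref{05101630}, $\mh{L}\in\mc{C}^{p}(O,\mc{Z})$, so $\frac{\partial\mh{L}}{\partial Y_{0}}$, $\frac{\partial\mh{L}}{\partial\mf{Y}}$ and $\frac{\partial\mh{L}_{\mf{e}}}{\partial Y_{i}}$ are $C^{p-1}$. For the partials of $\mh{L}_{\mf{e}}$ this uses that $\mf{e}_{O}$ is the restriction of a linear isometry, together with Cor.\ref{02211942} and Cor.\ref{04221226}. Rmk.\ref{051016304} gives $\Uppsi(u)\in\mc{C}^{p-1}$, and $\Upphi(u)\in\mc{C}^{p}$. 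Hence, by Thm.\ref{81210}, the compositions $\frac{\partial\mh{L}}{\partial Y_{0}}\circ(\Upphi(u)\boxtimes\Uppsi(u))$ and the analogous ones are $C^{p-1}$; here Rmk.\ref{10161240} guarantees that the composition is defined. Next, $D_{1}(\Upphi(u))\in\mc{C}^{p-1}$, and $\frac{\partial}{\partial e_{i}^{2}[n]}D_{1}(\Upphi(u))$ is $C^{p-2}$. This follows by writing it as $\mr{ev}_{(\ze,e_{i})}$ composed with $(D_{1}\Upphi(u))^{(1)}$ and using the continuity of evaluation from Rmk.\ref{10231117}. Since $\mf{t}_{T,Y_{i},\mc{Z}}$ is continuous bilinear by Lemma \ref{10221256}, it is smooth. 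Therefore $\Omega_{1}[u]$ and $\Omega_{2}[u]$ are $C^{p-2}$ by the chain rule (Thm.\ref{81210}). For $\Omega_{3}[u]$, the factor $\frac{\partial}{\partial e_{i}^{2}[n]}(\mh{L}\circ(\Upphi(u)\boxtimes\Uppsi(u)))$ is $C^{p-2}$, because $\mh{L}\circ(\Upphi(u)\boxtimes\Uppsi(u))$ is $C^{p-1}$. The factor $D_{1}(\widehat{\chi}_{i})\circ(\beta\boxtimes\widehat{\sigma})$ is $C^{p-1}$: $\widehat{\chi}$ is $C^{p}$ by Def.\ref{05101630}\eqref{06291612}, and $\widehat{\sigma}$ is $C^{p}$. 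I would derive the regularity of $\widehat{\sigma}$ from $\widehat{\chi}(\eta,\widehat{\sigma}(\eta,x))=x$ via an implicit function / inverse argument with parameters, using $\widehat{J}>0$. Finally, Lemma \ref{10221931} gives that $\mf{e}_{T,\mc{Z}}$ is continuous bilinear. For $\Omega_{4}[u]$, Lemma \ref{07220226} gives $F_{u}\in\mc{C}^{p-1}$. Moreover $\widehat{J}=\mr{det}\circ\widehat{M}$ is $C^{p-1}$, since $\widehat{M}$ consists of partials of the $C^{p}$ map $\widehat{\chi}$ and $\mr{det}$ is polynomial; hence $D_{1}\widehat{J}\in\mc{C}^{p-2}$.

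For item \eqref{10221814b}, fix $\eta$ and $f\in\mc{H}(W)$. The function $x\mapsto\Omega_{k}[u](\eta,x)$ is continuous by (1), since $p\geq2$, and takes values in $\mc{L}_{b}(T,\mc{Z}_{\R})$. The scalar factor $(\xi^{-1})^{\ast}(f)$, respectively $((\chi(\eta)\circ\xi)^{-1})^{\ast}(f)$, is continuous with compact support in $U$, because $\chi(\eta)$ is a diffeomorphism of $U$. So each integrand lies in $\mc{C}_{c}(U,\mc{L}_{b}(T,\mc{Z}_{\R}))$.

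The main obstacle is existence of the integral in $\mc{L}_{b}(T,\mc{Z}_{\R})$, which need not be quasi-complete. I would handle it as follows. First, integrate the $\mc{Z}_{\R}$-valued functions $x\mapsto(\text{integrand})(x)(h)$ for each $h\in T$; these integrals exist because $\mc{Z}_{\R}$ is quasi-complete and the integrand is continuous with compact support. This defines a linear map $T\to\mc{Z}_{\R}$. Its continuity follows from the bound $p(\cdot)\leq\lambda_{U}(K)\,\|f\|_{\infty}\sup_{K}\overset{1}{p}(\Omega)\|h\|$, where $K$ is the compact support, and it identifies the weak integral in $\mc{L}_{b}(T,\mc{Z}_{\R})$ (using Rmk.\ref{10231117}). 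The same estimate, uniform over $\|h\|\leq1$, shows that $f\mapsto\mh{B}[u](\eta)(f)$ is linear and continuous on each $\mc{H}(W,K;\K)$. It is therefore continuous on $\mc{H}(W)$, which has the final topology, so it lies in $\mc{M}(W,\mc{L}_{b}(T,\mc{Z}_{\R}))$. Alternatively, one can argue scalarwise as in Lemma \ref{10161358}.
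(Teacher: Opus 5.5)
Your item (1) follows the paper's route. Your implicit-function argument for $\widehat{\sigma}\in\mc{C}^{p}$ is valid even though $T$ is only normed, since the unknown lives in $\R^{n}$ and $\widehat{M}$ is invertible. It makes explicit a point the paper leaves tacit. For the compositions with $\mf{t}$ and $\mf{e}_{T,\mc{Z}}$, cite the bilinear product rule Prp.\ref{01101646}, as the paper does, rather than Thm.\ref{81210}: the intermediate space there must be normed, and $\mc{L}_{b}((Y_{0})_{\R},\mc{Z}_{\R})$ is not.

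The gap is in item (2). Your premise that $\mc{L}_{b}(T,\mc{Z}_{\R})$ need not be quasi-complete is false, and its quasi-completeness is exactly what the paper uses (Lemma \ref{12311713}). Since $T$ is normed, a bounded set $B\subseteq\mc{L}_{b}(T,\mc{Z}_{\R})$ satisfies $\sup_{S\in B}\overset{1}{q}(S)<\infty$ for each $q\in\Gamma$. So a Cauchy net in $B$ converges pointwise, by quasi-completeness of $\mc{Z}_{\R}$, to a linear map that is continuous by the same bound, and the convergence is uniform on bounded sets. With this, apply Cor.\ref{06162059} to the continuous integrands and compose with the continuous pullbacks $f\mapsto(\xi^{-1})^{\ast}(f)$ and $f\mapsto((\chi(\eta)\circ\xi)^{-1})^{\ast}(f)$ from Cor.\ref{08111037}; item (2) follows directly.

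Your workaround does not settle what you yourself call the main obstacle. Write $g$ for the $\mc{L}_{b}(T,\mc{Z}_{\R})$-valued integrand and $I(h)\coloneqq\int\mr{ev}_{h}\circ g\,d\lambda_{U}$. You correctly get $I\in\mc{L}(T,\mc{Z}_{\R})$. But the continuity of $\mr{ev}_{h}$ (Rmk.\ref{10231117}) only shows that \emph{if} $\int g\,d\lambda_{U}$ exists in $\mc{L}_{b}(T,\mc{Z}_{\R})$, then it equals $I$. For existence you need $\phi(I)=\int\phi\circ g\,d\lambda_{U}$ for \emph{every} $\phi$ in the dual of $\mc{L}_{b}(T,\mc{Z}_{\R})$. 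The functionals $\gamma\circ\mr{ev}_{h}$ do not exhaust that dual: already for $\mc{Z}=\R$ and $T=\ell^{1}$ they give only $\ell^{1}\subsetneq(\ell^{\infty})^{\prime}$.

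Closing this needs an additional argument. For example, Riemann-type sums of $g$ are Cauchy in $\mc{L}_{b}(T,\mc{Z}_{\R})$ by uniform continuity on the compact support, and they converge pointwise, hence in $\mc{L}_{b}(T,\mc{Z}_{\R})$, to $I$. This is in effect the quasi-completeness argument above. Lemma \ref{10161358}, which concerns image measures, plays no role here. Once existence is secured, your estimate uniform in $\|h\|\leq1$ does correctly give continuity on each $\mc{H}(W,K;\K)$, hence membership in $\mc{M}(W,\mc{L}_{b}(T,\mc{Z}_{\R}))$.
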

\begin{proof}
Item \eqref{10221814a} follows by Def.\ref{05101630}, Rmk.\ref{051016304} and since Lemma \ref{10221256}, Lemma \ref{10221931}, 
Thm.\ref{81210}, Cor.\ref{04221226}, Prp.\ref{08091052}, Cor.\ref{8126} and Prp.\ref{01101646}. Next since Lemma \ref{12311713} 
we deduce that $\mc{L}_{b}(T,\mc{Z}_{\R})$ is a Hausdorff locally convex quasi-complete space therefore item \eqref{10221814b} 
follows by the proven item \eqref{10221814a}, since Cor.\ref{06162059} and Cor.\ref{08111037}.
\end{proof}
\begin{definition}
Let $u\in\mf{Z}^{O}$ and $h\in T$, then for any $k\in[1,4]\cap\Z$ define $\Omega_{k}[u,h]\in\mc{C}^{p-2}(A\times U,\mc{Z})$ such 
that 
\begin{equation*}
\Omega_{k}[u,h]
\coloneqq
\mr{ev}_{h}^{\mc{L}_{b}(T,\mc{Z}_{\R})}\circ\Omega_{k}[u].
\end{equation*}
Furthermore for any $\eta\in A$ and $f\in\mc{H}(W)$, define $\mh{B}[u,\eta,f,h]\in\mc{Z}$ and $\mh{E}[u,\eta,f,h]\in\mc{Z}$ 
such that 
\begin{equation*}
\begin{aligned}
\mh{B}[u,\eta,f,h]
&\coloneqq
\mr{ev}_{h}^{\mc{L}_{b}(T,\mc{Z}_{\R})}(\mh{B}[u](\eta)(f)),
\\
\mh{E}[u,\eta,f,h]
&\coloneqq
\mr{ev}_{h}^{\mc{L}_{b}(T,\mc{Z}_{\R})}(\mh{E}(\eta)(f)).
\end{aligned}
\end{equation*}
\end{definition}
\begin{remark}
Let $u\in\mf{Z}^{O}$, $\eta\in A$, $f\in\mc{H}(W)$ and $h\in T$. Since $\mr{ev}_{h}\in\mc{L}(\mc{L}_{b}(T,\mc{Z}_{\R}),\mc{Z}_{\R})$
by Rmk.\ref{10231117} we obtain 
\begin{equation*}
\mh{B}[u,\eta,f,h]=
\int((\chi(\eta)\circ\xi)^{-1})^{\ast}(f)\cdot\left[\sum_{i=1}^{3}\Omega_{i}[u,h]\circ(\mf{o}_{\eta}\boxtimes\mr{Id}_{U})\right]
\,d\lambda_{U},
\end{equation*}
and
\begin{equation*}
\mh{E}[u,\eta,f,h]
=
\int(\xi^{-1})^{\ast}(f)\cdot\left[\Omega_{4}[u,h]\circ(\mf{o}_{\eta}\boxtimes\mr{Id}_{U})\right]\,d\lambda_{U}.
\end{equation*}
Furthermore by Rmk.\ref{10231439} we obtain
\begin{equation*}
\begin{aligned}
\Omega_{1}[u,h]
&=
\mf{g}_{0}
\circ\bigl\{
\bigl(
\frac{\partial\mh{L}}{\partial Y_{0}}
\circ
(\Upphi(u)
\boxtimes
\Uppsi(u))
\bigr)
\boxtimes
\bigl(
\mr{ev}_{h}^{\mc{L}_{b}(T,(Y_{0})_{\R})}\circ(D_{1}(\Upphi(u)))
\bigr)
\bigr\}
\\
\Omega_{2}[u,h]
&=
\sum_{i=1}^{n}
\mf{g}_{i}\circ\bigl\{
\bigl(
\frac{\partial\mh{L}_{\mf{e}}}{\partial Y_{i}}
\circ
(\mf{d}_{O})_{\natural}
\circ
(\Upphi(u)
\boxtimes
\Uppsi(u))
\bigr)
\boxtimes
\bigl(
\mr{ev}_{h}^{\mc{L}_{b}(T,(Y_{i})_{\R})}\circ\bigl(\frac{\partial}{\partial e_{i}^{2}[n]}(D_{1}(\Upphi(u)))\bigr)
\bigr)
\bigr\};
\\
\Omega_{3}[u,h]
&=
\sum_{i=1}^{n}
\bigl(
\frac{\partial}{\partial e_{i}^{2}[n]}
\bigl(
\mh{L}
\circ
(\Upphi(u)
\boxtimes
\Uppsi(u))
\bigr)
\bigr)
\cdot
(\mr{ev}_{h}^{\mc{L}_{b}(T,\R)}\circ D_{1}(\widehat{\chi}_{i})
\circ(\beta\boxtimes\widehat{\sigma})),
\\
\Omega_{4}[u,h]
&=
F_{u}\cdot(\mr{ev}_{h}^{\mc{L}_{b}(T,\R)}\circ(D_{1}\widehat{J})).
\end{aligned}
\end{equation*}
\end{remark}
\begin{remark}
Let $u\in\mf{Z}^{O}$, $\eta\in A$, $h\in T$, $f\in\mc{H}(W)$, then by \eqref{08030349} and since Lemma \ref{08080256} we have
\begin{multline*}
\mh{B}[u,\eta,f,h]
=
\\
\int((\chi(\eta)\circ\xi)^{-1})^{\ast}(f)(x)
\bigl[
\frac{\partial\mh{L}}{\partial Y_{0}}(\phi_{\eta}^{\xi}(u)(x),\psi_{\eta}^{\xi}(u)(x))
\bigl((D_{1}\Upphi(u))(\eta,x)(h)\bigr)
\\
+
\sum_{i=1}^{n}\bigl(\frac{\partial\mh{L}_{\mf{e}}}{\partial Y_{i}}\circ(\mf{d}_{O})_{\natural}\bigr)
(\phi_{\eta}^{\xi}(u)(x),\psi_{\eta}^{\xi}(u)(x))
\bigl(\frac{\partial}{\partial e_{i}^{2}[n]}\bigl(D_{1}\Upphi(u)\bigr)(\eta,x)(h)\bigr)
\bigr]\,d\lambda_{U}(x)
\\
+
\int((\chi(\eta)\circ\xi)^{-1})^{\ast}(f)(x)
\sum_{i=1}^{n}\frac{\partial}{\partial e_{i}[n]}\left(\mr{L}(\phi_{\eta}^{\xi}(u),\psi_{\eta}^{\xi}(u))\right)(x)
\cdot
(D_{1}\widehat{\chi}_{i})\left(\eta,(\chi(\eta))^{-1}(x)\right)(h)
\,d\lambda_{U}(x).
\end{multline*}
\end{remark}
\begin{lemma}
\label{07221116}
Let $u\in\mf{Z}^{O}$, $\eta\in A$, $f\in\mc{H}(W)$, then
\begin{equation}
\label{07221116a}
\int((\xi^{-1})^{\ast}f)\cdot\bigl(\{(D_{1}F_{u})\cdot\widehat{J}\}\circ(\mf{o}_{\eta}\boxtimes\mr{Id}_{U})\bigr)\,d\lambda_{U}
\in\mc{L}_{b}(T,\mc{Z}_{\R}).
\end{equation}
Furthermore \eqref{07221116b}$\Leftrightarrow$\eqref{07221116c} and each of them holds true, where
\begin{equation}
\label{07221116b}
\int((\xi^{-1})^{\ast}f)\cdot\bigl(\{(D_{1}F_{u})\cdot\widehat{J}\}\circ(\mf{o}_{\eta}\boxtimes\mr{Id}_{U})\bigr)\,d\lambda_{U}
=\mh{B}[u,\eta,f],
\end{equation}
and 
\begin{equation}
\label{07221116c}
(\forall h\in T)\bigl(\int((\xi^{-1})^{\ast}f)\cdot\bigl(
\{[\mr{ev}_{h}^{\mc{L}_{b}(T,\mc{Z}_{\R})}\circ(D_{1}F_{u})]\cdot\widehat{J}\}\circ(\mf{o}_{\eta}\boxtimes\mr{Id}_{U})\bigr)\,
d\lambda_{U}=\mh{B}[u,\eta,f,h]\bigr).
\end{equation}
\end{lemma}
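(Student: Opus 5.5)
The proof splits into three parts: existence of the integral in \eqref{07221116a}, the equivalence \eqref{07221116b}$\Leftrightarrow$\eqref{07221116c}, and a pointwise verification of \eqref{07221116c}. Throughout, I read $\mh{B}[u,\eta,f]$ as $\mh{B}[u](\eta)(f)$.

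\emph{Existence.} By Lemma \ref{07220226}, $D_{1}F_{u}\in\mc{C}^{p-2}(A\times U,\mc{L}_{b}(T,\mc{Z}_{\R}))$, and $\widehat{J}$ is continuous by Def.\ref{05101630}\eqref{06291612}. Composing with $\mf{o}_{\eta}\boxtimes\mr{Id}_{U}$ gives a continuous map on $U$. Multiplying by $(\xi^{-1})^{\ast}f\in\mc{H}(U)$ gives a continuous compactly supported map with values in $\mc{L}_{b}(T,\mc{Z}_{\R})$, which is Hausdorff and quasi-complete by Lemma \ref{12311713}. Hence the integral in \eqref{07221116a} exists and lies in $\mc{L}_{b}(T,\mc{Z}_{\R})$, by Cor.\ref{06162059} and Cor.\ref{08111037}, exactly as in the proof of Prp.\ref{10221814}.

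\emph{Equivalence.} Each $\mr{ev}_{h}$ is continuous and linear by Rmk.\ref{10231117}, so it commutes with the vector integral. Applying $\mr{ev}_{h}$ to both sides of \eqref{07221116b} therefore gives \eqref{07221116c}, using the preceding Remark's formula for $\mh{B}[u,\eta,f,h]$. Conversely, $\{\mr{ev}_{h}\}_{h\in T}$ separates the points of $\mc{L}_{b}(T,\mc{Z}_{\R})$, so \eqref{07221116c} implies \eqref{07221116b}.

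\emph{Proof of \eqref{07221116c}.} Fix $h\in T$ and $\gamma\in(\mc{Z}_{\R})^{\prime}$; since these functionals separate points, it suffices to prove the identity after applying $\gamma$, which reduces everything to scalar Lebesgue integrals on $U$.
\begin{enumerate}
\item Insert the pointwise formula for $D_{1}(F_{u})(\eta,x)(h)$ from Rmk.\ref{07220226b}. Every term there is evaluated at $\widehat{\chi}(\eta,x)=\chi(\eta)(x)$, except the factor $D_{1}(\widehat{\chi}_{i})(\eta,x)(h)$, which is evaluated at $x$.
\item Change variables $y=\chi(\eta)(x)$. The map $\chi(\eta)$ is a diffeomorphism of $U$ with inverse $\sigma(\eta)$ and strictly positive Jacobian $J_{\eta}=\widehat{J}(\eta,\cdot)$. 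The factor $\widehat{J}$ is absorbed by the change of variables (same step as in Lemma \ref{06121742}, via \cite[Prp.16.6(d)]{lee}). The test function becomes $((\chi(\eta)\circ\xi)^{-1})^{\ast}f$, and $D_{1}\widehat{\chi}_{i}(\eta,x)$ becomes $D_{1}\widehat{\chi}_{i}(\eta,\widehat{\sigma}(\eta,y))$. This accounts for the factor $\beta\boxtimes\widehat{\sigma}$ in $\Omega_{3}$.
\item The first term of Rmk.\ref{07220226b} becomes $\Omega_{1}[u,h]\circ(\mf{o}_{\eta}\boxtimes\mr{Id}_{U})$ directly. The third term becomes $\Omega_{3}[u,h]\circ(\mf{o}_{\eta}\boxtimes\mr{Id}_{U})$, using Lemma \ref{08080256} and \eqref{08030349}.
\item For the second term, decompose $\frac{\partial\mh{L}}{\partial\mf{Y}}$ over the components $Y_{i}$ using \eqref{10181658b}. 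This turns it into a sum over $i$ of $\frac{\partial\mh{L}_{\mf{e}}}{\partial Y_{i}}\circ(\mf{d}_{O})_{\natural}$ applied to $\mr{ev}_{h}\circ D_{1}(\Uppsi_{i}(u))$.
\end{enumerate}

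\emph{Main obstacle.} The step I expect to be hardest is matching this second term with $\Omega_{2}$, that is, proving
\begin{equation*}
D_{1}(\Uppsi_{i}(u))=\frac{\partial}{\partial e_{i}^{2}[n]}\bigl(D_{1}(\Upphi(u))\bigr).
\end{equation*}
Since $\phi_{\eta}(\mf{Z}^{O})\subseteq\mf{Z}^{O}$, Rmk.\ref{06132050} together with \eqref{08161354} gives $\Uppsi_{i}(u)=\frac{\partial\Upphi(u)}{\partial e_{i}^{2}[n]}$ on $A\times U$. It then remains to interchange $D_{1}$ with the $e_{i}^{2}$-directional derivative. This is the symmetry of the second differential of $\Upphi(u)\in\mc{C}^{p}(A\times U,Y_{0})$, which holds because $p\geq2$ and $Y_{0}=X$ is Banach. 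Concretely, I would show that $\Upphi(u)^{(2)}(\eta,y)\bigl((k,0),(0,e_{i})\bigr)$ is symmetric, using the symmetry of the second differential in the bounded calculus (Prp.\ref{08091052} and the related results cited in Prp.\ref{10221814}), and then evaluate both sides at $h$.

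Summing the three matched terms, integrating against $((\chi(\eta)\circ\xi)^{-1})^{\ast}f$, and removing $\gamma$ by point separation gives \eqref{07221116c}.
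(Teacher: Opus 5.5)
Your proposal is correct and follows essentially the same route as the paper. Existence comes from the quasi-completeness of $\mc{L}_{b}(T,\mc{Z}_{\R})$ (Lemma \ref{12311713}), and the equivalence from the continuity and point-separation of the maps $\mr{ev}_{h}$. Then \eqref{07221116c} follows by scalarization, the change of variables along $\chi(\eta)$, Lemma \ref{07220226}, \eqref{10181658b}, and the identity $\Pr_{i}^{\mc{B}}\bigl(D_{1}(\Uppsi(u))(e)(h)\bigr)=\frac{\partial(D_{1}\Upphi(u))}{\partial e_{i}^{2}[n]}(e)(h)$ obtained from Rmk.\ref{06132050}, \eqref{08161354} and symmetry of mixed partials. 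The only slip is the citation for that interchange: the relevant result is Cor.\ref{06300307} (symmetry of higher partial differentials, valid for any Hausdorff locally convex target, so the Banach property of $Y_{0}$ plays no role), not Prp.\ref{08091052}.
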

\begin{proof}
Since Lemma \ref{07220226}, Prp.\ref{01101646} and the fact that $\widehat{J}\in\mc{C}^{p-1}(A\times U,\R)$ we have 
\begin{equation*}
(D_{1}F_{u})\cdot\widehat{J}\in\mc{C}^{p-2}(A\times U,\mc{L}_{b}(T,\mc{Z}_{\R})).
\end{equation*}
Next $\mc{L}_{b}(T,\mc{Z}_{\R})$ is a Hausdorff locally convex quasi-complete space since Lemma \ref{12311713} therefore, 
\eqref{07221116a} follows since \cite[III.38 Cor.2]{IntBourb}.
Next $\mr{ev}_{h}\in\mc{L}(\mc{L}_{b}(T,\mc{Z}_{\R}),\mc{Z}_{\R})$ since Rmk.\ref{10231117} so, 
\eqref{07221116b}$\Rightarrow$\eqref{07221116c}; while \eqref{07221116a}, Prp.\ref{10221814}\eqref{10221814b} and 
$\mr{ev}_{h}\in\mc{L}(\mc{L}_{b}(T,\mc{Z}_{\R}),\mc{Z}_{\R})$ imply \eqref{07221116c}$\Rightarrow$\eqref{07221116b} so, we have 
proven that \eqref{07221116b}$\Leftrightarrow$\eqref{07221116c}. In order to conclude the proof it sufficies to prove that 
\eqref{07221116c} holds true. To this end note that for any $e\in A\times U$, $h\in T$ and $i\in[1,n]\cap\Z$ we have 
\begin{equation}
\label{08031059b}
\begin{aligned}
\Pr_{i}^{\mc{B}}
\bigl(D_{1}(\Uppsi(u))(e)(h)\bigr)
&=
\Pr_{i}^{\mc{B}}
\bigl((\Uppsi(u))^{(1)}(e)(h,\ze)\bigr)
\\
&=
(\Uppsi_{i}(u))^{(1)}(e)(h,\ze)
\\
&=
D_{1}(\Uppsi_{i}(u))(e)(h)
\\
&=
D_{1}
\bigl(\frac{\partial(\Upphi(u))}{\partial e_{i}^{2}[n]}\bigr)(e)(h)
\\
&=
\frac{\partial(D_{1}\Upphi(u))}{\partial e_{i}^{2}[n]}(e)(h),
\end{aligned}
\end{equation}
where the 1th and 3th equality follow by Cor.\ref{02211942}, the 2th one by Prp.\ref{815}, the 4th one by Rmk.\ref{06132050}
and \eqref{08161354}, the 5th one by Cor.\ref{06300307}.
Then \eqref{07221116c} follows since the definition of weak integral, since Cor.\ref{09261537} applied for $F=\R$ and to the 
diffeomorphism $\chi(\eta)$, since Lemma \ref{07220226}, \eqref{10181658b}, and \eqref{08031059b}.
\end{proof}
In the following remark we employ to the special case of the category of topological linear spaces over $\K$ and continuous 
linear maps, the operator $(\cdot)_{\star}$ defined in the general case in \cite[p.12]{sil}. 
\begin{remark}
\label{09152034}
Let $N$ be a normed space over $\K$, $B$ be an open set of $N$, $C\subseteq B$ and $f:B\to\mc{M}(W,\mc{Z}_{\R})$ in 
addition let $r:C\to\R_{>0}$ be such that $x+B_{r(x)}^{N}(\ze)\subseteq B$ for every $x\in C$ which exists since $B$ is open in $N$.
Now by recalling that the topology on $\mc{M}(W,\mc{Z}_{\R})$ is the topology of simple convergence, we have that $f$ is 
differentiable on $C$ iff we prove that there exists a map $\mf{f}$ such that 
\begin{enumerate}
\item
$\mf{f}:C\to\mc{L}(N,\mc{M}(W,\mc{Z}_{\R}))$;
\label{09152034a}
\item
\begin{equation*}
(\forall x\in C)(\forall l\in\mc{H}(W))
\bigl(\lim_{h\to\ze,\,h\in\complement_{B_{r(x)}^{N}(\ze)}\{\ze\}}
\frac{f(x+h)(l)-f(x)(l)-\mf{f}(x)(h)(l)}{\|h\|_{N}}=\ze\bigr).
\end{equation*}
\label{09152034b}
\end{enumerate}
By Thm.\ref{821} we deduce that item \eqref{09152034b} is equivalent to the following one:
\begin{itemize}
\item
For every $l\in\mc{H}(W)$ the map $\mr{ev}_{l}^{\mc{M}(W,\mc{Z}_{\R})}\circ f:B\to\mc{Z}_{\R}$, $x\mapsto f(x)(l)$ is differentiable on 
$C$ with differential map equal to $(\mr{ev}_{l}^{\mc{M}(W,\mc{Z}_{\R})})_{\star}\circ\mf{f}:C\to\mc{L}(N,\mc{Z}_{\R})$, 
$x\mapsto(h\mapsto \mf{f}(x)(h)(l))$. 
\end{itemize}
Well-set since $\mr{ev}_{l}^{\mc{M}(W,\mc{Z}_{\R})}\in\mc{L}(\mc{M}(W,\mc{Z}_{\R}),\mc{Z}_{\R})$. 
If items (\ref{09152034a},\ref{09152034b}) are fulfilled, then $\mf{f}$ is the differential map of $f$.
Next given $S:N\to\mc{M}(W,\mc{Z}_{\R})$ linear we notice that 
\begin{equation}
\label{09152119}
S\in\mc{L}(N,\mc{M}(W,\mc{Z}_{\R}))
\Leftrightarrow
(\forall l\in\mc{H}(W))(\forall q\in\Gamma)(\exists\,c\in\R_{>0})(\forall h\in N)(q(S(h)(l))\leq c\|h\|_{N}).
\end{equation}
Finally Cor.\ref{06162059} might be employed to check if a map belongs to $\mc{M}(W,\mc{Z}_{\R})$.
\end{remark}
To this end let us state the following: 
\begin{proposition}
\label{09030733} 
For every $u\in\mf{Z}^{O}$ and $\eta\in A$ we have 
\begin{equation*}
\begin{aligned}
G_{u}&\in\mc{C}^{p-1}(A\times U,\mc{Z});
\\
(D_{1}G_{u})&\in\mc{C}^{p-2}(A\times U,\mc{L}_{b}(T,\mc{Z}_{\R})).
\end{aligned}
\end{equation*}
\end{proposition}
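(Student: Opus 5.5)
The plan is to reduce everything to the regularity of the two factors of $G_{u}=F_{u}\cdot\widehat{J}$. By Lemma \ref{07220226} we already have $F_{u}\in\mc{C}^{p-1}(A\times U,\mc{Z})$. So the first claim will follow once we know that $\widehat{J}\in\mc{C}^{p-1}(A\times U,\R)$ and that the product $\mr{s}_{\mc{Z}_{\R}}\circ(\widehat{J}\boxtimes F_{u})$ inherits $C^{p-1}$-regularity.

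\textbf{Regularity of $\widehat{J}$.} By Def.\ref{05101630}\eqref{06291612}, $\imath_{U}^{\R^{n}}\circ\widehat{\chi}\in\mc{C}^{p}(A\times U,\R^{n})$. Composing with the continuous linear projections $\Pr_{\mu}$ gives $\widehat{\chi}_{\mu}\in\mc{C}^{p}(A\times U,\R)$. The partial derivatives $\frac{\partial\widehat{\chi}_{\mu}}{\partial e_{\nu}^{2}[n]}=\mr{ev}_{(\ze,e_{\nu}[n])}\circ\widehat{\chi}_{\mu}^{(1)}$ then lie in $\mc{C}^{p-1}(A\times U,\R)$, by Cor.\ref{04221226} (or Cor.\ref{06300307}) together with Thm.\ref{821}. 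Hence $\widehat{M}\in\mc{C}^{p-1}(A\times U,\mathbb{M}_{n}(\R))$, since a map into a finite product is $C^{p-1}$ iff its components are (Prp.\ref{815}). Finally $\mr{det}$ is a polynomial, hence smooth, on $\mathbb{M}_{n}(\R)$, so the chain rule Thm.\ref{821}/Thm.\ref{81210} gives $\widehat{J}=\mr{det}\circ\widehat{M}\in\mc{C}^{p-1}(A\times U,\R)$.

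\textbf{Regularity of the product.} The scalar multiplication $\mr{s}_{\mc{Z}_{\R}}:\R\times\mc{Z}_{\R}\to\mc{Z}_{\R}$ is continuous and bilinear, hence smooth in the bounded calculus. Therefore $G_{u}=\mr{s}_{\mc{Z}_{\R}}\circ(\widehat{J}\boxtimes F_{u})$ is $C^{p-1}$, using Prp.\ref{01101646} (products of $C^{q}$ maps via continuous bilinear maps), or alternatively Thm.\ref{81210} together with Prp.\ref{815}.

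\textbf{The partial differential.} Since $p>1$, $G_{u}$ is at least $C^{1}$. Cor.\ref{04221226} applied to $G_{u}\in\mc{C}^{p-1}$ then gives $D_{1}G_{u}\in\mc{C}^{p-2}(A\times U,\mc{L}_{b}(T,\mc{Z}_{\R}))$, exactly as was done for $D_{1}F_{u}$ in Lemma \ref{07220226}. For later use it is worth also recording the Leibniz formula
\begin{equation*}
D_{1}G_{u}=(D_{1}F_{u})\cdot\widehat{J}+\mf{e}_{T,\mc{Z}}\circ\{F_{u}\boxtimes D_{1}\widehat{J}\},
\end{equation*}
which follows from the chain rule applied to the bilinear map, Cor.\ref{02211942}, and Lemma \ref{10221931}. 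This also gives a second route to the regularity claim: each summand is $C^{p-2}$ by Lemma \ref{07220226}, the regularity of $\widehat{J}$, and Lemma \ref{10221931}.

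\textbf{Main obstacle.} The only delicate point is bookkeeping of regularity when $p=\infty$ or $p=2$, together with the convention identifying $\mc{C}^{q}(\cdot,\mc{Z})$ with $\mc{C}^{q}(\cdot,\mc{Z}_{\R})$ in the complex case. Once the product rule for a continuous bilinear map is available in the bounded calculus (Prp.\ref{01101646}), the rest is routine.
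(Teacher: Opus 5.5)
Your proposal is correct and follows the paper's proof. The paper obtains $G_{u}\in\mc{C}^{p-1}(A\times U,\mc{Z})$ from Lemma \ref{07220226}, the regularity $\widehat{J}\in\mc{C}^{p-1}(A\times U,\R)$ and Prp.\ref{01101646}, and then $D_{1}G_{u}\in\mc{C}^{p-2}$ from Cor.\ref{04221226}, exactly as you do; the only addition on your side is spelling out why $\widehat{J}=\mr{det}\circ\widehat{M}$ is $C^{p-1}$, which the paper takes as known (for the componentwise $C^{p-1}$ step the appropriate citation is Cor.\ref{04121839} rather than Prp.\ref{815}).
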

\begin{proof}
The 1th relation in the statement follows since Prp.\ref{01101646}, Lemma \ref{07220226} specifically 
$F_{u}\in\mc{C}^{p-1}(A\times U,\mc{Z})$ and the fact that $\widehat{J}\in\mc{C}^{p-1}(A\times U,\R)$. The 2th relation follows by 
the 1th one and since Cor.\ref{04221226}.
\end{proof}
Notice that if $x\in U$, $\eta\in A$ and $h\in T$, then since Cor.\ref{02211942} we have 
\begin{equation}
\label{07222205a}
(D_{1}G_{u})(\eta,x)(h)=(G_{u})^{(1)}(\eta,x)(h,\ze);
\end{equation}
while 
\begin{equation}
\label{07222205b}
\begin{aligned}
(G_{u})^{(1)}(\eta,x)(h,\ze)
&=
\\
(F_{u})^{(1)}(\eta,x)(h,\ze)\cdot\widehat{J}(\eta,x)
+
F_{u}(\eta,x)\cdot\widehat{J}^{(1)}(\eta,x)(h,\ze)
&=
\\
(D_{1}F_{u})(\eta,x)(h)\cdot\widehat{J}(\eta,x)
+
F_{u}(\eta,x)\cdot(D_{1}\widehat{J})(\eta,x)(h),
\end{aligned}
\end{equation}
where the 1th equality follows since Prp.\ref{03241344}, the 2th one by Cor.\ref{02211942}. 
Now we can state the following 
\begin{theorem}
[\textbf{Differential of $\mc{A}(u)$}]
\label{07222143}
Let $u\in\mf{Z}^{O}$, then $\mc{A}(u)$ is differentiable and for every $\eta\in A$, $h\in T$ and $f\in\mc{H}(W)$ we have 
\begin{equation*}
\mc{A}(u)^{(1)}(\eta)(h)(f)=\mh{B}[u,\eta,f,h]+\mh{E}[u,\eta,f,h].
\end{equation*}
\end{theorem}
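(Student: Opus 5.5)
The plan is to reduce differentiability of $\mc{A}(u)$, which takes values in $\mc{M}(W,\mc{Z}_{\R})$ with the topology of simple convergence, to a parametrized integral, using the criterion of Rmk.\ref{09152034}. Fix $l=f\in\mc{H}(W)$. By Lemma \ref{07181052},
\begin{equation*}
\mr{ev}_{f}\circ\mc{A}(u):\eta\mapsto\int(\xi^{-1})^{\ast}(f)(x)\,G_{u}(\eta,x)\,d\lambda_{U}(x).
\end{equation*}
The integrand is $(\xi^{-1})^{\ast}(f)\cdot G_{u}$, where $(\xi^{-1})^{\ast}(f)$ is a compactly supported continuous function on $U$ independent of $\eta$. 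By Prp.\ref{09030733}, $G_{u}\in\mc{C}^{p-1}(A\times U,\mc{Z})$ with $p-1\geq1$, and $D_{1}G_{u}$ is continuous with values in $\mc{L}_{b}(T,\mc{Z}_{\R})$, which is quasi-complete by Lemma \ref{12311713}. These are exactly the hypotheses of Cor.\ref{01031245b} on commuting differentiation and integration against a general measure, applied with the measure $(\xi^{-1})^{\ast}(f)\lambda_{U}$ of compact support $K$. That corollary should give that $\mr{ev}_{f}\circ\mc{A}(u)$ is differentiable on $A$, with
\begin{equation*}
(\mr{ev}_{f}\circ\mc{A}(u))^{(1)}(\eta)=\int(\xi^{-1})^{\ast}(f)\cdot\bigl((D_{1}G_{u})\circ(\mf{o}_{\eta}\boxtimes\mr{Id}_{U})\bigr)\,d\lambda_{U}\in\mc{L}_{b}(T,\mc{Z}_{\R}).
\end{equation*}
The expected uniformity requirement, namely continuity of $D_{1}G_{u}$ on $\ov{B}\times K$ for a small ball $\ov{B}$ around $\eta$, is supplied by compactness and the mean value theorem in its seminorm form.

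Next I would identify this integral. By \eqref{07222205a} and \eqref{07222205b}, $D_{1}G_{u}=(D_{1}F_{u})\cdot\widehat{J}+\mf{e}_{T,\mc{Z}}\circ(F_{u}\boxtimes D_{1}\widehat{J})=(D_{1}F_{u})\cdot\widehat{J}+\Omega_{4}[u]$. Integrating termwise gives two contributions. The $\Omega_{4}$ term gives $\mh{E}[u](\eta)(f)$ by definition. The first term gives $\mh{B}[u](\eta)(f)$ by Lemma \ref{07221116}, specifically \eqref{07221116b}, which already performs the change of variables through $\chi(\eta)$ and the rewriting of $D_{1}\Uppsi$ via \eqref{08031059b}. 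Evaluating at $h$ with the continuous map $\mr{ev}_{h}$ from Rmk.\ref{10231117} yields $\mh{B}[u,\eta,f,h]+\mh{E}[u,\eta,f,h]$.

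It remains to define $\mf{f}(\eta)(h)(f)\coloneqq\mh{B}[u,\eta,f,h]+\mh{E}[u,\eta,f,h]$ and verify item \eqref{09152034a} of Rmk.\ref{09152034}. Two things need checking.
\begin{itemize}
\item For fixed $h$, $f\mapsto\mf{f}(\eta)(h)(f)$ lies in $\mc{M}(W,\mc{Z}_{\R})$. This follows from Prp.\ref{10221814}\eqref{10221814b} composed with $\mr{ev}_{h}$.
\item For fixed $\eta$, $h\mapsto\mf{f}(\eta)(h)$ is linear and continuous into $\mc{M}(W,\mc{Z}_{\R})$. This uses criterion \eqref{09152119}: for each $f$ the map $h\mapsto\mf{f}(\eta)(h)(f)$ is an element of $\mc{L}_{b}(T,\mc{Z}_{\R})$, so $q(\mf{f}(\eta)(h)(f))\leq c\|h\|$.
\end{itemize}

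The main obstacle is the first step: confirming that Cor.\ref{01031245b} applies in this generality, with $T$ infinite dimensional, values in a quasi-complete space, and a differential in the bounded topology. I would check its hypotheses carefully, chiefly joint continuity of $D_{1}G_{u}$ into $\mc{L}_{b}$ and compact support in $x$.
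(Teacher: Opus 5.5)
Your proposal is correct and follows essentially the same route as the paper. You reduce via Rmk.~\ref{09152034}, differentiate under the integral sign via Cor.~\ref{01031245b}, identify $\int(\xi^{-1})^{\ast}(f)\cdot\bigl((D_{1}G_{u})\circ(\mf{o}_{\eta}\boxtimes\mr{Id}_{U})\bigr)\,d\lambda_{U}$ with $\mh{B}[u](\eta)(f)+\mh{E}[u](\eta)(f)$ through \eqref{07222205a}, \eqref{07222205b} and Lemma~\ref{07221116}, and then check membership in $\mc{M}(W,\mc{Z}_{\R})$ and continuity in $h$.

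The differences are minor. The paper keeps $\lambda_{U}$ as the measure and puts $(\xi^{-1})^{\ast}(f)$ into the integrand, so the compact-support hypotheses of Cor.~\ref{01031245b} come straight from Cor.~\ref{09292048}; with your weighted measure you would need a cutoff equal to $1$ on $\xi(\mr{supp}(f))$. For continuity in $h$, the paper gives an explicit seminorm estimate via $\sup_{x\in\xi(\mr{supp}(f))}\overset{1}{q}\bigl((D_{1}G_{u})(\eta,x)\bigr)$, whereas you more economically use that the integral already lies in $\mc{L}_{b}(T,\mc{Z}_{\R})$.
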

\begin{proof}
Let $u\in\mf{Z}^{O}$, $\eta\in A$, $h\in T$ and $f\in\mc{H}(W)$. Since Rmk.\ref{09152034} $\mc{A}(u)$ is differentiable if we 
prove that 
\begin{enumerate}
\item
$(\forall\eta\in A)(\exists\,\mf{c}(u,\eta):T\to\mc{M}(W,\mc{Z}_{\R}))$ such that for every $f\in\mc{H}(W)$ the map 
\begin{equation*}
T\ni h\mapsto\mf{c}(u,\eta)(h)(f)\in\mc{Z},
\end{equation*}
is the differential at $\eta$ of the map 
\begin{equation*}
\mf{r}_{f}^{u}:A\ni a\mapsto\mc{A}(u)(a)(f)\in\mc{Z},
\end{equation*}
namely 
\begin{equation}
\label{09301538}
(\forall h\in T)(\mf{c}(u,\eta)(h)(f)=(\mf{r}_{f}^{u})^{(1)}(\eta)(h));
\end{equation}
\label{08040402a}
\item
$(\forall\eta\in A)(\mf{c}(u,\eta)\in\mc{L}(T,\mc{M}(W,\mc{Z}_{\R})))$;
\label{08040402b}
\end{enumerate}
If the above items (\ref{08040402a},\ref{08040402b}) are fulfilled, then $\mf{c}(u,\eta)$ is the differential of 
$\mc{A}(u)$ at $\eta$ for every $\eta\in A$. Next since Lemma \ref{06121742} we have 
\begin{equation}
\label{09301432}
\mf{r}_{f}^{u}(\eta)=\int(\xi^{-1})^{\ast}(f)\cdot\bigl(G_{u}\circ(\mf{o}_{\eta}\boxtimes\mr{Id}_{U})\bigr)d\lambda_{U},
\end{equation}
next $(\xi^{-1})^{\ast}(f)\in\mc{H}(U)$ since Cor.\ref{08111037}, then since Prp.\ref{09030733} and Cor.\ref{09292048} we obtain
\begin{equation}
\label{09301433}
\begin{aligned}
(\xi^{-1})^{\ast}(f)\cdot\bigl(G_{u}&\circ(\mf{o}_{\eta}\boxtimes\mr{Id}_{U})\bigr)\in\mc{H}\bigl(U,\xi(\mr{supp}(f));\mc{Z}\bigr),
\\
(\xi^{-1})^{\ast}(f)\cdot\bigl((D_{1}G_{u})&\circ(\mf{o}_{\eta}\boxtimes\mr{Id}_{U})\bigr)
\in\mc{H}\bigl(U,\xi(\mr{supp}(f));\mc{L}_{b}(T,\mc{Z}_{\R})\bigr),
\end{aligned}
\end{equation}
therefore by \eqref{09301432}, \eqref{09301433} and Cor.\ref{01031245b} we deduce that 
\begin{equation}
\label{09301513}
(\mf{r}_{f}^{u})^{(1)}(\eta)=\int(\xi^{-1})^{\ast}(f)\cdot\bigl((D_{1}G_{u})\circ(\mf{o}_{\eta}\boxtimes\mr{Id}_{U})\bigr)\,d\lambda_{U},
\end{equation}
well-set since we recall that $\mc{L}_{b}(T,\mc{Z}_{\R})$ is a Hausdorff quasi-complete locally convex space since 
Lemma \ref{12311713}; then by Rmk.\ref{08190319} we have 
\begin{equation}
\label{07230420}
(\mf{r}_{f}^{u})^{(1)}(\eta)(h)=
\int(\xi^{-1})^{\ast}(f)\cdot\bigl(\mr{ev}_{h}\circ(D_{1}G_{u})\circ(\mf{o}_{\eta}\boxtimes\mr{Id}_{U})\bigr)\,d\lambda_{U}.
\end{equation}
Next assume the position 
\begin{equation}
\label{08040908}
\mf{c}(u,\eta)\coloneqq\mc{A}(u)^{(1)}(\eta),
\end{equation}
where $\mc{A}(u)^{(1)}(\eta):T\to\mc{F}(\mc{H}(W),\mc{Z})$, $h\mapsto(f\mapsto\mc{A}(u)^{(1)}(\eta)(h)(f))$
as given in the statement of the present theorem. 
Since \eqref{07230420}, \eqref{07222205a}, \eqref{07222205b} and Lemma \ref{07221116} we obtain \eqref{09301538}, 
then the claimed item \eqref{08040402a} follows if we prove that 
$\bigl(f\mapsto\mf{c}(u,\eta)(h)(f)\bigr)\in\mc{M}(W,\mc{Z}_{\R})$
but this follows by \eqref{09301538}; \eqref{07230420}; the second relation in Prp.\ref{09030733}; by Rmk.\ref{10231117}
applied to $\mr{ev}_{h}$; by Cor.\ref{06162059}. Item \eqref{08040402a} has been proven.
Next in order to conclude the proof we claim to prove item \eqref{08040402b} which by \eqref{09152119} is equivalent 
to the following one 
\begin{equation}
\label{07230414}
(\forall\eta\in A)(\forall f\in\mc{H}(W))(\forall q\in\Gamma)(\exists\,c\in\R_{>0})(\forall h\in T)
(q(\mc{A}(u)^{(1)}(\eta)(h)(f))\leq c\|h\|).
\end{equation}
Since \eqref{07230420}, the already proven item \eqref{08040402a} and \eqref{08040908} we obtain by \cite[VI.5, Prp.6]{IntBourb}
that 
\begin{equation*}
\begin{aligned}
q(\mc{A}(u)^{1}(\eta)(h)(f))
&\leq
\int|(\xi^{-1})^{\ast}(f)|(x)q\bigl((D_{1}G_{u})(\eta,x)(h)\bigr)\,d\lambda_{U}(x)
\\
&\leq
\int|(\xi^{-1})^{\ast}(f)|(x)\|h\|\overset{1}{q}\bigl((D_{1}G_{u})(\eta,x)\bigr)\,d\lambda_{U}(x)
\\
&=
\int|(\xi^{-1})^{\ast}(f)|(x)\|h\|\overset{1}{q}\bigl((D_{1}G_{u})(\eta,x)\bigr)\,d\lambda_{\xi(\mr{supp}(f))}(x)
\\
&\leq
\|h\|\sup_{x\in\xi(\mr{supp}(f))}\overset{1}{q}\bigl((D_{1}G_{u})(\eta,x)\bigr)\int|(\xi^{-1})^{\ast}(f)|(y)\,d\lambda_{U}(y).
\end{aligned}
\end{equation*}
Now the locally convex topology of $\mc{L}_{b}(T,\mc{Z}_{\R})$ is generated by the set of 
seminorms $\widehat{\Gamma}\coloneqq\{\overset{1}{q}\,\vert\,q\in\Gamma\}$ then continuous maps on $\mc{L}_{b}(T,\mc{Z}_{\R})$, 
therefore since the second relation in Prp.\ref{09030733} and since continuous functions map compacts into compacts we obtain 
\begin{equation*}
(\forall\eta\in A)(\sup_{x\in\xi(\mr{supp}(f))}\overset{1}{q}\bigl((D_{1}G_{u})(\eta,x)\bigr)<\infty).
\end{equation*}
Claim \eqref{07230414} and the statement then follow by taking 
\begin{equation*}
c=\sup_{x\in\xi(\mr{supp}(f))}\overset{1}{q}\bigl((D_{1}G_{u})(\eta,x)\bigr)\int|(\xi^{-1})^{\ast}(f)|(y)\,d\lambda_{U}(y).
\end{equation*}
\end{proof}
\begin{definition}
[Euler-Lagrange equation]
\label{07230430}
Define $\mr{EL}_{W}(\mh{L})$ the set of those $u\in\mf{Z}^{O}$ such that 
\begin{equation*}
\frac{\partial\mh{L}}{\partial Y_{0}}\circ(u\boxtimes\partial^{\xi}u)\circ\xi^{-1}
=
\sum_{i=1}^{n}\frac{\partial}{\partial e_{i}[n]}\bigl(
\frac{\partial\mh{L}_{\mf{e}}}{\partial Y_{i}}\circ(\mf{d}_{O})_{\natural}\circ(u\boxtimes\partial^{\xi}u)\circ\xi^{-1}
\bigr).
\end{equation*}
\end{definition}
\begin{lemma}
\label{08060533}
$u\in\mr{EL}_{W}(\mh{L})$ iff 
\begin{equation*}
\left(\frac{\partial\mh{L}}{\partial Y_{0}}\circ(\Upphi(u)\boxtimes\Uppsi(u))\right)\circ(\mf{o}_{\ze}\boxtimes\mr{Id}_{U})
=
\sum_{i=1}^{n}\frac{\partial}{\partial e_{i}^{2}[n]}\bigl(
\frac{\partial\mh{L}_{\mf{e}}}{\partial Y_{i}}\circ(\mf{d}_{O})_{\natural}\circ(\Upphi(u)\boxtimes\Uppsi(u))
\bigr)\circ(\mf{o}_{\ze}\boxtimes\mr{Id}_{U}).
\end{equation*}
\end{lemma}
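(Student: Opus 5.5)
The plan is to evaluate both sides of the stated identity pointwise at $x\in U$ and reduce them to the two sides of the equation in Def.\ref{07230430}. The key input is $\phi_{\ze}=\mr{Id}_{\mf{Z}}$ from Def.\ref{05101630}. By Def.\ref{12141626} it gives
\begin{equation*}
\Upphi(u)\circ(\mf{o}_{\ze}\boxtimes\mr{Id}_{U})=\phi_{\ze}^{\xi}(u)=u\circ\xi^{-1}.
\end{equation*}
By \eqref{08161354} (applicable since $u\in\mf{Z}^{O}$), together with $\phi_{\ze}=\mr{Id}_{\mf{Z}}$, we also get
\begin{equation*}
\Uppsi(u)\circ(\mf{o}_{\ze}\boxtimes\mr{Id}_{U})=\psi_{\ze}(u)\circ\xi^{-1}=(\partial^{\xi}u)\circ\xi^{-1}.
\end{equation*}
Hence $(\Upphi(u)\boxtimes\Uppsi(u))\circ(\mf{o}_{\ze}\boxtimes\mr{Id}_{U})=(u\boxtimes\partial^{\xi}u)\circ\xi^{-1}$, which lies in $O$ by Rmk.\ref{10161240}. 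Composing with $\frac{\partial\mh{L}}{\partial Y_{0}}$ shows that the left-hand side of the lemma equals the left-hand side of the Euler--Lagrange equation in Def.\ref{07230430}.

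For the right-hand side, fix $i\in[1,n]\cap\Z$ and set
\begin{equation*}
g_{i}\coloneqq\frac{\partial\mh{L}_{\mf{e}}}{\partial Y_{i}}\circ(\mf{d}_{O})_{\natural}\circ(\Upphi(u)\boxtimes\Uppsi(u)):A\times U\to\mc{L}_{b}((Y_{i})_{\R},\mc{Z}_{\R}).
\end{equation*}
We first need $g_{i}$ to be differentiable on $A\times U$, so that Lemma \ref{08080256} applies with $\eta=\ze$. This holds because $\mh{L}\in\mc{C}^{p}$ with $p\geq 2$. Consequently the partial differential of $\mh{L}_{\mf{e}}$ is $C^{p-1}$, by the argument in the proof of Lemma \ref{10181658} and Cor.\ref{04221226}. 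Moreover $\Upphi(u)$ is $C^{p}$ and $\Uppsi(u)$ is $C^{p-1}$ by Def.\ref{05101630} and Rmk.\ref{051016304}, and $\mf{d}_{O}$ is a restricted linear isometry. The chain rule (Thm.\ref{821}, Thm.\ref{81210}) then shows that $g_{i}$ is at least $C^{1}$. Lemma \ref{08080256} now gives
\begin{equation*}
\frac{\partial g_{i}}{\partial e_{i}^{2}[n]}\circ(\mf{o}_{\ze}\boxtimes\mr{Id}_{U})=\frac{\partial}{\partial e_{i}[n]}\bigl(g_{i}\circ(\mf{o}_{\ze}\boxtimes\mr{Id}_{U})\bigr).
\end{equation*}
By the identity established in the first paragraph, the function inside the derivative on the right is
\begin{equation*}
\frac{\partial\mh{L}_{\mf{e}}}{\partial Y_{i}}\circ(\mf{d}_{O})_{\natural}\circ(u\boxtimes\partial^{\xi}u)\circ\xi^{-1}.
\end{equation*}
Summing over $i$, the right-hand side of the lemma coincides with the right-hand side of Def.\ref{07230430}.

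Since both sides match identically as maps on $U$, the two equations are the same equation, and the equivalence follows. The only step requiring care is the regularity check that legitimizes Lemma \ref{08080256}: we must verify that $g_{i}$ is differentiable with values in the locally convex space $\mc{L}_{b}((Y_{i})_{\R},\mc{Z}_{\R})$, which uses $p>1$. A second, purely notational point is to check that the composition conventions (the Convention on $g\circ f$ and $(\mf{d}_{O})_{\natural}$) agree on both sides. Everything else is a direct substitution.
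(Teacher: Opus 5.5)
Your proposal is correct and follows essentially the same route as the paper. Like the paper, you use $\phi_{\ze}=\mr{Id}_{\mf{Z}}$ and \eqref{08161354} to identify $(\Upphi(u)\boxtimes\Uppsi(u))\circ(\mf{o}_{\ze}\boxtimes\mr{Id}_{U})$ with $(u\boxtimes\partial^{\xi}u)\circ\xi^{-1}$, and then Lemma \ref{08080256} to move $\frac{\partial}{\partial e_{i}^{2}[n]}$ past the restriction to $\eta=\ze$; your explicit check that $g_{i}$ is at least $C^{1}$ (using $p>1$) supplies a hypothesis the paper leaves implicit.
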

\begin{proof}
Since \eqref{08060435} we have $u\in\mr{EL}_{W}(\mh{L})$ iff
\begin{equation*}
\frac{\partial\mh{L}}{\partial Y_{0}}\circ(\phi_{\ze}^{\xi}(u)\boxtimes\psi_{\ze}^{\xi}(u))
=
\sum_{i=1}^{n}\frac{\partial}{\partial e_{i}[n]}\bigl(
\frac{\partial\mh{L}_{\mf{e}}}{\partial Y_{i}}\circ(\mf{d}_{O})_{\natural}\circ(\phi_{\ze}^{\xi}(u)\boxtimes\psi_{\ze}^{\xi}(u))
\bigr),
\end{equation*}
then the statement follows since \eqref{08161354} and Lemma \ref{08080256}. 
\end{proof}
\begin{proposition}
\label{08060629a}
Let $P$ and $Q$ locally convex spaces on $\R$, $\upphi\in\mc{L}(P,Q)$, $R$ be a family of normed spaces on $\R$
defined on $[1,m]\cap\Z$ with $m\in\Z_{>0}$, $B$ be an open set of $\prod_{i=1}^{m}R_{i}$, $C\subseteq B$, $f:B\to P$ be a 
differentiable map on $C$. Thus for every $k\in[1,m]\cap\Z$ and every $v\in R_{k}$ we have 
\begin{equation*}
\mr{ev}_{v}^{\mc{L}_{b}(R_{k},P)}\circ D_{k}(\upphi\circ f)=\upphi\circ\mr{ev}_{v}^{\mc{L}_{b}(R_{k},P)}\circ(D_{k}f).
\end{equation*}
\end{proposition}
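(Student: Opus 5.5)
We will derive the identity from the chain rule together with the fact that a continuous linear map is its own differential. By hypothesis $f$ is differentiable at each $x\in C$. Every $\upphi\in\mc{L}(P,Q)$ is differentiable everywhere, with constant differential map equal to $\upphi$ (Prp.\ref{813}). Hence, by the chain rule Thm.\ref{821}, $\upphi\circ f$ is differentiable on $C$ and
\begin{equation*}
(\upphi\circ f)^{(1)}(x)=\upphi\circ f^{(1)}(x)
\end{equation*}
for every $x\in C$. In other words, $(\upphi\circ f)^{(1)}(x)(w)=\upphi(f^{(1)}(x)(w))$ for every $w\in\prod_{i=1}^{m}R_{i}$.

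Next we pass to partial differentials. By Cor.\ref{02211942}, the $k$-th partial differential is obtained by restricting the total differential along the canonical injection $\mf{i}_{k}^{R}$. That is, for every $x\in C$ and $v\in R_{k}$,
\begin{equation*}
(D_{k}g)(x)(v)=g^{(1)}(x)(\mf{i}_{k}^{R}(v)),
\end{equation*}
for any map $g$ differentiable on $C$. We apply this to $g=\upphi\circ f$ and to $g=f$, and insert the chain-rule identity above. This gives
\begin{equation*}
D_{k}(\upphi\circ f)(x)(v)
=(\upphi\circ f)^{(1)}(x)(\mf{i}_{k}^{R}(v))
=\upphi\bigl(f^{(1)}(x)(\mf{i}_{k}^{R}(v))\bigr)
=\upphi\bigl((D_{k}f)(x)(v)\bigr).
\end{equation*}
Reading the two ends through the definition of $\mr{ev}_{v}^{\mc{L}_{b}(R_{k},P)}$ (for $D_{k}f$, and the analogous evaluation into $Q$ on the left), this is exactly the claimed equality of maps on $C$, evaluated pointwise at $x$.

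There is no substantial obstacle. The only care needed is bookkeeping: $D_{k}(\upphi\circ f)$ takes values in $\mc{L}_{b}(R_{k},Q)$, so the evaluation on the left-hand side is the one on that space, and the equality is an equality of maps defined on $C$. Both points are settled once the pointwise identity holds for all $x\in C$.
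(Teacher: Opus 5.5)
Your proof is correct and is essentially the paper's argument: the paper also writes $D_{k}(\upphi\circ f)(y)(v)=(\upphi\circ f)^{(1)}(y)(\mf{i}_{k}v)=\upphi\bigl(f^{(1)}(y)(\mf{i}_{k}v)\bigr)=\upphi\bigl((D_{k}f)(y)(v)\bigr)$, citing Cor.~\ref{02211942}, Thm.~\ref{821} and Prp.~\ref{813}, and you are right to read the left-hand evaluation as the one on $\mc{L}_{b}(R_{k},Q)$. One small point, which the paper shares: Thm.~\ref{821} is stated with a normed intermediate space, whereas here $f$ takes values in the locally convex space $P$, so the step $(\upphi\circ f)^{(1)}(x)=\upphi\circ f^{(1)}(x)$ is better justified directly, because $\upphi\bigl(f(x+h)-f(x)-f^{(1)}(x)(h)\bigr)/\|h\|\to\ze$ by linearity and continuity of $\upphi$.
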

\begin{proof}
Let $y\in C$ then
\begin{equation*}
\begin{aligned}
(\mr{ev}_{v}^{\mc{L}_{b}(R_{k},P)}\circ D_{k}(\upphi\circ f))(y)
&=
D_{k}(\upphi\circ f)(y)(v)
\\
&=
(\upphi\circ f)^{(1)}(y)(\mf{i}_{k}v)
\\
&=
\upphi(f^{(1)}(y)(\mf{i}_{k}v))
\\
&=
\upphi((D_{k}f)(y)(v))
\\
&=
(\upphi\circ\mr{ev}_{v}^{\mc{L}_{b}(R_{k},P)}\circ(D_{k}f))(y).
\end{aligned}
\end{equation*}
where the 2th equality follows since Cor.\ref{02211942}; the 3th one by Thm.\ref{821} and Prp.\ref{813}; the 4th one by
Cor.\ref{02211942}.
\end{proof}
\begin{corollary}
\label{08060629b}
Under the hypothesis of Prp.\ref{08060629a} assume in addition that $R_{i}=\R$ for every $i\in[1,N]\cap\Z$, thus
\begin{equation*}
\frac{\partial}{\partial e_{k}}(\upphi\circ f)=\upphi\circ\frac{\partial f}{\partial e_{k}}.
\end{equation*}
\end{corollary}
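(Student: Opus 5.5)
The plan is to specialize Prp.\ref{08060629a} to the case where every $R_{i}$ equals $\R$, so that directional partial derivatives and partial differentials can be identified. Fix $k\in[1,m]\cap\Z$ and a point $y\in C$. By the abuse of notation fixed in Notation \ref{12152204}, $\frac{\partial}{\partial e_{k}}g$ stands for the partial derivative of $g$ along $e_{k}$, not its partial differential. So for any map $g$ differentiable on $C$ with values in a locally convex space we have
\begin{equation*}
\frac{\partial g}{\partial e_{k}}(y)=(D_{k}g)(y)(1)=\bigl(\mr{ev}_{1}^{\mc{L}_{b}(R_{k},P)}\circ D_{k}g\bigr)(y).
\end{equation*}
This holds because $e_{k}=\mf{i}_{k}(1)$ and, by Cor.\ref{02211942}, $(D_{k}g)(y)(v)=g^{(1)}(y)(\mf{i}_{k}v)$. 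This is exactly the identity used in the proof of Lemma \ref{08080256} through Prp.\ref{08091051}.

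Next I would apply this identity twice. First take $g=\upphi\circ f$, which is differentiable on $C$ by Thm.\ref{821} because $\upphi$ is continuous and linear. Then take $g=f$, remembering that for $\upphi\circ f$ the evaluation map is $\mr{ev}_{1}^{\mc{L}_{b}(\R,Q)}$. This gives
\begin{equation*}
\frac{\partial}{\partial e_{k}}(\upphi\circ f)(y)=\mr{ev}_{1}\bigl(D_{k}(\upphi\circ f)(y)\bigr)=\upphi\bigl((D_{k}f)(y)(1)\bigr)=\upphi\Bigl(\frac{\partial f}{\partial e_{k}}(y)\Bigr).
\end{equation*}
The middle equality is Prp.\ref{08060629a} applied with $v=1\in R_{k}=\R$.

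Since $y\in C$ is arbitrary, the stated equality of maps on $C$ follows.

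The only delicate step is bookkeeping. One has to make sure the identification of $\frac{\partial}{\partial e_{k}}$ with $\mr{ev}_{1}\circ D_{k}$ is the one the paper intends. One also has to read the index bound $N$ in the statement as $m$. I expect no analytic obstacle.
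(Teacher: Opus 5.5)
Your proposal is correct and follows the paper's proof, which applies Prp.\ref{08060629a} with $v=1\in\R$ together with Prp.\ref{08091052p}. That proposition is exactly the numerical-case identity $(D_{k}f)(x)(1)=(D_{e_{k}[n]}f)(x)$, which you re-derive from Cor.\ref{02211942} and Prp.\ref{08091051}. You are also right to read the index bound $N$ in the statement as $m$; it is a typo.
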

\begin{proof}
By Prp.\ref{08060629a} and Prp.\ref{08091052p}.
\end{proof}
In what follows we let $\{\partial_{i}^{U}\}_{i=1}^{n}$ denote the basis of vector fields on $U$ associated with the chart 
$(U,\mr{Id}_{U})$. Next we introduce the main concept of the paper namely
\begin{definition}
[\textbf{Universal Operatorial Noether Current}]
\label{09302310}
Let $u\in\mf{Z}^{O}$, $i\in[1,n]\cap\Z$ and $k\in[1,2]\cap\Z$, define 
$\mr{N}_{k}^{i}[u]\in\mc{C}^{p-1}(A\times U,\mc{L}_{b}(T,\mc{Z}_{\R}))$
such that 
\begin{equation*}
\mr{N}_{1}^{i}[u]
\coloneqq
\mf{t}_{T,Y_{i},\mc{Z}}
\circ\bigl\{
\bigl[
\frac{\partial\mh{L}_{\mf{e}}}{\partial Y_{i}}
\circ
(\mf{d}_{O})_{\natural}
\circ
(\Upphi(u)
\boxtimes
\Uppsi(u))
\bigr]
\boxtimes
\bigl[D_{1}(\Upphi(u))\bigr]\bigr\};
\end{equation*}
and
\begin{equation*}
\mr{N}_{2}^{i}[u]
\coloneqq
\mf{e}_{T,\mc{Z}}
\circ
\bigl\{
\bigl[
\mh{L}
\circ
(\Upphi(u)
\boxtimes
\Uppsi(u))
\bigr]
\boxtimes
[D_{1}(\widehat{\chi}_{i})]
\bigr\}.
\end{equation*}
Next define the map $\mh{N}^{i}[u]\in\mc{C}^{p-1}(U,\mc{L}_{b}(T,\mc{Z}_{\R}))$ such that 
\begin{equation*}
\mh{N}^{i}[u]
\coloneqq
\sum_{k=1}^{2}
\mr{N}_{k}^{i}[u]
\circ(\mf{o}_{\ze}\boxtimes\mr{Id}_{U}).
\end{equation*}
Now we define \textbf{the universal operatorial Noether current relative to $u$}, 
or simply operatorial Noether current relative to $u$, the element 
\begin{equation*}
\mh{N}[u]\in\mc{C}^{p-1}(U,\mc{L}_{b}(T,\mc{Z}_{\R}))\otimes_{\mc{C}^{\infty}(U)}\Gamma(U,TM),
\end{equation*}
such that 
\begin{equation*}
\mh{N}[u]\coloneqq\sum_{i=1}^{n}\mh{N}^{i}[u]\otimes\partial_{i}^{U}.
\end{equation*}
If in addition $h\in T$, then define $\mh{N}^{i}[u](h)\in\mc{C}^{p-1}(U,\mc{Z})$ such that 
\begin{equation*}
\mh{N}^{i}[u](h)
\coloneqq
\mr{ev}_{h}^{\mc{L}_{b}(T,\mc{Z}_{\R})}\circ\mh{N}^{i}[u].
\end{equation*}
We call $\mh{N}^{i}[u]$ the $i$-th component of the operatorial Noether current relative to $u$, while 
call $\mh{N}^{i}[u](h)$ the $h$-evaluation of the $i$-th component of the operatorial Noether current relative to $u$.
\end{definition}
\begin{proposition}
\label{10282047}
Def.\ref{09302310} is well-set more specifically for any $u\in\mf{Z}^{O}$, $i\in[1,n]\cap\Z$ and $k\in[1,2]\cap\Z$, we have 
$\mr{N}_{k}^{i}[u]\in\mc{C}^{p-1}(A\times U,\mc{L}_{b}(T,\mc{Z}_{\R}))$.
\end{proposition}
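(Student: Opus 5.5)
The plan is to follow the pattern of the proof of Prp.\ref{10221814}\eqref{10221814a}. Each $\mr{N}_{k}^{i}[u]$ is the composition of a continuous bilinear map with the $\boxtimes$-product of two maps, so it suffices to show that both factors are $C^{p-1}$ on $A\times U$. Then the chain rule together with the smoothness of continuous bilinear maps gives the conclusion.

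\emph{Case $k=1$.} By Def.\ref{05101630}\eqref{051016303} we have $\Upphi(u)\in\mc{C}^{p}(A\times U,Y_{0})$, so Cor.\ref{04221226} gives $D_{1}(\Upphi(u))\in\mc{C}^{p-1}(A\times U,\mc{L}_{b}(T,(Y_{0})_{\R}))$. Since $Y_{i}=Y_{0}=X$, this is the same as a map into $\mc{L}_{b}(T,(Y_{i})_{\R})$. For the other factor:
\begin{itemize}
\item $\mh{L}\in\mc{C}^{p}(O,\mc{Z})$, and $(\mf{e}_{O})_{\natural}$ is the restriction of an isometric linear map, so $\mh{L}_{\mf{e}}$ is $C^{p}$ (Prp.\ref{813}, Prp.\ref{05231704}, Thm.\ref{821}). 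Hence $\frac{\partial\mh{L}_{\mf{e}}}{\partial Y_{i}}$ is $C^{p-1}$, by Cor.\ref{04221226} and Cor.\ref{02211942}.
\item $(\mf{d}_{O})_{\natural}$ is the restriction of a linear isometry, hence smooth.
\item $\Upphi(u)\boxtimes\Uppsi(u)$ is $C^{p-1}$ with values in $O$, by Rmk.\ref{051016304}, Rmk.\ref{10161240} and Prp.\ref{815}.
\end{itemize}
Thm.\ref{81210} then shows that $\frac{\partial\mh{L}_{\mf{e}}}{\partial Y_{i}}\circ(\mf{d}_{O})_{\natural}\circ(\Upphi(u)\boxtimes\Uppsi(u))$ is $C^{p-1}$ with values in $\mc{L}_{b}((Y_{i})_{\R},\mc{Z}_{\R})$. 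By Lemma \ref{10221256}, $\mf{t}_{T,Y_{i},\mc{Z}}$ is continuous bilinear, hence smooth. A further application of Thm.\ref{81210}, or of the product rule Prp.\ref{01101646} in its bilinear form, then yields $\mr{N}_{1}^{i}[u]\in\mc{C}^{p-1}$.

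\emph{Case $k=2$.} The first factor $\mh{L}\circ(\Upphi(u)\boxtimes\Uppsi(u))$ is $C^{p-1}$ by Thm.\ref{81210}, since $\mh{L}$ is $C^{p}$ and the inner map is $C^{p-1}$. For the second factor, Def.\ref{05101630}\eqref{06291612} says that $\imath_{U}^{\R^{n}}\circ\widehat{\chi}$ is $C^{p}$. Composing with the projection $\Pr_{i}$ gives $\widehat{\chi}_{i}\in\mc{C}^{p}(A\times U,\R)$, so $D_{1}(\widehat{\chi}_{i})\in\mc{C}^{p-1}(A\times U,\mc{L}_{b}(T,\R))$ by Cor.\ref{04221226}. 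Lemma \ref{10221931} says $\mf{e}_{T,\mc{Z}}$ is continuous bilinear, and as before we conclude $\mr{N}_{2}^{i}[u]\in\mc{C}^{p-1}$.

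\emph{Main obstacle.} The delicate point is the bookkeeping of differentiability orders. The limiting factor is $\Uppsi(u)$, which is only $C^{p-1}$ (Rmk.\ref{051016304}). Composing the $C^{p-1}$ map $\partial\mh{L}_{\mf{e}}/\partial Y_{i}$ with it must therefore not cost a further order. This holds because Thm.\ref{81210} preserves the minimum order: a composition of $C^{p-1}$ maps is $C^{p-1}$. One must also check that the target spaces are Hausdorff locally convex, so that Thm.\ref{81210} and the bilinear product rule apply. For $\mc{L}_{b}(T,\mc{Z}_{\R})$ this is given by Lemma \ref{12311713}. In the case $p=\infty$ the argument is the same with all orders infinite.
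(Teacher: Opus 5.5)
Your proposal is correct and is essentially the paper's proof. The paper likewise just combines Def.~\ref{05101630}, Rmk.~\ref{051016304}, the continuity and bilinearity of $\mf{t}$ and $\mf{e}$ (Lemma~\ref{10221256}, Lemma~\ref{10221931}), the composition result Thm.~\ref{81210}, Cor.~\ref{04221226}, Cor.~\ref{8126} and the bilinear rule Prp.~\ref{01101646}, factor by factor, as you do.

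One caveat, which the paper shares: as stated, Thm.~\ref{81210} and Prp.~\ref{01101646} require the intermediate spaces to be normed, yet your first factor takes values in $\mc{L}_{b}((Y_{i})_{\R},\mc{Z}_{\R})$ or in $\mc{Z}_{\R}$. What you actually need is the routine extension of the bilinear product rule to a continuous bilinear map whose first argument is locally convex and whose second is normed. It follows by the same induction, since the derivative again splits into two bilinear terms of that same type. So the hypothesis that really matters here is normedness of these intermediate spaces, not Hausdorff local convexity of the targets, which you single out as the thing to check.
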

\begin{proof}
$\mr{N}_{k}^{i}[u]\in\mc{C}^{p-1}(A\times U,\mc{L}_{b}(T,\mc{Z}_{\R}))$ follows by Def.\ref{05101630}, Rmk.\ref{051016304} and since 
Lemma \ref{10221256}, Lemma \ref{10221931}, Thm.\ref{81210}, Cor.\ref{04221226}, Prp.\ref{08091052}, Cor.\ref{8126} and 
Prp.\ref{01101646}. 
\end{proof}
\begin{remark}
Let $u\in\mf{Z}^{O}$, $h\in T$ and $i\in[1,n]\cap\Z$, then by Rmk.\ref{10231439} we obtain
\begin{multline*}
\mh{N}^{i}[u](h)
=
\Bigr\{
\mf{g}_{i}
\circ
\left(
\bigl(
\frac{\partial\mh{L}_{\mf{e}}}{\partial Y_{i}}\circ(\mf{d}_{O})_{\natural}\circ(\Upphi(u)\boxtimes\Uppsi(u))
\bigr)
\boxtimes
\bigl(
\mr{ev}_{h}^{\mc{L}_{b}(T,(Y_{i})_{\R})}\circ(D_{1}(\Upphi(u)))
\bigr)
\right)
\\
+
\bigl(\mh{L}\circ(\Upphi(u)\boxtimes\Uppsi(u))\bigr)
\cdot
\bigl(\mr{ev}_{h}^{\mc{L}_{b}(T,\R)}\circ(D_{1}\widehat{\chi}_{i})\bigr)
\Bigr\}
\circ(\mf{o}_{\ze}\boxtimes\mr{Id}_{U}).
\end{multline*}
\end{remark}
In the following Cor.\ref{12131752} we exhibit the divergence of $\mh{N}[u]$ as a consequence of \cite[Thm.3.47]{sil09261621}.
Here we recall some notation needed to decript the result below established. 
Given a Hausdorff locally convex space $H$ over $\K$, an open set $V$ of $M$ and a volume form on $V$, 
the divergence operator we mean is a map $\mr{div}_{\omega,p,H_{\R}}$ constructed in \cite[Cor.3.41]{sil09261621} 
defined on $\mc{C}^{p}(V,H_{\R})\otimes_{\mc{C}^{\infty}(V)}\Gamma(V,TM)$ and valued in $\mc{C}^{p-1}(V,\R\widehat{\otimes}H_{\R})$.
If $G$ and $H$ are locally convex spaces over $\K$, then $\widehat{G}$ is the Hausdorff completion of $G$, $\mf{j}_{G}$ is the 
canonical map of $G$ into $\widehat{G}$. If $T\in\mc{L}(G,H)$, then we let $\widehat{T}\in\mc{L}(\widehat{G},\widehat{H})$ denote
the unique continuous extension of $T$, namely the unique map $\widehat{T}$ such that 
$\widehat{T}\circ\mf{j}_{G}=\mf{j}_{H}\circ T$. We let $G\otimes H$ be the projective topological tensor product of $G$ and $H$, 
while $G\widehat{\otimes}H$ stands for $\widehat{G\otimes H}$, finally the map $\gamma_{G}\in\mc{L}(\K\otimes G,G)$ is such that 
$\gamma_{G}(\lambda\otimes x)=\lambda x$, for every $(\lambda,x)\in\K\times G$.
\begin{corollary}
[\textbf{Divergence of $\mh{N}[u]$}]
\label{12131752}
Assume $p=+\infty$, let $u\in\mf{Z}^{O}$ and set $G=\mc{L}_{b}(T,\mc{Z}_{\R})$, then
\begin{equation*}
((\widehat{\gamma_{G}})_{\ast}
\circ\mr{div}_{v_{g[U]},\infty,G})(\mh{N}[u])
=
\\
\mf{j}_{G}\bigl(\sum_{i=1}^{n}\frac{\partial\mh{N}^{i}[u]}{\partial e_{i}[n]}\bigr),
\end{equation*}
where $g[U]$ is the euclidean metric on $U$ and $v_{g[U]}$ be the volume form on $U$ induced by $g[U]$.
\end{corollary}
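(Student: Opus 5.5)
The plan is to reduce the statement to the coordinate formula for the divergence given in \cite[Cor.3.48]{sil09261621}, specialized to the chart $(U,\mr{Id}_{U})$ and the euclidean volume form. First, since $p=+\infty$, Prp.\ref{10282047} gives $\mr{N}_{k}^{i}[u]\in\mc{C}^{\infty}(A\times U,G)$. The map $\mf{o}_{\ze}\boxtimes\mr{Id}_{U}$ is smooth, being affine in each component. Hence by Thm.\ref{81210} (chain rule) each component $\mh{N}^{i}[u]\in\mc{C}^{\infty}(U,G)$. It follows that $\mh{N}[u]=\sum_{i}\mh{N}^{i}[u]\otimes\partial_{i}^{U}$ is a genuine element of $\mc{C}^{\infty}(U,G)\otimes_{\mc{C}^{\infty}(U)}\Gamma(U,TM)$, so $\mr{div}_{v_{g[U]},\infty,G}$ of \cite[Cor.3.41]{sil09261621} applies to it. Here we use that $G$ is a Hausdorff locally convex space; by Lemma \ref{12311713} it is even quasi-complete.

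Second, I would invoke \cite[Cor.3.48]{sil09261621}. That result expresses the divergence of a vector field $\sum_{i}X^{i}\otimes\partial_{i}$, relative to the Riemannian volume form, in local coordinates. The expected shape is
\begin{equation*}
(\widehat{\gamma_{G}})_{\ast}\mr{div}\bigl(\textstyle\sum_{i}X^{i}\otimes\partial_{i}\bigr)=\mf{j}_{G}\Bigl(\sum_{i}\bigl(\partial_{i}X^{i}+X^{i}\,\partial_{i}\log\sqrt{\mr{det}\,g}\bigr)\Bigr),
\end{equation*}
where $(\widehat{\gamma_{G}})_{\ast}$ transfers the $\R\widehat{\otimes}G$-valued result back into $\widehat{G}$. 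For the euclidean metric $g[U]$ in the identity chart, $\mr{det}\,g\equiv1$ and $v_{g[U]}=\wedge_{i=1}^{n}d\Pr_{i}$, so the correction term vanishes. Only $\sum_{i}\partial_{i}^{\mr{Id}_{U}}\mh{N}^{i}[u]$ survives. By the definition of $\partial_{i}^{\xi,E}$ in \cite[Definition 1.4]{sil09261621} applied to the identity chart, this equals $\sum_{i}\frac{\partial\mh{N}^{i}[u]}{\partial e_{i}[n]}$ (compare \eqref{08060435} with $\gamma=\mr{Id}_{U}$).

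The main obstacle is bookkeeping in matching the precise form of \cite[Cor.3.48]{sil09261621}. I need to check whether it is stated for a general chart and volume form, and exactly where $\mf{j}_{G}$ and $(\widehat{\gamma_{G}})_{\ast}$ sit: the divergence lands in $\R\widehat{\otimes}G$ and must be identified through $\widehat{\gamma_{G}}$ with elements of $\widehat{G}$. I would first try to apply it verbatim with $V=U$, chart $\mr{Id}_{U}$ and $\omega=v_{g[U]}$. I would then verify that $\widehat{\gamma_{G}}(\mf{j}_{\R\otimes G}(1\otimes x))=\mf{j}_{G}(x)$ by the defining property $\widehat{T}\circ\mf{j}=\mf{j}\circ T$. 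Linearity of $\mf{j}_{G}$ then lets it commute with the finite sum over $i$, which yields the displayed identity.
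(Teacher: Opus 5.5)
Your proposal is correct and takes the same route as the paper, whose proof consists solely of applying \cite[Cor.3.48]{sil09261621} with $\uppsi=\mr{Id}_{U}$. That is exactly your specialization to the identity chart and the euclidean volume form, where the metric correction term drops out; your regularity check is harmless but already built into Def.\ref{09302310}, since with $p=+\infty$ it gives $\mh{N}^{i}[u]\in\mc{C}^{\infty}(U,\mc{L}_{b}(T,\mc{Z}_{\R}))$ directly.
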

\begin{proof}
Since \cite[Cor.3.48]{sil09261621} applied to $\uppsi=\mr{Id}_{U}$.
\end{proof}
By developping $D_{1}(\Upphi(u))$ in Lemma \ref{07220332} we will display in Prp.\ref{08110009} the component $\mh{N}^{i}[u](h)$ 
in a different form. 
\begin{definition}
Define 
\begin{equation*}
\begin{aligned}
H:A\times U&\to\mc{L}(T,\mathbb{M}_{n}(\R));
\\
(\eta,x)&\mapsto\left(h\mapsto
\bigl((\mu,\nu)\mapsto\frac{\partial(D_{1}\widehat{\chi}_{\mu})}{\partial e_{\nu}^{2}[n]}(\eta,x)(h)\bigr)\right).
\end{aligned}
\end{equation*}
\end{definition}
\begin{proposition}
\label{07232347}
For every $h\in T$ we have 
\begin{equation*}
\mr{ev}_{h}^{\mc{L}_{b}(T,\R)}\circ(D_{1}\widehat{J})
=
\widehat{J}
\cdot
\left[\mr{Tr}\circ\mf{p}\circ\left((\mr{Inv}\circ\widehat{M})\boxtimes(\mr{ev}_{h}^{\mc{L}_{b}(T,\mathbb{M}_{n}(\R))}\circ H)\right)
\right];
\end{equation*}
in particular 
\begin{equation*}
(\mr{ev}_{h}^{\mc{L}_{b}(T,\R)}\circ (D_{1}\widehat{J}))\circ(\mf{o}_{\ze}\boxtimes\mr{Id}_{U})
=
\sum_{i=1}^{n}
\left(\mr{ev}_{h}^{\mc{L}_{b}(T,\R)}\circ\frac{\partial\bigl(D_{1}\widehat{\chi}_{i}\bigr)}{\partial e_{i}^{2}[n]}\right)
\circ(\mf{o}_{\ze}\boxtimes\mr{Id}_{U}).
\end{equation*}
\end{proposition}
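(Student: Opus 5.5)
The plan is to apply Jacobi's formula for the derivative of the determinant, pointwise in $(\eta,x)$, and then specialize to $\eta=\ze$.

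Fix $h\in T$ and $(\eta,x)\in A\times U$. Since $\widehat{J}=\mr{det}\circ\widehat{M}$ and $\mr{det}$ is a polynomial, hence smooth, map on $\mathbb{M}_{n}(\R)$, the chain rule (Thm.\ref{821}, Prp.\ref{815}) together with Cor.\ref{02211942} gives
\begin{equation*}
(D_{1}\widehat{J})(\eta,x)(h)=\mr{det}^{(1)}(\widehat{M}(\eta,x))\bigl((D_{1}\widehat{M})(\eta,x)(h)\bigr).
\end{equation*}
Jacobi's formula states $\mr{det}^{(1)}(a)(b)=\mr{det}(a)\,\mr{Tr}(a^{-1}b)$ for $a\in\mathbb{GL}_{n}(\R)$. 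Here $\widehat{M}(\eta,x)$ is invertible because $\widehat{J}(\eta,x)=J_{\eta}(x)>0$, as noted after Def.\ref{10051149}.

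It then remains to identify $(D_{1}\widehat{M})(\eta,x)(h)$ with $H(\eta,x)(h)$. Differentiation into $\mathbb{M}_{n}(\R)$ with the max-norm is componentwise (Prp.\ref{815}). The $(\mu,\nu)$ entry is $D_{1}\bigl(\frac{\partial\widehat{\chi}_{\mu}}{\partial e_{\nu}^{2}[n]}\bigr)(\eta,x)(h)$. By the symmetry of mixed partial derivatives, i.e. the commutation of $D_{1}$ with $\frac{\partial}{\partial e_{\nu}^{2}[n]}$ as in Cor.\ref{06300307} (the same step used in \eqref{08031059b}), this equals $\frac{\partial(D_{1}\widehat{\chi}_{\mu})}{\partial e_{\nu}^{2}[n]}(\eta,x)(h)$. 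That is exactly $H(\eta,x)(h)_{\mu,\nu}$. The commutation is legitimate since $\widehat{\chi}$ is $C^{p}$ with $p\geq 2$. This proves the first identity.

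For the particular case, evaluate at $\eta=\ze$. Because $\omega_{\natural}^{\ze}=\xi$, we have $\chi(\ze)=\mr{Id}_{U}$. Hence $\widehat{M}(\ze,x)$ is the identity matrix and $\widehat{J}(\ze,x)=1$, so $(\mr{Inv}\circ\widehat{M})(\ze,x)=\mr{Id}$. The trace then reduces to $\sum_{i}H(\ze,x)(h)_{i,i}$, which is the stated sum.

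The main obstacle is the bookkeeping in the middle step. One must justify that $D_{1}$ of a matrix-valued map is computed entrywise, and that $D_{1}$ and the directional derivative in $e_{\nu}^{2}[n]$ commute. Both are routine consequences of the cited calculus results, since $T$ is only normed and the target is finite dimensional.
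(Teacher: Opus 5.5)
Your proposal is correct and takes essentially the same route as the paper. Both apply Jacobi's formula $\mr{det}^{(1)}(a)(b)=\mr{det}(a)\mr{Tr}(\mf{p}(\mr{Inv}(a),b))$ with the chain rule to $\widehat{M}(\cdot,x)$, identify $(D_{1}\widehat{M})(\eta,x)(h)$ with $H(\eta,x)(h)$ via Prp.\ref{815} and Cor.\ref{06300307}, and specialize at $\eta=\ze$ using $\chi(\ze)=\mr{Id}_{U}$; your justification of the invertibility of $\widehat{M}(\eta,x)$ through $\widehat{J}(\eta,x)=J_{\eta}(x)>0$ makes explicit a point the paper leaves implicit.
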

Def.\ref{05101630}\eqref{06291612} implies $\widehat{J}\in\mc{C}^{1}(A\times U,\R)$ so, the statement of Prp.\ref{07232347}
is well-set.
\begin{proof}[Proof of Prp.\ref{07232347}]
It is well-known that for any $a\in\mathbb{GL}_{n}(\R)$ and $b\in\mathbb{M}_{n}(\R)$ 
we have 
\begin{equation}
\label{09250752}
\mr{det}^{(1)}(a)(b)=\mr{det}(a)\mr{Tr}\left(\mf{p}(\mr{Inv}(a),b)\right).
\end{equation}
Next if $Q$ is an open set of a real normed space $N$ and $f:Q\to\mathbb{M}_{n}(\R)$ is a map differentiable at $\eta\in Q$, 
then since Thm.\ref{821} we obtain $(\mr{det}\circ f)^{(1)}(\eta)(h)=\mr{det}^{(1)}(f(\eta))(f^{(1)}(\eta)(h))$ 
for every $h\in N$, and if in addition $f(\eta)\in\mathbb{GL}_{n}(\R)$, then since \eqref{09250752} we conclude
\begin{equation}
\label{09250732a}
(\mr{det}\circ f)^{(1)}(\eta)(h)
=
(\mr{det}\circ f)(\eta)\mr{Tr}\left(\mf{p}\bigl(\mr{Inv}(f(\eta)),f^{(1)}(\eta)(h)\bigr)\right).
\end{equation}
Now let $x\in U$ thus, under the positions $Q=A$, $N=T$ and $f=\widehat{M}(\cdot,x)$ we have 
$(\mr{det}\circ f)(\eta)=\widehat{J}(\eta,x)$ and then 
\begin{equation}
\label{09250732b}
(\mr{det}\circ f)^{(1)}(\eta)(h)=(D_{1}\widehat{J})(\eta,x)(h)=(\mr{ev}_{h}^{\mc{L}_{b}(T,\R)}\circ(D_{1}\widehat{J}))(\eta,x),
\end{equation}
while $f^{(1)}(\eta)(h)=(D_{1}\widehat{M})(\eta,x)(h)$ then since Prp.\ref{815} and Cor.\ref{06300307} we obtain 
\begin{equation}
\label{09250732c}
f^{(1)}(\eta)(h)_{\mu,\nu}=\frac{\partial(D_{1}\widehat{\chi}_{\mu})}{\partial e_{\nu}^{2}[n]}(\eta,x)(h).
\end{equation}
The 1th equality in the statement then follows since \eqref{09250732a}, \eqref{09250732b} and \eqref{09250732c}; while the 2th
equality follows by the 1th one, and by $\widehat{J}(\ze,x)=1$ and $(\mr{Inv}\circ\widehat{M})(\ze,x)=\un_{n}$, indeed 
$\widehat{M}(\ze,x)=\un_{n}$ since $\chi(\ze)=\mr{Id}_{U}$.
\end{proof}
Now we are able to establish the following
\begin{theorem}
[\textbf{Differential of $\mc{A}(u)$ at $\ze$ and on-shell}]
\label{08070358}
Let $u\in\mr{EL}_{W}(\mh{L})$ thus for every $h\in T$ and $f\in\mc{H}(W)$ we have 
\begin{equation}
\label{08070358a}
\mc{A}(u)^{(1)}(\ze)(h)(f)
=
\mr{ev}_{h}^{\mc{L}_{b}(T,\mc{Z}_{\R})}
\left(\int
(\xi^{-1})^{\ast}(f)
\sum_{i=1}^{n}\frac{\partial\mh{N}^{i}[u]}{\partial e_{i}[n]}
d\lambda_{U}\right).
\end{equation}
Equivalently
\begin{equation}
\label{08070358b}
\mc{A}(u)^{(1)}(\ze)(h)(f)
=
\int
(\xi^{-1})^{\ast}(f)
\sum_{i=1}^{n}\frac{\partial\mh{N}^{i}[u](h)}{\partial e_{i}[n]}
d\lambda_{U}.
\end{equation}
\end{theorem}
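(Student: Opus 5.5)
The plan is to specialize Thm.\ref{07222143} at $\eta=\ze$ and reorganize the integrand using the Euler--Lagrange equation. By Thm.\ref{07222143}, $\mc{A}(u)^{(1)}(\ze)(h)(f)=\mh{B}[u,\ze,f,h]+\mh{E}[u,\ze,f,h]$. At $\eta=\ze$ we have $\chi(\ze)=\mr{Id}_{U}$, hence $\widehat{\sigma}(\ze,\cdot)=\mr{Id}_{U}$, $\widehat{J}(\ze,\cdot)=1$ and $((\chi(\ze)\circ\xi)^{-1})^{\ast}(f)=(\xi^{-1})^{\ast}(f)$. Thus both integrals are against the same test function $(\xi^{-1})^{\ast}(f)$, and it suffices to prove the pointwise identity on $U$:
\begin{equation*}
\sum_{k=1}^{4}\Omega_{k}[u,h]\circ(\mf{o}_{\ze}\boxtimes\mr{Id}_{U})
=
\sum_{i=1}^{n}\frac{\partial\mh{N}^{i}[u](h)}{\partial e_{i}[n]}.
\end{equation*}
This gives \eqref{08070358b}. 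Then \eqref{08070358a} follows by pulling $\mr{ev}_{h}$ out of the weak integral, using Rmk.\ref{10231117}, Cor.\ref{08060629b}, and the integrability argument of Lemma \ref{07221116}, exactly as in the equivalence \eqref{07221116b}$\Leftrightarrow$\eqref{07221116c}.

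To prove the pointwise identity, I expand $\partial_{i}\mh{N}^{i}[u](h)$ using the product rule (Prp.\ref{03241344}) for the bilinear maps $\mf{g}_{i}$ and $\mr{s}_{\mc{Z}}$, together with Lemma \ref{08080256}, which converts $\partial/\partial e_{i}[n]$ of a composite with $\mf{o}_{\ze}\boxtimes\mr{Id}_{U}$ into $\partial/\partial e_{i}^{2}[n]$ composed with $\mf{o}_{\ze}\boxtimes\mr{Id}_{U}$. This produces four groups of terms.
\begin{enumerate}
\item $\sum_{i}\mf{g}_{i}\bigl(\partial_{i}(\partial\mh{L}_{\mf{e}}/\partial Y_{i}\circ\cdots),\,D_{1}\Upphi(u)(h)\bigr)$. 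By Lemma \ref{08060533} ($u\in\mr{EL}_{W}(\mh{L})$) this sum equals $\mf{g}_{0}\bigl(\partial\mh{L}/\partial Y_{0}\circ\cdots,\,D_{1}\Upphi(u)(h)\bigr)$, which is $\Omega_{1}[u,h]$.
\item $\sum_{i}\mf{g}_{i}\bigl(\partial\mh{L}_{\mf{e}}/\partial Y_{i}\circ\cdots,\,\partial_{i}^{2}D_{1}\Upphi(u)(h)\bigr)$. This is exactly $\Omega_{2}[u,h]$.
\item $\sum_{i}\partial_{i}^{2}\bigl(\mh{L}\circ(\Upphi(u)\boxtimes\Uppsi(u))\bigr)\cdot D_{1}\widehat{\chi}_{i}(h)$. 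Since $\widehat{\sigma}(\ze,\cdot)=\mr{Id}_{U}$, this is $\Omega_{3}[u,h]$.
\item $\bigl(\mh{L}\circ\cdots\bigr)\cdot\sum_{i}\partial_{i}^{2}D_{1}\widehat{\chi}_{i}(h)$. By the second identity of Prp.\ref{07232347}, this equals $F_{u}\cdot(D_{1}\widehat{J})(h)$ at $\eta=\ze$, i.e.\ $\Omega_{4}[u,h]$. Here one also uses $F_{u}(\ze,\cdot)=\mh{L}\circ(\Upphi(u)\boxtimes\Uppsi(u))(\ze,\cdot)$, again because $\widehat{\chi}(\ze,\cdot)=\mr{Id}_{U}$.
\end{enumerate}
Summing the four groups gives the claimed identity.

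The main obstacle is the regularity and the interchange of derivatives needed in the product-rule expansion. First, $\mh{N}^{i}[u]$ must be differentiable in $x$; this holds by Prp.\ref{10282047} since $p>1$. Second, one needs
\begin{equation*}
\partial_{i}^{2}\bigl(D_{1}\Upphi(u)\bigr)(h)=D_{1}\bigl(\partial_{i}^{2}\Upphi(u)\bigr)(h),
\end{equation*}
which is used implicitly through \eqref{08031059b} and Cor.\ref{06300307}. It requires $\Upphi(u)\in\mc{C}^{p}$ with $p\geq2$, which the hypotheses provide. Third, the evaluations $\mr{ev}_{h}$ must commute with partial derivatives (Prp.\ref{08060629a}). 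I would verify these via Cor.\ref{04221226}, Cor.\ref{06300307} and Prp.\ref{08060629a}, and otherwise keep track of the compositions with $\mf{o}_{\ze}\boxtimes\mr{Id}_{U}$ carefully.
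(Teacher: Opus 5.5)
Your proposal is correct and follows the paper's proof: the paper also specializes Thm.\ref{07222143} at $\eta=\ze$ and uses Lemma \ref{08060533} (with bilinearity of the $\mf{g}_{i}$), the product rule (Prp.\ref{03241344}, Prp.\ref{08091051}), Cor.\ref{08060629b}, Prp.\ref{07232347} and Lemma \ref{08080256} to identify $(\Omega_{1}+\Omega_{2})[u,h]$ and $(\Omega_{3}+\Omega_{4})[u,h]$ at $(\ze,\cdot)$ with the $x$-divergences of the two summands of $\mh{N}^{i}[u](h)$; the only difference is that it runs the computation from the $\Omega$'s toward the divergence rather than expanding the divergence as you do. The equivalence of \eqref{08070358a} and \eqref{08070358b} is likewise obtained as you indicate: the $\mc{L}_{b}(T,\mc{Z}_{\R})$-valued integral exists, and $\mr{ev}_{h}$ is continuous and commutes with the partial derivatives.
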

\begin{proof}
We have
\begin{multline}
\label{07240315}
\begin{aligned}
\mf{g}_{0}\circ
\left(
\bigl(
\frac{\partial\mh{L}}{\partial Y_{0}}
\circ
(\Upphi(u)
\boxtimes
\Uppsi(u))
\bigr)
\boxtimes
\bigl(
\mr{ev}_{h}^{\mc{L}_{b}(T,(Y_{0})_{\R})}\circ(D_{1}(\Upphi(u)))
\bigr)
\right)
\circ
(\mf{o}_{\ze}\boxtimes\mr{Id}_{U})
+
\\
\sum_{i=1}^{n}
\mf{g}_{i}\circ
\left(
\bigl(
\frac{\partial\mh{L}_{\mf{e}}}{\partial Y_{i}}
\circ(\mf{d}_{O})_{\natural}
\circ
(\Upphi(u)
\boxtimes
\Uppsi(u))
\bigr)
\boxtimes
\bigl(
\mr{ev}_{h}^{\mc{L}_{b}(T,(Y_{0})_{\R})}\circ\bigl(\frac{\partial}{\partial e_{i}^{2}[n]}(D_{1}(\Upphi(u)))\bigr)
\bigr)
\right)
\circ
(\mf{o}_{\ze}\boxtimes\mr{Id}_{U})
&=
\\
\sum_{i=1}^{n}
\mf{g}_{i}\circ
\left(
\frac{\partial}{\partial e_{i}^{2}[n]}
\bigl(
\frac{\partial\mh{L}_{\mf{e}}}{\partial Y_{i}}\circ(\mf{d}_{O})_{\natural}\circ(\Upphi(u)\boxtimes\Uppsi(u))
\bigr)
\boxtimes
\bigl(
\mr{ev}_{h}^{\mc{L}_{b}(T,(Y_{0})_{\R})}\circ(D_{1}(\Upphi(u)))
\bigr)
\right)
\circ
(\mf{o}_{\ze}\boxtimes\mr{Id}_{U})
+
\\
\mf{g}_{i}\circ
\left(
\bigl(
\frac{\partial\mh{L}_{\mf{e}}}{\partial Y_{i}}
\circ(\mf{d}_{O})_{\natural}
\circ
(\Upphi(u)
\boxtimes
\Uppsi(u))
\bigr)
\boxtimes
\bigl(
\mr{ev}_{h}^{\mc{L}_{b}(T,(Y_{0})_{\R})}\circ\bigl(\frac{\partial}{\partial e_{i}^{2}[n]}(D_{1}(\Upphi(u)))\bigr)
\bigr)
\right)
\circ
(\mf{o}_{\ze}\boxtimes\mr{Id}_{U})
&=
\\
\sum_{i=1}^{n}
\mf{g}_{i}\circ
\left(
\frac{\partial}{\partial e_{i}^{2}[n]}\bigl(
\frac{\partial\mh{L}_{\mf{e}}}{\partial Y_{i}}\circ(\mf{d}_{O})_{\natural}\circ(\Upphi(u)\boxtimes\Uppsi(u))
\bigr)
\boxtimes
\bigl(
\mr{ev}_{h}^{\mc{L}_{b}(T,(Y_{0})_{\R})}\circ(D_{1}(\Upphi(u)))
\bigr)
\right)
\circ
(\mf{o}_{\ze}\boxtimes\mr{Id}_{U})
+
\\
\mf{g}_{i}\circ
\left(
\bigl(
\frac{\partial\mh{L}_{\mf{e}}}{\partial Y_{i}}
\circ(\mf{d}_{O})_{\natural}
\circ
(\Upphi(u)
\boxtimes
\Uppsi(u))
\bigr)
\boxtimes
\bigl(\frac{\partial}{\partial e_{i}^{2}[n]}
\bigl(\mr{ev}_{h}^{\mc{L}_{b}(T,(Y_{0})_{\R})}\circ(D_{1}(\Upphi(u)))
\bigr)
\bigr)
\right)
\circ
(\mf{o}_{\ze}\boxtimes\mr{Id}_{U})
&=
\\
\sum_{i=1}^{n}
\frac{\partial}{\partial e_{i}^{2}[n]}
\left[
\mf{g}_{i}
\circ
\left(
\bigl(
\frac{\partial\mh{L}_{\mf{e}}}{\partial Y_{i}}\circ(\mf{d}_{O})_{\natural}\circ(\Upphi(u)\boxtimes\Uppsi(u))
\bigr)
\boxtimes
\bigl(
\mr{ev}_{h}^{\mc{L}_{b}(T,(Y_{0})_{\R})}\circ(D_{1}(\Upphi(u)))
\bigr)
\right)
\right]
\circ
(\mf{o}_{\ze}\boxtimes\mr{Id}_{U})
\end{aligned}
\end{multline}
where the 1th equality follows by Lemma \ref{08060533} and the bilinearity of $\mf{g}_{r}$ for $r\in[0,n]\cap\Z$,
the 2th one by Cor.\ref{08060629b}, the 3th one since Prp.\ref{03241344} and Prp.\ref{08091051}.
Next 
\begin{multline}
\label{07240317}
\begin{aligned}
(F_{u}\cdot(\mr{ev}_{h}^{\mc{L}_{b}(T,\R)}\circ(D_{1}\widehat{J})))
\circ
(\mf{o}_{\ze}\boxtimes\mr{Id}_{U})
+
\\
\left[
\sum_{i=1}^{n}
\bigl(
\frac{\partial}{\partial e_{i}^{2}[n]}
\bigl(
\mh{L}
\circ
(\Upphi(u)
\boxtimes
\Uppsi(u))
\bigr)
\bigr)
\cdot
(\mr{ev}_{h}^{\mc{L}_{b}(T,\R)}\circ D_{1}(\widehat{\chi}_{i}))
\right]
\circ
(\mf{o}_{\ze}\boxtimes\mr{Id}_{U})
&=
\\
\bigl[
\bigl(\mh{L}\circ(\Upphi(u)\boxtimes\Uppsi(u))\bigr)
\cdot
\sum_{i=1}^{n}
\frac{\partial}{\partial e_{i}^{2}[n]}
\bigl(\mr{ev}_{h}^{\mc{L}_{b}(T,\R)}\circ(D_{1}\widehat{\chi}_{i})\bigr)
+
\\
\sum_{i=1}^{n}
\frac{\partial}{\partial e_{i}^{2}[n]}
\bigl(
\mh{L}
\circ
(\Upphi(u)
\boxtimes
\Uppsi(u))
\bigr)
\cdot
(\mr{ev}_{h}^{\mc{L}_{b}(T,\R)}\circ D_{1}(\widehat{\chi}_{i}))
\bigr]
\circ
(\mf{o}_{\ze}\boxtimes\mr{Id}_{U})
&=
\\
\sum_{i=1}^{n}
\bigl[
\bigl(\mh{L}\circ(\Upphi(u)\boxtimes\Uppsi(u))\bigr)
\cdot
\frac{\partial}{\partial e_{i}^{2}[n]}
\bigl(\mr{ev}_{h}^{\mc{L}_{b}(T,\R)}\circ(D_{1}\widehat{\chi}_{i})\bigr)
+
\\
\frac{\partial}{\partial e_{i}^{2}[n]}
\bigl(
\mh{L}
\circ
(\Upphi(u)
\boxtimes
\Uppsi(u))
\bigr)
\cdot
(\mr{ev}_{h}^{\mc{L}_{b}(T,\R)}\circ D_{1}(\widehat{\chi}_{i}))
\bigr]
\circ
(\mf{o}_{\ze}\boxtimes\mr{Id}_{U})
&=
\\
\sum_{i=1}^{n}
\frac{\partial}{\partial e_{i}^{2}[n]}
\bigl[
\bigl(\mh{L}\circ(\Upphi(u)\boxtimes\Uppsi(u))\bigr)
\cdot
\bigl(\mr{ev}_{h}^{\mc{L}_{b}(T,\R)}\circ(D_{1}\widehat{\chi}_{i})\bigr)
\bigr]
\circ
(\mf{o}_{\ze}\boxtimes\mr{Id}_{U});
\end{aligned}
\end{multline}
where the 1th equality follows since Prp.\ref{07232347} and Cor.\ref{08060629b};
the 3th one since Prp.\ref{03241344} and Prp.\ref{08091051}. Statement \eqref{08070358b}
then follows since Thm.\ref{07222143}, \eqref{07240315}, \eqref{07240317} and Lemma \ref{08080256}.
Next by definition $\mh{N}^{i}[u]\in\mc{C}^{p-1}(U,\mc{L}_{b}(T,\mc{Z}_{\R}))$ so, 
\begin{equation*}
\frac{\partial\mh{N}^{i}[u]}{\partial e_{i}[n]}\in\mc{C}^{p-2}(U,\mc{L}_{b}(T,\mc{Z}_{\R})),
\end{equation*}
since Prp.\ref{08091052} and Cor.\ref{04221226} therefore the operator 
\begin{equation*}
f\mapsto
\int
(\xi^{-1})^{\ast}(f)
\sum_{i=1}^{n}\frac{\partial\mh{N}^{i}[u]}{\partial e_{i}[n]}
d\lambda_{U},
\end{equation*}
belongs to $\mc{M}(W,\mc{L}_{b}(T,\mc{Z}_{\R}))$ since Lemma \ref{12311713}, Cor.\ref{06162059} and Cor.\ref{08111037},
in particular the integral belongs to $\mc{L}_{b}(T,\mc{Z}_{\R})$ thus, \eqref{08070358b} is equivalent to \eqref{08070358a}
which then for what above proven holds true.
\end{proof}
\begin{remark}
\label{10031813}
Let $\mf{k}:U\to\R$ be a locally $\lambda_{U}$-integrable map. Thus
\begin{equation*}
(\forall f\in\mc{H}(W))\left(\int(\xi^{-1})^{\ast}(f)\cdot\mf{k}\,d\lambda_{U}=0\right)
\Rightarrow
\mf{k}=\ze.
\end{equation*}
Indeed $\xi^{-1}$ is a homeomorphism being a diffeomorphism so, by Cor.\ref{08111037} $(\xi^{-1})^{\ast}\mc{H}(W)=\mc{H}(U)$ 
while by definition $\mc{C}_{c}^{\infty}(U)\subset\mc{H}(U)$ 
therefore, the above implication follows since the fundamental lemma of calculus of variations \cite[p.16-17]{gh}.
\end{remark}
Next we can establish the following conservation law:
\begin{corollary}
[\textbf{Conservation of the operatorial Noether current}]
\label{08110158}
Let $u\in\mr{EL}_{W}(\mh{L})$ thus 
\begin{equation*}
\mc{A}(u)^{(1)}(\ze)=\ze
\Rightarrow
\sum_{i=1}^{n}\frac{\partial\mh{N}^{i}[u]}{\partial e_{i}[n]}=\ze.
\end{equation*}
\end{corollary}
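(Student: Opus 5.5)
The natural route is to combine Thm.\ref{08070358} with the fundamental lemma in the form of Rmk.\ref{10031813}, applied componentwise after dualization. Fix $u\in\mr{EL}_{W}(\mh{L})$ and assume $\mc{A}(u)^{(1)}(\ze)=\ze$. Set
\begin{equation*}
\mf{D}\coloneqq\sum_{i=1}^{n}\frac{\partial\mh{N}^{i}[u]}{\partial e_{i}[n]},
\end{equation*}
which by Prp.\ref{08091052} and Cor.\ref{04221226} belongs to $\mc{C}^{p-2}(U,\mc{L}_{b}(T,\mc{Z}_{\R}))$. In particular it is continuous, since $p>1$. We must show that $\mf{D}(x)(h)=\ze$ for every $x\in U$ and $h\in T$.

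First, fix $h\in T$. By \eqref{08070358b} in Thm.\ref{08070358} and the hypothesis, for every $f\in\mc{H}(W)$ we have
\begin{equation*}
\int(\xi^{-1})^{\ast}(f)\,\bigl(\mr{ev}_{h}\circ\mf{D}\bigr)\,d\lambda_{U}=\ze .
\end{equation*}
Here we use Cor.\ref{08060629b} with $\upphi=\mr{ev}_{h}$, which is continuous and linear by Rmk.\ref{10231117}, to identify $\sum_{i}\partial_{i}(\mh{N}^{i}[u](h))$ with $\mr{ev}_{h}\circ\mf{D}$.

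Next, let $\gamma\in(\mc{Z}_{\R})^{\prime}$. The integral is a weak integral, so applying $\gamma$ commutes with it, for instance via \cite[Prp.3.55]{sil09261621} or the definition of the weak integral as in Lemma \ref{06121742}. This gives
\begin{equation*}
\int(\xi^{-1})^{\ast}(f)\cdot\mf{k}\,d\lambda_{U}=0\qquad\text{for all }f\in\mc{H}(W),
\end{equation*}
where $\mf{k}\coloneqq\gamma\circ\mr{ev}_{h}\circ\mf{D}:U\to\R$. This function is continuous, hence locally $\lambda_{U}$-integrable, so Rmk.\ref{10031813} yields $\mf{k}=\ze$.

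Finally, $\mc{Z}_{\R}$ is Hausdorff locally convex, so $(\mc{Z}_{\R})^{\prime}$ separates points by Hahn–Banach. Hence $\mf{D}(x)(h)=\ze$ for all $x$ and $h$, that is, $\mf{D}=\ze$.

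The main obstacle is the bookkeeping in the second step. One must justify that the integral in \eqref{08070358b} really is the Bourbaki vector integral, so that it commutes with $\gamma$. One must also check that $\mf{k}$ satisfies the hypothesis of Rmk.\ref{10031813}, which is immediate once its continuity is noted. If the $\C$-structure of $\mc{Z}$ causes any issue, working throughout with $\mc{Z}_{\R}$ and real functionals avoids it.
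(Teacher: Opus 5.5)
Your proposal is correct and follows essentially the same route as the paper: Thm.~\ref{08070358} in the form \eqref{08070358b}, then scalarization by $\gamma\in\mc{L}(\mc{Z}_{\R},\R)$, then Rmk.~\ref{10031813}, and finally point separation in the Hausdorff space $\mc{Z}_{\R}$. You also use Cor.~\ref{08060629b} with $\upphi=\mr{ev}_{h}$ to pass from $\sum_{i}\partial_{i}(\mh{N}^{i}[u](h))=\ze$ for all $h\in T$ to $\sum_{i}\partial_{i}\mh{N}^{i}[u]=\ze$, which spells out a step the paper leaves implicit.
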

Equivalently we can express the above result in terms of conservation for every $h\in T$ of the $h$-evaluation of the 
operatorial Noether current relative to $u$: 
\begin{equation}
\label{11051310}
\mc{A}(u)^{(1)}(\ze)=\ze
\Rightarrow
(\forall h\in T)\left(\sum_{i=1}^{n}\frac{\partial\mh{N}^{i}[u](h)}{\partial e_{i}[n]}=\ze\right).
\end{equation}
\begin{remark}
\label{08281125}
Let $u\in\mf{Z}^{O}$, if $\mc{A}(u)$ is a constant map then, $\mc{A}(u)^{(1)}(\eta)=\ze$ for every $\eta\in A$.
Next notice that the following four statements are equivalent: $\mf{a}$ is constant; 
$(\forall\eta\in A)(\mf{a}(\eta)=\mf{a}(\ze))$; 
$(\forall\eta\in A)(\forall u\in\mf{Z}^{O})(\mf{A}(u,\eta)=\mf{A}(u,\ze))$;
$(\forall u\in\mf{Z}^{O})(\mc{A}(u)\text{ is constant})$. Thus if $\mf{a}$ is constant, 
then $\mc{A}(u)^{(1)}(\eta)=\ze$ for every $\eta\in A$ and every $u\in\mf{Z}^{O}$.
\end{remark}
\begin{proof}[Proof of Corollary \ref{08110158}]
Assume that $\mc{A}(u)^{(1)}(\ze)=\ze$ thus, by Thm.\ref{08070358} we have 
\begin{equation*}
(\forall f\in\mc{H}(W))
(\forall h\in T)
\left(\int
(\xi^{-1})^{\ast}(f)
\cdot
\sum_{i=1}^{n}\frac{\partial\mh{N}^{i}[u](h)}{\partial e_{i}[n]}d\lambda_{U}=\ze
\right),
\end{equation*}
and then 
\begin{equation*}
(\forall f\in\mc{H}(W))
(\forall\gamma\in\mc{L}(\mc{Z}_{\R},\R))
(\forall h\in T)
\left(\int
(\xi^{-1})^{\ast}(f)
\cdot
\left(\gamma\circ\sum_{i=1}^{n}\frac{\partial\mh{N}^{i}[u](h)}{\partial e_{i}[n]}\right)d\lambda_{U}=\ze
\right),
\end{equation*}
therefore since Rmk.\ref{10031813} applied to the map 
\begin{equation*}
\gamma\circ\sum_{i=1}^{n}\frac{\partial\mh{N}^{i}[u](h)}{\partial e_{i}[n]},
\end{equation*} 
we obtain
\begin{equation*}
(\forall\gamma\in\mc{L}(\mc{Z}_{\R},\R))
(\forall h\in T)
\left(\gamma\circ\sum_{i=1}^{n}\frac{\partial\mh{N}^{i}[u](h)}{\partial e_{i}[n]}=\ze\right),
\end{equation*}
then the statement follows since $\mc{L}(\mc{Z}_{\R},\R)$ separates the points of $\mc{Z}_{\R}$ being $\mc{Z}_{\R}$ of Hausdorff.
\end{proof}
Now we are in the position of exhibiting the main result of the paper namely:
\begin{corollary}
[\textbf{Conservation of the operatorial Noether current in terms of divergence}]
\label{12142339}
Assume $p=+\infty$, let $u\in\mr{EL}_{W}(\mh{L})$ and set $G=\mc{L}_{b}(T,\mc{Z}_{\R})$, then
\begin{equation*}
\mc{A}(u)^{(1)}(\ze)=\ze
\Rightarrow
\mr{div}_{v_{g[U]},\infty,G}(\mh{N}[u])=\ze.
\end{equation*}
\end{corollary}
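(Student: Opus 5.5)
The plan is to combine Cor.\ref{08110158} with Cor.\ref{12131752} and then remove the outer maps $(\widehat{\gamma_{G}})_{\ast}$ and $\mf{j}_{G}$, which appear only on the divergence side. Assume $p=+\infty$, $u\in\mr{EL}_{W}(\mh{L})$ and $\mc{A}(u)^{(1)}(\ze)=\ze$.

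First, Cor.\ref{08110158} (valid for any admissible $p$, in particular $p=+\infty$) gives
\[
\sum_{i=1}^{n}\frac{\partial\mh{N}^{i}[u]}{\partial e_{i}[n]}=\ze
\]
as an element of $\mc{C}^{\infty}(U,G)$. Applying $\mf{j}_{G}$ pointwise, which is linear, the right-hand side in Cor.\ref{12131752} vanishes. Hence
\[
((\widehat{\gamma_{G}})_{\ast}\circ\mr{div}_{v_{g[U]},\infty,G})(\mh{N}[u])=\ze .
\]

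It then remains to see that $(\widehat{\gamma_{G}})_{\ast}$, i.e. postcomposition with $\widehat{\gamma_{G}}$ on $\mc{C}^{\infty}(U,\R\widehat{\otimes}G)$, is injective. For this I would show that $\widehat{\gamma_{G}}$ is injective on $\R\widehat{\otimes}G$, and the plan for that has three steps.

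\begin{enumerate}
\item Note that $\gamma_{G}:\R\otimes G\to G$ is a linear bijection, with inverse $x\mapsto 1\otimes x$, since $\R$ is one-dimensional.
\item Show that $\gamma_{G}$ is in fact a topological isomorphism. The projective seminorm $p\otimes|\cdot|$ on $\R\otimes G$ satisfies $(p\otimes|\cdot|)(1\otimes x)=p(x)$, and every element of $\R\otimes G$ is an elementary tensor $1\otimes x$.
\item Pass to completions. Since $\gamma_{G}$ is an isomorphism, its extension $\widehat{\gamma_{G}}:\R\widehat{\otimes}G\to\widehat{G}$ is again an isomorphism, in particular injective. Pointwise injectivity then gives injectivity of $(\widehat{\gamma_{G}})_{\ast}$, and therefore $\mr{div}_{v_{g[U]},\infty,G}(\mh{N}[u])=\ze$.
\end{enumerate}

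The main obstacle is step 3: checking precisely that $\widehat{\gamma_{G}}$, as defined in the recalled notation, is the completion of an isomorphism and hence injective. Here quasi-completeness of $G$ (Lemma \ref{12311713}) is not needed; the only fact used is that completion is functorial on isomorphisms.

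If instead the $(\cdot)_{\ast}$ notation in \cite{sil09261621} denotes postcomposition on the coefficient factor of the tensor product, the same injectivity argument applies componentwise. In that case I would also use the description of $\mr{div}$ from \cite[Cor.3.48]{sil09261621} with $\uppsi=\mr{Id}_{U}$ to see that the divergence is determined by its coefficient function.
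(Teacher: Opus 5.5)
Your proposal is correct and is essentially the paper's proof. The paper likewise combines Cor.~\ref{08110158} with Cor.~\ref{12131752}, then notes that $\gamma_{G}$ is a linear topological isomorphism, so $\widehat{\gamma_{G}}$ is one too and $(\widehat{\gamma_{G}})_{\ast}$ is an isomorphism, in particular injective. The only difference is that the paper cites \cite[Lemma 3.33 and Lemma 2.40]{sil09261621} for these two facts, whereas you verify them directly: via the cross property of the projective seminorm on $\R\otimes G$, and via pointwise injectivity of postcomposition.
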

\begin{proof}
$\gamma_{G}$ is a linear and continuous isomorphism by \cite[Lemma 3.33]{sil09261621} then it is so also $\widehat{\gamma_{G}}$ 
and so, $(\widehat{\gamma_{G}})_{\ast}$ is a $\mc{C}^{\infty}(U)$-module isomorphism since \cite[Lemma 2.40]{sil09261621}.
Then the statement follows since Cor.\ref{08110158} and Cor.\ref{12131752}.
\end{proof}
As direct consequence of the conservation law stated in Cor.\ref{08110158} we obtain the following conservation of 
operatorial charges.
In what follows for any open subset $V$ of $\R^{n-1}$ we let $\{\partial_{i}^{V}\}_{i=1}^{n-1}$ denote the basis of vector 
fields on $V$ associated with the chart $(V,\mr{Id}_{V})$. 
\begin{corollary}
[\textbf{Conservation of operatorial Noether charges relative to $u$}]
\label{05112017}
Assume $n>1$, let $u\in\mr{EL}_{W}(\mh{L})$. For any $i\in[1,n]\cap\Z$ let $J_{i}$ be an open interval of $\R$ such that 
$J=\prod_{i=1}^{n}J_{i}\subseteq U$ which exist since the set of elementary sets constitutes a base of the product topology 
\cite[I.24]{BourGT}. Let $V=\prod_{i=2}^{n}J_{i}$ and $\lambda_{V}$ be the Lebesgue measure on $V$. 
Let for every $t\in J_{1}$ the map $c_{t}:V\to J_{1}$, $z\mapsto t$ and define
\begin{equation*}
Q_{V}(u):J_{1}\to\mc{L}_{b}(T,\mc{Z}_{\R}),
t\mapsto\int_{V}\mh{N}^{1}[u]\circ\imath_{J}^{U}\circ(c_{t}\boxtimes\mr{Id}_{V})\,d\lambda_{V}.
\end{equation*}
If $p=+\infty$, then the following conservation law is satisfied 
\begin{equation*}
\mc{A}(u)^{(1)}(\ze)=\ze\Rightarrow\frac{dQ_{V}(u)}{dt}=\ze.
\end{equation*}
\end{corollary}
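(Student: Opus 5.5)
The plan is to reduce the statement to Cor.\ref{08110158} together with differentiation under the integral sign over $V$. Assume $p=+\infty$ and $\mc{A}(u)^{(1)}(\ze)=\ze$. Then Cor.\ref{08110158} gives
\begin{equation*}
\sum_{i=1}^{n}\frac{\partial\mh{N}^{i}[u]}{\partial e_{i}[n]}=\ze
\end{equation*}
on $U$, and in particular on $J$. Since $p=\infty$, each $\mh{N}^{i}[u]$ belongs to $\mc{C}^{\infty}(U,G)$ with $G=\mc{L}_{b}(T,\mc{Z}_{\R})$. By Lemma \ref{12311713}, $G$ is a Hausdorff quasi-complete locally convex space, so all the integrals below of continuous compactly supported $G$-valued maps make sense.

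\textbf{Step 1.} I first differentiate $Q_{V}(u)$ in $t$. The intended tool is Cor.\ref{01031245b}, the commutation of differentiation and integration, applied with parameter space $J_{1}$ and integrand $(t,z)\mapsto\mh{N}^{1}[u](t,z)$ on $J_{1}\times V$. This should give
\begin{equation*}
\frac{dQ_{V}(u)}{dt}(t)=\int_{V}\frac{\partial\mh{N}^{1}[u]}{\partial e_{1}[n]}(t,z)\,d\lambda_{V}(z).
\end{equation*}
The main obstacle is that $V$ is not compact and $\mh{N}^{1}[u]$ need not have compact support in $V$. Two things are needed. First, the integral defining $Q_{V}(u)$ must be meaningful. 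Second, the hypotheses of Cor.\ref{01031245b} (compactly supported integrand, as in \eqref{09301433}) must be met. To handle this I would use that $u\in\mf{Z}\subseteq\mc{C}_{c,1}^{\infty}(W,Y_{0})$, so $u^{\xi}$ and its first partials have compact support in $U$. At $\eta=\ze$, $\mr{N}_{1}^{i}[u]$ is built from $\frac{\partial\mh{L}_{\mf{e}}}{\partial Y_{i}}$ evaluated at $(u^{\xi},\partial u^{\xi})$ together with $D_{1}\Upphi(u)(\ze,\cdot)$, and $\mr{N}_{2}^{i}[u]$ from $\mh{L}(u^{\xi},\partial u^{\xi})\,D_{1}\widehat{\chi}_{i}(\ze,\cdot)$. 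I would argue that these vanish, or are controlled, outside a compact set. If that fails, I would fall back on interpreting the statement locally: replace $V$ by a relatively compact box and use the integrability of the continuous integrand on the closed box. I would then justify differentiation under the integral through the weak integral, by composing with every $\gamma\in\mc{L}(G,\R)$, applying the scalar dominated-convergence theorem with the bound given by continuity of $\partial_{1}\mh{N}^{1}[u]$ on compacts, and concluding by Hahn--Banach separation since $G$ is Hausdorff.

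\textbf{Step 2.} By the conservation law,
\begin{equation*}
\partial_{1}\mh{N}^{1}[u]=-\sum_{i=2}^{n}\partial_{i}\mh{N}^{i}[u],
\end{equation*}
so
\begin{equation*}
\frac{dQ_{V}(u)}{dt}(t)=-\sum_{i=2}^{n}\int_{V}\partial_{i}\mh{N}^{i}[u](t,z)\,d\lambda_{V}(z).
\end{equation*}
For each $i\geq2$ I apply Fubini on $V=\prod_{j\ge2}J_{j}$ (again weakly, via functionals $\gamma$) and integrate $\partial_{i}$ along $J_{i}$ by the fundamental theorem of calculus. This gives boundary terms at the endpoints of $J_{i}$. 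These vanish because the support of the integrand in the $z$-variables is compact inside $V$; this is the same support property required in Step 1, which is why I regard that property as the crux.

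\textbf{Step 3.} Steps 1 and 2 give $\gamma\circ\frac{dQ_{V}(u)}{dt}=0$ for every $\gamma\in\mc{L}(G,\R)$. Since $G$ is Hausdorff, its dual separates points, so $\frac{dQ_{V}(u)}{dt}=\ze$.
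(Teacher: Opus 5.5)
Your skeleton matches the paper's proof. Both use Cor.~\ref{08110158}, differentiation under the integral via Cor.~\ref{01031245b}, the replacement of $\partial_{1}\mh{N}^{1}[u]$ by $-\sum_{k\geq 2}\partial_{k}\mh{N}^{k}[u]$, and separation by functionals. The paper removes the remaining terms with the divergence theorem Cor.~\ref{03161712} on $V$, using $\partial V=\emptyset$ and the field $\mh{N}[u]_{t}$; this is equivalent to your Fubini plus fundamental theorem of calculus. The genuine gap is the step you flag and leave open: compact support in $V$ of the slices $z\mapsto\mh{N}^{k}[u](t,z)$. You give no argument for it, and your fallback does not rescue the proof. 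On a relatively compact box $Q_{V}(u)$ becomes well defined, but Step~2 then leaves the face terms $\int\bigl(\mh{N}^{k}[u]\vert_{z_{k}=\beta_{k}}-\mh{N}^{k}[u]\vert_{z_{k}=\alpha_{k}}\bigr)$, with $J_{k}=(\alpha_{k},\beta_{k})$. These are exactly the terms that must vanish.

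The paper does not fill this either. Its proof asserts the property in \eqref{08190457}, citing Prp.~\ref{10282047}, Def.~\ref{08310159} and Def.~\ref{12141626}, but these only give compact support of $u^{\xi}$ in $U$. That support can cross the lateral faces $J_{1}\times\partial V$. Off it the current equals $\frac{\partial\mh{L}_{\mf{e}}}{\partial Y_{k}}(\ze,\ze)\circ D_{1}\Upphi(u)(\ze,\cdot)+\mh{L}(\ze,\ze)\,D_{1}\widehat{\chi}_{k}(\ze,\cdot)$, which need not vanish.

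In fact the statement fails as written. Take $M=U=\R^{2}$, $\xi=\mr{Id}$, $\omega^{\eta}=\xi$, $T=\R$, $X=\R^{2}$, $\mc{Z}=\R$ and $O=\mc{Y}$. Let $\mh{L}(a,b)=\det(b_{1},b_{2})$, i.e.\ $\partial_{1}u_{1}\partial_{2}u_{2}-\partial_{2}u_{1}\partial_{1}u_{2}$, and $\phi_{\eta}(u)=R_{\eta}\circ u$ with $R_{\eta}$ the rotation by the angle $\eta$.
\begin{itemize}
\item This Lagrangian is null, so $\mr{EL}_{W}(\mh{L})=\mf{Z}$.
\item $\mc{A}(u)$ is constant, because $\det(R_{\eta}b_{1},R_{\eta}b_{2})=\det(b_{1},b_{2})$; hence $\mc{A}(u)^{(1)}(\ze)=\ze$.
\item One computes $\mh{N}^{1}[u](h)=-\frac{h}{2}\partial_{2}\bigl((u^{\xi}_{1})^{2}+(u^{\xi}_{2})^{2}\bigr)$.
\item For $J_{2}=(\alpha,\beta)$ this gives $Q_{V}(u)(t)(h)=-\frac{h}{2}\bigl(|u^{\xi}(t,\beta)|^{2}-|u^{\xi}(t,\alpha)|^{2}\bigr)$.
\end{itemize}
This is not constant in $t$ if $\mr{supp}(u^{\xi})$ meets the line $\{x_{2}=\beta\}$ but not $\{x_{2}=\alpha\}$.

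So an extra hypothesis is needed. One sufficient choice is a compact $K\subset V$ with $\mh{N}^{k}[u](t,z)=\ze$ for all $(t,z)\in J_{1}\times(V\setminus K)$ and all $k\in[1,n]\cap\Z$. Under it your Steps~1--3 go through as written: dominated convergence works on $[t_{0}-\epsilon,t_{0}+\epsilon]\times K\subset J$, and the face terms vanish.
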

We call $Q_{V}(u)$ as defined in Cor.\ref{05112017} the operatorial Noether charge relative to $u$ and to $V$.
\begin{proof}[Proof of Cor.\ref{05112017}.]
Let $t\in J_{1}$, we deduce since Prp.\ref{10282047}, and by Def.\ref{08310159} and Def.\ref{12141626} that 
\begin{equation}
\label{08190457}
\begin{aligned}
(\forall i\in[1,n]\cap\Z)(\mh{N}^{i}[u]\circ\imath_{J}^{U}\circ(c_{t}\boxtimes\mr{Id}_{V})\in
\mc{C}_{c}^{p-1}(V,\mc{L}_{b}(T,\mc{Z}_{\R}))),
\\
(\forall i,j\in[1,n]\cap\Z)\left(\frac{\partial\mh{N}^{i}[u]}{\partial e_{j}[n]}\circ\imath_{J}^{U}\circ
(c_{t}\boxtimes\mr{Id}_{V})\in\mc{C}_{c}^{p-2}(V,\mc{L}_{b}(T,\mc{Z}_{\R}))\right);
\end{aligned}
\end{equation}
so they are continuous and compactly supported maps then scalarly $\lambda_{V}$-integrable, therefore according to 
\cite[Def.1.16 and Cor.1.23(2)]{sil09261621} we can define 
\begin{equation*}
\mh{N}[u]_{t}\in\mf{L}_{c}^{1}(V,\mc{L}_{b}(T,\mc{Z}_{\R}),\lambda_{V})\otimes_{\mc{C}^{\infty}(V)}\Gamma(V,TV),
\end{equation*}
such that 
\begin{equation*}
\mh{N}[u]_{t}\coloneqq\sum_{i=2}^{n}\left(\mh{N}^{i}[u]\circ\imath_{J}^{U}\circ
(c_{t}\boxtimes\mr{Id}_{V})\right)\otimes\partial_{i-1}^{V}.
\end{equation*}
Next we deduce that the weak integral against $\lambda_{V}$ of the maps in \eqref{08190457} exists and belongs to 
$\mc{L}_{b}(T,\mc{Z}_{\R})$ since \cite[III.38 Cor.2]{IntBourb} and since $\mc{L}_{b}(T,\mc{Z}_{\R})$ is a Hausdorff locally convex 
quasi-complete space by Lemma \ref{12311713}; as a result $Q_{V}(u)$ is well-defined. Furthermore by \eqref{08190457} we deduce 
since Cor.\ref{01031245b} that 
\begin{equation*}
\frac{dQ_{V}(u)}{dt}(t)=
\int_{V}\frac{\partial\mh{N}^{1}[u]}{\partial e_{1}[n]}\circ\imath_{J}^{U}\circ(c_{t}\boxtimes\mr{Id}_{V})\,d\lambda_{V},
\end{equation*}
so, since Cor.\ref{08110158} we obtain that $\mc{A}(u)^{(1)}(\ze)=\ze$ implies 
\begin{equation}
\label{12141437}
\frac{dQ_{V}(u)}{dt}(t)
=
(-1)\int_{V}\sum_{k=2}^{n}\frac{\partial\mh{N}^{k}[u]}{\partial e_{k}[n]}\circ\imath_{J}^{U}
\circ(c_{t}\boxtimes\mr{Id}_{V})\,d\lambda_{V},
\end{equation}
where the integral in the right-hand side of the above equality belongs to $\mc{L}_{b}(T,\mc{Z}_{\R})$ by what above proven. 
Now assume $p=+\infty$ and let $\phi\in\mc{L}(\mc{L}_{b}(T,\mc{Z}_{\R}),\R)$, then 
\begin{equation}
\begin{aligned}
\label{12141435}
\phi\left(\int_{V}\sum_{k=2}^{n}\frac{\partial\mh{N}^{k}[u]}{\partial e_{k}[n]}\circ\imath_{J}^{U}
\circ(c_{t}\boxtimes\mr{Id}_{V})\,d\lambda_{V}\right)
&=
\int_{V}\sum_{k=2}^{n}\frac{\partial(\phi_{\ast}\circ\mh{N}^{k}[u])}{\partial e_{k}[n]}\circ\imath_{J}^{U}
\circ(c_{t}\boxtimes\mr{Id}_{V})\,d\lambda_{V}
\\
&=
\int_{V}\mr{div}_{V}\bigl((\mf{a}_{\Gamma(V,TV)}\circ\phi_{\times})(\mh{N}[u]_{t})\bigr)\,dv_{V},
\\
&=
\ze.
\end{aligned}
\end{equation}
Here $\phi_{\ast}$ and $\phi_{\times}$ are defined respectively in \cite[Def.2.39 and Def.2.41]{sil09261621}, 
$\mf{a}_{\Gamma(V,TV)}$ is a particular case of a map defined in \cite[Def.3.36]{sil09261621}, and it is the unique 
$\mc{C}^{\infty}(V)$-linear map $\mf{a}_{\Gamma(V,TV)}:\mc{C}^{\infty}(V)\otimes_{\mc{C}^{\infty}(V)}\Gamma(V,TV)\to\Gamma(V,TV)$ such that 
$\mf{a}_{\Gamma(V,TV)}(g\otimes T)=gT$ for every $(g,T)\in\mc{C}^{\infty}(V)\times\Gamma(V,TV)$, while $\mr{div}_{V}$ is the usual 
divergence associated with $v_{V}$ the volume form on $V$ induced by $g_{V}$ the Euclidean metric on $V$.
The 1th equality follows since the definition of weak integral and by \cite[(3.5)]{sil09261621}, the 2th equality follows by 
the definition of $\phi_{\times}$ and $\mf{a}_{\Gamma(V,TV)}$, and since the divergence formula in Cor.\ref{03290108},
the 3th equality follows by the fact that $\partial V=\emptyset$ the boundary of $V$ and since the divergence theorem 
Cor.\ref{03161712}.
Now $\mc{L}(\mc{L}_{b}(T,\mc{Z}_{\R}),\R)$ separates the points of $\mc{L}_{b}(T,\mc{Z}_{\R})$ being of Hausdorff then, we conclude 
that 
\eqref{12141435} implies
\begin{equation*}
\int_{V}\sum_{k=2}^{n}\frac{\partial\mh{N}^{k}[u]}{\partial e_{k}[n]}\circ\imath_{J}^{U}
\circ(c_{t}\boxtimes\mr{Id}_{V})\,d\lambda_{V}=\ze,
\end{equation*}
then the statement follows by \eqref{12141437}.
\end{proof}
We conclude this section by displaying a different form of the components of the operatorial Noether current. 
Let us start with the following
\begin{lemma}
\label{07220332}
Let $u\in\mf{Z}^{O}$ and $h\in T$ thus, 
\begin{multline*}
\mr{ev}_{h}^{\mc{L}_{b}(T,(Y_{0})_{\R})}\circ D_{1}\bigl(\Upphi(u)\bigr)\circ(\beta\boxtimes\widehat{\chi})
=
\\
\mr{ev}_{h}^{\mc{L}_{b}(T,(Y_{0})_{\R})}\circ D_{1}\bigl(\Upphi(u)\circ(\beta\boxtimes\widehat{\chi})\bigr)
-
\sum_{l=1}^{n}
\bigl(\frac{\partial\Upphi(u)}{\partial e_{l}^{2}[n]}\circ(\beta\boxtimes\widehat{\chi})\bigr)
\cdot
(\mr{ev}_{h}^{\mc{L}_{b}(T,\R)}\circ D_{1}\widehat{\chi}_{l}).
\end{multline*}
\end{lemma}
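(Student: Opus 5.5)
The identity is the chain rule applied to the composite $\Upphi(u)\circ(\beta\boxtimes\widehat{\chi})$, differentiated only in the parameter direction. I would fix $(\eta,x)\in A\times U$ and $h\in T$, and compute the right-hand side's first term pointwise. By \eqref{07222205a}-type reasoning, i.e. Cor.\ref{02211942}, we have
\begin{equation*}
D_{1}\bigl(\Upphi(u)\circ(\beta\boxtimes\widehat{\chi})\bigr)(\eta,x)(h)
=
\bigl(\Upphi(u)\circ(\beta\boxtimes\widehat{\chi})\bigr)^{(1)}(\eta,x)(h,\ze).
\end{equation*}
The differentiability needed here holds because $\Upphi(u)\in\mc{C}^{p}(A\times U,Y_{0})$ by Def.\ref{05101630}\eqref{051016303}, $\imath_{U}^{\R^{n}}\circ\widehat{\chi}$ is $C^{p}$ by Def.\ref{05101630}\eqref{06291612}, and $\beta$ is differentiable by Lemma \ref{07220225}, with $p>1$.

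Next I would apply the chain rule Thm.\ref{821} together with Prp.\ref{815} for the map $\beta\boxtimes\widehat{\chi}$ into the product $T\times\R^{n}$. This gives the value
\begin{equation*}
\Upphi(u)^{(1)}(\delta)\bigl(\beta^{(1)}(\eta,x)(h,\ze),\,\widehat{\chi}^{(1)}(\eta,x)(h,\ze)\bigr)
=
\Upphi(u)^{(1)}(\delta)(h,\rho),
\end{equation*}
where $\delta=(\eta,\widehat{\chi}(\eta,x))$ and $\rho=\widehat{\chi}^{(1)}(\eta,x)(h,\ze)=(D_{1}\widehat{\chi})(\eta,x)(h)$. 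This is exactly as in the proof of Lemma \ref{07220226}. Splitting with Cor.\ref{02211942} then yields
\begin{equation*}
(D_{1}\Upphi(u))(\delta)(h)+(D_{2}\Upphi(u))(\delta)(\rho).
\end{equation*}

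The first summand is precisely $\mr{ev}_{h}\circ D_{1}(\Upphi(u))\circ(\beta\boxtimes\widehat{\chi})$ evaluated at $(\eta,x)$. For the second summand I would expand $\rho$ in the standard basis: by Prp.\ref{815} and Cor.\ref{02211942}, $\Pr_{l}(\rho)=D_{1}(\widehat{\chi}_{l})(\eta,x)(h)$. Linearity of $(D_{2}\Upphi(u))(\delta)$ and \eqref{08092146} then give
\begin{equation*}
\sum_{l=1}^{n}\frac{\partial\Upphi(u)}{\partial e_{l}^{2}[n]}(\delta)\cdot(D_{1}\widehat{\chi}_{l})(\eta,x)(h).
\end{equation*}
Rearranging produces the claimed identity pointwise, and hence as an equality of maps.

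The only delicate step is bookkeeping rather than analysis. One must treat $Y_{0}$ as a real space $(Y_{0})_{\R}$ so that the differential conventions \eqref{12201914} apply, and one must check that $\widehat{\chi}$, valued in $U$, may be composed after $\imath_{U}^{\R^{n}}$ when differentiating. I expect no genuine obstacle beyond this.
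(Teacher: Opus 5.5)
Your proposal is correct and follows the paper's proof essentially step by step. You reduce $D_{1}$ to $(\cdot)^{(1)}(\eta,x)(h,\ze)$ via Cor.~\ref{02211942}, apply Thm.~\ref{821} and Prp.~\ref{815} to obtain $\Upphi(u)^{(1)}(\delta)(h,\rho)$, split this into the $D_{1}$ and $D_{2}$ parts, and then expand $\rho$ componentwise using Prp.~\ref{815}, Cor.~\ref{02211942} applied to $\widehat{\chi}$, and \eqref{08092146}. The only cosmetic difference is that the paper first rewrites the $D_{2}$-term as $\Upphi(u)^{(1)}(\delta)(\ze,\rho)$ and then invokes the first equality in \eqref{08092146}, whereas you use its second equality directly.
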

\begin{proof}
\begin{equation*}
\begin{aligned}
D_{1}\bigl(\Upphi(u)\circ(\beta\boxtimes\widehat{\chi})\bigr)(\eta,x)(h)
&=
\\
\bigl(\Upphi(u)\circ(\beta\boxtimes\widehat{\chi})\bigr)^{(1)}(\eta,x)(h,\ze)
&=
\\
(\Upphi(u))^{(1)}(\eta,\widehat{\chi}(\eta,x))(h,(\widehat{\chi})^{(1)}(\eta,x)(h,\ze))
&=
\\
D_{1}\bigl(\Upphi(u)\bigr)(\eta,\widehat{\chi}(\eta,x))(h)
+
D_{2}\bigl(\Upphi(u)\bigr)(\eta,\widehat{\chi}(\eta,x))((D_{1}\widehat{\chi})(\eta,x)h)
&=
\\
D_{1}\bigl(\Upphi(u)\bigr)(\eta,\widehat{\chi}(\eta,x))(h)
+
\bigl(\Upphi(u)\bigr)^{(1)}(\eta,\widehat{\chi}(\eta,x))(\ze,(D_{1}\widehat{\chi})(\eta,x)h);
\end{aligned}
\end{equation*}
where the 1th equality follows since Cor.\ref{02211942} applied to 
$\Upphi(u)\circ(\beta\boxtimes\widehat{\chi})$; the 2th one by Thm.\ref{821} and Prp.\ref{815}; the 3th one by
Cor.\ref{02211942} applied to $\Upphi(u)$ and to $\widehat{\chi}$; the 4th one by Cor.\ref{02211942} applied to
$\Upphi(u)$. The statement then follows since Cor.\ref{02211942} and Prp.\ref{815} applied to the map $\widehat{\chi}$,
and by the 1th equality in \eqref{08092146} applied to $\Upphi(u)$.
\end{proof}
\begin{definition}
We define $\lr{\mr{X}}{\Delta}$ to be a duplet associated with $\lr{\mh{L},\phi}{\omega}$ iff 
\begin{enumerate}
\item
$\mr{X}:[1,n]\cap\Z\to\mc{C}^{p-1}(A\times U,\mc{L}_{b}(T,\R))$ such that $\forall i\in[1,n]\cap\Z$ we have 
\begin{equation*}
\mr{X}^{i}=D_{1}(\widehat{\chi}_{i});
\end{equation*}
\label{051016307}
\item
$\Delta:\mf{Z}^{O}\to\mc{C}^{p-1}(A\times U,\mc{L}_{b}(T,(Y_{0})_{\R}))$ such that $\forall u\in\mf{Z}^{O}$ we have 
\begin{equation*}
\Delta(u)=D_{1}\bigl(\Upphi(u)\circ(\beta\boxtimes\widehat{\chi})\bigr).
\end{equation*}
\label{051016308}
\end{enumerate}
\end{definition}
\begin{definition}
Let $\lr{\mr{X}}{\Delta}$ be a duplet associted with $\lr{\mh{L},\phi}{\omega}$.
If $T=\R^{s}$ with $s\in\Z_{>0}$, then for every $r\in[1,s]\cap\Z$, then define $\mr{X}_{r}:[1,n]\cap\Z\to\mc{C}^{p-1}(U,\R)$ and
$\Delta_{r}:\mf{Z}^{O}\to\mc{C}^{p-1}(U,Y_{0})$ such that $\forall i\in[1,n]\cap\Z$ and $\forall u\in\mf{Z}^{O}$ we have 
\begin{equation*}
\begin{aligned}
\mr{X}_{r}^{i}
&\coloneqq
\mr{ev}_{e_{r}[s]}^{\mc{L}_{b}(T,\R)}\circ\mr{X}^{i}\circ(\mf{o}_{\ze}\boxtimes\mr{Id}_{U});
\\
\Delta_{r}(u)
&\coloneqq
\mr{ev}_{e_{r}[s]}^{\mc{L}_{b}(T,(Y_{0})_{\R})}
\circ(\Delta(u))\circ(\mf{o}_{\ze}\boxtimes\mr{Id}_{U}).
\end{aligned}
\end{equation*}
\end{definition}
In the following Lemma \ref{08112204} and Prp.\ref{08110009} we let $\lr{\mr{X}}{\Delta}$ be a duplet associated with 
$\lr{\mh{L},\phi}{\omega}$. 
\begin{lemma}
\label{08112204}
Let $u\in\mf{Z}^{O}$ and $h\in T$ thus, 
\begin{multline*}
\mr{ev}_{h}^{\mc{L}_{b}(T,(Y_{0})_{\R})}\circ D_{1}\bigl(\Upphi(u)\bigr)\circ(\mf{o}_{\ze}\boxtimes\mr{Id}_{U})
=
\\
\mr{ev}_{h}^{\mc{L}_{b}(T,(Y_{0})_{\R})}\circ\Delta(u)\circ(\mf{o}_{\ze}\boxtimes\mr{Id}_{U})
-
\sum_{l=1}^{n}(\partial_{l}u^{\xi})\cdot(\mr{ev}_{h}^{\mc{L}_{b}(T,\R)}\circ\mr{X}^{l}\circ(\mf{o}_{\ze}\boxtimes\mr{Id}_{U})).
\end{multline*}
\end{lemma}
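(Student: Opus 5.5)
The plan is to specialize Lemma \ref{07220332} at $\eta=\ze$ and then rewrite each term using the definitions of $\Delta$ and $\mr{X}$.

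First, compose both sides of the identity in Lemma \ref{07220332} on the right with $\mf{o}_{\ze}\boxtimes\mr{Id}_{U}$. The key observation is
\begin{equation*}
(\beta\boxtimes\widehat{\chi})\circ(\mf{o}_{\ze}\boxtimes\mr{Id}_{U})=\mf{o}_{\ze}\boxtimes\mr{Id}_{U}.
\end{equation*}
This holds because $\widehat{\chi}(\ze,x)=\chi(\ze)(x)=x$: indeed $\omega_{\natural}^{\ze}=\xi$ gives $\chi(\ze)=\xi\circ\xi^{-1}=\mr{Id}_{U}$. Hence the left-hand side of Lemma \ref{07220332}, once composed, becomes exactly the left-hand side of the statement.

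On the right-hand side, the first term becomes $\mr{ev}_{h}\circ\Delta(u)\circ(\mf{o}_{\ze}\boxtimes\mr{Id}_{U})$ by the definition of the duplet. Item \eqref{051016308} gives $\Delta(u)=D_{1}(\Upphi(u)\circ(\beta\boxtimes\widehat{\chi}))$. In the sum, $\mr{ev}_{h}\circ D_{1}\widehat{\chi}_{l}$ composed with $\mf{o}_{\ze}\boxtimes\mr{Id}_{U}$ equals $\mr{ev}_{h}\circ\mr{X}^{l}\circ(\mf{o}_{\ze}\boxtimes\mr{Id}_{U})$ by item \eqref{051016307}. Composition distributes over the pointwise product and sum, since these are defined pointwise via $\mr{s}$.

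The remaining step is to identify
\begin{equation*}
\frac{\partial\Upphi(u)}{\partial e_{l}^{2}[n]}\circ(\beta\boxtimes\widehat{\chi})\circ(\mf{o}_{\ze}\boxtimes\mr{Id}_{U})=\frac{\partial\Upphi(u)}{\partial e_{l}^{2}[n]}\circ(\mf{o}_{\ze}\boxtimes\mr{Id}_{U})
\end{equation*}
with $\partial_{l}u^{\xi}$. The argument runs as follows:
\begin{itemize}
\item By Lemma \ref{08080256}, applicable since $\Upphi(u)$ is $C^{p}$ with $p\geq 1$, the left side equals $\frac{\partial}{\partial e_{l}[n]}\bigl(\Upphi(u)\circ(\mf{o}_{\ze}\boxtimes\mr{Id}_{U})\bigr)$.
\item By definition, $\Upphi(u)\circ(\mf{o}_{\ze}\boxtimes\mr{Id}_{U})=\phi_{\ze}^{\xi}(u)=\phi_{\ze}(u)\circ\xi^{-1}$.
\item Since $\phi_{\ze}=\mr{Id}_{\mf{Z}}$ by Def.\ref{05101630}, this is $u\circ\xi^{-1}=u^{\xi}$.
\end{itemize}
Together these give $\partial_{l}u^{\xi}$, which is what the statement requires.

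No step is a real obstacle. The only care needed is bookkeeping: matching the abuse-of-notation conventions for composition of the $\mr{ev}_{h}$ maps, and checking that $\widehat{\chi}(\ze,\cdot)=\mr{Id}_{U}$ as a map into $U$. That latter point, the identity $\chi(\ze)=\mr{Id}_{U}$, was already used in the proof of Prp.\ref{07232347}.
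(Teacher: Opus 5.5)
Your proposal is correct and follows the paper's own proof. The paper likewise composes Lemma \ref{07220332} on the right with $\mf{o}_{\ze}\boxtimes\mr{Id}_{U}$, uses $(\beta\boxtimes\widehat{\chi})\circ(\mf{o}_{\ze}\boxtimes\mr{Id}_{U})=\mf{o}_{\ze}\boxtimes\mr{Id}_{U}$, and then invokes Lemma \ref{08080256} to obtain the $\partial_{l}u^{\xi}$ term. You additionally spell out what the paper leaves implicit: the definitions of $\Delta$ and $\mr{X}$, the fact $\chi(\ze)=\mr{Id}_{U}$, and $\phi_{\ze}=\mr{Id}_{\mf{Z}}$.
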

\begin{proof}
We have 
\begin{equation*}
\mr{ev}_{h}^{\mc{L}_{b}(T,(Y_{0})_{\R})}\circ D_{1}\bigl(\Upphi(u)\bigr)\circ(\mf{o}_{\ze}\boxtimes\mr{Id}_{U})
=
\mr{ev}_{h}^{\mc{L}_{b}(T,(Y_{0})_{\R})}\circ D_{1}\bigl(\Upphi(u)\bigr)\circ(\beta\boxtimes\widehat{\chi})\circ(\mf{o}_{\ze}\boxtimes
\mr{Id}_{U}),
\end{equation*}
since 
\begin{equation*}
(\beta\boxtimes\widehat{\chi})\circ(\mf{o}_{\eta}\boxtimes\mr{Id}_{U})=\mf{o}_{\eta}\boxtimes\chi(\eta),
\end{equation*}
so $(\beta\boxtimes\widehat{\chi})\circ(\mf{o}_{\ze}\boxtimes\mr{Id}_{U})=\mf{o}_{\ze}\boxtimes\mr{Id}_{U}$.
Then the statement follows since Lemma \ref{07220332} and Lemma \ref{08080256}.
\end{proof}
\begin{proposition}
\label{08110009}
Let $u\in\mf{Z}^{O}$, $h\in T$ and $k\in[1,n]\cap\Z$, then
\begin{multline*}
\mh{N}^{k}[u](h)=
\\
\mf{g}_{k}
\circ
\left\{
\left(
\frac{\partial\mh{L}_{\mf{e}}}{\partial Y_{k}}\circ(\mf{d}_{O})_{\natural}\circ(u^{\xi}\boxtimes(\partial u^{\xi}))
\right)
\boxtimes
\left(
\mr{ev}_{h}^{\mc{L}_{b}(T,(Y_{0})_{\R})}\circ\Delta(u)\circ(\mf{o}_{\ze}\boxtimes\mr{Id}_{U})
-
\sum_{l=1}^{n}(\partial_{l}u^{\xi})\cdot(\mr{ev}_{h}^{\mc{L}_{b}(T,\R)}\circ\mr{X}^{l}\circ(\mf{o}_{\ze}\boxtimes\mr{Id}_{U}))
\right)
\right\}
\\
+
\bigl(\mh{L}\circ(u^{\xi}\boxtimes(\partial u^{\xi}))\bigr)
\cdot
\bigl(\mr{ev}_{h}^{\mc{L}_{b}(T,\R)}\circ\mr{X}^{k}\circ(\mf{o}_{\ze}\boxtimes\mr{Id}_{U})\bigr).
\end{multline*}
In particular if $T=\R^{s}$ with $s\in\Z_{>0}$, then for every $r\in[1,s]\cap\Z$ we obtain
\begin{equation}
\label{08110009b}
\mh{N}^{k}(u;e_{r}[s])=
\mf{g}_{k}
\circ
\left(
\left(
\frac{\partial\mh{L}_{\mf{e}}}{\partial Y_{k}}\circ(\mf{d}_{O})_{\natural}\circ(u^{\xi}\boxtimes(\partial u^{\xi}))
\right)
\boxtimes
\left(\Delta_{r}(u)-\sum_{l=1}^{n}(\partial_{l}u^{\xi})\cdot\mr{X}_{r}^{l}
\right)
\right)
+
\bigl(\mh{L}\circ(u^{\xi}\boxtimes(\partial u^{\xi}))\bigr)
\cdot\mr{X}_{r}^{k}.
\end{equation}
\end{proposition}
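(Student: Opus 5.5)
The plan is to start from the explicit pointwise form of $\mh{N}^{k}[u](h)$ in the Remark following Prp.\ref{10282047}. That form writes $\mh{N}^{k}[u](h)$ as the sum of two terms composed with $(\mf{o}_{\ze}\boxtimes\mr{Id}_{U})$. The first is $\mf{g}_{k}$ applied to the pair consisting of $\frac{\partial\mh{L}_{\mf{e}}}{\partial Y_{k}}\circ(\mf{d}_{O})_{\natural}\circ(\Upphi(u)\boxtimes\Uppsi(u))$ and $\mr{ev}_{h}^{\mc{L}_{b}(T,(Y_{0})_{\R})}\circ D_{1}(\Upphi(u))$. The second is $\mh{L}\circ(\Upphi(u)\boxtimes\Uppsi(u))$ multiplied by $\mr{ev}_{h}^{\mc{L}_{b}(T,\R)}\circ D_{1}\widehat{\chi}_{k}$. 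Since composition with $(\mf{o}_{\ze}\boxtimes\mr{Id}_{U})$ distributes over $\boxtimes$, over $\mf{g}_{k}$ and over the scalar product, I will evaluate each factor separately at $(\ze,x)$.

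The first step is to identify the field factors at $\eta=\ze$. By Def.\ref{05101630}, $\phi_{\ze}=\mr{Id}_{\mf{Z}}$, so $\Upphi(u)\circ(\mf{o}_{\ze}\boxtimes\mr{Id}_{U})=\phi_{\ze}^{\xi}(u)=u^{\xi}$. By Rmk.\ref{06132050}, i.e. by \eqref{08161354} and \eqref{08060435}, $\Uppsi(u)\circ(\mf{o}_{\ze}\boxtimes\mr{Id}_{U})=\psi_{\ze}^{\xi}(u)=\partial^{\xi}(u)\circ\xi^{-1}=\partial u^{\xi}$. Hence $(\Upphi(u)\boxtimes\Uppsi(u))\circ(\mf{o}_{\ze}\boxtimes\mr{Id}_{U})=u^{\xi}\boxtimes\partial u^{\xi}$, and this handles the factor $\frac{\partial\mh{L}_{\mf{e}}}{\partial Y_{k}}\circ\dots$ as well as $\mh{L}\circ\dots$. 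The second step uses the definition of the duplet: $\mr{X}^{k}=D_{1}\widehat{\chi}_{k}$, which turns the last factor into $\mr{ev}_{h}\circ\mr{X}^{k}\circ(\mf{o}_{\ze}\boxtimes\mr{Id}_{U})$.

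The third step, and the only one with real content, is to rewrite $\mr{ev}_{h}\circ D_{1}(\Upphi(u))\circ(\mf{o}_{\ze}\boxtimes\mr{Id}_{U})$. This is exactly Lemma \ref{08112204}. That lemma in turn rests on Lemma \ref{07220332} and on the facts $(\beta\boxtimes\widehat{\chi})\circ(\mf{o}_{\ze}\boxtimes\mr{Id}_{U})=\mf{o}_{\ze}\boxtimes\mr{Id}_{U}$ (since $\chi(\ze)=\mr{Id}_{U}$) and $\frac{\partial\Upphi(u)}{\partial e_{l}^{2}[n]}\circ(\mf{o}_{\ze}\boxtimes\mr{Id}_{U})=\partial_{l}u^{\xi}$, which follows from Lemma \ref{08080256}. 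Substituting gives the first displayed formula.

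For \eqref{08110009b}, I will set $h=e_{r}[s]$ and recognise $\Delta_{r}(u)$ and $\mr{X}_{r}^{l}$ directly from their definitions. The main obstacle is mostly bookkeeping: checking that composition with $(\mf{o}_{\ze}\boxtimes\mr{Id}_{U})$ commutes with $\mf{g}_{k}$, with $\boxtimes$ and with the products. This holds pointwise and is immediate. One should also check that the term $\partial_{l}u^{\xi}$ coming from Lemma \ref{08080256} really needs the differentiability of $\Upphi(u)$, which Def.\ref{05101630} guarantees since $p>1$.
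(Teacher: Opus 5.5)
Your proposal is correct and follows the paper's route. The paper's proof is just ``By Lemma \ref{08112204}'': that lemma is substituted into the explicit form of $\mh{N}^{k}[u](h)$, which is your third step. Your first two steps (using $\phi_{\ze}=\mr{Id}_{\mf{Z}}$, \eqref{08161354}, \eqref{08060435} and $\mr{X}^{k}=D_{1}\widehat{\chi}_{k}$) correctly supply the bookkeeping the paper leaves implicit.
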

\begin{proof}
By Lemma \ref{08112204}.
\end{proof}
\begin{remark}
[Reobtaining the classical results as special cases.]
\label{11051131}
Cor.\ref{08110158} and Cor.\ref{05112017} with the help of the above result and under the following very special assumptions:
\begin{itemize}
\item
$\K=\R$,
\item
$\mc{Z}=\R$,
\item
$\K_{1}$ depending by the model considered,
\item
$\mr{dim}X<\infty$,
\item
$T=\R^{s}$ with $s\in\Z_{>0}$;
\end{itemize}
yield the classical results provided in the Bogoliubov text \cite{bog2}. More specifically if $u\in\mf{Z}^{O}$, $k\in[0,n]\cap\Z$
and $r\in[1,s]\cap\Z$, then the $e_{r}[s]$-evaluation $\mh{N}^{k}(u;e_{r}[s])$ of the $k$-th component of the operatorial Noether 
current relative to $u$ provided in \eqref{08110009b}, equals $(-1)$ times the current \cite[p.18 (2.5)]{bog2}.
Notice that in \cite[p.18 (2.5)]{bog2} the index $(n)$ replaces the index $r$, while in \cite[p.18 (2.5)]{bog2} the index $i$ 
runs in $[1,\mr{dim}(X)]$ because of the decomposition, relative to a basis of the finite dimensional $X$, 
of what in \eqref{08110009b} is the partial differential $\frac{\partial\mh{L}_{\mf{e}}}{\partial Y_{k}}$. 
Clearly the index $i\in[1,\mr{dim}(X)]$ could not be present in the more general \eqref{08110009b} since in Prp.\ref{08110009} 
we neither hypothized $X$ to be a cartesian product, nor in particular $X$ to be finite dimensional. Furthermore, if in addition 
to the above assumptions we assume that $u\in\mr{EL}_{W}(\mh{L})$ and $\mc{A}(u)^{(1)}(\ze)=\ze$, for instance when $\mf{a}$ is a 
constant map, then, we deduce since Cor.\ref{08110158} or more specifically \eqref{11051310} that 
\begin{equation}
\label{11061136a}
\sum_{k=1}^{n}\frac{\partial\mh{N}^{k}[u](e_{r}[s])}{\partial e_{k}[n]}=\ze,
\end{equation}
while since Cor.\ref{05112017} we obtain the following law of conservation 
\begin{equation}
\label{11061136b}
\frac{dQ_{r}(u)}{dt}=\ze,
\end{equation}
where $Q_{r}(u)$ equals the evaluation via $e_{r}[s]$ of the operatorial Noether charge $Q_{V}(u)$. 
Now \eqref{11061136a} equals \cite[p.18 (2.7)]{bog2} and \eqref{11061136b} equals \cite[p.19 (2.8)]{bog2} for a suitable $V$. 
\end{remark}
\section{Symmetries of $\mr{vp}(\mf{a}(\ze))$}
\label{11050951}
By recalling \cite[Thm.3.57]{sil09261621} and the fact that $\mc{Z}$ is quasi-complete, we provide the following 
\begin{definition}
\label{08280329}
Define $\upmu:\mf{Z}^{O}\to\mc{M}(W,\mc{Z}_{\R})$ such that for every $u\in\mf{Z}^{O}$
\begin{equation*}
\upmu(u)\doteqdot\mf{m}_{\mf{L}(u,\partial^{\xi}u)\otimes\wedge_{i=1}^{n}d\xi_{i}}.
\end{equation*}  
\end{definition}
In what follows we use Def.\ref{03011632}, while $\mf{a}(\eta)$ is defined in Def.\ref{08280414}.
\begin{proposition}
\label{08280330}
For every $\eta\in A$ we have 
\begin{equation*}
\mf{a}(\eta)=(\upzeta(\eta)^{\ast})^{\intercal}\circ\upmu\circ\imath_{\phi_{\eta}(\mf{Z}_{O})}^{\mf{Z}_{O}}
\circ(\phi_{\eta}\circ\imath_{\mf{Z}_{O}}^{\mf{Z}})_{\natural}.
\end{equation*}
\end{proposition}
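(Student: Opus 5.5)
The identity is an equality of maps $\mf{Z}^{O}\to\mc{M}(W,\mc{Z}_{\R})$. I will prove it by evaluating both sides at an arbitrary $u\in\mf{Z}^{O}$ and then at an arbitrary test function $f\in\mc{H}(W)$; equality in $\mc{Z}_{\R}$ for every such $f$ gives equality of the measures, by the definition $\mc{M}(W,\mc{Z}_{\R})=\mc{L}_{s}(\mc{H}(W),\mc{Z}_{\R})$.

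First, the right-hand side is well defined. The standing Assumption gives $\phi_{\eta}(\mf{Z}^{O})\subseteq\mf{Z}^{O}$. Hence $(\phi_{\eta}\circ\imath_{\mf{Z}^{O}}^{\mf{Z}})_{\natural}$ takes values in $\phi_{\eta}(\mf{Z}^{O})$, and composing with the inclusion $\imath_{\phi_{\eta}(\mf{Z}^{O})}^{\mf{Z}^{O}}$ lands in the domain of $\upmu$. So for $u\in\mf{Z}^{O}$ the right-hand side reads
\[
(\upzeta(\eta)^{\ast})^{\intercal}\bigl(\upmu(\phi_{\eta}(u))\bigr),
\qquad
\upmu(\phi_{\eta}(u))=\mf{m}_{\mf{L}(\phi_{\eta}(u),\partial^{\xi}\phi_{\eta}(u))\otimes\wedge_{i=1}^{n}d\xi_{i}} .
\]

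Second, I rewrite the left-hand side. By \eqref{08161354}, since $u\in\mf{Z}^{O}$ we have $\psi_{\eta}(u)=\partial^{\xi}(\phi_{\eta}(u))$. Therefore $\mf{L}(\phi_{\eta}(u),\psi_{\eta}(u))=\mf{L}(\phi_{\eta}(u),\partial^{\xi}\phi_{\eta}(u))$, and Def.\ref{08280414} gives
\[
\mf{a}(\eta)(u)=\mf{A}(u,\eta)=\upzeta(\eta)\bigl(\upmu(\phi_{\eta}(u))\bigr),
\]
where $\upzeta(\eta)(\cdot)$ is the image measure of Lemma \ref{10161358} under the diffeomorphism $\upzeta(\eta):W\to W$.

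The remaining, and main, step is to identify the image-measure operation $\upzeta(\eta)(\mu)$ with the transpose $(\upzeta(\eta)^{\ast})^{\intercal}(\mu)$ from Def.\ref{03011632}. For $f\in\mc{H}(W)$, formula \eqref{10161406} gives $\upzeta(\eta)(\mu)(f)=\mu(f\circ\upzeta(\eta))=\mu(\upzeta(\eta)^{\ast}f)$. The transpose is presumably defined by $(\upzeta(\eta)^{\ast})^{\intercal}(\mu)(f)=\mu(\upzeta(\eta)^{\ast}f)$. The two expressions then coincide, provided $\upzeta(\eta)^{\ast}$ maps $\mc{H}(W)$ into $\mc{H}(W)$. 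This holds because $\upzeta(\eta)=\xi^{-1}\circ\omega_{\natural}^{\eta}$ is a homeomorphism of $W$ onto itself, so it preserves compact supports; this is exactly the content of Cor.\ref{08111037}.

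I also need the hypotheses of Lemma \ref{10161358} to be satisfied, so that the image measure exists. Measurability and essential integrability are automatic here: for each $\gamma$, $g\circ\upzeta(\eta)$ is continuous with compact support. The expected obstacle is only notational, namely matching Def.\ref{03011632} precisely, in particular the convention for the transpose of $\upzeta(\eta)^{\ast}$ as a map $\mc{H}(W)\to\mc{H}(W)$ acting on $\mc{L}_{s}(\mc{H}(W),\mc{Z}_{\R})$. Once the two expressions agree for every $u\in\mf{Z}^{O}$ and every $f\in\mc{H}(W)$, the statement follows.
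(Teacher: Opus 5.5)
Your proposal is correct and follows the paper's proof, which simply cites \eqref{08161354} to replace $\psi_{\eta}(u)$ by $\partial^{\xi}\phi_{\eta}(u)$, and \eqref{08280414a}, which is exactly your observation that $\mf{A}(u,\eta)(f)$ is the measure $\upmu(\phi_{\eta}(u))$ evaluated at $\upzeta(\eta)^{\ast}f$. Your guess about the transpose is also right: Def.\ref{03011632} with a single factor gives $(\upzeta(\eta)^{\ast})^{\intercal}(\mu)=\mu\circ\upzeta(\eta)^{\ast}$, so the image measure of Lemma \ref{10161358} and the transpose coincide.
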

\begin{proof}
By \eqref{08161354} and \eqref{08280414a}.
\end{proof}
\begin{lemma}
\label{08280331}
Let $N$ be a normed space on $\R$, and $P,Q$ be Hausdorff locally convex spaces on $\R$
moreover let $A$ and $B$ be open sets of $N$ such that $A\cap B\neq\emptyset$, $f:A\to N$ be a differentiable map at 
$x\in A\cap B$ and such that $f(A)\subseteq B$, $S:B\to P$ be a differentiable map at $x$ and $f(x)$, finally let 
$\mh{I}\in\mc{L}(P,Q)$. Then, 
\begin{equation*}
\mh{I}\circ S\circ\imath_{f(A)}^{B}\circ f_{\natural}\circ\imath_{A\cap B}^{A}=S\circ\imath_{A\cap B}^{B}
\Rightarrow
(S^{(1)}(f(x))=\ze\Rightarrow S^{(1)}(x)=\ze),
\end{equation*}
if in addition we assume that $f^{(1)}(x)$ admits a right inverse and that $\mh{I}$ admits a left inverse, then 
\begin{equation*}
\mh{I}\circ S\circ\imath_{f(A)}^{B}\circ f_{\natural}\circ\imath_{A\cap B}^{A}=S\circ\imath_{A\cap B}^{B}
\Rightarrow
(S^{(1)}(x)=\ze\Leftrightarrow S^{(1)}(f(x))=\ze).
\end{equation*}
\end{lemma}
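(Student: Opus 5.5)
The plan is to differentiate the identity in the hypothesis at $x$ using the chain rule, and then read off both implications from the resulting operator identity. First I observe that $A\cap B$ is an open set of $N$ containing $x$. Differentiability and the differential at a point are local notions, so the map $S\circ\imath_{A\cap B}^{B}$ is differentiable at $x$ with differential $S^{(1)}(x)$. Likewise $f\circ\imath_{A\cap B}^{A}$, regarded as an $N$-valued map, is differentiable at $x$ with differential $f^{(1)}(x)$.

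Next I compute the differential of the left-hand side. Since $f$ is differentiable at $x$, $f(A)\subseteq B$, and $S$ is differentiable at $f(x)$, the chain rule Thm.\ref{821} gives that $S\circ\imath_{f(A)}^{B}\circ f_{\natural}\circ\imath_{A\cap B}^{A}$ is differentiable at $x$ with differential $S^{(1)}(f(x))\circ f^{(1)}(x)$. Composing with the continuous linear map $\mh{I}$ and using Thm.\ref{821} together with Prp.\ref{813} (a continuous linear map is its own differential) yields differential $\mh{I}\circ S^{(1)}(f(x))\circ f^{(1)}(x)$ at $x$. By uniqueness of the differential, the assumed equality of maps on $A\cap B$ then gives
\begin{equation*}
\mh{I}\circ S^{(1)}(f(x))\circ f^{(1)}(x)=S^{(1)}(x).
\end{equation*}

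The first claim follows immediately: if $S^{(1)}(f(x))=\ze$, the left-hand side vanishes, so $S^{(1)}(x)=\ze$. For the second claim, only the converse direction remains. Assume $S^{(1)}(x)=\ze$, and let $\mh{J}$ be a left inverse of $\mh{I}$ and $r$ a right inverse of $f^{(1)}(x)$. Mere linearity of these inverses suffices, since they only act on the operator identity algebraically. Applying $\mh{J}$ on the left gives $S^{(1)}(f(x))\circ f^{(1)}(x)=\ze$. Composing with $r$ on the right gives $S^{(1)}(f(x))=S^{(1)}(f(x))\circ f^{(1)}(x)\circ r=\ze$.

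The only delicate point I expect is the bookkeeping with the restrictions and identity injections: one must check that the chain rule applies to $S\circ\imath_{f(A)}^{B}\circ f_{\natural}$ exactly as to $S\circ f$, and that restricting to the open set $A\cap B$ does not change the differential at $x$. Both reduce to the locality of the limit in the definition of the differential.
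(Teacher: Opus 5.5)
Your proposal is correct and follows the paper's proof. Like the paper, you differentiate both sides of the hypothesized identity at $x$ using the chain rule (Thm.~\ref{821}), Prp.~\ref{813} and the locality of the differential (Prp.~\ref{05231704}) to obtain $\mh{I}\circ S^{(1)}(f(x))\circ f^{(1)}(x)=S^{(1)}(x)$, and then read off both implications; your explicit use of the left inverse of $\mh{I}$ and the right inverse of $f^{(1)}(x)$ simply writes out the step the paper compresses into ``then the statement follows''.
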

\begin{proof}
Since Thm.\ref{821}, Prp.\ref{813} and Prp.\ref{05231704} we have 
\begin{equation*}
(\mh{I}\circ S\circ\imath_{f(A)}^{B}\circ f_{\natural}\circ\imath_{A\cap B}^{A})^{(1)}(x)
=
\mh{I}\circ S^{1}(f(x))\circ f^{(1)}(x),
\end{equation*}
while since Prp.\ref{05231704} we have $(S\circ\imath_{A\cap B}^{B})^{(1)}(x)=S^{(1)}(x)$ therefore 
$\mh{I}\circ S\circ\imath_{f(A)}^{B}\circ f_{\natural}\circ\imath_{A\cap B}^{A}=S\circ\imath_{A\cap B}^{B}$ implies 
$\mh{I}\circ S^{1}(f(x))\circ f^{(1)}(x)=S^{(1)}(x)$ and then the statement follows.
\end{proof}
By recalling that $\mf{Z}^{O}$ is an open set of the normed space $\mf{Z}$ according to Lemma \ref{09171929}, we can provide 
the following: 
\begin{definition}
\label{11031405}
If $g:\mf{Z}^{O}\to\mc{M}(W,\mc{Z}_{\R})$ is a differentiable map, then we define the set of solutions of the variational problem 
associated with $g$ the following set 
\begin{equation*}
\mr{vp}(g)\coloneqq\{u\in\mf{Z}^{O}\,\vert\,g^{(1)}(u)=\ze\}.
\end{equation*}
\end{definition}
In Thm.\ref{09142108} we shall provide a sufficient condition ensuring the differentiability of $\mf{a}(\ze)$. 
Next since Prp.\ref{08280330} we have $\mf{a}(\ze)=\upmu$ thus, we can provide the following 
\begin{definition}
Assume that $\upmu$ is differentiable, then $\lr{\mh{L},\phi}{\omega}$ is called perfect at $\eta$ iff $\eta\in A$, 
$\phi_{\eta}\circ\imath_{\mf{Z}^{O}}^{\mf{Z}}:\mf{Z}^{O}\to\mf{Z}$ is differentiable and its differential map 
$(\phi_{\eta}\circ\imath_{\mf{Z}^{O}}^{\mf{Z}})^{(1)}$ is such that $(\phi_{\eta}\circ\imath_{\mf{Z}^{O}}^{\mf{Z}})^{(1)}(u)$ 
admits a right inverse for every $u\in\mr{vp}(\mf{a}(\ze))$. 
Furthermore $\lr{\mh{L},\phi}{\omega}$ is called perfect iff it is perfect at $\eta$ for every 
$\eta\in A$.
\end{definition}
By calling a symmetry of a nonempty set any map defined on and valued in that set, we can state the following  
\begin{theorem}
[\textbf{An invariance of the action $\mf{a}$ induces a symmetry of $\mr{vp}(\mf{a}(\ze))$}]
\label{08280333}
Assume that $\upmu$ is differentiable and $\lr{\mh{L},\phi}{\omega}$ is perfect at $\eta$, then, 
\begin{equation*}
\mf{a}(\eta)=\mf{a}(\ze)\Rightarrow\phi_{\eta}(\mr{vp}(\mf{a}(\ze)))\subseteq\mr{vp}(\mf{a}(\ze)).
\end{equation*}
\end{theorem}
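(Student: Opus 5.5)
The plan is to reduce the statement to Lemma \ref{08280331}, using the factorization of $\mf{a}(\eta)$ given in Prp.\ref{08280330}. Since $\mf{a}(\ze)=\upmu$ and $\phi_{\ze}=\mr{Id}_{\mf{Z}}$, the hypothesis $\mf{a}(\eta)=\mf{a}(\ze)$ together with Prp.\ref{08280330} gives, on $\mf{Z}^{O}$, the identity
\begin{equation*}
(\upzeta(\eta)^{\ast})^{\intercal}\circ\upmu\circ\imath_{\phi_{\eta}(\mf{Z}^{O})}^{\mf{Z}^{O}}\circ(\phi_{\eta}\circ\imath_{\mf{Z}^{O}}^{\mf{Z}})_{\natural}
=\upmu .
\end{equation*}
This has exactly the shape required by Lemma \ref{08280331}, with the following choices: $N=\mf{Z}$; $A=B=\mf{Z}^{O}$, which is open by Lemma \ref{09171929}, so that $A\cap B=\mf{Z}^{O}$; $f=\phi_{\eta}\circ\imath_{\mf{Z}^{O}}^{\mf{Z}}$, which satisfies $f(\mf{Z}^{O})\subseteq\mf{Z}^{O}$ by the standing assumption on $\phi$; $S=\upmu$; $P=Q=\mc{M}(W,\mc{Z}_{\R})$; and $\mh{I}=(\upzeta(\eta)^{\ast})^{\intercal}$.

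Two hypotheses of Lemma \ref{08280331} have to be checked. The first is that $f$ and $S$ are differentiable at the relevant points. For $f$ this follows from perfectness at $\eta$, and for $S$ from the assumed differentiability of $\upmu$, which holds at every point of $\mf{Z}^{O}$. The second is that $\mh{I}\in\mc{L}(P,Q)$. For this I would use that $\upzeta(\eta)$ is a diffeomorphism of $W$, so its pullback $\upzeta(\eta)^{\ast}$ is a linear bijection of $\mc{H}(W)$ onto itself (via Cor.\ref{08111037}), with inverse the pullback by $\upzeta(\eta)^{-1}$. The transpose $\mu\mapsto\mu\circ\upzeta(\eta)^{\ast}$ is then linear, and it is continuous for the topology of simple convergence because each evaluation $\mr{ev}_{l}\circ\mh{I}=\mr{ev}_{\upzeta(\eta)^{\ast}l}$ is continuous. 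The same argument shows that its inverse, the transpose of $(\upzeta(\eta)^{-1})^{\ast}$, is continuous, so $\mh{I}$ has a continuous left inverse. The step I expect to need the most care is this identification of $\mh{I}$ via Lemma \ref{10161358}: one has to confirm that the pushforward measure $\upzeta(\eta)(\mu)$ really acts by $g\mapsto\mu(g\circ\upzeta(\eta))$, that is, by the transpose of the pullback, and that it is invertible with inverse given by $\upzeta(\eta)^{-1}$.

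With these ingredients in place, fix $v\in\mr{vp}(\mf{a}(\ze))$, so that $\upmu^{(1)}(v)=\ze$. The goal is $\upmu^{(1)}(\phi_{\eta}(v))=\ze$, which is the implication $S^{(1)}(x)=\ze\Rightarrow S^{(1)}(f(x))=\ze$ at $x=v$. The first part of Lemma \ref{08280331} only gives the reverse implication, so the second part is needed. Its extra hypotheses are that $f^{(1)}(v)$ admits a right inverse, which is precisely perfectness at $\eta$ since $v\in\mr{vp}(\mf{a}(\ze))$, and that $\mh{I}$ admits a left inverse, which was shown above. The equivalence $S^{(1)}(v)=\ze\Leftrightarrow S^{(1)}(\phi_{\eta}(v))=\ze$ then gives $\phi_{\eta}(v)\in\mr{vp}(\mf{a}(\ze))$, noting that $\phi_{\eta}(v)\in\mf{Z}^{O}$. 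Concretely, differentiating the identity gives $\mh{I}\circ\upmu^{(1)}(\phi_{\eta}(v))\circ f^{(1)}(v)=\upmu^{(1)}(v)=\ze$. Composing on the left with the left inverse of $\mh{I}$ and on the right with the right inverse of $f^{(1)}(v)$ yields $\upmu^{(1)}(\phi_{\eta}(v))=\ze$.
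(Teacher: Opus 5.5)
Your proof is correct and follows the paper's argument. The paper likewise combines Prp.~\ref{08280330} with the second part of Lemma~\ref{08280331}, taking $S=\upmu$ and $\mh{I}=(\upzeta(\eta)^{\ast})^{\intercal}$, and uses that $\mh{I}$ is continuous with inverse $((\upzeta(\eta)^{-1})^{\ast})^{\intercal}$. Your direct check of the continuity of $\mh{I}$, through the evaluation maps of the simple-convergence topology, is if anything a more fitting justification than the paper's appeal to Prp.~\ref{06301252}.
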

\begin{proof}
$(\upzeta(\eta)^{\ast})^{\intercal}$ is linear and continuous since Prp.\ref{06301252} moreover it is invertible with 
inverse $((\upzeta(\eta)^{-1})^{\ast})^{\intercal}$, then the statement follows since Prp.\ref{08280330} and Lemma \ref{08280331}
applied to $S=\mf{a}(\ze)=\upmu$.
\end{proof}
\begin{corollary}
\label{08281203}
Assume that $\mf{a}$ is constant. Thus, 
\begin{equation*}
(\forall u\in\mr{EL}_{W}(\mh{L}))
\left(\sum_{i=1}^{n}\frac{\partial\mh{N}^{i}[u]}{\partial e_{i}[n]}=\ze\right),
\end{equation*}
in addition under the hypothesis and notation of Cor.\ref{05112017} and if $p=+\infty$, then we have 
\begin{equation*}
(\forall u\in\mr{EL}_{W}(\mh{L}))
\left(\frac{dQ_{V}(u)}{dt}=\ze\right).
\end{equation*}
Furthermore if $\upmu$ is differentiable and $\lr{\mh{L},\phi}{\omega}$ is perfect at $\eta$, then
\begin{equation*}
\phi_{\eta}(\mr{vp}(\mf{a}(\ze)))\subseteq\mr{vp}(\mf{a}(\ze)).
\end{equation*}
\end{corollary}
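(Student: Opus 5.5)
The plan is to reduce everything to results already stated, using Rmk.\ref{08281125} as the bridge from the constancy of $\mf{a}$ to vanishing differentials. Assume $\mf{a}$ is constant. By Rmk.\ref{08281125}, $\mc{A}(u)$ is then constant for every $u\in\mf{Z}^{O}$. Hence $\mc{A}(u)^{(1)}(\eta)=\ze$ for every $\eta\in A$, in particular $\mc{A}(u)^{(1)}(\ze)=\ze$. Here we use that $\mc{A}(u)$ is differentiable by Thm.\ref{07222143}, and that a constant map has zero differential (Prp.\ref{812}).

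\textbf{First claim.} Fix $u\in\mr{EL}_{W}(\mh{L})$, so in particular $u\in\mf{Z}^{O}$. The hypothesis $\mc{A}(u)^{(1)}(\ze)=\ze$ holds by the preceding paragraph. Cor.\ref{08110158} therefore gives $\sum_{i=1}^{n}\partial\mh{N}^{i}[u]/\partial e_{i}[n]=\ze$.

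\textbf{Second claim.} Assume additionally the hypotheses and notation of Cor.\ref{05112017} (so $n>1$, $J\subseteq U$, $V$, $Q_{V}(u)$ as there) and $p=+\infty$. For the same $u$ the premise $\mc{A}(u)^{(1)}(\ze)=\ze$ is again satisfied, so Cor.\ref{05112017} yields $dQ_{V}(u)/dt=\ze$ directly.

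\textbf{Third claim.} Assume $\upmu$ is differentiable and $\lr{\mh{L},\phi}{\omega}$ is perfect at $\eta$; note that this presupposes $\eta\in A$. Since $\mf{a}$ is constant, $\mf{a}(\eta)=\mf{a}(\ze)$ by the equivalences listed in Rmk.\ref{08281125}. Thm.\ref{08280333} then gives $\phi_{\eta}(\mr{vp}(\mf{a}(\ze)))\subseteq\mr{vp}(\mf{a}(\ze))$.

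\textbf{Main obstacle.} No step is genuinely hard. The only point needing care is checking that every hypothesis of the invoked corollaries is matched. Specifically: $u\in\mr{EL}_{W}(\mh{L})$ implies $u\in\mf{Z}^{O}$ by Def.\ref{07230430}, so Rmk.\ref{08281125} applies to $u$. The standing Assumption \ref{08101151a} makes $\mc{A}(u)$ well defined and differentiable, so the statement about its differential at $\ze$ is meaningful.
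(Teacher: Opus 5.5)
Your proposal is correct and follows the paper's own proof, which simply cites Rmk.\ref{08281125}, Cor.\ref{08110158}, Cor.\ref{05112017} and Thm.\ref{08280333} in the same way. You reduce constancy of $\mf{a}$ to $\mc{A}(u)^{(1)}(\ze)=\ze$ and to $\mf{a}(\eta)=\mf{a}(\ze)$, then apply those three results, and your hypothesis checks are accurate.
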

\begin{proof}
Since Rmk.\ref{08281125}, Cor.\ref{08110158}, Cor.\ref{05112017}, and Thm.\ref{08280333}. 
\end{proof}
\section
[Differential of $\mf{a}(\ze)$ and characterization of $\mr{vp}(\mf{a}(\ze))$]
{Differential of $\mf{a}(\ze)$ and characterization of the set of solutions of the variational problem associated 
with $\mf{a}(\ze)$.}
\label{09171942}
In the previous section we stated Thm.\ref{08280333} and Cor.\ref{08281203} requiring the differentiability of $\mf{a}(\ze)$.
In the present section Thm.\ref{09142108} provides a sufficient condition ensuring the differentiability of $\mf{a}(\ze)$ 
and establishes its differential map; Cor.\ref{10011525} displays a characterization of $\mr{vp}(\mf{a}(\ze))$ in terms of the 
solution set $\mr{sEL}_{W}(\mh{L})$ of a functional differential equation Def.\ref{11011831}; while Thm.\ref{10301959} 
establishes that $\mr{sEL}_{W}(\mh{L})$ and then $\mr{vp}(\mf{a}(\ze))$ is a subset of $\mr{EL}_{W}(\mh{L})$.
As a result Cor.\ref{10011525}, Thm.\ref{09142108} and Thm.\ref{08280333} we can state in Cor.\ref{11031537} 
that the invariance $\mf{a}(\eta)=\mf{a}(\ze)$ implies that $\phi_{\eta}$ is a symmetry of the solution set $\mr{sEL}_{W}(\mh{L})$.
Finally Cor.\ref{10042025} summarizes the consequences of the main results of the paper under the hypothesis that 
$\mf{a}$ is a constant map.
\begin{convention}
Let $\mf{n}$ denote $\mf{s}_{\mc{Y}_{\R},\mc{Z}_{\R}}$ see Def.\ref{11022206}.
\end{convention}
\begin{definition}
\label{11030827}
For every $u\in\mf{Z}^{O}$ and $v\in\mf{Z}$ define $Q(u,v)\in\mc{C}^{p-1}(U,\mc{Z})$ 
such that 
\begin{equation*}
Q(u,v)
\coloneqq
\mf{n}\circ
\left[
\bigl(\mh{L}^{(1)}\circ(\Upphi(u)\boxtimes\Uppsi(u))\circ(\mf{o}_{\ze}\boxtimes\mr{Id}_{U})\bigr)
\boxtimes
\bigl((\Upphi(v)\boxtimes\Uppsi(v))\circ(\mf{o}_{\ze}\boxtimes\mr{Id}_{U})\bigr)
\right].
\end{equation*}
Furthermore define $\mh{Q}[u,v]\in\mc{M}(W,\mc{Z}_{\R})$ such that for every $f\in\mc{H}(W)$ we have 
\begin{equation*}
\mh{Q}[u,v](f)
\coloneqq
\int(\xi^{-1})^{\ast}(f)\cdot Q(u,v)\,d\lambda_{U}.
\end{equation*}
\end{definition}
Explicitly we have 
\begin{equation*}
\mh{Q}[u,v](f)
=
\int(\xi^{-1})^{\ast}(f)(x)\mh{L}^{(1)}(u^{\xi}(x),(\partial u^{\xi})(x))((v^{\xi}(x),(\partial v^{\xi})(x)))\,d\lambda_{U}(x).
\end{equation*}
\begin{proposition}
\label{11030824}
Definition \eqref{11030827} is well-set, specifically for every $u\in\mf{Z}^{O}$ and $v\in\mf{Z}$, we have 
\begin{enumerate}
\item
$Q(u,v)\in\mc{C}^{p-1}(U,\mc{Z})$; 
\label{11030824a}
\item
$\mh{Q}[u,v]\in\mc{M}(W,\mc{Z}_{\R})$. 
\label{11030824b}
\end{enumerate}
\end{proposition}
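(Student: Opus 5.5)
The plan follows the pattern of Prp.\ref{10221814} and Prp.\ref{10282047}: first establish the regularity of the integrand, then invoke the measure-producing results already used in the paper.

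For item \eqref{11030824a}, fix $u\in\mf{Z}^{O}$ and $v\in\mf{Z}$. By Def.\ref{05101630}\eqref{09261410a}, $\mh{L}\in\mc{C}^{p}(O,\mc{Z})$, so $\mh{L}^{(1)}\in\mc{C}^{p-1}(O,\mc{L}_{b}(\mc{Y}_{\R},\mc{Z}_{\R}))$ by Cor.\ref{04221226}. By Def.\ref{05101630}\eqref{051016303} and Rmk.\ref{051016304}, $\Upphi(u)\boxtimes\Uppsi(u)$ and $\Upphi(v)\boxtimes\Uppsi(v)$ lie in $\mc{C}^{p-1}(A\times U,\mc{Y})$. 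The componentwise product is differentiable by Prp.\ref{815}, and by Rmk.\ref{10161240} the first of these maps takes values in $O$.

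The map $\mf{o}_{\ze}\boxtimes\mr{Id}_{U}$ is smooth, so composing with it preserves $\mc{C}^{p-1}$ by Thm.\ref{81210}. Hence both factors inside $\mf{n}\circ[\cdot\boxtimes\cdot]$ are $\mc{C}^{p-1}$ on $U$. Since $\mf{n}=\mf{s}_{\mc{Y}_{\R},\mc{Z}_{\R}}$ is continuous and bilinear by Lemma \ref{10221256}, composing with it again yields a $\mc{C}^{p-1}$ map, by Thm.\ref{81210} together with Prp.\ref{01101646}, exactly as in the proof of Prp.\ref{10282047}. This gives $Q(u,v)\in\mc{C}^{p-1}(U,\mc{Z})$.

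For item \eqref{11030824b}, let $f\in\mc{H}(W)$. Then $(\xi^{-1})^{\ast}(f)\in\mc{H}(U)$ by Cor.\ref{08111037}. Because $p>1$, $Q(u,v)$ is in particular continuous, so $(\xi^{-1})^{\ast}(f)\cdot Q(u,v)$ is continuous with compact support contained in $\xi(\mr{supp}f)$. Since $\mc{Z}$ is quasi-complete, the weak integral $\mh{Q}[u,v](f)$ exists in $\mc{Z}_{\R}$ by \cite[III.38 Cor.2]{IntBourb}. Linearity in $f$ is clear. For membership in $\mc{M}(W,\mc{Z}_{\R})$, the plan is to apply Cor.\ref{06162059} exactly as done for $\mh{E}[u](\eta)$ in Prp.\ref{10221814}\eqref{10221814b} and in the proof of Thm.\ref{08070358}.

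If a direct argument is needed, continuity on each $\mc{H}(W,K;\R)$ follows from an estimate for every $q\in\Gamma$. Using \cite[VI.5, Prp.6]{IntBourb},
\begin{equation*}
q(\mh{Q}[u,v](f))\leq\sup_{x\in\xi(K)}q(Q(u,v)(x))\,\lambda_{U}(\xi(K))\,\sup|(\xi^{-1})^{\ast}f|,
\end{equation*}
where the first supremum is finite by compactness of $\xi(K)$. Continuity on each $\mc{H}(W,K;\R)$ then gives continuity for the final topology.

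The only delicate point is the regularity bookkeeping in item \eqref{11030824a}. One must check that $\Uppsi(v)$ is $\mc{C}^{p-1}$ even for $v\in\mf{Z}\setminus\mf{Z}^{O}$, since \eqref{08161354} is only assumed on $\mf{Z}^{O}$. Rmk.\ref{051016304} is stated for all of $\mf{Z}$, so I would rely on it. Failing that, one can use that $p-1\geq1$ and that only continuity of $Q(u,v)$ is needed for item \eqref{11030824b}.
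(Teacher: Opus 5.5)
Your proposal is correct and follows essentially the paper's route. Item \eqref{11030824a} comes from Def.~\ref{05101630}, Rmk.~\ref{051016304}, Lemma~\ref{10221256}, Thm.~\ref{81210} and Prp.~\ref{01101646}; for the $\boxtimes$-product the paper cites Cor.~\ref{8126}, which gives the higher regularity, whereas your Prp.~\ref{815} only gives differentiability. Item \eqref{11030824b} then follows via Cor.~\ref{06162059} and Cor.~\ref{08111037}. Your worry about $\Uppsi(v)$ for $v\notin\mf{Z}^{O}$ is harmless: $Q(u,v)$ only involves the slice $\eta=\ze$, where $\phi_{\ze}(v)=v$ and $\psi_{\ze}(v)\in\mf{T}$ are smooth by definition, so that factor is smooth on $U$ regardless of \eqref{08161354}.
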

\begin{proof}
Item \eqref{11030824a} follows by Def.\ref{05101630}, Rmk.\ref{051016304} and since Lemma \ref{10221256}, 
Thm.\ref{81210}, Cor.\ref{8126} and Prp.\ref{01101646}. Next item \eqref{11030824b} follows 
by the proven item \eqref{11030824a}, since Cor.\ref{06162059} and Cor.\ref{08111037}.
\end{proof}
\begin{proposition}
\label{09251745b}
Let $u\in\mf{Z}^{O}$, then 
\begin{equation*}
(e\mapsto\mh{Q}[u,e])\in\mc{L}(\mf{Z}_{\R},\mc{M}(W,\mc{Z}_{\R})).
\end{equation*}
\end{proposition}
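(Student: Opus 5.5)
The map $e\mapsto\mh{Q}[u,e]$ is built pointwise from $\mh{L}^{(1)}$ evaluated along $u$, so the plan is to split the claim into (i) $\R$-linearity and (ii) continuity. Continuity will be checked through the criterion \eqref{09152119} of Rmk.\ref{09152034}, with $N=\mf{Z}_{\R}$ and the norm $\widehat{q}$.

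For linearity, fix $x\in U$. By Def.\ref{12141626} and $\phi_{\ze}=\mr{Id}_{\mf{Z}}$ we have $\Upphi(e)(\ze,x)=e^{\xi}(x)$. By \eqref{08161354} and \eqref{08060435}, $\Uppsi(e)(\ze,x)=(\partial e^{\xi})(x)$. Strictly, \eqref{08161354} only holds for $e\in\mf{Z}^{O}$, so I will use the explicit formula displayed after Def.\ref{11030827}, i.e. interpret the second slot as $(\partial e^{\xi})(x)=(\partial^{\xi}e)(\xi^{-1}(x))$. Both $e\mapsto e^{\xi}(x)$ and $e\mapsto\partial e^{\xi}(x)$ are $\R$-linear. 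Since $\mh{L}^{(1)}(t)\in\mc{L}(\mc{Y}_{\R},\mc{Z}_{\R})$ for each $t$, the map $e\mapsto Q(u,e)(x)$ is $\R$-linear. Linearity of the weak integral against $(\xi^{-1})^{\ast}(f)\,d\lambda_{U}$ then gives linearity of $e\mapsto\mh{Q}[u,e](f)$ for every $f\in\mc{H}(W)$. By Prp.\ref{11030824}, each $\mh{Q}[u,e]$ lies in $\mc{M}(W,\mc{Z}_{\R})$, so the map is well defined.

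For continuity, fix $f\in\mc{H}(W)$ and $q\in\Gamma$, and let $K=\xi(\mr{supp}f)$, which is compact. The seminorm estimate for weak integrals, \cite[VI.5, Prp.6]{IntBourb}, as used in the proof of Thm.\ref{07222143}, gives
\begin{equation*}
q(\mh{Q}[u,e](f))\leq\int|(\xi^{-1})^{\ast}(f)|(x)\,q\bigl(\mh{L}^{(1)}(u^{\xi}(x),\partial u^{\xi}(x))(e^{\xi}(x),\partial e^{\xi}(x))\bigr)\,d\lambda_{U}(x).
\end{equation*}
By Lemma \ref{10221256}, the integrand is bounded by
\begin{equation*}
|(\xi^{-1})^{\ast}(f)|(x)\,\overset{1}{q}\bigl(\mh{L}^{(1)}(u^{\xi}(x),\partial u^{\xi}(x))\bigr)\,\|(e^{\xi}(x),\partial e^{\xi}(x))\|_{\mc{Y}}.
\end{equation*}
The max-norm on $\mc{Y}$ and the definition of $\widehat{q}$ in Def.\ref{08310159} give $\|(e^{\xi}(x),\partial e^{\xi}(x))\|_{\mc{Y}}\leq\widehat{q}(e)$.

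The remaining factor is continuous on $U$. This follows because $\mh{L}\in\mc{C}^{p}$ with $p\geq1$ makes $\mh{L}^{(1)}$ continuous into $\mc{L}_{b}(\mc{Y}_{\R},\mc{Z}_{\R})$, $x\mapsto(u^{\xi}(x),\partial u^{\xi}(x))$ is continuous, and $\overset{1}{q}$ is a continuous seminorm. Its supremum over the compact set $K$ is therefore finite. We get $q(\mh{Q}[u,e](f))\leq c\,\widehat{q}(e)$ with
\begin{equation*}
c=\sup_{x\in K}\overset{1}{q}\bigl(\mh{L}^{(1)}(u^{\xi}(x),\partial u^{\xi}(x))\bigr)\int|(\xi^{-1})^{\ast}(f)|\,d\lambda_{U}+1,
\end{equation*}
which is exactly \eqref{09152119}. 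The main point needing care is the identification of $\Uppsi(e)(\ze,\cdot)$ with $\partial e^{\xi}$ for $e$ outside $\mf{Z}^{O}$; I resolve it via the explicit formula rather than \eqref{08161354}. The analytic estimate itself is routine, mirroring the final step of Thm.\ref{07222143}.
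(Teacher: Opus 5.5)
Your proposal is correct and follows essentially the same route as the paper. The shared steps are: the seminorm estimate for the weak integral from \cite[VI.5, Prp.6]{IntBourb}; the pointwise bound by $\overset{1}{q}\bigl(\mh{L}^{(1)}(u^{\xi}(x),\partial u^{\xi}(x))\bigr)\,\widehat{q}(e)$; finiteness of the supremum of this continuous factor over the compact set $\xi(\mr{supp}(f))$; and the conclusion via \eqref{09152119} together with Prp.\ref{11030824}. What you add are details the paper leaves implicit, not a different argument: the explicit $\R$-linearity, reading the second slot as $\partial e^{\xi}$ for $e\notin\mf{Z}^{O}$ (exactly as the paper's explicit formula after Def.\ref{11030827} does), and adding $1$ to guarantee $c>0$.
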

\begin{proof}
For every $e\in\mf{Z}$ and $f\in\mc{H}(W)$ we have 
\begin{equation}
\begin{aligned}
\label{09251803}
p(\mh{Q}[u,e](f))
&\leq
\int|(\xi^{-1})^{\ast}(f)|(x)
p\left(\mh{L}^{(1)}(u^{\xi}(x),(\partial u^{\xi})(x))((e^{\xi}(x),(\partial e^{\xi})(x)))\right)\,d\lambda_{U}(x)
\\
&
\leq
\int|(\xi^{-1})^{\ast}(f)|(x)
\overset{1}{p}\left(\mh{L}^{(1)}(u^{\xi}(x),(\partial u^{\xi})(x))\right)
\|(e^{\xi}(x),(\partial e^{\xi})(x))\|_{\mc{Y}}
\,d\lambda_{U}(x),
\\
&
=
\int|(\xi^{-1})^{\ast}(f)|(x)
\overset{1}{p}\left(\mh{L}^{(1)}(u^{\xi}(x),(\partial u^{\xi})(x))\right)
\|(e^{\xi}(x),(\partial e^{\xi})(x))\|_{\mc{Y}}
\,d\lambda_{\xi(\mr{supp}(f))}(x),  
\\
&\leq
C(f)K(p)\widehat{q}(e);
\end{aligned}
\end{equation}
where the first inequality follows since the general result \cite[VI.5, Prp.6]{IntBourb}, while
\begin{equation*}
C(f)\coloneqq
\int|(\xi^{-1})^{\ast}(f)|\,d\lambda_{U},
\end{equation*}
and 
\begin{equation*}
\begin{aligned}
K(p)
&\coloneqq
\sup_{x\in\xi(\mr{supp}(f))}\overset{1}{p}\left(\mh{L}^{(1)}(u^{\xi}(x),(\partial u^{\xi})(x))\right)
\\
&=
\sup_{x\in\xi(\mr{supp}(f))}
\bigl(\overset{1}{p}\circ\mh{L}^{1}\circ(\Upphi(u)\boxtimes\Uppsi(u))\circ(\mf{o}_{\ze}\boxtimes\mr{Id}_{U})\bigr)(x).
\end{aligned}
\end{equation*}
Next we claim that 
\begin{equation}
\label{09251805}
K(p)<\infty.
\end{equation}
To this end we note that 
\begin{equation*}
\mh{L}^{(1)}\circ(\Upphi(u)\boxtimes\Uppsi(u))\circ(\mf{o}_{\ze}\boxtimes\mr{Id}_{U})\in
\mc{C}^{p-1}(U,\mc{L}_{b}(\mc{Y}_{\R},\mc{Z}_{\R})),
\end{equation*}
in particular it is continuous, $\overset{1}{p}$ is continuous, while $\xi(\mr{supp}(f))$ is compact, therefore $K(p)<\infty$ 
since continuous functions map compacts into compacts and compacts in $\R$ are bounded. Therefore taking into account 
Prp.\ref{11030824}\eqref{11030824b}, we have according to \eqref{09152119} that \eqref{09251803} and \eqref{09251805} imply
the statement.
\end{proof}
Prp.\ref{09251745b} legitimates the following 
\begin{definition}
Define the map 
\begin{equation*}
\begin{aligned}
\mh{Q}:\mf{Z}^{O}&\to\mc{L}(\mf{Z}_{\R},\mc{M}(W,\mc{Z}_{\R}));
\\
u&\mapsto(e\mapsto\mh{Q}[u,e]).
\end{aligned}
\end{equation*}
\end{definition}
\begin{theorem}
{\textbf{Differential map of $\mf{a}(\ze)$}.}
\label{09142108}
Assume that $\Gamma_{\lr{\mc{Y},2}{\mc{Z}}}\circ\mh{L}^{(2)}$ is bounded, then $\mf{a}(\ze)$ is differentiable with differential map 
such that 
\begin{equation*}
\mf{a}(\ze)^{(1)}=\mh{Q}.
\end{equation*}
\end{theorem}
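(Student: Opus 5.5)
The plan is to apply the criterion of Rmk.\ref{09152034} to $f=\mf{a}(\ze)=\upmu$ on $B=C=\mf{Z}^{O}$, with candidate differential $\mf{f}=\mh{Q}$. Item \eqref{09152034a} is already Prp.\ref{09251745b}, since $\mh{Q}(u)\in\mc{L}(\mf{Z}_{\R},\mc{M}(W,\mc{Z}_{\R}))$ for every $u\in\mf{Z}^{O}$. So the whole proof reduces to item \eqref{09152034b}: for fixed $u\in\mf{Z}^{O}$ and $l\in\mc{H}(W)$, show that
\begin{equation*}
\frac{\upmu(u+e)(l)-\upmu(u)(l)-\mh{Q}[u,e](l)}{\widehat{q}(e)}\to\ze
\end{equation*}
as $e\to\ze$ in $\mf{Z}$.

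By Lemma \ref{06121742} at $\eta=\ze$ (where $\chi(\ze)=\mr{Id}_{U}$ and $J_{\ze}=1$), together with \eqref{08060435}, the numerator equals
\begin{equation*}
\int(\xi^{-1})^{\ast}(l)(x)\,R_{e}(x)\,d\lambda_{U}(x),
\end{equation*}
where $R_{e}(x)$ is the first-order Taylor remainder
\begin{equation*}
R_{e}(x)=\mh{L}(a_{x}+b_{x})-\mh{L}(a_{x})-\mh{L}^{(1)}(a_{x})(b_{x}),
\end{equation*}
with $a_{x}=(u^{\xi}(x),\partial u^{\xi}(x))$ and $b_{x}=(e^{\xi}(x),\partial e^{\xi}(x))$. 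Here $\|b_{x}\|_{\mc{Y}}\leq\widehat{q}(e)$ for all $x$.

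For a seminorm $q\in\Gamma$, I would use the Taylor formula with integral remainder in the quasi-complete space $\mc{Z}_{\R}$ (or the mean value inequality for $\mh{L}^{(1)}$). This requires the whole segment $[a_{x},a_{x}+b_{x}]$ to lie in $O$. That holds because $\mf{Z}^{O}$ is open and the evaluation $\mf{e}_{y}^{\mf{Z}}$ is $1$-Lipschitz from $(\mf{Z},\widehat{q})$ to $\mc{Y}$. Concretely, take $e$ small with $u+te\in\mf{Z}^{O}$ for all $t\in[0,1]$ (a ball around $u$ in $\mf{Z}$ contained in $\mf{Z}^{O}$), so that $a_{x}+tb_{x}=\mf{e}^{\mf{Z}}_{\xi^{-1}(x)}(u+te)\in O$. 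The Taylor estimate then gives
\begin{equation*}
q(R_{e}(x))\leq\tfrac12\sup_{t\in[0,1]}\overset{2}{q}\bigl(\Gamma_{\lr{\mc{Y},2}{\mc{Z}}}\mh{L}^{(2)}(a_{x}+tb_{x})\bigr)\,\|b_{x}\|^{2}\leq\tfrac12 K_{q}\,\widehat{q}(e)^{2}.
\end{equation*}
The key point is that $K_{q}$ is finite and uniform in $x$ and $t$. This is exactly where the boundedness hypothesis on $\Gamma_{\lr{\mc{Y},2}{\mc{Z}}}\circ\mh{L}^{(2)}$ enters: a bounded subset of $\mc{L}_{b}^{2}(\mc{Y},\mc{Y};\mc{Z})$ has every seminorm $\overset{2}{q}$ bounded on it. The identity $\mh{L}^{(2)}(t)(a)(b)=\Gamma(\mh{L}^{(2)}(t))(a,b)$ converts the iterated differential into the bilinear form.

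Next, \cite[VI.5, Prp.6]{IntBourb} bounds the integral:
\begin{equation*}
q(\text{numerator})\leq\tfrac12 K_{q}\,C(l)\,\widehat{q}(e)^{2},
\end{equation*}
with $C(l)$ as in the proof of Prp.\ref{09251745b}. Dividing by $\widehat{q}(e)$, the quotient tends to $\ze$ in every seminorm. By Rmk.\ref{09152034} it follows that $\mf{a}(\ze)$ is differentiable and $\mf{a}(\ze)^{(1)}=\mh{Q}$.

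The main obstacle is justifying the second-order Taylor (mean value) estimate for a $\mc{Z}_{\R}$-valued $C^{2}$ map on a normed space, and the containment of the segment in $O$. I would handle the estimate by composing with continuous functionals $\gamma$ and applying the real Taylor formula to $t\mapsto\gamma(\mh{L}(a_{x}+tb_{x}))$. The bound $|\gamma(R_{e}(x))|\leq\sup_{t}|\gamma(\mh{L}^{(2)}(\cdot)(b_{x})(b_{x}))|$ then yields the seminorm bound via Hahn--Banach, taking $\gamma$ in the polar of $q$. Since $p\geq2$, $\mh{L}$ is $C^{2}$ and this reduction is legitimate.
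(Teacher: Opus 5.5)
Your proposal is correct and follows essentially the same route as the paper. You reduce via Rmk.\ref{09152034} and Prp.\ref{09251745b} to the limit for each fixed $f\in\mc{H}(W)$, and rewrite the numerator through Lemma~\ref{06121742} as an integral of the first-order Taylor remainder of $\mh{L}$. You then bound that remainder uniformly by a constant times $\widehat{q}(v)^{2}$, using the Taylor formula Thm.\ref{8143} and the boundedness of $\Gamma_{\lr{\mc{Y},2}{\mc{Z}}}\circ\mh{L}^{(2)}$, and conclude with \cite[VI.5, Prp.6]{IntBourb}. Your explicit check that the segment from $a_{x}$ to $a_{x}+b_{x}$ stays in $O$ (via $u+te\in\mf{Z}^{O}$), and your supremum taken only along that segment, make precise details the paper leaves implicit (its supremum is written over all of $\mc{Y}$, although $\mh{L}$ is defined only on $O$).
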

\begin{proof}
Let $u\in\mf{Z}^{O}$ and $f\in\mc{H}(W)$, then since Lemma \ref{06121742} we have 
\begin{equation*}
\mf{a}(\ze)(u)(f)=\int(\xi^{-1})^{\ast}(f)\cdot\left(\mh{L}\circ(u^{\xi}\boxtimes\partial u^{\xi})\right)\,d\lambda_{U}.
\end{equation*}
Next let $r>0$ be such that $u+B_{r}^{\mf{Z}}(\ze)\subseteq\mf{Z}^{O}$ which exists since $\mf{Z}^{O}$ is open in $\mf{Z}$.
Let $v\in B_{r}^{\mf{Z}}(\ze)$, then 
\begin{multline}
\label{09142037}
\mf{a}(\ze)(u+v)(f)-\mf{a}(\ze)(u)(f)-\mh{Q}[u,v](f)
=
\\
\int(\xi^{-1})^{\ast}(f)\cdot
\bigl[
\mh{L}\circ((u^{\xi}+v^{\xi})\boxtimes(\partial u^{\xi}+\partial v^{\xi}))
\\
-\mh{L}\circ(u^{\xi}\boxtimes\partial u^{\xi})
-\mf{n}\circ\left\{(\mh{L}^{(1)}\circ(u^{\xi}\boxtimes\partial u^{\xi}))\boxtimes(v^{\xi}\boxtimes\partial v^{\xi})\right\}
\bigr]
\,d\lambda_{U}.
\end{multline}
Let $p\in\Gamma$. By \eqref{09142037}, since the general result \cite[VI.5, Prp.6]{IntBourb}, 
since the Taylor expansion Thm.\ref{8143}, and finally since the hypothesis of boundedness we obtain 
\begin{equation*}
p\bigl(\mf{a}(\ze)(u+v)(f)-\mf{a}(\ze)(u)(f)-\mh{Q}[u,v](f)\bigr)\leq C(p)C(f)\widehat{q}(v)^{2},
\end{equation*}
where 
\begin{equation*}
\begin{aligned}
C(p)
&\coloneqq
\sup_{y\in\mc{Y}}\overset{2}{p}\left((\Gamma_{\lr{\mc{Y},2}{\mc{Z}}}\circ\mh{L}^{(2)})(y)\right)
\\
C(f)
&\coloneqq
\int|(\xi^{-1})^{\ast}(f)|\,d\lambda_{U};
\end{aligned}
\end{equation*}
therefore 
\begin{equation}
\label{09251745a}
\lim_{w\to\ze,\,w\in\complement_{B_{r}^{\mf{Z}}(\ze)}\{\ze\}}
\frac{p\bigl(\mf{a}(\ze)(u+w)(f)-\mf{a}(\ze)(u)(f)-\mh{Q}[u,w](f)\bigr)}{\widehat{q}(w)}=0.
\end{equation}
The statement then follows since \eqref{09251745a}, Prp.\ref{09251745b}, and Rmk.\ref{09152034}.
\end{proof}
\begin{definition}
\label{11011831}
Call \emph{special set of the solutions of the Euler-Lagrange equation} the set $\mr{sEL}_{W}(\mh{L})$ defined as 
the subset of those $u\in\mf{Z}^{O}$ satisfying
\begin{multline*}
(\forall v\in\mf{Z})
\bigl\{
\mf{g}_{0}
\circ
\left(
\bigl(
\frac{\partial\mh{L}}{\partial Y_{0}}\circ(u^{\xi}\boxtimes(\partial u^{\xi}))
\bigr)
\boxtimes
v^{\xi}
\right)
-
\mf{g}_{i}\circ\left(
\sum_{i=1}^{n}
\frac{\partial}{\partial e_{i}[n]}
\left[
\frac{\partial\mh{L}_{\mf{e}}}{\partial Y_{i}}\circ(\mf{d}_{O})_{\natural}\circ(u^{\xi}\boxtimes(\partial u^{\xi}))
\right]
\boxtimes
v^{\xi}
\right)
\\
=
(-1)
\sum_{i=1}^{n}
\frac{\partial}{\partial e_{i}[n]}\left[
\mf{g}_{i}\circ\left(  
\bigl(
\frac{\partial\mh{L}_{\mf{e}}}{\partial Y_{i}}\circ(\mf{d}_{O})_{\natural}\circ(u^{\xi}\boxtimes(\partial u^{\xi}))
\bigr)
\boxtimes 
v^{\xi}
\right)
\right]
\bigr\}.
\end{multline*}
\end{definition}
\begin{definition}
Let $Q\in\mr{Cmp}(W)$, define 
\begin{equation*}
\mf{Z}[Q]\coloneqq\{g\in\mf{Z}\,\vert\,(\forall(V,\rho)\in\mr{Crt}(W))
(\forall i\in[0,n]\cap\Z)(\mr{supp}(\partial_{i}^{\rho}g)\subseteq Q\}.
\end{equation*}
\end{definition}
\begin{lemma}
\label{10302024}
If $Q\in\mr{Cmp}(W)$, then $\mf{Z}[Q]$ is a $\mc{C}^{\infty}(W)$-module. 
\end{lemma}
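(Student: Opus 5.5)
To show that $\mf{Z}[Q]$ is a $\mc{C}^{\infty}(W)$-module, it suffices to prove two things. First, $\mf{Z}[Q]$ is a $\K_{1}$-linear subspace of $\mf{Z}$. Second, for every $a\in\mc{C}^{\infty}(W)$ and $g\in\mf{Z}[Q]$ the product $a\cdot g$ again lies in $\mf{Z}[Q]$. The module axioms themselves (distributivity, associativity, the action of the constant $1$) are then inherited pointwise from the $\mc{C}^{\infty}(W)$-module $\mc{C}^{\infty}(W,Y_{0})$.

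\emph{Linear subspace.} Let $g_{1},g_{2}\in\mf{Z}[Q]$ and $\lambda\in\K_{1}$, and fix a chart $(V,\rho)$ and $i\in[0,n]\cap\Z$. The operator $\partial_{i}^{\rho}$ of \cite[Definition 1.4]{sil09261621} is linear. The support of a sum is contained in the union of the supports, and the support of a scalar multiple is contained in the original support. Hence
\[
\mr{supp}\bigl(\partial_{i}^{\rho}(g_{1}+\lambda g_{2})\bigr)\subseteq\mr{supp}(\partial_{i}^{\rho}g_{1})\cup\mr{supp}(\partial_{i}^{\rho}g_{2})\subseteq Q .
\]
Also $\ze\in\mf{Z}[Q]$ trivially.

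\emph{Closure under multiplication.} Let $a\in\mc{C}^{\infty}(W)$ and $g\in\mf{Z}[Q]$. The case $i=0$ is immediate, since $\mr{supp}(ag)\subseteq\mr{supp}(g)\subseteq Q$. For $i\geq 1$ I would use the Leibniz rule for the chart derivative $\partial_{i}^{\rho}$ of a product of a scalar smooth function with a $Y_{0}$-valued smooth function, which gives
\[
\partial_{i}^{\rho}(ag)=(\partial_{i}^{\rho}a)\,g+a\,\partial_{i}^{\rho}g .
\]
To justify this, compose with $\rho^{-1}$ and apply the product rule Prp.~\ref{03241344} to the bilinear continuous scalar multiplication $\R\times(Y_{0})_{\R}\to(Y_{0})_{\R}$, together with \eqref{08060435}. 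The support of the right-hand side is contained in $\mr{supp}(g)\cup\mr{supp}(\partial_{i}^{\rho}g)\subseteq Q$.

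\emph{Membership in $\mf{Z}$.} It remains to check that $ag$ belongs to $\mf{Z}=\mc{C}_{c,1}^{\infty}(W,Y_{0})$. Smoothness follows from the same product rule (Thm.~\ref{81210} / Prp.~\ref{01101646}). The support conditions in the definition of $\mc{C}_{c,1}^{\infty}$ follow from the inclusions just shown: $\mr{supp}(ag)$ and $\mr{supp}(\partial_{\nu}^{\rho}(ag))$ are closed subsets of the compact set $Q\subseteq W$, hence compact. Since the chart $(V,\rho)$ ranges over charts of $W$, a compact set contained in $Q\cap V$ need not be compact in $V$ itself. This is the one delicate point, and I expect it to be the main obstacle, though a bookkeeping one. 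I would settle it by reading the definition of $\mc{C}_{c,1}^{\infty}$ in the same sense it has for $g$. Because $\mr{supp}(\partial_{\nu}^{\rho}(ag))$ is contained in $\mr{supp}(\partial_{\nu}^{\rho}g)\cup\mr{supp}(g)$, which is compact in $V$ by the hypothesis $g\in\mf{Z}$, the set $\mr{supp}(\partial_{\nu}^{\rho}(ag))$ is a closed subset of a compact subset of $V$. It is therefore compact in $V$, and the definition is satisfied.
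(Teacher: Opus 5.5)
Your linear-subspace check, which the paper omits, is fine. Your Leibniz-rule argument giving $\mr{supp}(\partial_{i}^{\rho}(ag))\subseteq\mr{supp}(g)\cup\mr{supp}(\partial_{i}^{\rho}g)\subseteq Q$ is fine too, and it is exactly the paper's proof, which uses the same product rule together with Prp.~\ref{12031719} and Cor.~\ref{09292048}.

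The gap is in your last paragraph. You claim that $\mr{supp}(\partial_{\nu}^{\rho}g)\cup\mr{supp}(g)$ is compact in $V$ ``by the hypothesis $g\in\mf{Z}$'', but the hypothesis does not give this. The definition of $\mc{C}_{c,1}^{\infty}$ makes $\mr{supp}(\partial_{\nu}^{\rho}g)$ compact in $V$ only for $\nu\in[1,n]\cap\Z$. By contrast, $\mr{supp}(g)$ is compact in $W$ and need not lie in $V$. Whenever the frontier of $V$ meets $\{g\neq\ze\}$, the support of $g\vert_{V}$ in $V$ accumulates there and is not compact in $V$. Since $a\in\mc{C}^{\infty}(W)$ is arbitrary, the term $(\partial_{\nu}^{\rho}a)\,g$ is controlled only by that set, so your inclusion gives no compactness in $V$. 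The sentence before it, ``closed subsets of the compact set $Q$, hence compact'', has the same defect: these sets are closed in $V$, not in $W$.

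The paper's proof does not address this point either; it passes straight from the inclusions in $Q$ to $fg\in\mf{Z}[Q]$. To close it you have to look at what the definition of $\mc{C}_{c,1}^{\infty}$ actually says.

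\emph{Literal reading.} If $(V,\rho)$ ranges over all charts of $W$, the definition forces $g=\ze$. Suppose $\partial_{\mu}^{\xi}g(x_{0})\neq\ze$. Let $B$ be an open ball with $\ov{B}\subset U$ and $\xi(x_{0})\in\partial B$, and take the chart with $V=\xi^{-1}(B)$ and $\rho=\xi\vert_{V}$. By continuity $\mr{supp}(\partial_{\mu}^{\rho}g)$ accumulates at $x_{0}\notin V$, so it is not compact in $V$, a contradiction. Hence every $\partial_{\mu}^{\xi}g$ vanishes and $g$ is locally constant. Since $g$ has compact support and every component of $U$ is non-compact, $g=\ze$, and membership is vacuous.

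\emph{Restricted reading.} Suppose only charts whose domain contains $\mr{supp}(g)$ matter, for instance $V=W$, which is all that $\widehat{\partial_{\nu}^{\xi}}:\mf{Z}\to\mf{Z}^{\emptyset}$ needs. Then membership is immediate: $\mr{supp}(\partial_{\nu}^{\rho}(ag))\subseteq\mr{supp}(ag)\subseteq\mr{supp}(g)\subseteq V$ exhibits it as a closed subset of $V$ inside a compact subset of $V$.

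Under either reading, the justification you wrote for this step is invalid and has to be replaced.
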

\begin{proof}
Let $Q\in\mr{Cmp}(W)$, $g\in\mf{Z}[Q]$ and $f\in\mc{C}^{\infty}(W)$, clearly $fg\in\mc{C}^{\infty}(W,Y_{0})$ moreover 
$\mr{supp}(fg)\subseteq Q$ since Cor.\ref{09292048}, next for any $(V,\rho)\in\mr{Crt}(W)$ we have 
$\partial_{\nu}^{\rho}(fg)=f\partial_{\nu}^{\rho}(g)+g\partial_{\nu}^{\rho}(f)$ 
for any $\nu\in[1,n]\cap\Z$, then $\mr{supp}(\partial_{\nu}^{\rho}(fg))\subseteq Q$ since Prp. \ref{12031719}
and Cor.\ref{09292048} therefore, $fg\in\mf{Z}[Q]$ and the statement follows.
\end{proof}
Now we can establish the following key result that justifies the denomination provided in Def.\ref{11011831}.
\begin{theorem}
\label{10301959}
$\mr{sEL}_{W}(\mh{L})\subseteq\mr{EL}_{W}(\mh{L})$. 
\end{theorem}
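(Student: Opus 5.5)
The plan is to test the defining identity of $\mr{sEL}_{W}(\mh{L})$ (Def.\ref{11011831}) against fields $v\in\mf{Z}$ that are locally constant near a chosen point. At such a point every term containing a derivative of $v^{\xi}$ drops out, and what remains is the Euler--Lagrange equation of Def.\ref{07230430} evaluated on a fixed vector $c\in Y_{0}$. Throughout, fix $u\in\mr{sEL}_{W}(\mh{L})$ and abbreviate
\begin{equation*}
P_{i}\coloneqq\frac{\partial\mh{L}_{\mf{e}}}{\partial Y_{i}}\circ(\mf{d}_{O})_{\natural}\circ(u^{\xi}\boxtimes\partial u^{\xi})
\quad\text{and}\quad
L_{0}\coloneqq\frac{\partial\mh{L}}{\partial Y_{0}}\circ(u^{\xi}\boxtimes\partial u^{\xi}).
\end{equation*}
Both maps are $\mc{C}^{p-1}$ on $U$ with values in $\mc{L}_{b}((Y_{i})_{\R},\mc{Z}_{\R})$; this follows as in Prp.\ref{10282047}. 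Since $p>1$, the derivatives $\partial P_{i}/\partial e_{i}[n]$ exist and are continuous.

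First I construct the test fields. Fix $x_{0}\in U$ and $c\in Y_{0}$. Choose a real $\theta\in\mc{C}_{c}^{\infty}(U)$ with $\theta\equiv1$ on an open ball $B\ni x_{0}$, and set $v\coloneqq(\theta\circ\xi)\,c$. Then $v$ is smooth and compactly supported, and $\partial_{\nu}^{\rho}v=(\partial_{\nu}^{\rho}(\theta\circ\xi))\,c$ has support inside $\mr{supp}(\theta\circ\xi)$ for every chart $(V,\rho)$. Hence $v\in\mf{Z}$, and indeed $v\in\mf{Z}[\xi^{-1}(\mr{supp}\,\theta)]$; this is the situation covered by the module statement in Lemma \ref{10302024}. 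On $B$ we have $v^{\xi}=c$ and $\partial v^{\xi}=\ze$.

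Next I expand the right-hand side of the sEL identity. Each $\mf{g}_{i}$ is continuous and bilinear by Lemma \ref{10221256}. The Leibniz rule (Prp.\ref{03241344} together with Prp.\ref{08091051}) therefore gives, on $U$,
\begin{equation*}
\frac{\partial}{\partial e_{i}[n]}\bigl[\mf{g}_{i}\circ(P_{i}\boxtimes v^{\xi})\bigr]
=\mf{g}_{i}\circ\Bigl(\frac{\partial P_{i}}{\partial e_{i}[n]}\boxtimes v^{\xi}\Bigr)
+\mf{g}_{i}\circ\Bigl(P_{i}\boxtimes\frac{\partial v^{\xi}}{\partial e_{i}[n]}\Bigr).
\end{equation*}
Restricting to $B$, the second summand vanishes. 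Evaluating at $x_{0}$, the sEL identity becomes an identity in $\mc{Z}$ relating $L_{0}(x_{0})(c)$ and $\sum_{i}\frac{\partial P_{i}}{\partial e_{i}[n]}(x_{0})(c)$. After moving terms across, this should be exactly
\begin{equation*}
L_{0}(x_{0})(c)=\sum_{i=1}^{n}\frac{\partial P_{i}}{\partial e_{i}[n]}(x_{0})(c).
\end{equation*}
To match this with Def.\ref{07230430} I will use Cor.\ref{08060629b} with $\upphi=\mr{ev}_{c}$, which is continuous by Rmk.\ref{10231117}. It gives $\frac{\partial}{\partial e_{i}[n]}(\mr{ev}_{c}\circ P_{i})=\mr{ev}_{c}\circ\frac{\partial P_{i}}{\partial e_{i}[n]}$, so the identity above is the Euler--Lagrange equation evaluated at $c$.

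Finally, $x_{0}\in U$ and $c\in Y_{0}$ are arbitrary, and an element of $\mc{L}_{b}((Y_{0})_{\R},\mc{Z}_{\R})$ is determined by its values on all $c$. Hence the two sides of Def.\ref{07230430} agree as maps $U\to\mc{L}_{b}((Y_{0})_{\R},\mc{Z}_{\R})$, and $u\in\mr{EL}_{W}(\mh{L})$.

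I expect the main obstacle to be the bookkeeping of the sign and index conventions in Def.\ref{11011831}: the summation index $i$ sits outside the sum in the middle term, and there is a global factor $(-1)$. After the Leibniz cancellation on $B$, I have to check carefully that exactly the EL combination $L_{0}(c)-\sum_{i}\partial_{i}P_{i}(c)$ survives, with the correct signs, rather than some other combination. I would first redo this cancellation symbolically, term by term, before fixing the final form of the argument.
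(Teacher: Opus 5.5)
There is a genuine gap, and it is exactly at the sign bookkeeping step you flagged. Insert your locally constant $v$ into Def.\ref{11011831} and expand the right-hand side by the Leibniz rule. At each $x\in B$ the left-hand side is $L_{0}(x)(c)-\sum_{i}\frac{\partial P_{i}}{\partial e_{i}[n]}(x)(c)$, and the right-hand side is $(-1)\sum_{i}\frac{\partial P_{i}}{\partial e_{i}[n]}(x)(c)$. Because of the factor $(-1)$, the divergence terms appear on both sides with the same sign and cancel. The identity at $x_{0}$ therefore reduces to $L_{0}(x_{0})(c)=\ze$, not to $L_{0}(x_{0})(c)=\sum_{i}\frac{\partial P_{i}}{\partial e_{i}[n]}(x_{0})(c)$.

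More generally, after Leibniz the defining identity for a given $v$ reads $\mf{g}_{0}\circ(L_{0}\boxtimes v^{\xi})+\sum_{i}\mf{g}_{i}\circ\bigl(P_{i}\boxtimes\frac{\partial v^{\xi}}{\partial e_{i}[n]}\bigr)=\ze$ on $U$; that is, the integrand of $\mh{Q}[u,v]$ vanishes pointwise. So locally constant test fields give you only $L_{0}\equiv\ze$. They say nothing about $\sum_{i}\frac{\partial P_{i}}{\partial e_{i}[n]}$, which the Euler--Lagrange equation of Def.\ref{07230430} also needs.

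You can close the gap in either of two ways.

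\emph{Staying pointwise.} Also test with $v$ such that $v^{\xi}(x)=\theta(x)\,(x_{j}-(x_{0})_{j})\,c$. At $x_{0}$ the reduced identity gives $P_{j}(x_{0})(c)=\ze$. Since $x_{0}$ and $c$ are arbitrary, $P_{j}\equiv\ze$ on $U$ for every $j$, so $\frac{\partial P_{j}}{\partial e_{j}[n]}\equiv\ze$. Together with $L_{0}\equiv\ze$, both sides of the Euler--Lagrange equation vanish. Incidentally, this shows that membership in $\mr{sEL}_{W}(\mh{L})$ forces $\mh{L}^{(1)}$ to vanish along $(u^{\xi},\partial u^{\xi})$.

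\emph{Following the paper.} Do not evaluate pointwise first; instead integrate over a compact $K$ containing the support of $v^{\xi}$.
\begin{enumerate}
\item The term $\sum_{i}\frac{\partial}{\partial e_{i}[n]}\bigl[\mf{g}_{i}\circ(P_{i}\boxtimes v^{\xi})\bigr]$ integrates to $\ze$ by the divergence theorem.
\item Hence the left-hand side of Def.\ref{11011831}, call it $\Delta[u;v]$, has zero integral over $K$.
\item Replace $v$ by $fv$, using Lemma \ref{10302024} and $\Delta[u;fv]=(f\circ\xi^{-1})\cdot\Delta[u;v]$. The fundamental lemma then gives $\Delta[u;v]=\ze$ on the interior of $K$.
\item Only now insert a locally constant $v$; this yields the Euler--Lagrange equation evaluated at $c$.
\end{enumerate}
In that route the $\partial_{i}P_{i}$ terms survive, because the divergence term is removed by integration rather than cancelled pointwise.
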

\begin{proof}
Let $w\in\mf{Z}_{O}$ and $s\in\mf{Z}$. Only in the present proof we let 
\begin{equation*}
\Delta[w;s]
\coloneqq
\mf{g}_{0}
\circ
\left(
\bigl(
\frac{\partial\mh{L}}{\partial Y_{0}}\circ(w^{\xi}\boxtimes(\partial w^{\xi}))
\bigr)
\boxtimes
s^{\xi}
\right)
-
\mf{g}_{i}\circ\left(
\sum_{i=1}^{n}
\frac{\partial}{\partial e_{i}[n]}
\left[
\frac{\partial\mh{L}_{\mf{e}}}{\partial Y_{i}}\circ(\mf{d}_{O})_{\natural}\circ(w^{\xi}\boxtimes(\partial w^{\xi}))
\right]
\boxtimes
s^{\xi}
\right),
\end{equation*}
and 
\begin{equation*}
B[w;s]
\coloneqq
\sum_{i=1}^{n}
\frac{\partial}{\partial e_{i}[n]}\left[
\mf{g}_{i}\circ\left(  
\bigl(
\frac{\partial\mh{L}_{\mf{e}}}{\partial Y_{i}}\circ(\mf{d}_{O})_{\natural}\circ(w^{\xi}\boxtimes(\partial w^{\xi}))
\bigr)
\boxtimes 
s^{\xi}
\right)
\right].
\end{equation*}
Next let $K_{0},K\in\mr{Cmp}(U)$ and $V\in\mr{Op}(U)$ such that 
\begin{equation}
\label{11011820}
K_{0}\subset V\subset K,
\end{equation}
set $Q=\xi^{-1}(K)\in\mr{Cmp}(W)$ and let $v\in\mf{Z}[Q]$.
Thus, $v^{\xi}\in\mc{H}(U,K;Y_{0})$ since Cor.\ref{08111037} and then $v^{\xi}\circ\imath_{\partial K}^{U}=\ze$ by the principle of 
extension of identities therefore, since \cite[Cor.3.60 and Thm.3.47]{sil09261621} we obtain
\begin{equation}
\label{10311817} 
\int(B[w;v]\circ\imath_{K}^{U})\,d\lambda_{K}=\ze.
\end{equation}
Now let $u\in\mr{sEL}_{W}(\mh{L})$, then \eqref{10311817} implies 
\begin{equation}
\label{11011029}
\int(\Delta[u;v]\circ\imath_{K}^{U})\,d\lambda_{K}=\ze,
\end{equation}
but $\mf{Z}[Q]$ is a $\mc{C}^{\infty}(W)$-module since Lemma \ref{10302024}, while $\Delta[w;fs]=f^{\xi}\cdot\Delta[w;s]$ 
for every $f\in\mc{C}^{\infty}(W)$, where $f^{\xi}=f\circ\xi^{-1}$, then by \eqref{11011029} we obtain 
\begin{equation*}
(\forall f\in\mc{C}^{\infty}(W))
(\int[(f^{\xi}\cdot\Delta[u;v])\circ\imath_{K}^{U}]\,d\lambda_{K}=\ze),
\end{equation*}
in particular 
\begin{equation}
\label{11011749}
(\forall f\in\mc{C}^{\infty}(W))
(\int[(f^{\xi}\cdot\Delta[u;v])\circ\imath_{V}^{U}]\,d\lambda_{V}=\ze).
\end{equation}
Now $(\xi^{-1})^{\ast}\mc{C}^{\infty}(W)=\mc{C}^{\infty}(U)$, while $(\imath_{V}^{U})^{\ast}\mc{C}^{\infty}(U)=\mc{C}^{\infty}(V)$ indeed any 
smooth map on $V$ can be extended to a smooth map on $U$ by the extension lemma for smooth maps, see for instance 
\cite[Lemma 2.26]{lee} therefore, since the fundamental lemma of calculus of variations \cite[p.16-17]{gh} we deduce by 
\eqref{11011749} that 
\begin{equation}
\label{10311726}
\Delta[u;v]\circ\imath_{V}^{U}=\ze.
\end{equation}
Next let $\{O_{i}\}_{i=1}^{m}$ be a finite cover of $K_{0}$ made of open sets of $U$ such that $O_{i}\subset K$ for every 
$i\in[1,m]\cap\Z$. Then, $\{P_{i}=\xi^{-1}(O_{i})\}_{i=1}^{m}$ is a finite cover of 
$Q_{0}=\xi^{-1}(K_{0})$ compact of $W$ made of open sets of $W$ and such that $P_{i}\subset Q$ for every $i\in[1,m]\cap\Z$.
Now for any smooth manifold say $N$ there exists a partition of unity over $N$ made of smooth functions \cite[16.4.1]{die3} each 
one of them has the same support of its derivative with respect to any chart of $N$ \cite[16.4.1.4]{die3}. Therefore, in virtue 
of \cite[16.4.2]{die3} applied for $N=W$ there exists a real valued smooth map on $W$ say $h_{K_{0}}$ with the following 
properties:
\begin{itemize}
\item
$h_{K_{0}}\circ\imath_{Q_{0}}^{W}$ is the constant map equal to $1$;
\item
$\mr{supp}(h_{K_{0}})=\mr{supp}(\partial_{\nu}^{\rho}h_{K_{0}})\subseteq\bigcup_{i=1}^{m}P_{i}\subset Q$, for any 
$(\mr{Dom}(\rho),\rho)\in\mr{Crt}(W)$ and any $\nu\in[1,n]\cap\Z$.
\end{itemize}
Thus, for every $y\in Y_{0}$ the map $h_{K_{0}}^{y}:W\to Y_{0}$, $x\mapsto h_{K_{0}}(x)y$ is such that 
\begin{itemize}
\item
$h_{K_{0}}^{y}\circ\imath_{Q_{0}}^{W}$ is the constant map equal to $y$, equivalently 
$(h_{K_{0}}^{y})^{\xi}\circ\imath_{K_{0}}^{U}$ is the constant map equal to $y$;
\item
$h_{K_{0}}^{y}\in\mf{Z}[Q]$.
\end{itemize}
Now the facts that $h_{K_{0}}^{y}\in\mf{Z}[Q]$, $K_{0}\subset V$, and \eqref{10311726} imply
\begin{equation}
\label{10311742}
\Delta[u;h_{K_{0}}^{y}]\circ\imath_{K_{0}}^{U}=\ze.
\end{equation}
Next let $r:U\to\R_{>0}$ be such that $(\forall x\in U)(\ov{B}_{r(x)}(x)\subset U)$ which exists since $U$ is open in $\R^{n}$, 
then for any $x\in U$ we have $\ov{B}_{r(x)}(x)\in\mr{Cmp}(U)$, $\ov{B}_{3^{-1}r(x)}(x)\in\mr{Cmp}(U)$, $B_{2^{-1}r(x)}(x)\in\mr{Op}(U)$
and 
\begin{equation*}
\ov{B}_{3^{-1}r(x)}(x)\subset B_{2^{-1}r(x)}(x)\subset\ov{B}_{r(x)}(x), 
\end{equation*}
namely the position \eqref{11011820}, then we can apply \eqref{10311742} to conclude that 
\begin{equation}
\label{11011351}
(\forall x\in W)(\Delta[u;h_{\ov{B}_{3^{-1}r(x)}(x)}^{y}]\circ\imath_{\ov{B}_{3^{-1}r(x)}(x)}^{U}=\ze),
\end{equation}
but $(h_{\ov{B}_{3^{-1}r(x)}(x)}^{y})^{\xi}\circ\imath_{\ov{B}_{3^{-1}r(x)}(x)}^{U}$ is the constant map equal to $y$ so, by \eqref{11011351} we 
deduce that $u\in\mr{EL}_{W}(\mh{L})$ and the inclusion $\mr{sEL}_{W}(\mh{L})\subseteq\mr{EL}_{W}(\mh{L})$ follows. 
\end{proof}
The following result establishes a characterization of $\mr{vp}(\mf{a}(\ze))$ in terms of $\mr{sEL}_{W}(\mh{L})$.
\begin{corollary}
[\textbf{The set of solutions of the variational problem associated with $\mf{a}(\ze)$ equals $\mr{sEL}_{W}(\mh{L})$.}]
\label{10011525}
Assume that $\Gamma_{\lr{\mc{Y},2}{\mc{Z}}}\circ\mh{L}^{(2)}$ is bounded, then 
\begin{equation*}
\mr{vp}(\mf{a}(\ze))=\mr{sEL}_{W}(\mh{L}).
\end{equation*}
\end{corollary}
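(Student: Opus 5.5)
By Thm.\ref{09142108}, the boundedness hypothesis on $\Gamma_{\lr{\mc{Y},2}{\mc{Z}}}\circ\mh{L}^{(2)}$ makes $\mf{a}(\ze)$ differentiable with $\mf{a}(\ze)^{(1)}=\mh{Q}$. So $\mr{vp}(\mf{a}(\ze))$ is well defined and equals the set of $u\in\mf{Z}^{O}$ with $\mh{Q}[u,v]=\ze$ for every $v\in\mf{Z}$. The plan is to show, for each fixed $u$, that ``$\mh{Q}[u,v](f)=\ze$ for all $v\in\mf{Z}$, $f\in\mc{H}(W)$'' is equivalent to the defining identity of $\mr{sEL}_{W}(\mh{L})$ in Def.\ref{11011831}.

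\textbf{Step 1 (pointwise decomposition of $Q(u,v)$).} By Lemma \ref{10181658}, $\eqref{08060435}$ and bilinearity of the $\mf{g}_{i}$,
\begin{equation*}
Q(u,v)=\mf{g}_{0}\circ\Bigl(\bigl(\tfrac{\partial\mh{L}}{\partial Y_{0}}\circ(u^{\xi}\boxtimes\partial u^{\xi})\bigr)\boxtimes v^{\xi}\Bigr)+\sum_{i=1}^{n}\mf{g}_{i}\circ\Bigl(\bigl(\tfrac{\partial\mh{L}_{\mf{e}}}{\partial Y_{i}}\circ(\mf{d}_{O})_{\natural}\circ(u^{\xi}\boxtimes\partial u^{\xi})\bigr)\boxtimes\partial_{i}v^{\xi}\Bigr).
\end{equation*}
Next apply the Leibniz rule (Prp.\ref{03241344}, Prp.\ref{08091051}) to each $i$-th summand. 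This rewrites it as $\partial_{i}$ of $\mf{g}_{i}\circ(\cdots\boxtimes v^{\xi})$ minus $\mf{g}_{i}\circ(\partial_{i}[\cdots]\boxtimes v^{\xi})$. We need $p\geq2$ so that these derivatives exist. With the notation of the proof of Thm.\ref{10301959}, this gives the identity
\begin{equation*}
Q(u,v)=\Delta[u;v]+B[u;v].
\end{equation*}
Hence $u\in\mr{sEL}_{W}(\mh{L})$ exactly when $Q(u,v)=\ze$ for every $v\in\mf{Z}$.

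\textbf{Step 2 ($\mr{sEL}\subseteq\mr{vp}$).} If $u\in\mr{sEL}_{W}(\mh{L})$, then Step 1 gives $Q(u,v)=\ze$ pointwise, so $\mh{Q}[u,v](f)=\int(\xi^{-1})^{\ast}(f)\cdot Q(u,v)\,d\lambda_{U}=\ze$ for all $f$. Thus $\mh{Q}(u)=\ze$, that is $u\in\mr{vp}(\mf{a}(\ze))$.

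\textbf{Step 3 ($\mr{vp}\subseteq\mr{sEL}$).} Let $\mh{Q}[u,v]=\ze$ for all $v$. Fix $\gamma\in\mc{L}(\mc{Z}_{\R},\R)$ and apply it under the weak integral, as in the proof of Cor.\ref{08110158}. This gives $\int(\xi^{-1})^{\ast}(f)\cdot(\gamma\circ Q(u,v))\,d\lambda_{U}=0$ for every $f\in\mc{H}(W)$. The map $\gamma\circ Q(u,v)$ is continuous, hence locally integrable, so Rmk.\ref{10031813} yields $\gamma\circ Q(u,v)=0$. Since $\mc{L}(\mc{Z}_{\R},\R)$ separates points of the Hausdorff space $\mc{Z}_{\R}$, we get $Q(u,v)=\ze$ for every $v\in\mf{Z}$. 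Step 1 then gives $u\in\mr{sEL}_{W}(\mh{L})$.

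\textbf{Main obstacle.} The delicate point is Step 1. The Leibniz rule must be justified for the $\mc{Z}_{\R}$-valued bilinear pairing $\mf{g}_{i}$ composed with a $C^{p-1}$ operator-valued map and a smooth $Y_{0}$-valued map. This requires $\frac{\partial\mh{L}_{\mf{e}}}{\partial Y_{i}}\circ(\mf{d}_{O})_{\natural}\circ(u^{\xi}\boxtimes\partial u^{\xi})$ to be $C^{1}$ into $\mc{L}_{b}((Y_{i})_{\R},\mc{Z}_{\R})$, which holds because $p>1$ and by Lemma \ref{10221256}. Some care is also needed with the paper's index bookkeeping in Def.\ref{11011831}, where $\mf{g}_{i}$ appears outside the sum; I would read it as $\sum_{i}\mf{g}_{i}\circ(\cdots)$.
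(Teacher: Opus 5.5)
Your proposal is correct and follows the paper's proof essentially step for step. The paper also expands $\mh{L}^{(1)}$ via Lemma~\ref{10181658} as in \eqref{10011519}, uses the Leibniz rule \eqref{10011509} to move $\partial_{i}$ off the test field, combines this with Thm.~\ref{09142108} to obtain the integrated equivalence \eqref{10011531}, and concludes with Rmk.~\ref{10031813} and the definition of the weak integral, i.e.\ testing against functionals $\gamma$ exactly as you do. The differences are only organizational: you first isolate the pointwise identity $Q(u,v)=\Delta[u;v]+B[u;v]$ and then prove the two inclusions separately, and your reading of the misplaced $\mf{g}_{i}$ in Def.~\ref{11011831} as $\sum_{i}\mf{g}_{i}\circ(\cdots)$ matches how the paper itself uses it in \eqref{10011531}.
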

\begin{proof}
Let $u\in\mf{Z}^{O}$ and $w\in\mf{Z}$. Since Lemma \ref{10181658} we have 
\begin{multline}
\label{10011519}
\mf{n}
\circ
\left(
\bigl(\mh{L}^{(1)}\circ(u^{\xi}\boxtimes(\partial u^{\xi}))\bigr)
\boxtimes
(w^{\xi}\boxtimes(\partial w^{\xi}))
\right)
=
\mf{g}_{0}
\circ
\left(
\bigl(
\frac{\partial\mh{L}}{\partial Y_{0}}\circ(u^{\xi}\boxtimes(\partial u^{\xi}))
\bigr)
\boxtimes
w^{\xi}
\right)
\\
+
\sum_{i=1}^{n}
\mf{g}_{i}
\circ
\left(
\bigl(\frac{\partial\mh{L}_{\mf{e}}}{\partial Y_{i}}\circ(\mf{d}_{O})_{\natural}\circ(u^{\xi}\boxtimes(\partial u^{\xi}))\bigr)
\boxtimes
(\partial_{i}w^{\xi})
\right).
\end{multline}
Next since Prp.\ref{03241344} and Prp.\ref{08091051} we deduce that 
\begin{multline}
\label{10011509}
\frac{\partial}{\partial e_{i}[n]}\left[
\mf{g}_{i}\circ\left(  
\bigl(
\frac{\partial\mh{L}_{\mf{e}}}{\partial Y_{i}}\circ(\mf{d}_{O})_{\natural}\circ(u^{\xi}\boxtimes(\partial u^{\xi}))
\bigr)
\boxtimes 
w^{\xi}
\right)
\right]
=
\\
\mf{g}_{i}\circ\left(
\frac{\partial}{\partial e_{i}[n]}
\left[
\frac{\partial\mh{L}_{\mf{e}}}{\partial Y_{i}}\circ(\mf{d}_{O})_{\natural}\circ(u^{\xi}\boxtimes(\partial u^{\xi}))
\right]
\boxtimes
w^{\xi}
\right)
+
\\
\mf{g}_{i}\circ\left(
\bigl(\frac{\partial\mh{L}_{\mf{e}}}{\partial Y_{i}}\circ(\mf{d}_{O})_{\natural}\circ(u^{\xi}\boxtimes(\partial u^{\xi}))\bigr)
\boxtimes
(\partial_{i}w^{\xi})
\right).
\end{multline}
Now since Thm. \ref{09142108}, \eqref{10011519} and \eqref{10011509} we obtain
\begin{multline}
\label{10011531}
\mf{a}(\ze)^{(1)}(u)=\ze
\Leftrightarrow
(\forall v\in\mf{Z})(\forall f\in\mc{H}(W))
\\
\bigl(
\int(\xi^{-1})^{\ast}(f)\cdot 
\bigl\{
\mf{g}_{0}
\circ
\left(
\bigl(
\frac{\partial\mh{L}}{\partial Y_{0}}\circ(u^{\xi}\boxtimes(\partial u^{\xi}))
\bigr)
\boxtimes
v^{\xi}
\right)
-
\sum_{i=1}^{n}
\mf{g}_{i}\circ\left(
\frac{\partial}{\partial e_{i}[n]}
\left[
\frac{\partial\mh{L}_{\mf{e}}}{\partial Y_{i}}\circ(\mf{d}_{O})_{\natural}\circ(u^{\xi}\boxtimes(\partial u^{\xi}))
\right]
\boxtimes
v^{\xi}
\right)
\\
+
\sum_{i=1}^{n}\frac{\partial}{\partial e_{i}[n]}\left[
\mf{g}_{i}\circ\left(  
\bigl(
\frac{\partial\mh{L}_{\mf{e}}}{\partial Y_{i}}\circ(\mf{d}_{O})_{\natural}\circ(u^{\xi}\boxtimes(\partial u^{\xi}))
\bigr)
\boxtimes 
v^{\xi}
\right)
\right]\,d\lambda_{U}
\bigr\}
=\ze
\bigr).
\end{multline}
The statement then follows by \eqref{10011531}, since Rmk.\ref{10031813} and by the definition of weak integral.
\end{proof}
\begin{corollary}
[\textbf{An invariance of the action $\mf{a}$ induces a symmetry of $\mr{sEL}_{W}(\mh{L})$}]
\label{11031537}
If $\Gamma_{\lr{\mc{Y},2}{\mc{Z}}}\circ\mh{L}^{(2)}$ is bounded and $\lr{\mh{L},\phi}{\omega}$ is perfect at $\eta$, then 
\begin{equation*}
\mf{a}(\eta)=\mf{a}(\ze)\Rightarrow\phi_{\eta}(\mr{sEL}_{W}(\mh{L}))\subseteq\mr{sEL}_{W}(\mh{L}).
\end{equation*}
\end{corollary}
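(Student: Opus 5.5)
This follows by chaining three results already established: Thm.\ref{09142108}, Cor.\ref{10011525} and Thm.\ref{08280333}. First, assuming $\Gamma_{\lr{\mc{Y},2}{\mc{Z}}}\circ\mh{L}^{(2)}$ is bounded, Thm.\ref{09142108} gives that $\mf{a}(\ze)$ is differentiable with differential map $\mh{Q}$. Since $\mf{a}(\ze)=\upmu$ by Prp.\ref{08280330} (with $\phi_{\ze}=\mr{Id}_{\mf{Z}}$ and $\upzeta(\ze)=\mr{Id}_{W}$), this means that $\upmu$ is differentiable. This is precisely the standing hypothesis needed to make sense of ``perfect at $\eta$''. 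Under that hypothesis the assumption that $\lr{\mh{L},\phi}{\omega}$ is perfect at $\eta$ is meaningful, so the hypotheses of Thm.\ref{08280333} are fulfilled.

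Next, I will apply Thm.\ref{08280333}: from $\mf{a}(\eta)=\mf{a}(\ze)$ we obtain $\phi_{\eta}(\mr{vp}(\mf{a}(\ze)))\subseteq\mr{vp}(\mf{a}(\ze))$. Then, again using the boundedness hypothesis, Cor.\ref{10011525} yields $\mr{vp}(\mf{a}(\ze))=\mr{sEL}_{W}(\mh{L})$. Substituting this equality on both sides of the inclusion gives $\phi_{\eta}(\mr{sEL}_{W}(\mh{L}))\subseteq\mr{sEL}_{W}(\mh{L})$, which is the claim.

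The only delicate point is bookkeeping rather than analysis. We must check that the definition of ``perfect at $\eta$'' refers to right invertibility of $(\phi_{\eta}\circ\imath_{\mf{Z}^{O}}^{\mf{Z}})^{(1)}(u)$ on $\mr{vp}(\mf{a}(\ze))$. After Cor.\ref{10011525} this set is the same as $\mr{sEL}_{W}(\mh{L})$, so no additional hypothesis is needed. We must also confirm that $\phi_{\eta}$ maps $\mf{Z}^{O}$ into itself, which is part of the standing assumption on $\phi$, so that the inclusion statements are well-posed. Beyond this, no estimates are needed: all the analytic content (Taylor bound, differentiability, fundamental lemma) is already contained in the cited results.
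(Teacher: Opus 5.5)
Your proposal is correct and follows the same route as the paper, whose proof simply cites Cor.~\ref{10011525}, Thm.~\ref{09142108} and Thm.~\ref{08280333}. You combine these exactly as the paper does, and your bookkeeping is right: $\mf{a}(\ze)=\upmu$ holds since $\phi_{\ze}=\mr{Id}_{\mf{Z}}$ and $\upzeta(\ze)=\xi^{-1}\circ\xi=\mr{Id}_{W}$, and this is what makes the ``perfect at $\eta$'' hypothesis meaningful.
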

\begin{proof}
Since Cor.\ref{10011525}, Thm.\ref{09142108} and Thm.\ref{08280333}.
\end{proof}
Finally all the main results of the paper yield the following 
\begin{corollary}
\label{10042025}
Assume that $\mf{a}$ is constant. Then
\begin{equation*}
(\forall u\in\mr{EL}_{W}(\mh{L}))
\left(\sum_{i=1}^{n}\frac{\partial\mh{N}^{i}[u]}{\partial e_{i}[n]}=\ze\right),
\end{equation*}
in particular 
\begin{equation*}
(\forall u\in\mr{sEL}_{W}(\mh{L}))
\left(\sum_{i=1}^{n}\frac{\partial\mh{N}^{i}[u]}{\partial e_{i}[n]}=\ze\right).
\end{equation*}
Furthermore under the hypothesis and notation of Cor.\ref{05112017} and if $p=+\infty$, then we have 
\begin{equation*}
(\forall u\in\mr{EL}_{W}(\mh{L}))
\left(\frac{dQ_{V}(u)}{dt}=\ze\right),
\end{equation*}
in particular
\begin{equation*}
(\forall u\in\mr{sEL}_{W}(\mh{L}))
\left(\frac{dQ_{V}(u)}{dt}=\ze\right).
\end{equation*}
Finally if $\Gamma_{\lr{\mc{Y},2}{\mc{Z}}}\circ\mh{L}^{(2)}$ is bounded and $\lr{\mh{L},\phi}{\omega}$ is perfect at $\eta$, then 
\begin{equation*}
\phi_{\eta}(\mr{sEL}_{W}(\mh{L}))\subseteq\mr{sEL}_{W}(\mh{L}).
\end{equation*}
\end{corollary}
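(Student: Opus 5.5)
The statement of Cor.\ref{10042025} is an assembly of earlier results, so the plan is to reduce each clause to a previously proven one. The only new input is the hypothesis that $\mf{a}$ is constant. By Rmk.\ref{08281125} this hypothesis is equivalent to $\mc{A}(u)$ being constant for every $u\in\mf{Z}^{O}$. Hence $\mc{A}(u)^{(1)}(\eta)=\ze$ for every $\eta\in A$, and in particular $\mc{A}(u)^{(1)}(\ze)=\ze$ for every $u\in\mf{Z}^{O}$. Strictly speaking, Rmk.\ref{08281125} uses the differentiability of $\mc{A}(u)$ guaranteed by Thm.\ref{07222143}. A constant map has zero differential, because the defining limit in Rmk.\ref{09152034} is trivially satisfied by $\ze\in\mc{L}(T,\mc{M}(W,\mc{Z}_{\R}))$.

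For the first display, take $u\in\mr{EL}_{W}(\mh{L})$. Since $\mc{A}(u)^{(1)}(\ze)=\ze$, Cor.\ref{08110158} gives $\sum_{i}\partial\mh{N}^{i}[u]/\partial e_{i}[n]=\ze$ directly. The \lq\lq in particular\rq\rq\ clause for $\mr{sEL}_{W}(\mh{L})$ then follows from the inclusion $\mr{sEL}_{W}(\mh{L})\subseteq\mr{EL}_{W}(\mh{L})$ of Thm.\ref{10301959}. This theorem holds without any boundedness hypothesis, so nothing further is needed here.

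For the charge statements, place ourselves under the hypotheses and notation of Cor.\ref{05112017} with $p=+\infty$. For $u\in\mr{EL}_{W}(\mh{L})$ the implication $\mc{A}(u)^{(1)}(\ze)=\ze\Rightarrow dQ_{V}(u)/dt=\ze$ holds, and its premise is satisfied as above, so $dQ_{V}(u)/dt=\ze$. The restriction to $\mr{sEL}_{W}(\mh{L})$ again follows from Thm.\ref{10301959}. Equivalently, one may simply invoke Cor.\ref{08281203}, whose first two assertions are exactly these.

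For the last clause, assume that $\Gamma_{\lr{\mc{Y},2}{\mc{Z}}}\circ\mh{L}^{(2)}$ is bounded and that $\lr{\mh{L},\phi}{\omega}$ is perfect at $\eta$. Constancy of $\mf{a}$ gives $\mf{a}(\eta)=\mf{a}(\ze)$, so Cor.\ref{11031537} yields $\phi_{\eta}(\mr{sEL}_{W}(\mh{L}))\subseteq\mr{sEL}_{W}(\mh{L})$. The point to check is that the standing assumption of perfectness, namely that $\upmu=\mf{a}(\ze)$ is differentiable, holds. This is supplied by Thm.\ref{09142108} under the boundedness hypothesis, since $\mf{a}(\ze)=\upmu$ by Prp.\ref{08280330}.

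There is no genuine obstacle. The only care needed is to note that every quoted result is applied with its own hypotheses: Cor.\ref{08110158} requires $u\in\mr{EL}_{W}(\mh{L})$, Cor.\ref{05112017} requires $p=+\infty$, and the last clause requires the boundedness and perfectness hypotheses.
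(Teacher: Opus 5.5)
Your proposal is correct and follows the paper's proof. The paper's proof is the citation chain Cor.~\ref{08281203} (itself resting on Rmk.~\ref{08281125}, Cor.~\ref{08110158} and Cor.~\ref{05112017}), then Thm.~\ref{10301959}, then Cor.~\ref{11031537}; you simply unpack Cor.~\ref{08281203} into its ingredients, and your check that perfectness presupposes the differentiability of $\upmu=\mf{a}(\ze)$, supplied by Thm.~\ref{09142108} under the boundedness hypothesis, is exactly what Cor.~\ref{11031537} uses internally.
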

\begin{proof}
Since Cor.\ref{08281203}, Thm.\ref{10301959} and Cor.\ref{11031537}.
\end{proof}
\section{Examples}
\label{08300401}
In the present section we specialize to the case $\K=\K_{1}=\C$, recall \eqref{12201914} and the $\C$-action provided in 
\eqref{12152125}.
\begin{notations}
In the present section we add to Notation \ref{12152204} the following notation.
If $Z$ is a topological linear space on $\C$, we let $\ov{Z}$ be the topological linear space on $\C$
whose underlying linear space is the conjugate space of $Z$ and whose underlying topology is that of $Z$. We define 
$Z^{\dagger}\coloneqq\mc{L}(Z,\ov{\C})$ and if $Z$ is locally convex, then we define $Z_{b}^{\dagger}\coloneqq\mc{L}_{b}(Z,\ov{\C})$ 
where we recall that $_{b}$ means the topology of uniform convergence over bounded sets.
If $E$ is a topological linear space on $\mathbb{A}\in\{\R,\C\}$, then we let $\mathbb{GL}(E)$ be the group of 
$\mathbb{A}$-linear continuous isomorphism defined in and valued on $E$ where map composition is the law of composition and 
where $\mr{Id}_{E}$ is the neutral element.
We shall use Einstein convention on sums namely repeated indexes are assumed summed. If $B:\R^{n}\to\R^{n}$ is linear, then
we let $B_{a,b}\coloneqq(\Pr_{a}\circ B)(e_{b}[n])$ for every $a,b\in[1,n]\cap\Z$. If $(V,\theta)$ and $(V,\rho)$ are charts of $M$,
then we let $\Lambda(\theta,\rho)\coloneqq\rho\circ\theta^{-1}$. If $\Lambda(\theta,\rho)$ is the restriction of a linear map on
$\R^{n}$ still denoted by the same symbol, then we know that 
\begin{equation}
\label{08262239}
\partial_{\mu}^{\theta}=\Lambda(\theta,\rho)_{\nu,\mu}\partial_{\nu}^{\rho}. 
\end{equation}
\end{notations}
\begin{definition}
\label{10172011}
For any topological complex linear space $Q$, any $\rho\in\mc{L}(X,Q)$ and any smooth manifold $N$ define 
$\rho_{\intercal}:\mc{C}_{c}^{\infty}(N,X)\to\mc{C}_{c}^{\infty}(N,Q)$, $u\mapsto\rho\circ u$.
\end{definition}
\begin{lemma}
\label{08300643}
For any topological complex linear space $Q$, any $\rho\in\mc{L}(X,Q)$ injective and any smooth manifold $N$ the map 
$\rho_{\intercal}$ is a well-defined linear map. If $Q=X$, then $\rho_{\intercal}\in\mc{L}(\mf{Z})$ and 
$\|\rho_{\intercal}\|\leq\|\rho\|$. 
\end{lemma}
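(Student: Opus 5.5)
The proof splits into three parts: $\rho_{\intercal}$ is well defined as a map into $\mc{C}_{c}^{\infty}(N,Q)$, it is linear, and, when $Q=X$, it restricts to a bounded operator on $\mf{Z}$ with the stated norm bound.

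\emph{Well-definedness.} Fix $u\in\mc{C}_{c}^{\infty}(N,X)$. Since $\rho$ is continuous and $\C$-linear, hence continuous and $\R$-linear from $X_{\R}$ to $Q_{\R}$, the composite $\rho\circ u$ is smooth in the sense of \cite[Definition 1.1]{sil09261621}. On any chart $(V,\theta)$ of $N$, Cor.\ref{08060629b} (together with Prp.\ref{08060629a} for the iterated derivatives) gives
\begin{equation*}
\partial_{i}^{\theta}(\rho\circ u)=\rho\circ\partial_{i}^{\theta}u,
\end{equation*}
and by induction every iterated derivative of $\rho\circ u$ is $\rho$ applied to the corresponding derivative of $u$. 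Since $\rho(\ze)=\ze$, we have $\mr{supp}(\rho\circ u)\subseteq\mr{supp}(u)$, which is compact, so $\rho\circ u\in\mc{C}_{c}^{\infty}(N,Q)$. (In fact injectivity of $\rho$ gives equality of supports, although only the inclusion is needed here.)

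\emph{Linearity.} This is pointwise: $\rho\circ(au+bv)=a\,\rho\circ u+b\,\rho\circ v$ because $\rho$ is linear.

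\emph{The case $Q=X$ and $N=W$.} Let $u\in\mf{Z}=\mc{C}_{c,1}^{\infty}(W,X)$, recalling that $Y_{0}=X$. The same identity $\partial_{\nu}^{\rho'}(\rho\circ u)=\rho\circ\partial_{\nu}^{\rho'}u$, valid for every chart $(V,\rho')$, yields
\begin{equation*}
\mr{supp}(\partial_{\nu}^{\rho'}(\rho\circ u))\subseteq\mr{supp}(\partial_{\nu}^{\rho'}u)\in\mr{Cmp}(V),
\end{equation*}
so $\rho_{\intercal}(\mf{Z})\subseteq\mf{Z}$. For the norm, with $\nu\in[0,n]\cap\Z$ and $y\in W$,
\begin{equation*}
\|\partial_{\nu}^{\xi}(\rho\circ u)(y)\|=\|\rho(\partial_{\nu}^{\xi}u(y))\|\leq\|\rho\|\,\|\partial_{\nu}^{\xi}u(y)\|.
\end{equation*}
Taking the supremum over $y$ and the maximum over $\nu$ gives $\widehat{q}(\rho_{\intercal}u)\leq\|\rho\|\,\widehat{q}(u)$. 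Hence $\rho_{\intercal}$ is bounded, so $\rho_{\intercal}\in\mc{L}(\mf{Z})$ and $\|\rho_{\intercal}\|\leq\|\rho\|$.

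\emph{Expected obstacle.} The only step needing care is justifying that the chart derivatives $\partial_{i}^{\theta}$ of \cite[Definition 1.4]{sil09261621} commute with a continuous linear map. This reduces to Cor.\ref{08060629b} applied to $u\circ\theta^{-1}$ on $\theta(V)$, combined with \eqref{08060435}.
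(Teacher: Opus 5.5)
Your proposal is correct and follows essentially the same route as the paper: composition with the continuous linear $\rho$ preserves smoothness, the support is controlled through $\rho^{-1}(\{\ze\})=\{\ze\}$, and the identity $\partial_{\nu}^{\xi}(\rho\circ u)=\rho\circ\partial_{\nu}^{\xi}u$, obtained by passing through $u\circ\xi^{-1}$, gives $\widehat{q}(\rho_{\intercal}u)\leq\|\rho\|\,\widehat{q}(u)$. The differences are small refinements. You note that the inclusion $\mr{supp}(\rho\circ u)\subseteq\mr{supp}(u)$ already suffices, whereas the paper uses injectivity to get equality. You also check explicitly, chart by chart, that $\rho_{\intercal}(u)$ satisfies the $\mc{C}_{c,1}^{\infty}$ condition, which the paper leaves implicit.
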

\begin{proof}
If $u\in\mc{C}^{\infty}(N,X)$, then $\rho\circ u\in\mc{C}^{\infty}(N,Q)$ since $\rho$ is smooth being linear and continuous;
next $u\mapsto\rho\circ u$ is a linear map from $\mc{C}^{\infty}(N,X)$ to $\mc{C}^{\infty}(N,Q)$ since $\rho$ is linear;  
finally 
\begin{equation*}
\overset{-1}{\overbrace{\rho\circ u}}(\{\ze\})=(\overset{-1}{u}\circ\overset{-1}{\rho}(\{\ze\}))=\overset{-1}{u}(\{\ze\}),
\end{equation*}
the 2th equality following by $\overset{-1}{\rho}(\{\ze\})=\{\ze\}$ since $\rho$ is linear and injective thus, 
$\mr{supp}(\rho\circ u)=\mr{supp}(u)$ which ends the proof of well-definedness of $\rho_{\intercal}$. Next let $u\in\mf{Z}$ thus,
\begin{equation*}
\partial_{\nu}^{\xi}(\rho_{\intercal}(u))
=
\partial_{\nu}(\rho\circ u\circ\xi^{-1})\circ\xi
=
\rho\circ\partial_{\nu}(u\circ\xi^{-1})\circ\xi
=
\rho\circ\partial_{\nu}^{\xi}(u);
\end{equation*}
therefore
\begin{equation*}
\widehat{q}(\rho_{\intercal}(u))
=
\max_{\nu\in[0,n]\cap\Z}\sup_{x\in W}\|(\rho\circ\partial_{\nu}^{\xi}(u))(x)\|_{X},
\leq\|\rho\|\widehat{q}(u),
\end{equation*}
and the statement follows.
\end{proof}
\begin{definition}
Define $\mf{Z}_{2}$ to be the $\C$-normed space whose underlying linear space is $\mc{C}_{c,1}^{\infty}(W,Y_{0})$ and whose norm is 
\begin{equation*}
\widehat{q}_{2}(g)\coloneqq\sup_{x\in W}\|g(x)\|_{X}.
\end{equation*}
\end{definition}
\begin{lemma}
\label{08310357}
Let $\theta\in\mc{C}^{\infty}(W,W)$ be such that $\exists\,\theta^{-1}\in\mc{C}(W,W)$. Then the map 
$\theta^{\ast}:\mc{C}_{c}^{\infty}(W,X)\to\mc{C}_{c}^{\infty}(W,X)$, $f\mapsto f\circ\theta$ is such that
$\theta^{\ast}\in\mc{L}(\mf{Z},\mf{Z}_{2})$ and $\|\theta^{\ast}\|\leq 1$.
Furthermore let $(\xi\circ\theta)_{\mu}\coloneqq\Pr_{\mu}\circ\imath_{U}^{\R^{n}}\circ\xi\circ\theta$ for any $\mu\in[1,n]\cap\Z$. 
Thus if we assume that 
\begin{equation}
\label{08310357hy}
(\forall\nu\in[0,n]\cap\Z)(\forall\mu\in[1,n]\cap\Z)(\sup_{x\in W}\left|\partial_{\nu}^{\xi}(\xi\circ\theta)_{\mu}(x)\right|<\infty),
\end{equation}
then 
\begin{equation*}
\theta^{\ast}\in\mc{L}(\mf{Z},\mf{Z}),\,\|\theta^{\ast}\|\leq nC(\theta),
\end{equation*}
where
\begin{equation*}
C(\theta)\coloneqq
\sup\left\{
1,
\max_{\nu\in[1,n]\cap\Z}
\max_{\mu\in[1,n]\cap\Z}
\sup_{x\in W}\left|\partial_{\nu}^{\xi}(\xi\circ\theta)_{\mu}(x)\right|.
\right\}
\end{equation*}
\end{lemma}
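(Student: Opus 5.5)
The plan is to reduce both claims to the chain rule for $\partial_{\nu}^{\xi}$ together with the fact that $\theta$ is a homeomorphism of $W$. The argument has three parts: well-definedness, the $\mf{Z}_{2}$ estimate, and the $\mf{Z}$ estimate.

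\emph{Well-definedness.} Let $f\in\mc{C}_{c,1}^{\infty}(W,X)$. Then $f\circ\theta$ is smooth as a composition of smooth maps. Since $\theta$ is a continuous bijection with continuous inverse, we have $\mr{supp}(f\circ\theta)=\theta^{-1}(\mr{supp}(f))$. This set is compact, because $\theta^{-1}$ is continuous and maps compact sets to compact sets. For the derivatives, the chain rule gives
\begin{equation*}
\partial_{\nu}^{\xi}(f\circ\theta)=\sum_{\mu=1}^{n}\bigl((\partial_{\mu}^{\xi}f)\circ\theta\bigr)\cdot\partial_{\nu}^{\xi}(\xi\circ\theta)_{\mu}.
\end{equation*}
To obtain it, write $f\circ\theta\circ\xi^{-1}=(f\circ\xi^{-1})\circ(\xi\circ\theta\circ\xi^{-1})$ and apply Thm.\ref{821} and Cor.\ref{02211942} in coordinates. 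From this formula, $\mr{supp}(\partial_{\nu}^{\xi}(f\circ\theta))\subseteq\bigcup_{\mu}\theta^{-1}(\mr{supp}(\partial_{\mu}^{\xi}f))$, which is compact. Membership in $\mc{C}_{c,1}^{\infty}$ requires this for every chart $\rho$, not only $\xi$. I would handle a general chart with the same chain-rule formula in the chart $\rho$, or by relating $\partial^{\rho}$ to $\partial^{\xi}$ on overlaps. I expect this chart-independence of the support condition to be the fiddliest point, though it is routine. Linearity of $\theta^{\ast}$ is immediate.

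\emph{Bound into $\mf{Z}_{2}$.} Here the estimate is $\sup_{x\in W}\|f(\theta(x))\|_{X}\leq\sup_{y\in W}\|f(y)\|_{X}\leq\widehat{q}(f)$, the last step because $\nu=0$ is one of the terms in $\widehat{q}$. This gives $\|\theta^{\ast}\|\leq1$.

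\emph{Bound into $\mf{Z}$, under \eqref{08310357hy}.} For $\nu=0$ the bound is the one just proved, and it is $\leq\widehat{q}(f)\leq nC(\theta)\widehat{q}(f)$ because $C(\theta)\geq1$. For $\nu\geq1$, apply the chain-rule formula and the triangle inequality:
\begin{equation*}
\sup_{x}\|\partial_{\nu}^{\xi}(f\circ\theta)(x)\|\leq\sum_{\mu=1}^{n}\sup_{y}\|\partial_{\mu}^{\xi}f(y)\|\cdot\sup_{x}|\partial_{\nu}^{\xi}(\xi\circ\theta)_{\mu}(x)|\leq nC(\theta)\widehat{q}(f).
\end{equation*}
Taking the maximum over $\nu$ yields $\widehat{q}(\theta^{\ast}f)\leq nC(\theta)\widehat{q}(f)$. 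Hence $\theta^{\ast}$ is bounded, hence continuous, and $\|\theta^{\ast}\|\leq nC(\theta)$. The main genuine step is the coordinate chain rule for the vector-valued map $f$. It follows from the bounded calculus results cited (Thm.\ref{821}, Cor.\ref{8126}) applied to $f\circ\xi^{-1}$, which takes values in the Banach space $X$.
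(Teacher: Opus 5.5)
Your proposal is correct and follows the paper's proof essentially step by step: the coordinate chain rule through $\lambda=\xi\circ\theta\circ\xi^{-1}$ giving $\partial_{\nu}^{\xi}(f\circ\theta)=\sum_{\mu}\partial_{\nu}^{\xi}(\xi\circ\theta)_{\mu}\cdot[(\partial_{\mu}^{\xi}f)\circ\theta]$, the sup estimate for $\nu=0$, and the triangle inequality with $C(\theta)\geq 1$ to reach $nC(\theta)$.

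The paper never checks that $\theta^{\ast}$ preserves $\mc{C}_{c,1}^{\infty}$, and it writes $\mr{supp}(\theta^{\ast}u)=\mr{supp}(u)$ where your $\theta^{-1}(\mr{supp}(f))$ is the correct set, so that part of your plan goes beyond the paper. Be aware, however, that the chain rule in an arbitrary chart $(V,\rho)$ only places the support inside $V\cap\bigcup_{\mu}\theta^{-1}(\mr{supp}(\partial_{\mu}^{\xi}f))$, which need not be compact in $V$. So you would really have to pull the hypothesis back through the chart $(\theta(V),\rho\circ\theta^{-1})$, and that requires $\theta^{-1}$ to be smooth. This holds for $\theta=\upzeta(\eta)$, the only case the paper uses.
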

\begin{proof}
$\theta^{\ast}:\mc{C}^{\infty}(W,X)\to\mc{C}^{\infty}(W,X)$ since $\theta$ is smooth moreover $\theta^{\ast}$ is trivially linear. 
Now by Prp.\ref{12061416}, since $\overset{-1}{\theta}=\theta^{-1}$ and the fact that $\theta^{-1}$ being continous maps
the set of compacts into the set of compacts, we deduce that $\mr{supp}(\theta^{\ast}(u))=\mr{supp}(u)$ so, for what above proven
$\theta^{\ast}:\mc{C}_{c}^{\infty}(W,X)\to\mc{C}_{c}^{\infty}(W,X)$. Next let $u\in\mf{Z}$ then,
\begin{equation*}
\widehat{q}_{2}(\theta^{\ast}(u))
=
\sup_{x\in W}\|u(\theta(x))\|_{X}
=
\sup_{x\in W}\|u(x)\|_{X}
\leq
\widehat{q}(u);
\end{equation*}
the 2th equality following since $\theta$ is surjective and the first part of the statement follows.
Next let $\nu\in[1,n]\cap\Z$ and $u\in\mf{Z}$, let $\lambda\in\mc{C}^{\infty}(U,U)$ defined by 
$\lambda\coloneqq\xi\circ\theta\circ\xi^{-1}$ and $\lambda_{\mu}\coloneqq\Pr_{\mu}\circ\lambda$; thus for every $y\in U$ we have
\begin{equation*}
\begin{aligned}
\partial_{\nu}(u^{\xi}\circ\lambda)(y)
&=
(u^{\xi}\circ\lambda)^{1}(e_{\nu}[n])(y)
\\
&=
(u^{\xi})^{(1)}(\lambda(x))(\lambda^{1}(y)(e_{\nu}[n]))
\\
&=
(u^{\xi})^{(1)}(\lambda(y))(\sum_{\mu=1}^{n}\imath_{\mu}(\lambda_{\mu}^{1}(y)(e_{\nu}[n])))
\\
&=
(u^{\xi})^{(1)}(\lambda(y))(\sum_{\mu=1}^{n}e_{\mu}[n]\lambda_{\mu}^{1}(y)(e_{\nu}[n]))
\\
&=
\sum_{\mu=1}^{n}
\lambda_{\mu}^{1}(y)(e_{\nu}[n])
(u^{\xi})^{(1)}(\lambda(y))(e_{\mu}[n])
\\
&=
\sum_{\mu=1}^{n}\partial_{\nu}(\lambda_{\mu})(y)(\partial_{\mu}u^{\xi})(\lambda(y))
\end{aligned}
\end{equation*}
where the the 2th equality follows by Thm.\ref{821}, the 3th one by Prp.\ref{815} so,
\begin{equation*}
\partial_{\nu}(u^{\xi}\circ\lambda)
=
\sum_{\mu=1}^{n}\partial_{\nu}(\lambda_{\mu})\cdot[(\partial_{\mu}u^{\xi})\circ\lambda].
\end{equation*}
Now
\begin{equation*}
\partial_{\nu}^{\xi}(u\circ\theta)
=
\partial_{\nu}(u\circ\theta\circ\xi^{-1})\circ\xi
=
\partial_{\nu}(u^{\xi}\circ\lambda)\circ\xi
\end{equation*}
therefore
\begin{equation*}
\begin{aligned}
\partial_{\nu}^{\xi}(u\circ\theta)
&=
\sum_{\mu=1}^{n}
[\partial_{\nu}((\xi\circ\theta)_{\mu}\circ\xi^{-1})\circ\xi]
\cdot
[(\partial_{\mu}u^{\xi})\circ\lambda\circ\xi]
\\
&=
\sum_{\mu=1}^{n}
\partial_{\nu}^{\xi}((\xi\circ\theta)_{\mu})
\cdot
[(\partial_{\mu}u\circ\xi^{-1})\circ\xi\circ\theta]
\\
&=
\sum_{\mu=1}^{n}
\partial_{\nu}^{\xi}((\xi\circ\theta)_{\mu})
\cdot
[(\partial_{\mu}^{\xi}u)\circ\theta].
\end{aligned}
\end{equation*}
Thus 
\begin{equation*}
\begin{aligned}
\sup_{x\in W}
\|\partial_{\nu}^{\xi}(u\circ\theta)(x)\|_{X}
&
\leq
\sum_{\mu=1}^{n}
\sup_{x\in W}\|\partial_{\nu}^{\xi}((\xi\circ\theta)_{\mu})(x)\|
\cdot
\sup_{x\in W}\|(\partial_{\mu}^{\xi}u)(\theta(x))\|
\\
&
=
\sum_{\mu=1}^{n}
\sup_{x\in W}\|\partial_{\nu}^{\xi}((\xi\circ\theta)_{\mu})(x)\|
\cdot
\sup_{x\in W}\|(\partial_{\mu}^{\xi}u)(x)\|
\\
\leq
nC(\theta)
\widehat{q}(u),
\end{aligned}
\end{equation*}
and 
\begin{equation*}
\sup_{x\in W}\|(u\circ\theta)(x)\|_{X}
=
\sup_{x\in W}\|u(x)\|_{X}
\leq
\widehat{q}(u)
\leq
nC(\theta)
\widehat{q}(u),
\end{equation*}
where the equalities follow since $\theta$ is surjective.
Then the statement follows.
\end{proof}
\begin{definition}
\label{08262042}
Let $Z$ and $E$ be complex topological linear spaces, we call $(\overline{\cdot},\widehat{\cdot})$ a conjugation system on $Z$ 
and valued in $E$ iff 
\begin{enumerate}
\item
$\overline{\cdot}\in\mc{L}(Z,\mc{L}_{b}(Z,\ov{E}))$, 
\item
$\widehat{\cdot}\in\mc{L}(\mc{L}_{b}(Z),\ov{\mc{L}_{b}(Z)})$, 
\item
$(\forall z\in Z)(\forall P\in\mc{L}(Z))(\ov{Pz}=\ov{z}\circ\widehat{P})$.
\label{08262042c}
\end{enumerate}
\end{definition}
\begin{example}
Let $X=\mathbb{M}_{n,1}(\C)$ provided by the $\max$-norm, let $\mc{L}(X)$ be identified with $\mathbb{M}_{n}(\C)$ and let 
$\mc{L}(X,\C)$ be identified with $\mathbb{M}_{1,n}(\C)$. Let $\gamma_{0}\in\mc{L}(X)$ be such that $\gamma_{0}^{2}=\un_{n}$, let
$^{\intercal}$ be the transpose operation, $^{\ast}$ the complex conjugation and finally let $^{\dagger}$ be the composition of the 
previous two operations. Thus we can define $\ov{x}\coloneqq x^{\dagger}\circ\gamma_{0}$ and 
$\widehat{T}\coloneqq\gamma_{0}T^{\dagger}\gamma_{0}$; clearly $(\overline{\cdot},\widehat{\cdot})$ is a conjugation system on $X$
valued in $\C$.
\end{example}
\begin{assumption}
For the remaining of the present section \ref{08300401}, we let $(\overline{\cdot},\widehat{\cdot})$ be a conjugation system 
on $X$ and valued in $\mc{Z}$. Furthermore we assume that for every $\eta\in A$ the map $\chi(\eta)$ defined in 
Def.\ref{10051149} is the restricition of a unique element of $\mathbb{GL}(\R^{n})$ which by abuse of notation we denote with the
same symbol. 
\end{assumption}
\begin{definition}
\label{08262218}
We call $(R,\gamma)$ a Dirac system on $X$ of dimension $n$ iff 
\begin{enumerate}
\item
$\gamma:[1,n]\cap\Z\to\mc{L}(X)$,
\item
$R\in\mr{Mor_{Gr}}(\mathbb{GL}(\R^{n}),\mathbb{GL}(X))$,
\item
$(\forall P\in\mathbb{GL}(\R^{n}))(\forall\nu\in[1,n\cap\Z])$ we have 
\begin{equation*}
R(P^{-1})\circ\gamma^{\nu}\circ R(P)=P_{\nu,\mu}\gamma^{\mu}.
\end{equation*} 
\item
$(\forall P\in\mathbb{GL}(\R^{n}))(\widehat{R}(P)\circ R(P)=\mr{Id}_{X})$;
\label{08262218a}
\end{enumerate}
where $\widehat{R}(P)\coloneqq\widehat{R(P)}$ for every $P\in\mathbb{GL}(\R^{n})$.
\end{definition}
\begin{definition}
\label{08251005}
Let $(R,\gamma)$ be a Dirac system on $X$ of dimension $n$, define
\begin{equation*}
\begin{cases}
S:A\to\mathbb{GL}(X),
\\
S\coloneqq R\circ\chi.
\end{cases}
\end{equation*}
For every $\eta\in A$ define $\widehat{S}(\eta)\coloneqq\widehat{S(\eta)}$.
\end{definition}
Notice that $\chi(\eta)=\Lambda(\omega_{\natural}^{\eta},\xi)$ moreover 
\begin{equation}
\label{08251046}
S(\eta)^{-1}\circ\gamma^{\nu}\circ S(\eta)=\chi_{\nu,\mu}(\eta)\gamma^{\mu}.
\end{equation} 
\begin{convention}
If $u\in\mc{C}^{\infty}(W,Y_{0})$, then we let $u^{\xi}=u\circ\xi^{-1}$ and for any set $Q$ and any map $f:Q\to X$ we 
let $\ov{f}$ denote $\ov{\cdot}\circ f$.
\end{convention}
\begin{lemma}
\label{08251057}
If $f:U\to X$ is differentiable and $\Lambda$ is the restriction of a unique map belonging to $\mathbb{GL}(\R^{n})$, then
$\partial_{\nu}(f\circ\Lambda)=\Lambda_{\mu,\nu}(\partial_{\mu}f)\circ\Lambda$.
\end{lemma}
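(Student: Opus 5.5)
This is the chain rule applied to the composite of $f$ with a linear map, written in coordinates. Throughout, $\Lambda$ denotes both the restriction and the unique element of $\mathbb{GL}(\R^{n})$ extending it. Since the composite $f\circ\Lambda$ must be defined, we read the statement on the open set $\Lambda^{-1}(U)$ (or on $U$ when $\Lambda(U)=U$, as in the intended application $\Lambda=\chi(\eta)$). Fix a point $y$ of this domain and an index $\nu\in[1,n]\cap\Z$.

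The first step is to express the partial derivative through the differential. By Prp.\ref{08091051} (together with Prp.\ref{08091052p}), $\partial_{\nu}(f\circ\Lambda)(y)=(f\circ\Lambda)^{(1)}(y)(e_{\nu}[n])$.

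The second step is the chain rule, Thm.\ref{821}. It gives $(f\circ\Lambda)^{(1)}(y)=f^{(1)}(\Lambda(y))\circ\Lambda^{(1)}(y)$. Because $\Lambda$ is linear and continuous, Prp.\ref{813} yields $\Lambda^{(1)}(y)=\Lambda$. Here one must take care that the abuse of notation between the restriction and its linear extension is harmless; Prp.\ref{05231704} on restrictions to open sets handles this.

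The third step expands $\Lambda(e_{\nu}[n])$ in the standard basis. We have $\Lambda(e_{\nu}[n])=\sum_{\mu}(\Pr_{\mu}\circ\Lambda)(e_{\nu}[n])\,e_{\mu}[n]=\sum_{\mu}\Lambda_{\mu,\nu}e_{\mu}[n]$, by the Notation's definition of $\Lambda_{a,b}$. Since $f^{(1)}(\Lambda(y))$ is real-linear (recall the convention that $f:U\to X_{\R}$), applying it to this expansion gives $\sum_{\mu}\Lambda_{\mu,\nu}\,f^{(1)}(\Lambda(y))(e_{\mu}[n])$. By Prp.\ref{08091051} again, this equals $\sum_{\mu}\Lambda_{\mu,\nu}(\partial_{\mu}f)(\Lambda(y))$, which is the claimed formula under the Einstein convention.

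This is exactly the computation already carried out in the proof of Lemma \ref{08310357}, specialised to $\lambda=\Lambda$ linear, where $\partial_{\nu}\Lambda_{\mu}=\Lambda_{\mu,\nu}$ is constant. No step is a genuine obstacle; the only points needing care are the index bookkeeping $\Lambda_{\mu,\nu}$ versus $\Lambda_{\nu,\mu}$ and the domain issue noted above.
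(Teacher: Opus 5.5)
Your proof is correct and follows the paper's argument step for step: you write $\partial_{\nu}(f\circ\Lambda)$ as $(f\circ\Lambda)^{(1)}(\cdot)(e_{\nu}[n])$ via Prp.\ref{08091051}, apply the chain rule Thm.\ref{821}, replace $\Lambda^{(1)}$ by $\Lambda$ (the paper cites Cor.\ref{03110819}, which combines your Prp.\ref{813} and Prp.\ref{05231704}), then expand $\Lambda(e_{\nu}[n])=\Lambda_{\mu,\nu}e_{\mu}[n]$ and use linearity. Your remark that the composite lives on $\Lambda^{-1}(U)$ is a legitimate clarification that the paper leaves implicit.
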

\begin{proof}
\begin{equation*}
\begin{aligned}
\partial_{\nu}(f\circ\Lambda)(x)
&=
(f\circ\Lambda)^{(1)}(x)(e_{\nu}[n])
\\
&=
f^{(1)}(\Lambda(x))(\Lambda^{(1)}(x)(e_{\nu}[n]))
\\
&=
f^{(1)}(\Lambda(x))(\Lambda(e_{\nu}[n]))
\\
&=
f^{(1)}(\Lambda(x))(e_{\mu}[n]\Pr_{\mu}(\Lambda(e_{\nu}[n])))
\\
&=
\Lambda_{\mu,\nu}f^{(1)}(\Lambda(x))(e_{\mu}[n])
\\
&=
\Lambda_{\mu,\nu}(\partial_{\mu}f)(\Lambda(x));
\end{aligned}
\end{equation*}
where the 1th and 6th equality follow since Prp.\ref{08091051}; the 2th one by Thm.\ref{821}; the 3th one since 
Cor.\ref{03110819}.
\end{proof}
\begin{definition}
Define $\mf{x}:\mc{L}_{b}(X,\ov{\mc{Z}})\times X\to\ov{\mc{Z}}$, $(B,x)\mapsto B(x)$.
\end{definition}
Notice that $\mf{x}$ is bilinear and continuous therefore, 
$\mf{x}\circ(\ov{(\cdot)}\boxtimes\mr{Id}_{X}):X\times X\to\ov{\mc{Z}}$ is bilinear and continuous.
\begin{assumption}
\label{09081619pre}
For the remaining of the present section \ref{08300401}, we let $(R,\gamma)$ be a Dirac system on $X$ of dimension $n$. 
\end{assumption}
\begin{definition}
Define for every $u\in\mc{C}^{\infty}(W,Y_{0})$ and $\eta\in A$ the maps $u_{\eta}^{\xi}:U\to X$ and $u_{\eta}:W\to X$ such that 
\begin{equation*}
\begin{aligned}
u_{\eta}^{\xi}&\coloneqq S(\eta)\circ u^{\xi}\circ\chi(\eta)^{-1},
\\
u_{\eta}&\coloneqq S(\eta)\circ u^{\xi}\circ\omega_{\natural}^{\eta}.
\end{aligned}
\end{equation*}
By abusing the notation we let $\un$ denote either $\mr{Id}_{\mc{C}_{c}^{\infty}(W,X)}$ or $\mr{Id}_{\mc{C}_{c}^{\infty}(U,X)}$ whenever
this will not be cause of confusion.
\end{definition}
\begin{theorem}
[Invariance of the Lagrangian]
\label{08251007}
For every $\eta\in A$, $u,v\in\mc{C}^{\infty}(W,Y_{0})$ we have 
\begin{equation*}
\begin{aligned}
\mf{x}\circ\left(\ov{v_{\eta}^{\xi}}\boxtimes\bigl(\gamma^{\nu}\circ\partial_{\nu}(u_{\eta}^{\xi})\bigr)\right)
\circ\chi(\eta)
&=
\mf{x}\circ\left(\ov{v_{\ze}^{\xi}}\boxtimes\bigl(\gamma^{\nu}\circ\partial_{\nu}(u_{\ze}^{\xi})\bigr)\right);
\\
\mf{x}\circ\left(\ov{\partial_{\nu}(v_{\eta}^{\xi})}\boxtimes\bigl(\gamma^{\nu}\circ u_{\eta}^{\xi}\bigr)\right)
\circ\chi(\eta)
&=
\mf{x}\circ\left(\ov{\partial_{\nu}(v_{\ze}^{\xi})}\boxtimes\bigl(\gamma^{\nu}\circ u_{\ze}^{\xi}\bigr)\right);
\\
\mf{x}\circ\left(\ov{v_{\eta}^{\xi}}\boxtimes u_{\eta}^{\xi}\right)\circ\chi(\eta)
&=
\mf{x}\circ\left(\ov{v_{\ze}^{\xi}}\boxtimes u_{\ze}^{\xi}\right).
\end{aligned}
\end{equation*}
\end{theorem}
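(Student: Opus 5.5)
The plan is a direct pointwise computation. First I rewrite every quantity evaluated at $\chi(\eta)(x)$ in terms of $u^{\xi}$ and $v^{\xi}$ at $x$. Then the $\chi(\eta)$-factors produced by the chain rule are absorbed using \eqref{08251046} and Def.\ref{08262218}\eqref{08262218a}. Finally I identify the right-hand sides through $u_{\ze}^{\xi}=u^{\xi}$.

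\textbf{Step 1 (the $\eta=\ze$ side).} Since $\omega_{\natural}^{\ze}=\xi$, we have $\chi(\ze)=\mr{Id}_{U}$, i.e. the identity of $\mathbb{GL}(\R^{n})$. As $R$ is a group morphism, $S(\ze)=R(\mr{Id}_{\R^{n}})=\mr{Id}_{X}$. Hence $u_{\ze}^{\xi}=u^{\xi}$ and $v_{\ze}^{\xi}=v^{\xi}$, so each right-hand side is the corresponding expression built from $u^{\xi}$ and $v^{\xi}$.

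\textbf{Step 2 (undoing the change of variables).} Fix $\eta$ and write $\Lambda\coloneqq\chi(\eta)^{-1}\in\mathbb{GL}(\R^{n})$. Then $u_{\eta}^{\xi}\circ\chi(\eta)=S(\eta)\circ u^{\xi}$. By Lemma \ref{08251057} applied to $f=S(\eta)\circ u^{\xi}$, together with the linearity and continuity of $S(\eta)$ (so that it commutes with $\partial_{\mu}$, cf. Cor.\ref{08060629b}), I get
\begin{equation*}
\partial_{\nu}(u_{\eta}^{\xi})\circ\chi(\eta)=\Lambda_{\mu,\nu}\,S(\eta)\circ\partial_{\mu}u^{\xi},
\end{equation*}
and the same identity holds for $v$.

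\textbf{Step 3 (absorbing the factors).} From \eqref{08251046} we have $\gamma^{\nu}\circ S(\eta)=\chi_{\nu,\rho}(\eta)\,S(\eta)\circ\gamma^{\rho}$, so
\begin{equation*}
\gamma^{\nu}\circ\partial_{\nu}(u_{\eta}^{\xi})\circ\chi(\eta)=\Lambda_{\mu,\nu}\chi_{\nu,\rho}(\eta)\,S(\eta)\gamma^{\rho}\partial_{\mu}u^{\xi}=S(\eta)\gamma^{\mu}\partial_{\mu}u^{\xi},
\end{equation*}
because $\Lambda_{\mu,\nu}\chi_{\nu,\rho}(\eta)=(\chi(\eta)^{-1}\chi(\eta))_{\mu,\rho}=\delta_{\mu\rho}$. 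For the conjugated factor, Def.\ref{08262042}\eqref{08262042c} gives $\ov{S(\eta)z}=\ov{z}\circ\widehat{S}(\eta)$. Combined with Def.\ref{08262218}\eqref{08262218a}, $\widehat{S}(\eta)\circ S(\eta)=\mr{Id}_{X}$, this yields the first identity.

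The third identity is the same argument without derivatives: $\ov{S v^{\xi}}(S u^{\xi})=\ov{v^{\xi}}(\widehat{S}Su^{\xi})=\ov{v^{\xi}}(u^{\xi})$.

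For the second identity, Step 2 for $v$ and the $\R$-linearity of $\ov{\cdot}$ (the coefficients $\Lambda_{\mu,\nu}$ are real) give
\begin{equation*}
\ov{\partial_{\nu}v_{\eta}^{\xi}}\circ\chi(\eta)=\Lambda_{\mu,\nu}\,\ov{\partial_{\mu}v^{\xi}}\circ\widehat{S}(\eta).
\end{equation*}
Since $S(\eta)$ is invertible and $\widehat{S}(\eta)S(\eta)=\mr{Id}_{X}$, we have $\widehat{S}(\eta)=S(\eta)^{-1}$. Therefore $\widehat{S}(\eta)\gamma^{\nu}S(\eta)=\chi_{\nu,\rho}(\eta)\gamma^{\rho}$ by \eqref{08251046}, and the same Kronecker contraction as above concludes.

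\textbf{Expected obstacle.} The only delicate point is index bookkeeping. One must check that the index convention $B_{a,b}=(\Pr_{a}\circ B)(e_{b}[n])$ in Lemma \ref{08251057} and in \eqref{08251046} really produces $\Lambda_{\mu,\nu}\chi_{\nu,\rho}$, i.e. the product $\chi^{-1}\chi$, rather than $\chi\chi^{-1}$ or a transposed contraction. If the indices turned out transposed, I would switch to using $\chi(\eta)$ directly in Lemma \ref{08251057}, applied to $u_{\eta}^{\xi}$ itself, which reverses the roles of the two matrices.
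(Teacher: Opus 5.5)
Your proposal is correct and follows essentially the paper's route. It is the same pointwise computation, using the conjugation-system rule $\ov{Pz}=\ov{z}\circ\widehat{P}$, the covariance relation $S(\eta)^{-1}\gamma^{\nu}S(\eta)=\chi_{\nu,\mu}(\eta)\gamma^{\mu}$, the identity $\widehat{S}(\eta)\circ S(\eta)=\mr{Id}_{X}$ and the linear chain rule of Lemma \ref{08251057}, and it ends in the contraction $\sum_{\nu}(\chi(\eta)^{-1})_{\mu,\nu}\chi(\eta)_{\nu,\rho}=\delta_{\mu,\rho}$. The paper writes $\sigma(\eta)$ for $\chi(\eta)^{-1}$ and composes with $\chi(\eta)$ only at the end, whereas you compose first; your index bookkeeping is right as written, so the fallback in your last paragraph is unnecessary.
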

\begin{proof}
Let $\eta\in A$ and $u,v\in\mc{C}^{\infty}(W,Y_{0})$ then, 
\begin{equation*}
\begin{aligned}
\mf{x}\circ\left(\ov{v_{\eta}^{\xi}}\boxtimes\bigl(\gamma^{\nu}\circ\partial_{\nu}(u_{\eta}^{\xi})\bigr)\right)
&=
\mf{x}\circ\left((\ov{v^{\xi}\circ\sigma(\eta))}
\boxtimes
\bigl(\widehat{S}(\eta)\circ\gamma^{\nu}\circ S(\eta)\circ\partial_{\nu}(u^{\xi}\circ\sigma(\eta))\bigr)
\right)
\\
&=
\mf{x}\circ\left((\ov{v^{\xi}}\circ\sigma(\eta))
\boxtimes
\bigl(
\widehat{S}(\eta)\circ S(\eta)\chi(\eta)_{\nu,\mu}\circ\gamma^{\mu}\circ\partial_{\nu}(u^{\xi}\circ\sigma(\eta))
\bigr)\right)
\\
&=
\mf{x}\circ\left((\ov{v^{\xi}}\circ\sigma(\eta))
\boxtimes
\bigl(
\chi(\eta)_{\nu,\mu}\circ\gamma^{\mu}\circ\partial_{\nu}(u^{\xi}\circ\sigma(\eta))
\bigr)\right)
\\
&=
\mf{x}\circ\left((\ov{v^{\xi}}\circ\sigma(\eta))
\boxtimes
\bigl(
\chi(\eta)_{\nu,\mu}\circ\gamma^{\mu}\circ\sigma(\eta)_{\rho,\nu}\partial_{\rho}(u^{\xi})\circ\sigma(\eta)
\bigr)\right)
\\
&=
\mf{x}\circ\left((\ov{v^{\xi}}\circ\sigma(\eta))
\boxtimes
\bigl(
\delta_{\rho,\mu}\gamma^{\mu}\circ\partial_{\rho}(u^{\xi})\circ\sigma(\eta)
\bigr)\right)
\\
&=
\mf{x}\circ\left((\ov{v^{\xi}}\circ\sigma(\eta))
\boxtimes
\bigl(
\gamma^{\mu}\circ\partial_{\mu}(u^{\xi})\circ\sigma(\eta)
\bigr)\right)
\\
&=
\mf{x}\circ\left(\ov{v^{\xi}}
\boxtimes
\bigl(
\gamma^{\mu}\circ\partial_{\mu}(u^{\xi})
\bigr)\right)
\circ\sigma(\eta).
\end{aligned}
\end{equation*}
Here the 1th equality follows by Def.\ref{08262042}\eqref{08262042c} and since Prp.\ref{08091051}, Thm.\ref{821} and 
Cor.\ref{03110819}; the 2th one by \eqref{08251046}; the 3th one by Def.\ref{08262218}\eqref{08262218a}; the 4th one 
since Lemma \ref{08251057}; the 5th one since $\sigma(\eta)\circ\chi(\eta)=\mr{Id}_{U}$.
So, the first equality in the statement follows. The remaining two equalities follow similarly.
\end{proof}
\begin{proposition}
\label{08262226}
For every $\eta\in A$, $u\in\mc{C}^{\infty}(W,Y_{0})$ we have 
\begin{equation*}
(i\gamma^{\nu}\partial_{\nu}^{\xi}-m\un)u=\ze
\Leftrightarrow
(i\gamma^{\nu}\partial_{\nu}^{\xi}-m\un)u_{\eta}=\ze.
\end{equation*}
\end{proposition}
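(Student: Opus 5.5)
The plan is to reduce both sides of the equivalence to statements about maps on $U$ through the chart $\xi$, and then to show that the Dirac operator applied to $u_{\eta}$ is, up to composition with invertible maps, the Dirac operator applied to $u$. The expected identity is
\begin{equation*}
\bigl((i\gamma^{\nu}\partial_{\nu}^{\xi}-m\un)u_{\eta}\bigr)\circ\xi^{-1}
=
S(\eta)\circ\bigl((i\gamma^{\nu}\partial_{\nu}-m\un)u^{\xi}\bigr)\circ\sigma(\eta).
\end{equation*}
Once this holds, the statement follows. Indeed, $S(\eta)\in\mathbb{GL}(X)$ is injective, and $\sigma(\eta)=\chi(\eta)^{-1}$ is a bijection of $U$. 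Moreover, by \eqref{08060435} the map $\bigl((i\gamma^{\nu}\partial_{\nu}^{\xi}-m\un)u\bigr)\circ\xi^{-1}$ equals $(i\gamma^{\nu}\partial_{\nu}-m\un)u^{\xi}$, and $\xi^{-1}$ is a bijection onto $W$. So the left-hand side of the statement vanishes iff the right-hand side does.

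First I will compute $u_{\eta}\circ\xi^{-1}$. By definition this is $S(\eta)\circ u^{\xi}\circ\omega_{\natural}^{\eta}\circ\xi^{-1}=S(\eta)\circ u^{\xi}\circ\sigma(\eta)$. By the standing assumption, $\chi(\eta)$ is the restriction of an element of $\mathbb{GL}(\R^{n})$, hence so is $\sigma(\eta)=\chi(\eta)^{-1}$. Applying \eqref{08060435} to the chart $\xi$ and then Lemma \ref{08251057} (with $\Lambda=\sigma(\eta)$), together with the fact that $S(\eta)$ is linear and continuous (Cor.\ref{08060629b}), gives
\begin{equation*}
(\partial_{\nu}^{\xi}u_{\eta})\circ\xi^{-1}=\sigma(\eta)_{\rho,\nu}\,S(\eta)\circ(\partial_{\rho}u^{\xi})\circ\sigma(\eta).
\end{equation*}

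Next I will move $\gamma^{\nu}$ past $S(\eta)$ using \eqref{08251046}, in the form $\gamma^{\nu}\circ S(\eta)=\chi(\eta)_{\nu,\mu}\,S(\eta)\circ\gamma^{\mu}$. Summing over $\nu$ produces the coefficient $\chi(\eta)_{\nu,\mu}\sigma(\eta)_{\rho,\nu}=(\sigma(\eta)\chi(\eta))_{\rho,\mu}=\delta_{\rho,\mu}$, because $\sigma(\eta)\circ\chi(\eta)=\mr{Id}$. This yields $\gamma^{\nu}\circ(\partial_{\nu}^{\xi}u_{\eta})\circ\xi^{-1}=S(\eta)\circ\gamma^{\rho}\circ(\partial_{\rho}u^{\xi})\circ\sigma(\eta)$. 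The mass term is immediate: $m\,u_{\eta}\circ\xi^{-1}=S(\eta)\circ(m\,u^{\xi})\circ\sigma(\eta)$.

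The main obstacle is the factor $i$. To write $i\gamma^{\nu}\partial_{\nu}^{\xi}-m\un$ as $S(\eta)\circ(\cdots)$, I need $S(\eta)$ to commute with multiplication by $i$, that is, $S(\eta)$ must be $\C$-linear. I will take this from the fact that $\mathbb{GL}(X)$ is the group of $\C$-linear isomorphisms of the complex space $X$ (Notation, with $\K_{1}=\C$ in this section), so that $R$, and hence $S=R\circ\chi$, is valued in $\C$-linear maps. I also need care with the index conventions of \eqref{08251046} and Lemma \ref{08251057}, so that the contraction really yields $\sigma(\eta)\chi(\eta)$ and not $\chi(\eta)\sigma(\eta)$; both products are the identity, however, so the cancellation holds in either case.
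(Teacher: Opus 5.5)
Your proof is correct and is essentially the paper's argument. The paper proves the same intertwining identity $\gamma^{\nu}\partial_{\nu}^{\xi}(u_{\eta})=S(\eta)\gamma^{\mu}\partial_{\mu}^{\xi}(u)\circ\xi^{-1}\circ\omega_{\natural}^{\eta}$ from the linearity of $S(\eta)$ and \eqref{08251046}, then concludes by invertibility of $S(\eta)$, $\xi^{-1}$ and $\omega_{\natural}^{\eta}$; the only difference is that it stays on $W$ and absorbs $\chi(\eta)_{\nu,\mu}$ via the change-of-chart formula \eqref{08262239} (with $\Lambda(\omega_{\natural}^{\eta},\xi)=\chi(\eta)$), whereas you pull back to $U$ and use Lemma \ref{08251057} with $\sigma(\eta)\chi(\eta)=\mr{Id}$. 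The paper also relies on the $\C$-linearity of $S(\eta)$ for the factor $i$, but only tacitly, whereas you state it explicitly.
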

\begin{proof}
In this proof we let $\chi(\eta)$ be $\chi(\eta)$. Let $\eta\in A$ and $u\in\mc{C}^{\infty}(W,Y_{0})$ then, 
\begin{equation*}
\begin{aligned}
\gamma^{\nu}\partial_{\nu}^{\xi}(u_{\eta})
&=
\gamma^{\nu}S(\eta)\partial_{\nu}^{\xi}(u^{\xi}\circ\omega_{\natural}^{\eta})
\\
&=
S(\eta)\chi(\eta)_{\nu,\mu}\gamma^{\mu}\partial_{\nu}^{\xi}(u^{\xi}\circ\omega_{\natural}^{\eta})
\\
&=
S(\eta)\gamma^{\mu}\partial_{\mu}^{\omega_{\natural}^{\eta}}(u^{\xi}\circ\omega_{\natural}^{\eta})
\\
&=
S(\eta)\gamma^{\mu}\partial_{\mu}(u^{\xi})\circ\omega_{\natural}^{\eta}
\\
&=
S(\eta)\gamma^{\mu}\partial_{\mu}^{\xi}(u)\circ\xi^{-1}\circ\omega_{\natural}^{\eta};
\end{aligned}
\end{equation*}
where the 1th equality follows in sequence since by \eqref{08060435}, Prp.\ref{08091051}, Thm.\ref{821} and Cor.\ref{03110819}; 
the 2th one by \eqref{08251046}; the 3th one by \eqref{08262239}; the 4th and 5th follow by \eqref{08060435}. Then the statement
follows since $S(\eta)$, $\xi^{-1}$ and $\omega_{\natural}^{\eta}$ admit inverse.
\end{proof}
\begin{lemma}
\label{08262318}
Let $u\in\mc{C}^{\infty}(W,Y_{0})$, $(W,\theta)$ be a chart of $M$ such that $\theta(W)=U$ then, 
\begin{equation*}
(i\gamma^{\nu}\partial_{\nu}^{\xi}-m\un)u=\ze
\Leftrightarrow
(i\gamma^{\nu}\partial_{\nu}-m\un)u^{\xi}=\ze
\Leftrightarrow
(i\gamma^{\nu}\partial_{\nu}^{\theta}-m\un)(u^{\xi}\circ\theta)=\ze
\end{equation*}
\end{lemma}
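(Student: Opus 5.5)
The plan is to reduce both equivalences to the single chart identity \eqref{08060435}, namely $(\partial^{\gamma}_{\nu}w)\circ\gamma^{-1}=\partial_{\nu}(w\circ\gamma^{-1})$ for any chart $(W,\gamma)$ with $\gamma(W)=U$. Each operator $i\gamma^{\nu}\partial^{\cdot}_{\nu}-m\un$ will be transported along the bijections $\xi:W\to U$ and $\theta:W\to U$.

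For the first equivalence, apply \eqref{08060435} with $\gamma=\xi$ and $w=u$. This gives $(\partial^{\xi}_{\nu}u)\circ\xi^{-1}=\partial_{\nu}u^{\xi}$. Since each $\gamma^{\nu}\in\mc{L}(X)$ acts pointwise, it commutes with precomposition by $\xi^{-1}$, and so does multiplication by $m$. Hence
\begin{equation*}
\bigl((i\gamma^{\nu}\partial_{\nu}^{\xi}-m\un)u\bigr)\circ\xi^{-1}=(i\gamma^{\nu}\partial_{\nu}-m\un)u^{\xi}.
\end{equation*}
Because $\xi^{-1}:U\to W$ is a bijection, the left side vanishes identically on $U$ if and only if $(i\gamma^{\nu}\partial_{\nu}^{\xi}-m\un)u$ vanishes identically on $W$. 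This gives the first equivalence.

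For the second equivalence, set $w\coloneqq u^{\xi}\circ\theta:W\to X$, which is smooth, and apply \eqref{08060435} with $\gamma=\theta$. Since $w\circ\theta^{-1}=u^{\xi}$, we get $(\partial^{\theta}_{\nu}w)\circ\theta^{-1}=\partial_{\nu}u^{\xi}$. The same argument as above then yields
\begin{equation*}
\bigl((i\gamma^{\nu}\partial_{\nu}^{\theta}-m\un)(u^{\xi}\circ\theta)\bigr)\circ\theta^{-1}=(i\gamma^{\nu}\partial_{\nu}-m\un)u^{\xi},
\end{equation*}
and bijectivity of $\theta^{-1}:U\to W$ concludes.

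The only point needing care, which I expect to be the sole obstacle, is that \eqref{08060435} was stated for elements of $\mf{Z}$, whereas here $u$ and $w$ are merely in $\mc{C}^{\infty}(W,Y_{0})$. The identity does not use compact support. It is the definition of $\partial^{\gamma}_{\nu}$ in \cite[Definition 1.4]{sil09261621}, i.e. the partial derivative of the local representative composed back with the chart. So I would remark that it holds verbatim for all smooth $X$-valued maps on $W$, and then run the two computations above.
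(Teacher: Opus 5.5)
Your proposal is correct and follows the same route as the paper: both reduce the two equivalences to \eqref{08060435}, applied once with chart $\xi$ to $u$ and once with chart $\theta$ to $u^{\xi}\circ\theta$, and then use that $\xi,\theta\colon W\to U$ are bijections. Your caveat is also right: \eqref{08060435} is stated only for $\mf{Z}$, but it holds for every smooth map by the definition of $\partial^{\gamma}_{\nu}$, and the paper uses it in this generality without comment.
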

\begin{proof}
By \eqref{08060435} we have 
\begin{equation*}
\begin{aligned}
\partial_{\nu}^{\xi}u
&=
\partial_{\nu}(u^{\xi})\circ\xi
\\
\partial_{\nu}^{\theta}(u^{\xi}\circ\theta)
&=
\partial_{\nu}(u^{\xi})\circ\theta;
\end{aligned}
\end{equation*}
then the 1th equality above implies the 1th equivalence in the statement, while the 2th equality above implies the 2th 
equivalence in the statement.
\end{proof}
\begin{corollary}
\label{08262350}
For every $\eta\in A$ and $u\in\mc{C}^{\infty}(W,Y_{0})$ we have 
\eqref{08262350a}
$\Leftrightarrow$
\eqref{08262350b}
$\Leftrightarrow$
\eqref{08262350c}
$\Leftrightarrow$
\eqref{08262350d}
$\Leftrightarrow$
\eqref{08262350e}
$\Leftrightarrow$
\eqref{08262350f},
where
\begin{enumerate}
\item
$(i\gamma^{\nu}\partial_{\nu}^{\xi}-m\un)u=\ze$,
\label{08262350a}
\item
$(i\gamma^{\nu}\partial_{\nu}-m\un)u^{\xi}=\ze$,
\label{08262350b}
\item
$(i\gamma^{\nu}\partial_{\nu}^{\omega_{\natural}^{\eta}}-m\un)(u^{\xi}\circ\omega_{\natural}^{\eta})=\ze$,
\label{08262350c}
\item
$(i\gamma^{\nu}\partial_{\nu}^{\xi}-m\un)u_{\eta}=\ze$,
\label{08262350d}
\item
$(i\gamma^{\nu}\partial_{\nu}-m\un)u_{\eta}^{\xi}=\ze$,
\label{08262350e}
\item
$(i\gamma^{\nu}\partial_{\nu}^{\omega_{\natural}^{\eta}}-m\un)(u_{\eta}\circ\zeta(\eta))=\ze$.
\label{08262350f}
\end{enumerate}
\end{corollary}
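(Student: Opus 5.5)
The plan is to chain the already established equivalences Prp.\ref{08262226} and Lemma \ref{08262318}, adding nothing beyond bookkeeping about which chart and which field each item refers to.

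\textbf{Item \eqref{08262350a} versus items \eqref{08262350b} and \eqref{08262350c}.} Apply Lemma \ref{08262318} to $u$ with the chart $\theta=\omega_{\natural}^{\eta}$. This is admissible, since by Assumption \ref{12152203} the pair $(W,\omega_{\natural}^{\eta})$ is a chart of $M$ with $\omega^{\eta}(W)=U$. The lemma then gives \eqref{08262350a}$\Leftrightarrow$\eqref{08262350b}$\Leftrightarrow$\eqref{08262350c} directly.

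\textbf{Item \eqref{08262350a} versus item \eqref{08262350d}.} This is exactly Prp.\ref{08262226}.

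\textbf{Items \eqref{08262350d}, \eqref{08262350e}, \eqref{08262350f}.} Apply Lemma \ref{08262318} once more, now to the field $u_{\eta}$ in place of $u$, again with $\theta=\omega_{\natural}^{\eta}$. Two identifications are needed here.
\begin{enumerate}
\item $(u_{\eta})^{\xi}=u_{\eta}^{\xi}$. By definition
\[
(u_{\eta})^{\xi}=S(\eta)\circ u^{\xi}\circ\omega_{\natural}^{\eta}\circ\xi^{-1}=S(\eta)\circ u^{\xi}\circ\sigma(\eta).
\]
Since $\sigma(\eta)=\chi(\eta)^{-1}$, this equals $u_{\eta}^{\xi}$.
\item $(u_{\eta})^{\xi}\circ\omega_{\natural}^{\eta}=u_{\eta}\circ\upzeta(\eta)$. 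Here $\upzeta(\eta)=\xi^{-1}\circ\omega_{\natural}^{\eta}$, so $u_{\eta}\circ\xi^{-1}\circ\omega_{\natural}^{\eta}=u_{\eta}\circ\upzeta(\eta)$. I read the $\zeta(\eta)$ appearing in \eqref{08262350f} as $\upzeta(\eta)$.
\end{enumerate}
With these, Lemma \ref{08262318} yields \eqref{08262350d}$\Leftrightarrow$\eqref{08262350e}$\Leftrightarrow$\eqref{08262350f}.

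\textbf{Hypotheses and main obstacle.} One must check that $u_{\eta}$ lies in $\mc{C}^{\infty}(W,Y_{0})$, so that Lemma \ref{08262318} applies to it. This holds because $u_{\eta}$ is the composition of the continuous linear map $S(\eta)$ with smooth maps. The only genuine care needed is the identification of the compositions in items 1 and 2 above, which rest on $\sigma(\eta)\circ\chi(\eta)=\mr{Id}_{U}$. I expect no analytic obstacle.
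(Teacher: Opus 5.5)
Your proposal is correct and follows the paper's own argument, which simply invokes Prp.~\ref{08262226} for \eqref{08262350a}$\Leftrightarrow$\eqref{08262350d} and Lemma~\ref{08262318} with $\theta=\omega_{\natural}^{\eta}$, applied once to $u$ and once to $u_{\eta}$. Your explicit identifications $(u_{\eta})^{\xi}=u_{\eta}^{\xi}$ (via $\chi(\eta)^{-1}=\sigma(\eta)$) and $(u_{\eta})^{\xi}\circ\omega_{\natural}^{\eta}=u_{\eta}\circ\upzeta(\eta)$ are exactly the bookkeeping the paper leaves implicit.
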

\begin{proof}
Since Prp.\ref{08262226} and Lemma \ref{08262318}.
\end{proof}
\begin{definition}
If $X=\prod_{i\in\{1,2\}}X_{i}$ where $X_{i}$ is a normed space for any $i\in\{1,2\}$, then 
define $\mc{G}\coloneqq\{X_{k}\}_{k\in\{1,2\}}$, and for every $i\in\{1,2\}$ define 
$\widehat{X}_{i}\coloneqq\imath_{i}^{\mc{G}}(X_{i})$ and $K_{i}^{\mc{G}}\coloneqq\imath_{i}^{\mc{G}}\circ\Pr_{i}^{\mc{G}}$. 
\end{definition}
\begin{assumption}
\label{09081619}
For the remaining of the present section \ref{08300401}, we assume that there exists a normed space $X_{0}$ such that 
$X=\prod_{i\in\{1,2\}}X_{i}$ with $X_{i}=X_{0}$ for $i\in\{1,2\}$. Furthermore by letting $S$ be defined as in Def.\ref{08251005}, 
we assume that 
\begin{enumerate}
\item
the bilinear map $\mf{x}\circ((\ov{(\cdot)}\circ\imath_{\widehat{X}_{2}}^{X})\times\imath_{\widehat{X}_{1}}^{X})$ is separating on 
$\widehat{X}_{2}$ and separating on $\widehat{X}_{1}$,
\label{09081619a}
\item
$\widehat{\gamma}^{\nu}=\gamma^{\nu}$ for every $\nu\in[1,n]\cap\Z$,
\label{09081619b}
\item
$S(\eta)\circ K_{i}^{\mc{G}}=K_{i}^{\mc{G}}\circ S(\eta)$, 
for every $i\in\{1,2\}$ and every $\eta\in A$.
\label{09081619c}
\end{enumerate}
\end{assumption}
\begin{definition}
Define for every $u\in\mc{C}^{\infty}(W,Y_{0})$, $v\in\mc{C}^{\infty}(U,Y_{0})$ and $i\in\{1,2\}$ the maps $u[i]:W\to X_{i}$ and 
$v[i]:U\to X_{i}$, such that 
\begin{equation*}
\begin{aligned}
u\{i\}&\coloneqq\Pr_{i}^{\mc{G}}\circ u
\\
v\{i\}&\coloneqq\Pr_{i}^{\mc{G}}\circ v
\\
u[i]&\coloneqq K_{i}^{\mc{G}}\circ u
\\
v[i]&\coloneqq K_{i}^{\mc{G}}\circ v.
\end{aligned}
\end{equation*}
\end{definition}
Let $i\in\{1,2\}$ and $\eta\in A$, notice that for any $\natural\in\{\emptyset,c,c1\}$ we have 
$u\in\mc{C}_{\natural}^{\infty}(W,Y_{0})\Rightarrow u_{\eta}\in\mc{C}_{\natural}^{\infty}(W,Y_{0})$,  
$u\in\mc{C}_{\natural}^{\infty}(W,Y_{0})\Rightarrow u[i]\in\mc{C}_{\natural}^{\infty}(W,Y_{0})$ and 
$v\in\mc{C}_{\natural}^{\infty}(U,Y_{0})\Rightarrow v[i]\in\mc{C}_{\natural}^{\infty}(U,Y_{0})$.
Furthermore since $\Pr_{j}^{\mc{G}}\circ\imath_{i}^{\mc{G}}=\delta_{i,j}\mr{Id}_{X_{i}}$
we have $u[i]\{j\}=u[j]\{i\}=\delta_{i,j}u\{i\}$ for every $u\in\mc{C}_{\natural}^{\infty}(W,Y_{0})$ and $j\in[1,2]\cap\Z$
moreover $u_{\eta}^{\xi}=u_{\eta}\circ\xi^{-1}$ so, no confusion might arise with the above convention, finally 
$u_{\eta}[i]=u[i]_{\eta}$ and $u_{\eta}^{\xi}[i]=u_{\eta}[i]^{\xi}=u[i]_{\eta}^{\xi}$.
\begin{definition}
\label{08300640}
We call $\lr{\mh{L}}{\phi}$ a preappropriate Dirac system of mass $m$ iff 
\begin{enumerate}
\item
$m\in\R_{\geq 0}$,
\item 
$\mh{L}:O\to\mc{Z}$ such that 
\begin{equation*}
\mh{L}:(a,b)
\mapsto
\frac{i}{2}
\left(
\mf{x}\left(\ov{K_{2}^{\mc{G}}(a)},\sum_{\nu=1}^{n}\gamma^{\nu}(K_{1}^{\mc{G}}(\Pr_{\nu}^{\mc{B}}b))\right)
-
\sum_{\nu=1}^{n}\mf{x}\left(\ov{K_{2}^{\mc{G}}(\Pr_{\nu}^{\mc{B}}b)},\gamma^{\nu}(K_{1}^{\mc{G}}(a))\right)
\right)
-m\mf{x}\left(\ov{K_{2}^{\mc{G}}(a)},K_{1}^{\mc{G}}(a)\right); 
\end{equation*}
\item
for every $\eta\in A$ and $u\in\mf{Z}$ we have 
\begin{equation*}
\phi_{\eta}(u)=S(\eta)\circ u^{\xi}\circ\omega_{\natural}^{\eta}.
\end{equation*}
\end{enumerate}
\end{definition}
\begin{assumption}
For the remaining of the present section \ref{08300401} we let $\lr{\mh{L}}{\phi}$ be a preappropriate Dirac 
system of mass $m$. 
\end{assumption}
\begin{remark}
\label{09081747}
Notice that 
\begin{equation*}
\mh{L}=
\frac{i}{2}
\bigl(
\mf{x}\circ(T_{1}\boxtimes T_{2})
-
\sum_{\nu=1}^{n}\mf{x}\circ(R_{1,\nu}\boxtimes R_{2,\nu})
\bigr)\circ\imath_{O}^{\mc{Y}}
-
m\mf{x}\circ(T_{1}\boxtimes S_{2})\circ\imath_{O}^{\mc{Y}};
\end{equation*}
where
\begin{equation*}
\begin{aligned}
T_{1}&=\ov{(\cdot)}\circ K_{2}^{\mc{G}}\circ\Pr_{1}^{\mc{E}}
\\
T_{2}&=\sum_{\nu=1}^{n}\gamma^{\nu}\circ K_{1}^{\mc{G}}\circ\Pr_{\nu}^{\mc{B}}\circ\Pr_{2}^{\mc{E}}
\\
R_{1,\nu}&=\ov{(\cdot)}\circ K_{2}^{\mc{G}}\circ\Pr_{\nu}^{\mc{B}}\circ\Pr_{2}^{\mc{E}}
\\
R_{2,\nu}&=\gamma^{\nu}\circ K_{1}^{\mc{G}}\circ\Pr_{1}^{\mc{E}} 
\\
S_{2}&=K_{1}^{\mc{G}}\circ\Pr_{1}^{\mc{E}}.
\end{aligned} 
\end{equation*}
\end{remark}
\begin{proposition}
\label{08310755}
We have
\begin{equation}
\label{08310755a}
\mh{L}\in\mc{C}^{\infty}(O,\mc{Z}).
\end{equation}
Next by letting $\omega_{\mu}^{\eta}\coloneqq\Pr_{\mu}\circ\omega^{\eta}$ for any $\mu\in[1,n]\cap\Z$, we assume that 
\begin{equation}
\label{08310749}
(\forall\mu\in[1,n]\cap\Z)
(\forall\eta\in A)
(\forall\nu\in[0,n]\cap\Z)(\forall\mu\in[1,n]\cap\Z)(\sup_{x\in W}\left|\partial_{\nu}^{\xi}(\omega^{\eta}_{\mu})(x)\right|<\infty).
\end{equation}
Thus, 
\begin{equation*}
\begin{aligned}
\phi: A&\to\mc{L}(\mf{Z},\mf{Z}),
\\
\psi: A&\to\mc{L}(\mf{Z},\mf{T}).
\end{aligned}
\end{equation*}
\end{proposition}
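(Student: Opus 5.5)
First, \eqref{08310755a}. By Rmk.\ref{09081747}, $\mh{L}$ is a finite linear combination of maps of the form $\mf{x}\circ(P\boxtimes Q)\circ\imath_{O}^{\mc{Y}}$, where $P\in\mc{L}(\mc{Y}_{\R},\mc{L}_{b}(X,\ov{\mc{Z}})_{\R})$ and $Q\in\mc{L}(\mc{Y}_{\R},X_{\R})$. Each such $P$ and $Q$ is a composition of continuous linear maps: projections $\Pr_{1}^{\mc{E}}$, $\Pr_{\nu}^{\mc{B}}\circ\Pr_{2}^{\mc{E}}$, the maps $K_{i}^{\mc{G}}$, the operators $\gamma^{\nu}$, and the map $\ov{(\cdot)}$, which is continuous and linear by Def.\ref{08262042}. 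The map $\mf{x}$ is continuous and bilinear by Lemma \ref{10221256}, being an instance of $\mf{s}_{X,\ov{\mc{Z}}}$.

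So $\mh{L}$ is the restriction to $O$ of a continuous bilinear form composed with a continuous linear map $\mc{Y}\to\mc{L}_{b}(X,\ov{\mc{Z}})\times X$. Continuous bilinear maps are $C^{\infty}$, with vanishing third differential. Applying the chain rule Thm.\ref{821} and the regularity of continuous linear maps (Prp.\ref{813}, Thm.\ref{81210}) gives $\mh{L}\in\mc{C}^{\infty}(O,\mc{Z}_{\R})$. Since $\mc{Z}$ and $\ov{\mc{Z}}$ have the same underlying real space, this is $\mc{C}^{\infty}(O,\mc{Z})$ by convention \eqref{12201914}.

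Next, $\phi_{\eta}\in\mc{L}(\mf{Z},\mf{Z})$. We write
\[
\phi_{\eta}(u)=S(\eta)\circ u\circ\xi^{-1}\circ\omega_{\natural}^{\eta}=(S(\eta))_{\intercal}\bigl(\upzeta(\eta)^{\ast}u\bigr),
\]
with $\upzeta(\eta)=\xi^{-1}\circ\omega_{\natural}^{\eta}\in\mc{C}^{\infty}(W,W)$. This map is a diffeomorphism with continuous inverse, since $\omega^{\eta}(W)=U$.

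We apply Lemma \ref{08310357} with $\theta=\upzeta(\eta)$. Then $\xi\circ\theta=\omega_{\natural}^{\eta}$, so hypothesis \eqref{08310357hy} is exactly assumption \eqref{08310749}. The lemma gives $\upzeta(\eta)^{\ast}\in\mc{L}(\mf{Z},\mf{Z})$, with the caveat that it is stated on $\mc{C}_{c}^{\infty}$. We must also check that $\upzeta(\eta)^{\ast}$ preserves the subspace $\mc{C}_{c,1}^{\infty}$. This should follow from the chain-rule formula in its proof, $\partial_{\nu}^{\xi}(u\circ\theta)=\sum_{\mu}\partial_{\nu}^{\xi}(\xi\circ\theta)_{\mu}\cdot(\partial_{\mu}^{\xi}u)\circ\theta$. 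Combined with $\theta^{-1}$ mapping compacts to compacts, this makes each $\mr{supp}\,\partial_{\nu}^{\rho}(u\circ\theta)$ compact.

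Then Lemma \ref{08300643} with $\rho=S(\eta)\in\mathbb{GL}(X)$, which is injective, gives $(S(\eta))_{\intercal}\in\mc{L}(\mf{Z})$. Again we need that $\mc{C}_{c,1}^{\infty}$ is preserved, which follows from $\partial_{\nu}^{\rho}(S(\eta)\circ u)=S(\eta)\circ\partial_{\nu}^{\rho}u$ and injectivity. Composing the two maps yields $\phi_{\eta}\in\mc{L}(\mf{Z},\mf{Z})$. Here $\phi_{\eta}$ is considered on all of $\mf{Z}$, as in Def.\ref{08300640}.

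For $\psi$, we note that Assumption \eqref{08161354} only pins $\psi_{\eta}$ down on $\mf{Z}^{O}$. The natural choice, presumably intended, is $\psi_{\eta}=\partial^{\xi}\circ\phi_{\eta}$, so it suffices to show $\partial^{\xi}\in\mc{L}(\mf{Z},\mf{T})$. Linearity is clear. For boundedness,
\[
\widehat{t}(\partial^{\xi}g)=\max_{\mu\in[1,n]\cap\Z}\sup_{x\in W}\|\partial_{\mu}^{\xi}g(x)\|\leq\widehat{q}(g),
\]
and $\partial^{\xi}g$ is smooth and compactly supported by Cor.\ref{8126}. Hence $\psi_{\eta}\in\mc{L}(\mf{Z},\mf{T})$.

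The main obstacle is a bookkeeping one: confirming that both constituent operators map $\mc{C}_{c,1}^{\infty}(W,Y_{0})$ into itself. This has to hold for every chart $(V,\rho)$, not only for $\xi$. I would handle it by the change-of-chart formula, analogous to \eqref{08262239} but with smooth coefficients, reducing the support condition for arbitrary $\rho$ to the finitely many $\partial_{\mu}^{\xi}$ and the condition for $u$ itself.
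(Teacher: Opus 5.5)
Your proposal is correct and follows the paper's proof essentially step for step. The paper, like you, gets smoothness of $\mh{L}$ from Rmk.~\ref{09081747} as a combination of continuous bilinear maps composed with continuous real-linear maps. It factors $\phi_{\eta}=S(\eta)_{\intercal}\circ\upzeta(\eta)^{\ast}$ and handles the two factors with Lemma~\ref{08310357} and Lemma~\ref{08300643}; the hypothesis of Lemma~\ref{08310357} is exactly \eqref{08310749}, since $\xi\circ\upzeta(\eta)=\omega_{\natural}^{\eta}$. It then treats $\psi_{\eta}=\partial^{\xi}\circ\phi_{\eta}$ via $\widehat{t}(\partial^{\xi}u)\leq\widehat{q}(u)$. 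Your extra remarks on preservation of $\mc{C}_{c,1}^{\infty}$, and on $\psi_{\eta}$ being fixed by \eqref{08161354} only on $\mf{Z}^{O}$, are points the paper passes over silently.

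One small adjustment: in the first part, avoid passing through $\mc{L}_{b}(X,\ov{\mc{Z}})$. That space is not normed, so Thm.~\ref{821} and Thm.~\ref{81210} do not apply to it. Instead use the continuous real-bilinear map $X\times X\to\ov{\mc{Z}}$, $(a,c)\mapsto\mf{x}(\ov{a},c)$, together with Prp.~\ref{01101646} for $F_{1}=F_{2}=X$.
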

\begin{proof}
$\mh{L}\in\mc{C}^{\infty}(O,\mc{Z})$ since Rmk.\ref{09081747}, Prp.\ref{01101646}, by \eqref{12152057} and since continuous linear
maps are smooth. Let $\eta\in A$. Trivially $\phi_{\eta}=S(\eta)_{\intercal}\circ(\upzeta(\eta))^{\ast}$. 
Now $(\upzeta(\eta))^{\ast}\in\mc{L}(\mf{Z},\mf{Z})$ since Lemma \ref{08310357}, while 
$S(\eta)_{\intercal}\in\mc{L}(\mf{Z},\mf{Z})$ since Lemma \ref{08300643} therefore 
$S(\eta)_{\intercal}\circ(\upzeta(\eta))^{\ast}\in\mc{L}(\mf{Z},\mf{Z})$ so, 
\begin{equation*}
\phi_{\eta}\in\mc{L}(\mf{Z},\mf{Z}).
\end{equation*}
Next $\psi_{\eta}=\partial^{\xi}\circ\phi_{\eta}$ but $\partial^{\xi}\in\mc{L}(\mf{Z},\mf{T})$ indeed 
\begin{equation*}
\widehat{t}(\partial^{\xi}u)
=\max_{\mu\in[1,n]\cap\Z}\sup_{x\in W}\|\partial_{\mu}^{\xi}u\|_{X}
\leq\widehat{q}(u),
\end{equation*}
therefore $\psi_{\eta}\in\mc{L}(\mf{Z},\mf{T})$.
\end{proof}
\begin{lemma}
\label{09011311}
Assume that $S\in\mc{C}^{3}( A,\mc{L}(X))$ and $\widehat{\sigma}\in\mc{C}^{3}(A\times U,U)$. 
Then, $\Upphi(u)\in\mc{C}^{3}(A\times U,Y_{0})$ and $\Uppsi(u)\in\mc{C}^{2}(A\times U,\mf{Y})$,
for every $u\in\mf{Z}$. 
\end{lemma}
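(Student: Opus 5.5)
We write $\Upphi(u)$ explicitly as a composite of maps whose regularity we control, and then apply the chain rule (Thm.\ref{81210}) together with the continuity and bilinearity of the evaluation map $\mf{s}$ (Lemma \ref{10221256}). By Def.\ref{08300640}, $\phi_{\eta}(u)=S(\eta)\circ u^{\xi}\circ\omega_{\natural}^{\eta}$. Hence
\begin{equation*}
\phi_{\eta}^{\xi}(u)=\phi_{\eta}(u)\circ\xi^{-1}=S(\eta)\circ u^{\xi}\circ\omega^{\eta}_{\natural}\circ\xi^{-1}=S(\eta)\circ u^{\xi}\circ\sigma(\eta).
\end{equation*}
This gives, for every $(\eta,x)\in A\times U$,
\begin{equation*}
\Upphi(u)(\eta,x)=\mf{s}_{X,X}\bigl(S(\eta),u^{\xi}(\widehat{\sigma}(\eta,x))\bigr),
\end{equation*}
so that
\begin{equation*}
\Upphi(u)=\mf{s}_{X,X}\circ\bigl((S\circ\beta)\boxtimes(u^{\xi}\circ\widehat{\sigma})\bigr).
\end{equation*}

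The regularity of the two factors is handled separately. The map $S\circ\beta$ is $C^{3}$, since $\beta$ is a continuous linear projection restricted to an open set (Lemma \ref{07220225}, and smooth) and $S\in\mc{C}^{3}(A,\mc{L}(X))$ by hypothesis. Here we regard $\mc{L}(X)$ with its operator norm, i.e.\ as $\mc{L}_{b}(X,X)$, which we may since $X$ is normed. The map $u^{\xi}$ is smooth on $U$ because $u\in\mf{Z}\subseteq\mc{C}^{\infty}_{c}(W,X)$ and $\xi^{-1}$ is a diffeomorphism. Since $\widehat{\sigma}\in\mc{C}^{3}(A\times U,U)$, the chain rule (Thm.\ref{81210}) gives $u^{\xi}\circ\widehat{\sigma}\in\mc{C}^{3}(A\times U,X)$.

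It then remains to invoke two facts. First, $\mf{s}_{X,X}$ is continuous bilinear, hence smooth (Prp.\ref{01101646}, the product rule for continuous bilinear maps). Second, a map into a product is $C^{3}$ iff its components are. Together these yield $\Upphi(u)\in\mc{C}^{3}(A\times U,Y_{0})$.

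For $\Uppsi(u)$ we use Rmk.\ref{06132050} together with \eqref{08161354}; by Prp.\ref{08310755}, $\psi_{\eta}=\partial^{\xi}\circ\phi_{\eta}$ on all of $\mf{Z}$. This gives, for each $i\in[1,n]\cap\Z$,
\begin{equation*}
\Uppsi_{i}(u)=\frac{\partial\Upphi(u)}{\partial e_{i}^{2}[n]}=\mr{ev}_{e_{i}[n]}\circ D_{2}(\Upphi(u)).
\end{equation*}
Since $\Upphi(u)$ is $C^{3}$, its partial differential $D_{2}(\Upphi(u))$ is $C^{2}$ by Cor.\ref{04221226}, and composing with the continuous linear evaluation keeps it $C^{2}$. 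Assembling the $n$ components via $\boxtimes$ into $\mf{Y}$ then gives $\Uppsi(u)\in\mc{C}^{2}(A\times U,\mf{Y})$.

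The main obstacle is justifying the identity $\Upphi(u)(\eta,x)=S(\eta)u^{\xi}(\widehat{\sigma}(\eta,x))$ carefully, including the fact that $\widehat{\sigma}$ takes values in $U$ where $u^{\xi}$ is defined, which follows from $\omega^{\eta}(W)=U$. A second point of care is the field issue: $X$ is complex, but differentiability is taken over $X_{\R}$, so we must check that $\mf{s}$ stays bilinear continuous over $\R$; this holds by \eqref{12152119}. The remaining steps are routine chain-rule bookkeeping.
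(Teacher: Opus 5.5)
Your proof is correct and follows the paper's argument. The paper also writes $\Upphi(u)$ as a continuous bilinear evaluation applied to $(S\circ\beta)\boxtimes(u\circ\xi^{-1}\circ\widehat{\sigma})$, gets $C^{3}$ from Prp.\ref{01101646}, and then obtains the components of $\Uppsi(u)$ as $\frac{\partial\Upphi(u)}{\partial e_{\nu}^{2}[n]}$ using Cor.\ref{04221226} and Cor.\ref{04121839}. Your version spells out the routine steps (the identity $\phi_{\eta}^{\xi}(u)=S(\eta)\circ u^{\xi}\circ\sigma(\eta)$ and the chain rule for $u^{\xi}\circ\widehat{\sigma}$), and your $\mf{s}_{X,X}$ is the correct evaluation map where the paper writes $\mf{g}_{0}$.
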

\begin{proof}
$\Upphi(u)=\mf{g}_{0}\circ\left((S\circ\beta)\boxtimes(u\circ\xi^{-1}\circ\widehat{\sigma})\right)$ 
so the first part of the statement follows since Prp.\ref{01101646} and $\beta$ being smooth. 
Next trivially we have $\Pr_{\nu}^{\mc{B}}\circ\Uppsi(u)=\frac{\partial}{\partial e_{\nu}^{2}[n]}(\Upphi)$ which is 
$\mc{C}^{2}$ since the first part of the statement and Cor.\ref{04221226} therefore the second part of the statement follows 
since Cor.\ref{04121839}.
\end{proof}
\begin{corollary}
\label{09020340}
Let $\omega_{\mu}^{\eta}\coloneqq\Pr_{\mu}\circ\omega^{\eta}$ for any $\mu\in[1,n]\cap\Z$. 
Assume that 
\begin{enumerate}
\item
\eqref{08310749} holds true,
\item
$S\in\mc{C}^{3}( A,\mc{L}(X))$, 
\item
$\widehat{\sigma}\in\mc{C}^{3}(A\times U,U)$, 
\item
$\widehat{\chi}\in\mc{C}^{2}(A\times U,U)$.
\end{enumerate}
Thus, $\lr{\mh{L}}{\phi,\omega}$ is a perfect appropriate transformation system on $X$ moreover for every $\eta\in A$ 
and $u\in\mf{Z}$ we have
\begin{enumerate}
\item
$\phi: A\to\mc{L}(\mf{Z},\mf{Z})$ in particular $\phi_{\eta}\in\mc{C}^{\infty}(\mf{Z},\mf{Z})$,
\item
$\phi_{\eta}(u)^{-1}=S(\eta)^{-1}\circ u\circ\upzeta(\eta)^{-1}$, 
\item
$\Upphi(u)\in\mc{C}^{3}(A\times U,Y_{0})$,
\item
$\mh{L}\in\mc{C}^{\infty}(\mc{Y},\mc{Z})$.
\end{enumerate}
\end{corollary}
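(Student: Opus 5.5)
To prove Cor.\ref{09020340} I would check the four items of Def.\ref{05101630} one at a time, then verify perfection, and read off the listed properties from the earlier results of this section.

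\emph{Appropriateness.}
\begin{itemize}
\item Item \eqref{09261410a}: by \eqref{08310755a} of Prp.\ref{08310755} we have $\mh{L}\in\mc{C}^{\infty}(O,\mc{Z})$. The same argument (Rmk.\ref{09081747}, Prp.\ref{01101646}, \eqref{12152057}) applied with $O$ replaced by $\mc{Y}$ gives $\mh{L}\in\mc{C}^{\infty}(\mc{Y},\mc{Z})$, since the formula makes sense on all of $\mc{Y}$. In particular $\mh{L}\in\mc{C}^{p}$ for the relevant $p$.
\item Item $\phi_{\ze}=\mr{Id}_{\mf{Z}}$: we have $\omega_{\natural}^{\ze}=\xi$, hence $\chi(\ze)=\mr{Id}_{U}$, and $R$ is a group morphism, so $S(\ze)=R(\mr{Id})=\mr{Id}_{X}$. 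Therefore $\phi_{\ze}(u)=u\circ\xi^{-1}\circ\xi=u$.
\item Item \eqref{051016303}: this follows from Lemma \ref{09011311}, given the hypotheses $S\in\mc{C}^{3}$ and $\widehat{\sigma}\in\mc{C}^{3}$.
\item Item \eqref{06291612}: this is the hypothesis $\widehat{\chi}\in\mc{C}^{2}$, composed with the smooth inclusion $\imath_{U}^{\R^{n}}$.
\end{itemize}
The value of $p$ is taken to be $2$ here. The paper assumes $p>1$, so $p=2$ is compatible with the $\mc{C}^{2}$ regularity of $\widehat{\chi}$ and the $\mc{C}^{3}$ regularity of $\Upphi(u)$.

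\emph{Remaining properties.}
\begin{itemize}
\item $\phi:A\to\mc{L}(\mf{Z},\mf{Z})$ is Prp.\ref{08310755} under \eqref{08310749}. Since $\phi_{\eta}$ is continuous linear, it is smooth (its differential is constantly $\phi_{\eta}$, by Prp.\ref{812}-type results). Hence $\phi_{\eta}\in\mc{C}^{\infty}(\mf{Z},\mf{Z})$.
\item The inverse formula follows from the factorization $\phi_{\eta}=S(\eta)_{\intercal}\circ(\upzeta(\eta))^{\ast}$ used in the proof of Prp.\ref{08310755}. Both factors are invertible, with inverses $(S(\eta)^{-1})_{\intercal}$ and $(\upzeta(\eta)^{-1})^{\ast}$, so $\phi_{\eta}^{-1}(u)=S(\eta)^{-1}\circ u\circ\upzeta(\eta)^{-1}$.
\end{itemize}

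\emph{Perfection.} I expect this step to be the main obstacle. Perfection requires $\phi_{\eta}\circ\imath_{\mf{Z}^{O}}^{\mf{Z}}$ to be differentiable with a right-invertible differential. Its differential at every $u$ is $\phi_{\eta}$ itself, so it suffices that $\phi_{\eta}$ is a bijection of $\mf{Z}$ with right inverse $\phi_{\eta}^{-1}$. The plan is as follows.
\begin{itemize}
\item Apply Lemma \ref{08300643} to $S(\eta)^{-1}$ to get $(S(\eta)^{-1})_{\intercal}\in\mc{L}(\mf{Z})$.
\item Show that $(\upzeta(\eta)^{-1})^{\ast}$ maps $\mf{Z}$ into $\mf{Z}$. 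This needs $\mc{C}_{c,1}^{\infty}$ to be preserved under composition with the diffeomorphism $\upzeta(\eta)^{-1}=(\omega_{\natural}^{\eta})^{-1}\circ\xi$. The supports of the chart derivatives transform by the chain rule, as in the proof of Lemma \ref{08310357}, so compactness of supports is preserved.
\end{itemize}
Perfection only needs a right inverse as a linear map on $\mf{Z}$ (boundedness is not needed), so this support argument should suffice. The hypothesis that $\upmu$ is differentiable enters only as the standing assumption in the definition of perfection.
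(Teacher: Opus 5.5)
Your proposal is correct and follows the paper's route. The paper's proof simply cites Prp.~\ref{08310755}, Lemma~\ref{09011311} and \eqref{12152057}, and you use the same ingredients while making explicit several points that one-line proof leaves implicit: $\phi_{\ze}=\mr{Id}_{\mf{Z}}$ because $S(\ze)=R(\mr{Id}_{\R^{n}})=\mr{Id}_{X}$; the $\mc{C}^{2}$ hypothesis on $\widehat{\chi}$ forces $p=2$; and perfection reduces to surjectivity of $\phi_{\eta}$, which holds because $(S(\eta)^{-1})_{\intercal}$ and $(\upzeta(\eta)^{-1})^{\ast}$ preserve $\mf{Z}$ (for the latter, with $\theta=\upzeta(\eta)^{-1}$, the clean argument uses the transported chart $(\theta(V),\rho\circ\theta^{-1})$, which gives $\partial_{\nu}^{\rho}(f\circ\theta)=(\partial_{\nu}^{\rho\circ\theta^{-1}}f)\circ\theta$).
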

\begin{proof}
Since Prp.\ref{08310755}, Lemma \ref{09011311} and \eqref{12152057}. 
\end{proof}
\begin{convention}
In what follows often we shall implement the following convention. Let $a\in X$ and $b\in\mf{Y}$, then for any $i\in[1,2]\cap\Z$
and $\nu\in[1,n]\cap\Z$ we let $a[i]$ be $K_{i}^{\mc{G}}(a)$; $b_{\nu}$ be $\Pr_{\nu}^{\mc{B}}(b)$.
\end{convention}
In particular $b_{\nu}[i]=(K_{i}^{\mc{G}}\circ\Pr_{\nu}^{\mc{B}})(b)$.
\begin{lemma}
\label{09100940}
Let $(a,b)\in O$, $(h,l)\in Y_{0}\times\mf{Y}$ thus, 
\begin{equation*}
\begin{aligned}
\mh{L}^{(1)}(a,b)(h,l)
=
\frac{i}{2}\mf{x}\left(\ov{a[2]},\sum_{\nu=1}^{n}\gamma^{\nu}(l_{\nu}[1])\right)
&+
\frac{i}{2}\mf{x}\left(\ov{h[2]},\sum_{\nu=1}^{n}\gamma^{\nu}(b_{\nu}[1])\right)
\\
-
\frac{i}{2}\sum_{\nu=1}^{n}\mf{x}\left(\ov{b_{\nu}[2]},\gamma^{\nu}(h[1])\right)
&-
\frac{i}{2}\sum_{\nu=1}^{n}\mf{x}\left(\ov{l_{\nu}[2]},\gamma^{\nu}(a[1])\right)
\\
-m\mf{x}(\ov{a[2]},h[1])
&-
m\mf{x}(\ov{h[2]},a[1]).
\end{aligned}
\end{equation*}
\end{lemma}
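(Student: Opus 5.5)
The plan is to read off $\mh{L}^{(1)}$ from the decomposition of Rmk.\ref{09081747}, which writes $\mh{L}$ as a linear combination of maps of the form $\mf{x}\circ(P\boxtimes Q)\circ\imath_{O}^{\mc{Y}}$. Here $P$ and $Q$ are continuous $\R$-linear maps on $\mc{Y}_{\R}$: they are built from $\Pr_{1}^{\mc{E}}$, $\Pr_{2}^{\mc{E}}$, $\Pr_{\nu}^{\mc{B}}$, $K_{i}^{\mc{G}}$, $\gamma^{\nu}$ and the continuous map $\ov{(\cdot)}$ of Def.\ref{08262042}, using \eqref{12152057} to regard complex-linear maps as real-linear. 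The map $\mf{x}$ is continuous and $\R$-bilinear, and this is already noted before Assumption \ref{09081619pre}. So each summand is a continuous bilinear map composed with a continuous linear map $\mc{Y}\to\mc{L}_{b}(X,\ov{\mc{Z}})\times X$.

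The key step is the Leibniz rule for continuous bilinear maps, Prp.\ref{03241344} (as used in \eqref{07222205b}), combined with the chain rule Thm.\ref{821}. Since continuous linear maps are their own differentials (Prp.\ref{813}), for $(a,b)\in O$ and $(h,l)\in\mc{Y}$ this gives
\begin{equation*}
\bigl(\mf{x}\circ(P\boxtimes Q)\bigr)^{(1)}(a,b)(h,l)=\mf{x}\bigl(P(a,b),Q(h,l)\bigr)+\mf{x}\bigl(P(h,l),Q(a,b)\bigr).
\end{equation*}
The restriction by $\imath_{O}^{\mc{Y}}$ does not change the differential, by Prp.\ref{05231704}. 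Linearity of differentiation then lets me sum the three pieces, with coefficients $\tfrac{i}{2}$, $-\tfrac{i}{2}$ and $-m$. These coefficients act through the $\C$-action \eqref{12152125} on $\mc{Z}_{\R}$-valued maps, and that action commutes with differentiation because $\updelta_{\star}^{\K}(\upeta_{\mc{Z}})$ is continuous and linear, see \eqref{12152056}.

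Finally I evaluate each term.
\begin{itemize}
\item The pair $(T_{1},T_{2})$ gives $\mf{x}(\ov{a[2]},\sum_{\nu}\gamma^{\nu}(l_{\nu}[1]))+\mf{x}(\ov{h[2]},\sum_{\nu}\gamma^{\nu}(b_{\nu}[1]))$.
\item The pair $(R_{1,\nu},R_{2,\nu})$ gives $\mf{x}(\ov{b_{\nu}[2]},\gamma^{\nu}(h[1]))+\mf{x}(\ov{l_{\nu}[2]},\gamma^{\nu}(a[1]))$.
\item The pair $(T_{1},S_{2})$ gives $\mf{x}(\ov{a[2]},h[1])+\mf{x}(\ov{h[2]},a[1])$.
\end{itemize}
Multiplying by the coefficients reproduces the stated formula exactly.

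The only delicate point is bookkeeping over the fields. The map $\ov{(\cdot)}$ is $\C$-linear into $\mc{L}_{b}(X,\ov{\mc{Z}})$, while $\mf{x}$ takes values in $\ov{\mc{Z}}$. Differentiability is meant over $\R$ in the sense of convention \eqref{12201914}, and $\ov{\mc{Z}}_{\R}=\mc{Z}_{\R}$. So every map involved is $\R$-linear continuous or $\R$-bilinear continuous, and no conjugation issue arises in the derivative.
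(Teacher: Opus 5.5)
Your proposal is correct and follows the paper's proof: the paper likewise expands $\mh{L}^{(1)}$ term by term from the decomposition in Rmk.\ref{09081747}, using Prp.\ref{03241344} and Prp.\ref{813}, and obtains exactly the six terms you list with the coefficients $\tfrac{i}{2}$, $-\tfrac{i}{2}$, $-m$. One small technical point, shared with the paper: Prp.\ref{03241344} is stated for normed $F_{1}$, whereas $T_{1}$ and $R_{1,\nu}$ take values in $\mc{L}_{b}(X,\ov{\mc{Z}})$; this is easily repaired by applying it instead to the continuous bilinear map $(x,y)\mapsto\mf{x}(\ov{x},y)$ on $X\times X$, which the paper already notes is continuous and bilinear.
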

\begin{proof}
Since Rmk.\ref{09081747}, Prp.\ref{03241344} and Prp.\ref{813} we have 
\begin{equation*}
\begin{aligned}
\mh{L}^{(1)}(a,b)(h,l)
=
\frac{i}{2}\mf{x}(T_{1}(a,b),T_{2}(h,l))
&+
\frac{i}{2}\mf{x}(T_{1}(h,l),T_{2}(a,b))
\\
-
\frac{i}{2}\sum_{\nu=1}^{n}\mf{x}(R_{1,\nu}(a,b),R_{2,\nu}(h,l))
&-
\frac{i}{2}\sum_{\nu=1}^{n}\mf{x}(R_{1,\nu}(h,l),R_{2,\nu}(a,b))
\\
-m\mf{x}(T_{1}(a,b),S_{2}(h,l))
&-
m\mf{x}(T_{1}(h,l),S_{2}(a,b));
\end{aligned}
\end{equation*}
then the statement follows.
\end{proof}
\begin{lemma}
\label{09101848}
Let $(a,b)\in O$, $h_{i}\in\widehat{X}_{i}$ for any $i\in[1,2]\cap\Z$, and $\nu\in[1,n]\cap\Z$ thus,
\begin{equation*}
\begin{aligned}
\frac{\partial\mh{L}}{\partial Y_{0}}(a,b)(h_{1})
&=
\mf{x}\left(\ov{\sum_{\nu=1}^{n}\frac{i}{2}\gamma^{\nu}(b_{\nu}[2])-ma[2]},h_{1}\right)
\\
\frac{\partial\mh{L}}{\partial Y_{0}}(a,b)(h_{2})
&=
\mf{x}\left(\ov{h_{2}},\sum_{\nu=1}^{n}\frac{i}{2}\gamma^{\nu}(b_{\nu}[1])-ma[1]\right)
\\
\left(\frac{\partial\mh{L}_{\mf{e}}}{\partial Y_{\nu}}\circ(\mf{d}_{O})_{\natural}\right)
(a,b)(h_{1})
&=
\mf{x}\left(\ov{-\frac{i}{2}\gamma^{\nu}(a[2])},h_{1}\right)
\\
\left(\frac{\partial\mh{L}_{\mf{e}}}{\partial Y_{\nu}}\circ(\mf{d}_{O})_{\natural}\right)
(a,b)(h_{2})
&=
\mf{x}\left(\ov{h_{2}},-\frac{i}{2}\gamma^{\nu}(a[1])\right).
\end{aligned}
\end{equation*}
\end{lemma}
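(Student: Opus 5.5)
Taking Lemma \ref{09100940} as given, I will read off the partial differentials by restricting the direction $(h,l)$ to the relevant factor and then dropping the terms that vanish on $\widehat{X}_{1}$ or on $\widehat{X}_{2}$.

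\emph{Step 1: the $Y_{0}$-partials.} By Cor.\ref{02211942}, $\frac{\partial\mh{L}}{\partial Y_{0}}(a,b)(h)=\mh{L}^{(1)}(a,b)(h,\ze)$. Setting $l=\ze$ in Lemma \ref{09100940} removes the first and fourth terms. This leaves
\[
\tfrac{i}{2}\mf{x}\bigl(\ov{h[2]},\textstyle\sum_{\nu}\gamma^{\nu}(b_{\nu}[1])\bigr)-\tfrac{i}{2}\sum_{\nu}\mf{x}\bigl(\ov{b_{\nu}[2]},\gamma^{\nu}(h[1])\bigr)-m\mf{x}(\ov{a[2]},h[1])-m\mf{x}(\ov{h[2]},a[1]).
\]
For $h=h_{1}\in\widehat{X}_{1}$ we have $h_{1}[2]=\ze$ and $h_{1}[1]=h_{1}$, because $K_{i}^{\mc{G}}\circ\imath_{j}^{\mc{G}}=\delta_{ij}\imath_{j}^{\mc{G}}$ and $\ov{\ze}=\ze$ by linearity of $\ov{\cdot}$. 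Only the second and third terms survive, and I rewrite them as a single $\mf{x}(\ov{\,\cdot\,},h_{1})$. For this I use Def.\ref{08262042}\eqref{08262042c} together with Assumption \ref{09081619}\eqref{09081619b}, which give $\ov{\gamma^{\nu}z}=\ov{z}\circ\widehat{\gamma}^{\nu}=\ov{z}\circ\gamma^{\nu}$, hence $\mf{x}(\ov{b_{\nu}[2]},\gamma^{\nu}h_{1})=\mf{x}(\ov{\gamma^{\nu}b_{\nu}[2]},h_{1})$. I then absorb the scalar $-\tfrac{i}{2}$ into the conjugate slot, using that $\ov{\cdot}$ is $\C$-linear into $\mc{L}_{b}(X,\ov{\mc{Z}})$ and that the conjugate structure turns $-\tfrac{i}{2}$ in $\mc{Z}$ into $+\tfrac{i}{2}$ inside the bar. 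The case $h=h_{2}\in\widehat{X}_{2}$ is the same computation: now $h_{2}[1]=\ze$ and only the first and fourth terms survive, and these are already of the form $\mf{x}(\ov{h_{2}},\cdot)$. Collecting them by $\C$-linearity of $\mf{x}$ in its second argument gives the second formula.

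\emph{Step 2: the $Y_{\nu}$-partials.} By \eqref{10181658a} of Lemma \ref{10181658},
\[
\bigl(\tfrac{\partial\mh{L}_{\mf{e}}}{\partial Y_{\nu}}\circ(\mf{d}_{O})_{\natural}\bigr)(a,b)(k)=\mh{L}^{(1)}(a,b)(\ze,l)
\]
with $l=\mf{i}_{\nu}^{\mc{B}}(k)$, so that $l_{\mu}=\delta_{\mu\nu}k$. Setting $h=\ze$ in Lemma \ref{09100940} leaves $\tfrac{i}{2}\mf{x}(\ov{a[2]},\gamma^{\nu}(k[1]))-\tfrac{i}{2}\mf{x}(\ov{k[2]},\gamma^{\nu}(a[1]))$. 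For $k=h_{1}$ only the first term remains; moving $\gamma^{\nu}$ across as in Step 1 gives $\mf{x}(\ov{-\tfrac{i}{2}\gamma^{\nu}a[2]},h_{1})$. For $k=h_{2}$ only the second term remains, and it is directly $\mf{x}(\ov{h_{2}},-\tfrac{i}{2}\gamma^{\nu}a[1])$.

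\textbf{Main obstacle.} The only delicate point is the scalar bookkeeping when a complex scalar is moved into the conjugate slot: $\ov{\cdot}$ takes values in $\mc{L}_{b}(X,\ov{\mc{Z}})$, so the sign of $\tfrac{i}{2}$ flips under conjugation. I will check this carefully against the signs in the statement, treating the multiplication by $i$ through \eqref{12152125} and $\upeta_{\mc{Z}}$. Everything else is a direct substitution into Lemma \ref{09100940}.
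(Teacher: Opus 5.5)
Your route is the same as the paper's. You identify the partials with $\mh{L}^{(1)}(a,b)(h,\ze)$ and $\mh{L}^{(1)}(a,b)(\ze,l)$ via Lemma~\ref{10181658}, substitute into Lemma~\ref{09100940}, and drop the terms containing $h_{1}[2]$ or $h_{2}[1]$. You then move $\gamma^{\nu}$ across the bar using Def.~\ref{08262042}\eqref{08262042c} and $\widehat{\gamma}^{\nu}=\gamma^{\nu}$, and absorb the scalars. The paper writes this out only for the first identity and calls the others similar. Everything up to the scalar step is fine. The gap is in the scalar step, which is the point you deferred, and as written your two justifications contradict each other.

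In the $h_{1}$-cases you justify $-\tfrac{i}{2}\mapsto+\tfrac{i}{2}$ (and $\tfrac{i}{2}\mapsto-\tfrac{i}{2}$ in the third identity) by $\ov{z}$ taking values in $\ov{\mc{Z}}$. That is, $\mf{x}(\ov{\lambda z},y)=\lambda^{\ast}\,\mf{x}(\ov{z},y)$ in $\mc{Z}$. But by the same definition $\ov{z}\in\mc{L}_{b}(X,\ov{\mc{Z}})$, so also $\mf{x}(\ov{z},\lambda y)=\ov{z}(\lambda y)=\lambda^{\ast}\,\mf{x}(\ov{z},y)$ in $\mc{Z}$. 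So ``$\C$-linearity of $\mf{x}$ in its second argument'' means linearity into $\ov{\mc{Z}}$. Applied consistently, it turns $\tfrac{i}{2}\,\mf{x}(\ov{h_{2}},\sum_{\nu}\gamma^{\nu}(b_{\nu}[1]))$ into $\mf{x}(\ov{h_{2}},-\tfrac{i}{2}\sum_{\nu}\gamma^{\nu}(b_{\nu}[1]))$. It turns $-\tfrac{i}{2}\,\mf{x}(\ov{h_{2}},\gamma^{\nu}(a[1]))$ into $\mf{x}(\ov{h_{2}},\tfrac{i}{2}\gamma^{\nu}(a[1]))$. Both are the opposite signs from the second and fourth identities. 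Reading the outer $\tfrac{i}{2}$ as a scalar of $\ov{\mc{Z}}$ instead does not help: then no scalar flips in either slot, and the first and third identities fail.

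So the four identities cannot all be derived from the literal Def.~\ref{08262042}, under which $\mf{x}\circ(\ov{(\cdot)}\boxtimes\mr{Id}_{X})$ is bilinear into $\ov{\mc{Z}}$, as the paper itself states. They hold jointly exactly under the sesquilinear convention. There $z\mapsto\mf{x}(\ov{z},y)$ is conjugate-linear and $y\mapsto\mf{x}(\ov{z},y)$ is $\C$-linear, both as $\mc{Z}$-valued maps, as in the paper's example $\ov{x}(y)=x^{\dagger}\gamma_{0}y$. You need to fix that convention explicitly at the start. Once you do, your substitutions give all four formulas, with a sign flip only when a scalar crosses into the barred slot. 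The paper's ``similarly'' passes over the same point.
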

\begin{proof}
Let $h\in X$ thus, since \eqref{08310755a} and Lemma \ref{10181658} we have 
\begin{equation*}
\begin{aligned}
\frac{\partial\mh{L}}{\partial Y_{0}}(a,b)(h)
&=
\mh{L}^{(1)}(a,b)(h,\ze)
\\
\left(\frac{\partial\mh{L}_{\mf{e}}}{\partial Y_{\nu}}\circ(\mf{d}_{O})_{\natural}\right)
(a,b)(h)
&=
\mh{L}^{(1)}(a,b)(\ze,l(\nu,h));
\end{aligned}
\end{equation*}
where $l(\nu,h)\in\mf{Y}$ such that $\Pr_{\mu}^{\mc{B}}(l(\nu,h))=\delta_{\mu,\nu}h$ for every $\mu\in[1,n]\cap\Z$.
Thus, 
\begin{equation*}
\begin{aligned}
\frac{\partial\mh{L}}{\partial Y_{0}}(a,b)(h_{1})
&=
-\frac{i}{2}\sum_{\nu=1}^{n}\mf{x}(\ov{b_{\nu}[2]},\gamma^{\nu}(h_{1}))-m\mf{x}(\ov{a[2]},h_{1})
\\
&=
-\frac{i}{2}\sum_{\nu=1}^{n}\mf{x}(\ov{\gamma^{\nu}(b_{\nu}[2])},h_{1})-m\mf{x}(\ov{a[2]},h_{1})
\\
&=
\sum_{\nu=1}^{n}\mf{x}(\ov{\frac{i}{2}\gamma^{\nu}(b_{\nu}[2])-ma[2]},h_{1})
\\
&=
\mf{x}\left(\ov{\sum_{\nu=1}^{n}\frac{i}{2}\gamma^{\nu}(b_{\nu}[2])-ma[2]},h_{1}\right),
\end{aligned}
\end{equation*}
where the 1th equality follows since Lemma \ref{09100940}; the 2th one since Assumption \ref{09081619}\eqref{09081619b};
the 3th one since $\mf{x}\circ(\ov{(\cdot)}\boxtimes\mr{Id}_{X}):X\times X\to\ov{\mc{Z}}$ is bilinear, since the complex structure
on $\ov{\mc{Z}}$ and since $i^{\ast}=-i$ and $m^{\ast}=m$; the 4th one since 
$\mf{x}\circ(\ov{(\cdot)}\boxtimes\mr{Id}_{X}):X\times X\to\ov{\mc{Z}}$ is bilinear.
Thus, the first equality in the statement follows. The remaining equalities in the statement can be proven similarly.
\end{proof}
\begin{theorem}
[Dirac Equation 1]
\label{09102044}
We have 
\begin{equation*}
\begin{aligned}
u\in\mr{EL}_{W}(\mh{L})
&\Leftrightarrow 
(\forall j\in[1,2]\cap\Z)\left((\sum_{\nu=1}^{n}i\gamma^{\nu}\circ\partial_{\nu}-m\un)\phi_{\ze}^{\xi}(u[j])=\ze\right)
\\
&\Leftrightarrow
(\forall j\in[1,2]\cap\Z)\left((\sum_{\nu=1}^{n}i\gamma^{\nu}\circ\partial_{\nu}-m\un)\phi_{\eta}^{\xi}(u[j])=\ze\right)
\\
&\Leftrightarrow
(\forall j\in[1,2]\cap\Z)\left((\sum_{\nu=1}^{n}i\gamma^{\nu}\circ\partial_{\nu}^{\xi}-m\un)\phi_{\ze}(u[j])=\ze\right)
\\
&\Leftrightarrow
(\forall j\in[1,2]\cap\Z)\left((\sum_{\nu=1}^{n}i\gamma^{\nu}\circ\partial_{\nu}^{\xi}-m\un)\phi_{\eta}(u[j])=\ze\right)
\\
&\Leftrightarrow
(\forall j\in[1,2]\cap\Z)\left((\sum_{\nu=1}^{n}i\gamma^{\nu}\circ\partial_{\nu}^{\omega_{\natural}^{\eta}}-m\un)
(\phi_{\eta}(u[j])\circ\upzeta(\eta))=\ze\right)
\\
&\Leftrightarrow
(\forall j\in[1,2]\cap\Z)\left((\sum_{\nu=1}^{n}i\gamma^{\nu}\circ\partial_{\nu}^{\omega_{\natural}^{\eta}}-m\un)
(\phi_{\ze}^{\xi}(u[j])\circ\omega_{\natural}^{\eta})=\ze\right).
\end{aligned}
\end{equation*}
\end{theorem}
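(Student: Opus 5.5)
The plan is to reduce everything to the first equivalence, since the remaining ones follow from results already established. Because $\phi_{\ze}=\mr{Id}_{\mf{Z}}$, we have $\phi_{\ze}^{\xi}(u[j])=u[j]^{\xi}$ and $\phi_{\eta}(u[j])=u[j]_{\eta}$, with $\phi_{\eta}^{\xi}(u[j])=u[j]_{\eta}^{\xi}$. Thus, for each fixed $j$, the five conditions after the first are exactly items \eqref{08262350b}, \eqref{08262350e}, \eqref{08262350a}, \eqref{08262350d}, \eqref{08262350f}, \eqref{08262350c} of Cor.\ref{08262350}, applied to $u[j]\in\mc{C}^{\infty}(W,Y_{0})$. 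The identity needed for the fifth condition is $\phi_{\eta}(u[j])\circ\upzeta(\eta)=u[j]_{\eta}\circ\zeta(\eta)$. Cor.\ref{08262350} therefore gives all equivalences except the first.

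For the first equivalence, I will write the Euler--Lagrange equation of Def.\ref{07230430} explicitly using Lemma \ref{09101848}. Put $a=u^{\xi}(x)$ and $b_{\nu}=\partial_{\nu}u^{\xi}(x)$. Since every $h\in X$ splits as $h=h[1]+h[2]$ with $h[i]\in\widehat{X}_{i}$, the equation $\mr{EL}$ holds iff it holds when tested on $h_{1}\in\widehat{X}_{1}$ and on $h_{2}\in\widehat{X}_{2}$ separately.

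Testing on $h_{2}$, the left side is $\mf{x}(\ov{h_{2}},\tfrac{i}{2}\gamma^{\nu}\partial_{\nu}u^{\xi}[1]-mu^{\xi}[1])$. For the right side I use Cor.\ref{08060629b} to move $\partial_{\nu}$ inside the continuous map $z\mapsto\mf{x}(\ov{h_{2}},z)$, giving $\mf{x}(\ov{h_{2}},-\tfrac{i}{2}\gamma^{\nu}\partial_{\nu}u^{\xi}[1])$. Here I need $\partial_{\nu}$ to commute with $K_{1}^{\mc{G}}$ and $\gamma^{\nu}$, which holds by linearity and continuity. Subtracting the two sides gives
\[
\mf{x}\bigl(\ov{h_{2}},\,i\gamma^{\nu}\partial_{\nu}u^{\xi}[1]-mu^{\xi}[1]\bigr)=\ze \quad\text{for all } h_{2}\in\widehat{X}_{2}.
\]
Testing on $h_{1}$ gives symmetrically $\mf{x}(\ov{i\gamma^{\nu}\partial_{\nu}u^{\xi}[2]-mu^{\xi}[2]}\,,h_{1})=\ze$ for all $h_{1}$. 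To pull $\partial_{\nu}$ and $\gamma^{\nu}$ through the conjugation in this step, I use that $\ov{\cdot}$ is continuous real-linear, together with Assumption \ref{09081619}\eqref{09081619b} and Def.\ref{08262042}\eqref{08262042c}, which give $\ov{\gamma^{\nu}z}=\ov{z}\circ\gamma^{\nu}$. The conjugate-linearity sign $i\mapsto -i$ has to be tracked here.

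The expected obstacle is checking that $\gamma^{\nu}$ maps $\widehat{X}_{1}$ into $\widehat{X}_{1}$, so that the vectors above lie in the correct summands and the separation hypothesis applies. If this does not hold outright, I would try to derive it from Assumption \ref{09081619}\eqref{09081619c} combined with the covariance in Def.\ref{08262218}. Once that is settled, Assumption \ref{09081619}\eqref{09081619a} (separation on $\widehat{X}_{1}$ and on $\widehat{X}_{2}$) shows the two displayed identities are equivalent to $(i\gamma^{\nu}\partial_{\nu}-m\un)u^{\xi}[j]=\ze$ for $j=1,2$. Since $u^{\xi}[j]=\phi_{\ze}^{\xi}(u[j])$, this is the first equivalence.
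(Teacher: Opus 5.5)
Your route is the paper's. You split $h=h[1]+h[2]$, evaluate both sides of the Euler--Lagrange equation with Lemma \ref{09101848}, move $\partial_{\nu}$ through the continuous real-linear evaluations, and invoke Assumption \ref{09081619}\eqref{09081619a}. The other five equivalences then come from Cor.\ref{08262350} applied to $u[j]$, using $\phi_{\ze}=\mr{Id}_{\mf{Z}}$, which holds because $S(\ze)=R(\mr{Id}_{\R^{n}})=\mr{Id}_{X}$. The obstacle you flag, however, is a genuine gap. The paper's proof passes over it silently, and your proposed fallback does not close it.

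\textbf{Where separation needs more.} Testing on $\widehat{X}_{1}$ gives only $\mf{x}(\ov{w_{2}},h_{1})=\ze$ for all $h_{1}\in\widehat{X}_{1}$, where $w_{2}=\sum_{\nu}i\gamma^{\nu}\partial_{\nu}(u^{\xi}[2])-mu^{\xi}[2]$. Writing $w_{2}=K_{1}^{\mc{G}}w_{2}+K_{2}^{\mc{G}}w_{2}$, the component $K_{1}^{\mc{G}}w_{2}=iK_{1}^{\mc{G}}\gamma^{\nu}\partial_{\nu}(u^{\xi}[2])$ enters through $\mf{x}(\ov{\cdot},\cdot)$ on $\widehat{X}_{1}\times\widehat{X}_{1}$. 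Nothing is assumed about the pairing there. Separation on $\widehat{X}_{2}$ therefore yields $w_{2}=\ze$ only if $\gamma^{\nu}(\widehat{X}_{2})\subseteq\widehat{X}_{2}$. Symmetrically, you need $\gamma^{\nu}(\widehat{X}_{1})\subseteq\widehat{X}_{1}$ for $w_{1}$. The paper simply applies \eqref{09081619a} to $w_{2}$ as if it lay in $\widehat{X}_{2}$.

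\textbf{Why your fallback fails.} You cannot derive this invariance from Assumption \ref{09081619}\eqref{09081619c} plus the covariance in Def.\ref{08262218}. Assumption \eqref{09081619c} only constrains $S(\eta)=R(\chi(\eta))$. Take $\omega^{\eta}=\xi$ for every $\eta$, which Assumption \ref{12152203} allows. Then $\chi(\eta)=\mr{Id}_{U}$ and $S(\eta)=\mr{Id}_{X}$, so \eqref{09081619c} is empty and covariance at $P=\chi(\eta)$ is trivial. The remaining hypotheses (separation, $\widehat{\gamma}^{\nu}=\gamma^{\nu}$, the Dirac-system identities) do not visibly tie $\gamma^{\nu}$ to $K_{i}^{\mc{G}}$.

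\textbf{Repair.} Add the hypothesis $\gamma^{\nu}\circ K_{i}^{\mc{G}}=K_{i}^{\mc{G}}\circ\gamma^{\nu}$ for $i\in\{1,2\}$. It holds in the intended model, where $\gamma^{\nu}$ acts diagonally on $X_{0}\times X_{0}$. With it, both your argument and the paper's are complete. Only the implication from $u\in\mr{EL}_{W}(\mh{L})$ to the two Dirac equations needs it. The converse follows directly from Lemma \ref{09101848} and linearity, without any invariance.
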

\begin{proof}
Let $x\in U$. Since $h=h[1]+h[2]$ for every $h\in X$, we have that 
\begin{equation}
\label{09102022}
\begin{aligned}
(\forall h\in X)
\left(\frac{\partial\mh{L}}{\partial Y_{0}}\circ(u^{\xi}\boxtimes\partial^{\xi}u)\right)(x)(h)
&=
\sum_{\nu=1}^{n}
\frac{\partial}{\partial e_{\nu}[n]}
\left(\frac{\partial\mh{L}_{\mf{e}}}{\partial Y_{\nu}}
\circ(\mf{d}_{O})_{\natural}
\circ(u^{\xi}\boxtimes\partial^{\xi}u)\right)
(x)(h)
\\
&
\Leftrightarrow
\\
(\forall i\in[1,2]\cap\Z)
(\forall h\in X)
\bigl(
\left(\frac{\partial\mh{L}}{\partial Y_{0}}\circ(u^{\xi}\boxtimes\partial^{\xi}u)\right)(x)(h[i])
&=
\sum_{\nu=1}^{n}
\frac{\partial}{\partial e_{\nu}[n]}\left(
\frac{\partial\mh{L}_{\mf{e}}}{\partial Y_{\nu}}\circ(\mf{d}_{O})_{\natural}\circ(u^{\xi}\boxtimes\partial^{\xi}u)\right)(x)
(h[i])
\bigr).
\end{aligned}
\end{equation}
Next since Lemma \ref{09101848} we have 
\begin{equation*}
\left(\frac{\partial\mh{L}}{\partial Y_{0}}\circ(u^{\xi}\boxtimes\partial^{\xi}u)\right)(x)(h[1])
=
\mf{x}\left(\ov{\sum_{\nu=1}^{n}\frac{i}{2}\gamma^{\nu}(\partial_{\nu}u^{\xi}[2])(x)-mu^{\xi}[2](x)},h[1]\right);
\end{equation*}
while 
\begin{equation}
\label{09102113}
\left(\frac{\partial\mh{L}_{\mf{e}}}{\partial Y_{\nu}}\circ(\mf{d}_{O})_{\natural}\circ(u^{\xi}\boxtimes\partial^{\xi}u)\right)(x)(h[1])
=
\mf{x}\left(\ov{-\frac{i}{2}\gamma^{\nu}(u^{\xi}[2](x))},h[1]\right),
\end{equation}
but by Rmk.\ref{10231117} applied to $\mr{ev}_{h[1]}^{\mc{L}_{b}(X,\ov{\mc{Z}})}$, Prp.\ref{813} and Thm.\ref{821} we have 
\begin{equation*}
\frac{\partial}{\partial e_{\nu}[n]}\left(\frac{\partial\mh{L}_{\mf{e}}}{\partial Y_{\nu}}\circ(\mf{d}_{O})_{\natural}
\circ(u^{\xi}\boxtimes\partial^{\xi}u)\right)(x)
(h[1])
=
\frac{\partial}{\partial e_{\nu}[n]}
\left(\frac{\partial\mh{L}_{\mf{e}}}{\partial Y_{\nu}}\circ(\mf{d}_{O})_{\natural}\circ(u^{\xi}\boxtimes\partial^{\xi}u)(\cdot)(h[1])
\right)(x),
\end{equation*}
and then by Prp.\ref{05091652} and \eqref{09102113} we obtain 
\begin{equation*}
\frac{\partial}{\partial e_{\nu}[n]}\left(
\frac{\partial\mh{L}_{\mf{e}}}{\partial Y_{\nu}}\circ(\mf{d}_{O})_{\natural}\circ(u^{\xi}\boxtimes\partial^{\xi}u)\right)(x)
(h[1])
=
\mf{x}\left(\ov{-\frac{i}{2}\gamma^{\nu}((\partial_{\nu}u^{\xi}[2])(x))},h[1]\right),
\end{equation*}
and then 
\begin{equation*}
\begin{aligned}
(\forall h\in X)
\bigl(
\left(
\frac{\partial\mh{L}}{\partial Y_{0}}\circ(u^{\xi}\boxtimes\partial^{\xi}u)\right)(x)(h[1])
&=
\sum_{\nu=1}^{n}
\frac{\partial}{\partial e_{\nu}[n]}\left(\frac{\partial\mh{L}_{\mf{e}}}{\partial Y_{\nu}}\circ(\mf{d}_{O})_{\natural}
\circ(u^{\xi}\boxtimes\partial^{\xi}u)\right)(x)
(h[1])
\bigr)
\\
&\Leftrightarrow
\\
(\forall h\in X)
&(\mf{x}\left(\ov{\sum_{\nu=1}^{n}i\gamma^{\nu}(\partial_{\nu}u^{\xi}[2])(x)-mu^{\xi}[2](x)},h[1]\right)=\ze).
\end{aligned}
\end{equation*}
Next clearly $\widehat{X}_{i}=\{h[i]\,\vert\,h\in X\}$ thus, by Assumption \ref{09081619}\eqref{09081619a} we obtain 
by the above equivalence that 
\begin{equation*}
\begin{aligned}
(\forall h\in X)
\bigl(
\left(\frac{\partial\mh{L}}{\partial Y_{0}}\circ(u^{\xi}\boxtimes\partial^{\xi}u)\right)(x)(h[1])
&=
\sum_{\nu=1}^{n}
\frac{\partial}{\partial e_{\nu}[n]}\left(
\frac{\partial\mh{L}_{\mf{e}}}{\partial Y_{\nu}}\circ(\mf{d}_{O})_{\natural}\circ(u^{\xi}\boxtimes\partial^{\xi}u)\right)(x)
(h[1])
\bigr)
\\
&\Leftrightarrow
\\
\sum_{\nu=1}^{n}i\gamma^{\nu}(&\partial_{\nu}(u^{\xi}[2]))(x)-mu^{\xi}[2](x)=\ze.
\end{aligned}
\end{equation*}
With a similar proof and we can prove that 
\begin{equation*}
\begin{aligned}
(\forall h\in X)
\bigl(
\left(\frac{\partial\mh{L}}{\partial Y_{0}}\circ(u^{\xi}\boxtimes\partial^{\xi}u)\right)(x)(h[2])
&=
\sum_{\nu=1}^{n}
\frac{\partial}{\partial e_{\nu}[n]}\left(
\frac{\partial\mh{L}_{\mf{e}}}{\partial Y_{\nu}}\circ(\mf{d}_{O})_{\natural}\circ(u^{\xi}\boxtimes\partial^{\xi}u)\right)(x)
(h[2])
\bigr)
\\
&\Leftrightarrow
\\
\sum_{\nu=1}^{n}i\gamma^{\nu}(&\partial_{\nu}(u^{\xi}[1]))(x)-mu^{\xi}[1](x)=\ze;
\end{aligned}
\end{equation*}
therefore the first equivalence of the statement follows since \eqref{09102022} and Def.\ref{07230430}; the remaining 
equivalences of the statement follow by the first one and since Cor.\ref{08262350}.
\end{proof}
\begin{corollary}
[Dirac Equation 2]
\label{09102044bis}
We have 
\begin{equation*}
\begin{aligned}
u\in\mr{EL}_{W}(\mh{L})
&\Leftrightarrow 
(\forall j\in[1,2]\cap\Z)\left((\sum_{\nu=1}^{n}i\gamma^{\nu}\circ\partial_{\nu}-m\un)\phi_{\ze}^{\xi}(u)\{j\}=\ze\right)
\\
&\Leftrightarrow
(\forall j\in[1,2]\cap\Z)\left((\sum_{\nu=1}^{n}i\gamma^{\nu}\circ\partial_{\nu}-m\un)\phi_{\eta}^{\xi}(u)\{j\}=\ze\right)
\\
&\Leftrightarrow
(\forall j\in[1,2]\cap\Z)\left((\sum_{\nu=1}^{n}i\gamma^{\nu}\circ\partial_{\nu}^{\xi}-m\un)\phi_{\ze}(u)\{j\}=\ze\right)
\\
&\Leftrightarrow
(\forall j\in[1,2]\cap\Z)\left((\sum_{\nu=1}^{n}i\gamma^{\nu}\circ\partial_{\nu}^{\xi}-m\un)\phi_{\eta}(u)\{j\}=\ze\right)
\\
&\Leftrightarrow
(\forall j\in[1,2]\cap\Z)\left((\sum_{\nu=1}^{n}i\gamma^{\nu}\circ\partial_{\nu}^{\omega_{\natural}^{\eta}}-m\un)
(\phi_{\eta}(u)\{j\}\circ\upzeta(\eta))=\ze\right)
\\
&\Leftrightarrow
(\forall j\in[1,2]\cap\Z)\left((\sum_{\nu=1}^{n}i\gamma^{\nu}\circ\partial_{\nu}^{\omega_{\natural}^{\eta}}-m\un)
(\phi_{\ze}^{\xi}(u)\{j\}\circ\omega_{\natural}^{\eta})=\ze\right).
\end{aligned}
\end{equation*}
\end{corollary}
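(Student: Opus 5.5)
The plan is to reduce Cor.\ref{09102044bis} to Thm.\ref{09102044}. The only difference between the two statements is that Thm.\ref{09102044} speaks of $u[j]=K_{j}^{\mc{G}}\circ u$, while the corollary speaks of $u\{j\}=\Pr_{j}^{\mc{G}}\circ u$. These are related by $u[j]=\imath_{j}^{\mc{G}}\circ u\{j\}$, where $\imath_{j}^{\mc{G}}$ is linear, continuous and injective, with left inverse $\Pr_{j}^{\mc{G}}$. So for each of the six equivalent conditions it will suffice to show that the Dirac operator applied to the $[j]$-version vanishes iff it vanishes on the $\{j\}$-version.

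First I would establish the commutation relations, each of which lets an operator move through $K_{j}^{\mc{G}}$.
\begin{itemize}
\item For the transformations: $\phi_{\eta}(u[j])=\phi_{\eta}(u)[j]$ and $\phi_{\eta}^{\xi}(u[j])=\phi_{\eta}^{\xi}(u)[j]$. This follows from $\phi_{\eta}(u)=S(\eta)\circ u^{\xi}\circ\omega_{\natural}^{\eta}$ together with Assumption \ref{09081619}\eqref{09081619c}, which gives $S(\eta)\circ K_{j}^{\mc{G}}=K_{j}^{\mc{G}}\circ S(\eta)$. This is also recorded in the remark after the definition of $u[i]$: $u_{\eta}[i]=u[i]_{\eta}$.
\item For the derivatives: each of $\partial_{\nu}$, $\partial_{\nu}^{\xi}$ and $\partial_{\nu}^{\omega_{\natural}^{\eta}}$ commutes with postcomposition by the continuous linear map $K_{j}^{\mc{G}}$, by Cor.\ref{08060629b}. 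Composition with $\upzeta(\eta)$ or $\omega_{\natural}^{\eta}$ on the right obviously commutes with $K_{j}^{\mc{G}}$.
\end{itemize}

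The main step is the interaction of $\gamma^{\nu}$ with the splitting $X=X_{0}\times X_{0}$. The cleanest route is to show that $\gamma^{\nu}$ commutes with $K_{j}^{\mc{G}}$. I would try to extract this from the Dirac system axiom $R(P^{-1})\gamma^{\nu}R(P)=P_{\nu,\mu}\gamma^{\mu}$ combined with Assumption \ref{09081619}\eqref{09081619c}, but I do not expect this to work in general, since \eqref{09081619c} only constrains $S(\eta)$ for the particular $\eta\in A$. So I expect this commutation to be the main obstacle.

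The fallback is to work inside the proof of Thm.\ref{09102044}, where the two components decouple. There the condition for $j=2$ is derived as
\[
\textstyle\sum_{\nu}i\gamma^{\nu}\partial_{\nu}(u^{\xi}[2])-mu^{\xi}[2]=\ze,
\]
and that expression was obtained by pairing against $h[1]$ through the separating form of Assumption \ref{09081619}\eqref{09081619a}. The separating property only sees the relevant projection of the operator's output. I would therefore redo that final step: apply $\Pr_{j}^{\mc{G}}$ to the identity, and use that $\imath_{j}^{\mc{G}}$ is injective and linear, so that $u[j]$ vanishes componentwise exactly when the corresponding $u\{j\}$ statement holds. The factor $m\un$ poses no difficulty, since it commutes with everything.

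With these commutations in hand, the first equivalence of the corollary follows from the first equivalence of Thm.\ref{09102044}. The remaining five then follow verbatim, either from the corresponding lines of Thm.\ref{09102044} or directly from Cor.\ref{08262350} applied to $u\{j\}$, using $\phi_{\eta}(u)\{j\}=\Pr_{j}^{\mc{G}}\circ\phi_{\eta}(u)$.
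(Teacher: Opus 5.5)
Your reduction has the same skeleton as the paper's proof. That proof invokes only Thm.\ref{09102044}, the identities $u[j]_{\eta}=u_{\eta}[j]$ and $u[j]_{\eta}^{\xi}=u_{\eta}^{\xi}[j]$ (your use of Assumption \ref{09081619}\eqref{09081619c}), $\Pr_{k}^{\mc{G}}\circ u[j]=\delta_{j,k}u\{j\}$, and Prp.\ref{813} with Thm.\ref{821} to move $\partial_{\nu}$ through a continuous linear map. However, the step you single out as the main one is left open, and the fallback you offer for it does not work.

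Write $D=\sum_{\nu}i\gamma^{\nu}\circ\partial_{\nu}-m\un$ and $v=\phi_{\ze}^{\xi}(u)$. Applying $\Pr_{j}^{\mc{G}}$ to $D(v[j])=\ze$ only yields $\Pr_{j}^{\mc{G}}\circ D(v[j])=\ze$. The converse also requires, for $k\neq j$, that $\Pr_{k}^{\mc{G}}\circ D(v[j])=i\sum_{\nu}\Pr_{k}^{\mc{G}}\circ\gamma^{\nu}\circ\imath_{j}^{\mc{G}}\circ\partial_{\nu}(v\{j\})$ vanish. That is essentially the commutation $\gamma^{\nu}\circ K_{j}^{\mc{G}}=K_{j}^{\mc{G}}\circ\gamma^{\nu}$, which you rightly doubt can be extracted from the hypotheses; Assumption \ref{09081619}\eqref{09081619c} is even vacuous when $\chi(\eta)=\mr{Id}_{U}$ for all $\eta$.

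Your claim that the separating property ``only sees the relevant projection'' is not available either. Assumption \ref{09081619}\eqref{09081619a} concerns the pairing on $\widehat{X}_{2}\times\widehat{X}_{1}$ only and says nothing about terms like $\mf{x}(\ov{z[1]},h[1])$. So redoing the last step of Thm.\ref{09102044} with a projection is not justified. Likewise, Cor.\ref{08262350} cannot be applied ``to $u\{j\}$'', since it is about $X$-valued maps.

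The missing idea is to fix what the $\{j\}$-equations mean. Since $\gamma^{\nu}\in\mc{L}(X)$ whereas $w\{j\}$ is $X_{0}$-valued, the only reading that typechecks goes through $\imath_{j}^{\mc{G}}$. It is also the reading under which the ingredients cited in the paper's proof suffice. Under it, $\gamma^{\nu}\circ\partial_{\nu}(w\{j\})$ and $m\,w\{j\}$ stand for $\gamma^{\nu}\circ\imath_{j}^{\mc{G}}\circ\partial_{\nu}(w\{j\})$ and $m\,\imath_{j}^{\mc{G}}\circ w\{j\}$.

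As $\imath_{j}^{\mc{G}}\circ w\{j\}=K_{j}^{\mc{G}}\circ w=w[j]$, Prp.\ref{813} and Thm.\ref{821} give $\imath_{j}^{\mc{G}}\circ\partial_{\nu}(w\{j\})=\partial_{\nu}(w[j])$. The same holds for $\partial_{\nu}^{\xi}$ and $\partial_{\nu}^{\omega_{\natural}^{\eta}}$, and right composition with $\upzeta(\eta)$ or $\omega_{\natural}^{\eta}$ commutes trivially. Combined with $\phi_{\eta}(u[j])=\phi_{\eta}(u)[j]$ and $\phi_{\eta}^{\xi}(u[j])=\phi_{\eta}^{\xi}(u)[j]$, each of the six lines becomes literally the corresponding line of Thm.\ref{09102044}. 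No interaction between $\gamma^{\nu}$ and the splitting is needed, and not even injectivity of $\imath_{j}^{\mc{G}}$.

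If instead you mean $\{j\}$ to be the $j$-th component of the operator's output, one direction of the equivalence requires the off-diagonal blocks $\Pr_{k}^{\mc{G}}\circ\gamma^{\nu}\circ\imath_{j}^{\mc{G}}$, $k\neq j$, to vanish. That is not among the stated hypotheses.
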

\begin{proof}
The statement follows since Thm.\ref{09102044}; since $u[j]_{\eta}=u_{\eta}[j]$ and $u[j]_{\eta}^{\xi}=u_{\eta}^{\xi}[j]$;
since $\Pr_{k}\circ(u[j])=\delta_{j,k}u\{j\}$; since Prp.\ref{813} and Thm.\ref{821}.
\end{proof}
In what follows recall that $\mf{a}$ is defined in Def.\ref{08280414}.
\begin{theorem}
Let $u\in\mf{Z}$. Then
\begin{multline}
\label{09061904}
(\forall\eta\in A)
\bigl(
(\forall j\in[1,2]\cap\Z)\left((\sum_{\nu=1}^{n}i\gamma^{\nu}\circ\partial_{\nu}^{\xi}-m\un)\phi_{\ze}(u[j])=\ze\right)
\\
\Rightarrow
\\
(\forall j\in[1,2]\cap\Z)\left((\sum_{\nu=1}^{n}i\gamma^{\nu}\circ\partial_{\nu}^{\xi}-m\un)\phi_{\eta}(u[j])=\ze\right)
\bigr).
\end{multline}
Next assume the hypothesis of Cor.\ref{09020340} and that $\widehat{J}$ is the constant map equal to $1$.
Then $\mf{a}$ is constant moreover
\begin{equation*}
(\forall j\in[1,2]\cap\Z)\left((\sum_{\nu=1}^{n}i\gamma^{\nu}\circ\partial_{\nu}^{\xi}-m\un)\phi_{\ze}(u[j])=\ze\right)
\Rightarrow
\sum_{i=1}^{n}\frac{\partial\mh{N}^{i}[u]}{\partial e_{i}[n]}=\ze,
\end{equation*}
and under the hypothesis and notation of Cor.\ref{05112017} and if $p=+\infty$, then we have 
\begin{equation*}
(\forall j\in[1,2]\cap\Z)\left((\sum_{\nu=1}^{n}i\gamma^{\nu}\circ\partial_{\nu}^{\xi}-m\un)\phi_{\ze}(u[j])=\ze\right)
\Rightarrow
\frac{dQ_{V}(u)}{dt}=\ze.
\end{equation*}
\end{theorem}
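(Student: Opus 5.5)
The first implication \eqref{09061904} is pure bookkeeping on top of Cor.\ref{08262350}. Fix $\eta\in A$ and $j\in[1,2]\cap\Z$, and apply Cor.\ref{08262350} to the field $u[j]\in\mc{C}^{\infty}(W,Y_{0})$ (this membership is noted after the definition of $u[i]$).
\begin{itemize}
\item The equivalence (\ref{08262350a})$\Leftrightarrow$(\ref{08262350d}) reads $(i\gamma^{\nu}\partial_{\nu}^{\xi}-m\un)u[j]=\ze\Leftrightarrow(i\gamma^{\nu}\partial_{\nu}^{\xi}-m\un)u[j]_{\eta}=\ze$.
\item By Def.\ref{08300640}, $\phi_{\eta}(u[j])=S(\eta)\circ u[j]^{\xi}\circ\omega_{\natural}^{\eta}=u[j]_{\eta}$.
\item For $\eta=\ze$ we have $\omega_{\natural}^{\ze}=\xi$ and $\chi(\ze)=\mr{Id}$, so $S(\ze)=R(\mr{Id})=\mr{Id}_{X}$ because $R$ is a group morphism. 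Hence $\phi_{\ze}(u[j])=u[j]$.
\end{itemize}
Substituting these two identities into the equivalence gives \eqref{09061904}, in fact as an equivalence.

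For the second part I will assume, as the objects $\mh{N}[u]$ and $Q_{V}(u)$ require, that $u\in\mf{Z}^{O}$. By Cor.\ref{09020340}, $\lr{\mh{L},\phi}{\omega}$ is an appropriate transformation system, so all earlier results of Section \ref{09030751} apply. The key step is to show that $\mf{a}$ is constant, i.e. $\mc{A}(w)(\eta)=\mc{A}(w)(\ze)$ for every $w\in\mf{Z}^{O}$ and $\eta\in A$.

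I will use Prp.\ref{08161352} with $V=\emptyset$; this is legitimate since $\widehat{J}\equiv1$. It then suffices to prove
\begin{equation*}
\mr{L}\bigl(\phi_{\eta}^{\xi}(w),\partial(\phi_{\eta}^{\xi}(w))\bigr)\circ\chi(\eta)=\mr{L}\bigl(\phi_{\ze}^{\xi}(w),\partial(\phi_{\ze}^{\xi}(w))\bigr).
\end{equation*}
Here $\phi_{\eta}^{\xi}(w)=w_{\eta}^{\xi}$. By Rmk.\ref{09081747} the left side decomposes into three $\mf{x}$-terms, evaluated at $K_{2}^{\mc{G}}\circ w_{\eta}^{\xi}$ and $K_{1}^{\mc{G}}\circ w_{\eta}^{\xi}$ and their partial derivatives. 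By Assumption \ref{09081619}\eqref{09081619c}, $S(\eta)$ commutes with $K_{i}^{\mc{G}}$, so $K_{i}^{\mc{G}}\circ w_{\eta}^{\xi}=(w[i])_{\eta}^{\xi}$. Also $K_{i}^{\mc{G}}$ commutes with $\partial_{\nu}$ by Cor.\ref{08060629b}. The three terms are then exactly those of Thm.\ref{08251007}, taken with $v=w[2]$ and $u=w[1]$, composed with $\chi(\eta)$. Thm.\ref{08251007} asserts that each of them equals its $\eta=\ze$ counterpart. Taking the linear combination $\tfrac{i}{2}(\cdot)-\tfrac{i}{2}(\cdot)-m(\cdot)$ yields the displayed identity, so $\mf{a}$ is constant.

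With $\mf{a}$ constant, Rmk.\ref{08281125} gives $\mc{A}(u)^{(1)}(\ze)=\ze$. If $(i\gamma^{\nu}\partial_{\nu}^{\xi}-m\un)\phi_{\ze}(u[j])=\ze$ for both $j$, then the third line of Thm.\ref{09102044} gives $u\in\mr{EL}_{W}(\mh{L})$. Cor.\ref{08110158} then yields $\sum_{i}\partial\mh{N}^{i}[u]/\partial e_{i}[n]=\ze$. Under the hypotheses of Cor.\ref{05112017} with $p=+\infty$, that corollary yields $dQ_{V}(u)/dt=\ze$; alternatively, Cor.\ref{08281203} gives both conclusions at once.

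I expect the main obstacle to be the matching step in the second paragraph. It requires checking carefully that $\partial(w_{\eta}^{\xi})$ composed with $\chi(\eta)$, together with the $\Pr_{\nu}^{\mc{B}}$-components and the projections $K_{i}^{\mc{G}}$ inside $\mh{L}$, lines up term by term with the three identities of Thm.\ref{08251007}. The middle term, in which the conjugate falls on the derivative, is the delicate one. The rest of the argument is direct citation.
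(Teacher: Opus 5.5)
Your proposal is correct and follows the paper's proof step for step. The first implication comes from Cor.~\ref{08262350} (\eqref{08262350a}$\Rightarrow$\eqref{08262350d}), constancy of $\mf{a}$ from Thm.~\ref{08251007} together with Prp.~\ref{08161352}, and the remaining conclusions from Cor.~\ref{09020340}, Thm.~\ref{09102044}, Rmk.~\ref{08281125}, Cor.~\ref{08110158} and Cor.~\ref{05112017}; beyond the paper, you make explicit the identifications $\phi_{\eta}(u[j])=u[j]_{\eta}$ and $S(\ze)=\mr{Id}_{X}$, and the term-by-term matching of $\mh{L}$ with the three identities of Thm.~\ref{08251007} via Assumption~\ref{09081619}\eqref{09081619c}.
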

\begin{proof}
\eqref{09061904} follows since Cor.\ref{08262350}(\eqref{08262350a}$\Rightarrow$\eqref{08262350d}).
Next $\mf{a}$ is constant since Thm.\ref{08251007} and Prp.\ref{08161352}, while $\lr{\mh{L}}{\phi,\omega}$ is an 
appropriate transformation system on $X$ since Cor.\ref{09020340}, then the remaining of the statement follows since 
Thm.\ref{09102044}, and Rmk.\ref{08281125}, Cor.\ref{08110158} and Cor.\ref{05112017}.
\end{proof}
\begin{definition}
[Physical Dirac Solutions]
Define
\begin{equation*}
\mr{EL}_{W}^{ph}(\mh{L})\coloneqq\left\{u\in\mr{EL}_{W}(\mh{L})\,\big\vert\,u\{1\}=u\{2\}\right\}.
\end{equation*}
\end{definition}
\section{Appendix}
In this section for the sake of completeness and reader's benefit we gather the results we employ in the body of the 
paper mostly regarding differential calculus for locally convex space valued maps defined on open sets of normed spaces. 
Made exception for Cor.\ref{01031245b} which is by far more general than similar results, the material present in the Appendix 
is more or less standard and the order in which here is collected is the one of quotation in the paper. 
All the linear spaces involved in this appendix are tacitly assumed to be over $\K\in\{\R,\C\}$.
\begin{corollary} 
[Partial Differential Expansion of the Differential and Viceversa]
\label{02211942}
Let $n\in\Z_{>0}$, $\mc{E}=\{E_{i}\}_{i=1}^{n}$ be a family of normed spaces, $F$ be a Hausdorff locally convex space, 
$A$ be an open set of $E=\prod_{i=1}^{n}E_{i}$ and $f:A\to F$ be a continuous map. 
If $f$ is differentiable at $a\in A$, then $f$ is differentiable at $a$ with 
respect to the subspace $E_{i}$ for every $i\in[1,n]\cap\Z$ and \eqref{02281718} holds true for $x=a$.
Moreover in order that $f$ be continuously differentiable in $A$ it is necessary and sufficient that 
for every $i\in[1,n]\cap\Z$ the map  $f$ be differentiable at each point with respect to the subspace $E_{i}$ 
and that $\mr{D}_{i}f\in\mc{C}(A,\mc{L}(E_{i},F))$. In such a case we have for all $x\in A$ and $k\in[1,n]\cap\Z$
\begin{equation}
\label{02281718}
\begin{aligned}
(\mr{D}_{k}f)(x)&=f^{\prime}(x)\circ\mf{i}_{k},
\\
f^{\prime}(x)&=\sum_{i=1}^{n}(\mr{D}_{i}f)(x)\circ\Pr_{i};
\end{aligned}
\end{equation}
namely
\begin{equation}
\label{02281719}
\begin{aligned}
\mr{D}_{k}f&=\mf{i}_{k}^{\dagger}\circ f^{\prime},
\\
f^{\prime}&=\sum_{i=1}^{n}\Pr_{i}^{\dagger}\circ(\mr{D}_{i}f).
\end{aligned}
\end{equation}
\end{corollary}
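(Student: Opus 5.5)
The plan is to reduce everything to one-variable statements along coordinate directions and then assemble them. Set $\mf{i}_{k}\coloneqq\mf{i}_{k}^{\mc{E}}$ and $\Pr_{k}\coloneqq\Pr_{k}^{\mc{E}}$.

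\emph{First part.} Assume $f$ is differentiable at $a$. For each $k$, the map $\theta_{k}:v\mapsto a+\mf{i}_{k}(v)$ is affine and continuous. It is defined on the open set $\theta_{k}^{-1}(A)$ of $E_{k}$, which contains $\ze$, and its differential is constantly $\mf{i}_{k}$. The chain rule (Thm.\ref{821}, together with Prp.\ref{813} for the differential of a continuous linear map) then shows that $f\circ\theta_{k}$ is differentiable at $\ze$ with differential $f^{\prime}(a)\circ\mf{i}_{k}$. This is exactly differentiability with respect to $E_{k}$, and it gives $(\mr{D}_{k}f)(a)=f^{\prime}(a)\circ\mf{i}_{k}$. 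Since $\sum_{i}\mf{i}_{i}\circ\Pr_{i}=\mr{Id}_{E}$ and $f^{\prime}(a)$ is linear, we get $f^{\prime}(a)=\sum_{i}f^{\prime}(a)\circ\mf{i}_{i}\circ\Pr_{i}=\sum_{i}(\mr{D}_{i}f)(a)\circ\Pr_{i}$. This proves \eqref{02281718} at $x=a$. The composition-operator form \eqref{02281719} is then a rewriting: $\mf{i}_{k}^{\dagger}$ and $\Pr_{i}^{\dagger}$ denote right composition with $\mf{i}_{k}$ and $\Pr_{i}$.

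\emph{Necessity in the $C^{1}$ part.} Assume $f$ is $C^{1}$. By the first part, $\mr{D}_{k}f=\mf{i}_{k}^{\dagger}\circ f^{\prime}$ pointwise. The map $\mf{i}_{k}^{\dagger}:\mc{L}_{b}(E,F)\to\mc{L}_{b}(E_{k},F)$ is linear and continuous: for $q$ a seminorm of $F$, $\overset{1}{q}(T\circ\mf{i}_{k})\le\overset{1}{q}(T)$, because $\mf{i}_{k}$ maps the unit ball into the unit ball for the $\max$-norm. Hence $\mr{D}_{k}f$ is continuous. (Alternatively use Lemma \ref{10221256} for $\mf{t}$.)

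\emph{Sufficiency in the $C^{1}$ part.} This is the main step. Assume each $\mr{D}_{i}f$ exists on $A$ and is continuous into $\mc{L}_{b}(E_{i},F)$. Define $L(x)\coloneqq\sum_{i}(\mr{D}_{i}f)(x)\circ\Pr_{i}$. By the continuity of $\Pr_{i}^{\dagger}$, shown as above, $L$ is continuous into $\mc{L}_{b}(E,F)$. So it suffices to prove that $f$ is differentiable at each $x$ with $f^{\prime}(x)=L(x)$.

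Fix $x$ and a ball $B_{r}(x)\subseteq A$, and let $h$ be small. Telescope
\begin{equation*}
f(x+h)-f(x)=\sum_{i=1}^{n}\bigl[f(x+s_{i})-f(x+s_{i-1})\bigr],\qquad s_{i}\coloneqq\textstyle\sum_{j\le i}\mf{i}_{j}(h_{j}).
\end{equation*}
Each segment between $x+s_{i-1}$ and $x+s_{i}$ moves only in the $E_{i}$-direction. Apply the mean value inequality for Hausdorff locally convex valued maps, seminorm by seminorm, via Hahn–Banach and the scalar mean value theorem, to the map $t\mapsto f(x+s_{i-1}+t\mf{i}_{i}(h_{i}))-t(\mr{D}_{i}f)(x)(h_{i})$ on $[0,1]$. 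This gives
\begin{equation*}
q\bigl(f(x+s_{i})-f(x+s_{i-1})-(\mr{D}_{i}f)(x)h_{i}\bigr)\le\|h_{i}\|\sup_{y\in B_{\|h\|}(x)}\overset{1}{q}\bigl((\mr{D}_{i}f)(y)-(\mr{D}_{i}f)(x)\bigr).
\end{equation*}
Summing over $i$ and using the continuity of $\mr{D}_{i}f$ at $x$ in $\mc{L}_{b}$, the quantity $q(f(x+h)-f(x)-L(x)h)/\|h\|$ tends to $0$. This yields differentiability with $f^{\prime}=L$, hence \eqref{02281718}, and continuity of $f^{\prime}$ follows from the continuity of $L$.

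The obstacle is the mean value inequality in a locally convex target. I would derive it from the scalar case: for $\gamma\in F^{\prime}$ with $|\gamma|\le q$, apply the scalar mean value theorem to $\gamma$ composed with the path above, then take the supremum over such $\gamma$, which recovers $q$ by Hahn–Banach. Differentiability of $t\mapsto\gamma\circ f(\dots)$ comes from differentiability with respect to $E_{i}$ combined with Thm.\ref{821}.
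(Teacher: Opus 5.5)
Your proof is correct. The paper gives no proof of this corollary: it lists it in the Appendix among standard facts (it is the locally convex version of Dieudonn\'e's partial-differential criterion). Your argument is the standard proof of that fact:
\begin{itemize}
\item for the first part, the chain rule through $v\mapsto a+\mf{i}_{k}(v)$;
\item for necessity, continuity of $T\mapsto T\circ\mf{i}_{k}$ and $T\mapsto T\circ\Pr_{i}$ on $\mc{L}_{b}$;
\item for sufficiency, the telescoping sum together with a mean value inequality reduced to the scalar case via Hahn--Banach.
\end{itemize}

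Two small points of bookkeeping.
\begin{itemize}
\item In the $\max$-norm the intermediate points $x+s_{i-1}+t\,\mf{i}_{i}(h_{i})$ can lie at distance exactly $\|h\|$ from $x$. Take the supremum over $\ov{B}_{\|h\|}(x)$, which lies in $A$ once $\|h\|<r$, rather than over the open ball.
\item When $F$ is complex, run Hahn--Banach with real-linear functionals on $F_{\R}$. This is consistent with the paper's convention that all differentials are taken in $F_{\R}$.
\end{itemize}
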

\begin{proposition}
\label{813}
Let $E$ be a normed space, $F$ be a Hausdorff locally convex space, and $u\in\mc{L}(E,F)$ thus $u$ is differentiable and 
$u^{\prime}(x)=u$ for every $x\in E$.
\end{proposition}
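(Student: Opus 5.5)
The statement is Prp.\ref{813}: a continuous linear map $u\in\mc{L}(E,F)$ from a normed space into a Hausdorff locally convex space is differentiable at every point, and its differential is $u$ itself. The plan is to verify the defining limit directly, using the definition of differentiability recalled in Notation \ref{12152204}. I will take the candidate differential at $x$ to be $u\in\mc{L}(E,F)$ itself; this is legitimate since it is $\K$-linear and continuous, hence also an element of $\mc{L}(E_{\R},F_{\R})$ by \eqref{12152119}.

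Fix $x\in E$ and any $r>0$, noting that $x+B_{r}(\ze)\subseteq E$ trivially. For $h\in\complement_{B_{r}(\ze)}\{\ze\}$, linearity of $u$ gives
\[
u(x+h)-u(x)-u(h)=\ze .
\]
Hence the difference quotient $\|h\|^{-1}\bigl(u(x+h)-u(x)-u(h)\bigr)$ is identically $\ze$ on the punctured ball. Its limit as $h\to\ze$ is therefore $\ze$ in $F$: every neighbourhood of $\ze$ contains the quotient. So $u$ is differentiable at $x$ with differential $u$.

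Uniqueness of the differential, which is part of the definition and relies on $F$ being Hausdorff, then yields $u^{\prime}(x)=u$ for every $x\in E$.

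There is essentially no obstacle here. The only point needing care is that when $\K=\C$ the differential is taken in the real sense, i.e.\ in $\mc{L}(E_{\R},F_{\R})$. This is handled by observing that $u$, viewed as $\updelta_{\star}^{\K}(u)$, is real-linear and continuous, and the identity above holds verbatim.
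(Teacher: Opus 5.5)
Your argument is correct: by linearity the remainder $u(x+h)-u(x)-u(h)$ vanishes identically, so the defining limit holds with candidate differential $u$ (regarded in $\mc{L}(E_{\R},F_{\R})$ when $\K=\C$), and uniqueness of the differential, which uses that $F$ is Hausdorff, gives $u^{\prime}(x)=u$. The paper lists this proposition in its Appendix as standard material and gives no proof, so there is no argument of the paper's to compare against; your direct check from the definition is the standard argument it relies on.
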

\begin{proposition}
\label{05231704}
Let $E$ be a normed space,$F$ be a Hausdorff locally convex space,  $A$ and $B$ be open sets of $E$ such that $B\subset A$ and 
$x_{0}\in B$. Thus $f:A\to F$ is differentiable at $x_{0}$ if and only if 
$f\circ\imath_{B}^{A}$ is differentiable at $x_{0}$ and in such cases 
the differential of $f$ at $x_{0}$ equals the differential of $f\circ\imath_{B}^{A}$ at $x_{0}$.
\end{proposition}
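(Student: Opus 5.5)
The statement to prove is Prp.\ref{05231704}: for $B\subset A$ open in $E$ and $x_{0}\in B$, the map $f$ is differentiable at $x_{0}$ iff $f\circ\imath_{B}^{A}$ is, and the two differentials coincide. The plan is to work directly with the definition of differentiability recalled in Notation \ref{12152204}, exploiting that this definition is purely local.

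First fix $r>0$ with $x_{0}+B_{r}^{E}(\ze)\subseteq B$. Such an $r$ exists because $B$ is open. Then also $x_{0}+B_{r}^{E}(\ze)\subseteq A$, so $r$ is admissible in the limit defining differentiability both for $f$ and for $g\coloneqq f\circ\imath_{B}^{A}$.

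For every $h\in B_{r}^{E}(\ze)$ we have $g(x_{0}+h)=f(x_{0}+h)$ and $g(x_{0})=f(x_{0})$. Hence for any $u\in\mc{L}(E,F)$ the two difference quotients
\begin{equation*}
\frac{f(x_{0}+h)-f(x_{0})-u(h)}{\|h\|},\qquad \frac{g(x_{0}+h)-g(x_{0})-u(h)}{\|h\|}
\end{equation*}
agree on $\complement_{B_{r}(\ze)}\{\ze\}$.

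The remaining step is to check that the limit along $h\to\ze$ does not depend on which admissible radius is used. If $f$ is differentiable with some radius $r'$, we restrict to $\min(r,r')$. The filter of the limit is the trace of $\mf{T}_{\ze}^{E}$ on the punctured ball, and traces on a smaller punctured ball generate the same convergence, because every neighbourhood of $\ze$ contains a smaller ball. Therefore the same $u$ makes one limit vanish iff it makes the other vanish. This gives both directions of the equivalence, and uniqueness of the differential then yields $f^{1}(x_{0})=g^{1}(x_{0})$.

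The only point needing care is this radius-independence of the limit. It is elementary, so I expect no real obstacle.
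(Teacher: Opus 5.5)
Your proof is correct. On a ball $x_{0}+B_{r}^{E}(\ze)\subseteq B$, the difference quotients of $f$ and $f\circ\imath_{B}^{A}$ coincide. The limit defining differentiability depends only on the trace of $\mf{T}_{\ze}^{E}$ on an arbitrarily small punctured ball, so the same $u\in\mc{L}(E,F)$ works for both maps, and uniqueness of the differential (which uses that $F$ is Hausdorff) gives the equality.

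The paper lists this proposition in the Appendix as standard material and gives no proof of its own. Your direct argument from the definition in Notation \ref{12152204} is the natural one.
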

\begin{theorem}
[Differential of the Composition of Differentiable Maps]
\label{821}
Let $E,F$ be normed spaces, $G$ be a Hausdorff locally convex space, $x_{0}\in E$, $A$ be an open neighborhood of $x_{0}$, 
$f$ be a continuous map of $A$ into $F$, $B$ be an open neighborhood of $f(x_{0})$ and $g$ be a continuous map of $B$ into $G$.
Let $D\coloneqq f^{-1}(B)$ , thus $x_{0}\in D$ and since $f$ is continuous we have that
$D$ is open relatively to $A$, hence open relatively to $E$ since $A$ is open. Let 
$h\coloneqq(g\circ\imath_{f(D)}^{B})\circ(f\circ\imath_{D}^{A})_{\natural}$,
Thus if $f$ is differentiable at $x_{0}$ and $g$ is differentiable
at $f(x_{0})$, then $h$ is differentiable at $x_{0}$ and we have 
\begin{equation*}
h^{\prime}(x_{0})=g^{\prime}(f(x_{0}))\circ f^{\prime}(x_{0}).
\end{equation*}
\end{theorem}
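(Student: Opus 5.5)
The plan is to reduce the statement to the classical chain rule for Fréchet maps between normed spaces, using the fact that continuous linear functionals on $G$ separate points.

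First I fix notation. Let $x_0\in D$, set $y_0=f(x_0)$, and choose $r>0$ with $x_0+B_r(\ze)\subseteq D$; this is possible because $D$ is open in $E$. For $t$ with $0<\|t\|<r$ write
\begin{equation*}
f(x_0+t)=y_0+f^{\prime}(x_0)t+\|t\|\,\varepsilon_1(t),\qquad \varepsilon_1(t)\to\ze \text{ in } F,
\end{equation*}
and for $s$ small
\begin{equation*}
g(y_0+s)=g(y_0)+g^{\prime}(y_0)s+\|s\|\,\varepsilon_2(s),\qquad \varepsilon_2(s)\to\ze \text{ in } G,
\end{equation*}
with $\varepsilon_2(\ze)\coloneqq\ze$. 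Now put $s(t)\coloneqq f(x_0+t)-y_0$. By continuity of $f$, $s(t)\to\ze$, and moreover $\|s(t)\|\le(\|f^{\prime}(x_0)\|+\|\varepsilon_1(t)\|)\|t\|$. Substituting gives
\begin{equation*}
h(x_0+t)-h(x_0)-g^{\prime}(y_0)f^{\prime}(x_0)t=\|t\|\,g^{\prime}(y_0)\varepsilon_1(t)+\|s(t)\|\,\varepsilon_2(s(t)).
\end{equation*}

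The main step is to check that, after dividing by $\|t\|$, the right-hand side tends to $\ze$ in $G$. I will do this seminorm by seminorm. Fix a continuous seminorm $q$ on $G$. Since $g^{\prime}(y_0)\in\mc{L}(F,G)$ and $F$ is normed, there is $c_q$ with $q(g^{\prime}(y_0)v)\le c_q\|v\|$, so the first term is bounded by $c_q\|\varepsilon_1(t)\|\to0$. The second term is bounded by $(\|f^{\prime}(x_0)\|+\|\varepsilon_1(t)\|)\,q(\varepsilon_2(s(t)))$. The prefactor stays bounded, and $q(\varepsilon_2(s(t)))\to0$ because $\varepsilon_2$ is continuous at $\ze$ (by construction) and $s(t)\to\ze$. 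Hence $h$ is differentiable at $x_0$ with derivative $g^{\prime}(y_0)\circ f^{\prime}(x_0)$, and this operator belongs to $\mc{L}(E,G)$ as a composition of continuous linear maps.

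The only subtlety I expect concerns the domains. Because $f(D)\subseteq B$, the substitution $y_0+s(t)\in B$ is legitimate, and shrinking $r$ if necessary keeps $s(t)$ inside the neighbourhood on which the expansion of $g$ holds. Prp.\ref{05231704} justifies working on the open set $D$ instead of $A$.
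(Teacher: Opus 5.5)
Your argument is correct. The paper gives no proof of Theorem~\ref{821}. It places the result in the Appendix among material described as ``more or less standard'': it is the classical chain rule (Dieudonn\'e's (8.2.1), whose wording it reproduces), with the target $G$ now only a Hausdorff locally convex space. So there is no paper proof to compare against, and you are supplying the omitted argument.

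What you wrote is the classical remainder argument with the two adjustments this generality needs. First, the estimate is done for each continuous seminorm $q$ of $G$, using that $q\circ g^{\prime}(y_0)$ is a continuous seminorm on the normed space $F$ and so is bounded by $c_q\|\cdot\|_F$. Second, the convention $\varepsilon_2(\ze)=\ze$ makes $\varepsilon_2$ continuous at $\ze$, which handles those $t$ with $s(t)=\ze$. No shrinking of $r$ is actually needed, since $\varepsilon_2$ is defined on all of $B-y_0$.

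One correction to the write-up: remove the opening sentence. It announces a reduction to the classical chain rule through the fact that $G^{\prime}$ separates points, but your proof never uses this. Implemented, that reduction would not work: applying the scalar chain rule to $\gamma\circ g$ for each $\gamma\in G^{\prime}$ only shows that $\|t\|^{-1}\bigl(h(x_0+t)-h(x_0)-g^{\prime}(y_0)f^{\prime}(x_0)t\bigr)$ tends to $\ze$ weakly. Weak convergence does not imply convergence in the topology of $G$. It is the seminorm-by-seminorm estimate you actually carry out that proves differentiability in $G$.
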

\begin{proposition}
\label{812}
Let $E$ be a normed space, $F$ be a Hausdorff locally convex space and $f$ be a map defined on an open set of $E$.
If $f$ is constant thus it is differentiable and its differential is the constant map equal to $\ze$.
\end{proposition}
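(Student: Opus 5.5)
The statement to prove is Prp.\ref{812}: a constant map $f$ on an open set $A$ of the normed space $E$, with values in the Hausdorff locally convex space $F$, is differentiable and $f^{(1)}$ is the constant map $\ze$. I would verify the definition of differentiability recalled in Notation \ref{12152204} directly, using $u=\ze\in\mc{L}(E,F)$ as the candidate differential at every point.

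Fix $x\in A$ and $r>0$ with $x+B_{r}(\ze)\subseteq A$; such an $r$ exists because $A$ is open. For $h\in B_{r}(\ze)$ with $h\neq\ze$, constancy of $f$ gives $f(x+h)=f(x)$, so
\begin{equation*}
\frac{f(x+h)-f(x)-\ze(h)}{\|h\|}=\ze .
\end{equation*}
The difference quotient is therefore identically $\ze$ on the punctured ball, and its limit as $h\to\ze$ is $\ze$. Since $F$ is Hausdorff, this limit is unique. The map $\ze$ is linear and continuous, so it lies in $\mc{L}(E,F)$. By uniqueness of the differential, $f^{1}(x)=\ze$.

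As $x\in A$ was arbitrary, $f$ is differentiable on $A$ and $f^{(1)}:A\to\mc{L}_{b}(E,F)$ is the constant map $\ze$. This constant map is trivially continuous, so $f\in\mc{C}^{1}(A,F)$; applying the same argument to $f^{(1)}$ repeatedly shows $f$ is $\mc{C}^{\infty}$, although the statement does not require this.

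The only point needing care is uniqueness of the differential in a locally convex codomain. This follows from the Hausdorff property: if $u$ and $v$ both satisfy the limit condition, then $(u-v)(th)/t\to\ze$ as $t\to0^{+}$ for each fixed $h$, so $(u-v)(h)=\ze$ by linearity. The paper already asserts this uniqueness in the definition, so no real obstacle arises.
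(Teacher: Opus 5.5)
Your argument is correct. The zero operator satisfies the defining limit of Notation \ref{12152204}, since the difference quotient vanishes identically on the punctured ball, and uniqueness of the differential, which you justify correctly via the Hausdorff property, then gives $f^{(1)}=\ze$. The paper gives no proof of Prp.\ref{812}; it lists it in the Appendix as standard material, and your direct check from the definition is the standard argument behind it.
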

\begin{proposition}
[Differential of a $\boxtimes$]
\label{815}
Let $E$ be a normed space, $A$ be an open set of $E$, $x_{0}\in A$, $n\in\Z_{>0}$, 
$\mc{F}=\{F_{i}\}_{i=1}^{n}$ be a family of Hausdorff locally convex spaces, $F=\prod_{i=1}^{n}F_{i}$ be endowed by the product 
topology and $f:A\to F$. In order that $f$ is differentiable at $x_{0}\in A$ is necessary and sufficient that for every 
$i\in[1,n]\cap\Z$ the map $f_{i}\coloneqq\Pr_{i}^{\mc{F}}\circ f$ is differentiable at $x_{0}$.
If $f$ is differentiable at $x_{0}$, then $\Pr_{i}^{\mc{F}}\circ(f^{\prime}(x_{0}))=f_{i}^{\prime}(x_{0})$ for every $i\in[1,n]\cap\Z$. 
\end{proposition}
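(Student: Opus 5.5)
Proposition \ref{815} is the standard fact that differentiability into a finite product of Hausdorff locally convex spaces is equivalent to componentwise differentiability, with the components of the differential being the differentials of the components. I would argue directly from the definition of differentiability recalled in Notation \ref{12152204}, using that the product topology on $F=\prod_{i=1}^{n}F_{i}$ is the locally convex topology generated by the seminorms $p\circ\Pr_{i}^{\mc{F}}$ with $p\in\Gamma_{F_{i}}$ and $i\in[1,n]\cap\Z$. Equivalently, a net in $F$ converges to $\ze$ if and only if each of its projections converges to $\ze$.

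For necessity, assume $f$ is differentiable at $x_{0}$ with differential $u=f^{\prime}(x_{0})\in\mc{L}(E,F)$. Since $\Pr_{i}^{\mc{F}}\in\mc{L}(F,F_{i})$, Proposition \ref{813} shows that $\Pr_{i}^{\mc{F}}$ is differentiable everywhere with differential equal to itself. Theorem \ref{821}, applied with $g=\Pr_{i}^{\mc{F}}$, then gives that $f_{i}=\Pr_{i}^{\mc{F}}\circ f$ is differentiable at $x_{0}$ with $f_{i}^{\prime}(x_{0})=\Pr_{i}^{\mc{F}}\circ f^{\prime}(x_{0})$; this is exactly the formula in the statement. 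Theorem \ref{821} is stated for normed $F$ in the middle slot, but the proof only uses the codomain of the outer map, so I would instead check directly that
\begin{equation*}
\Pr_{i}^{\mc{F}}\bigl(f(x_{0}+h)-f(x_{0})-u(h)\bigr)\|h\|^{-1}
\end{equation*}
tends to $\ze$, by continuity and linearity of $\Pr_{i}^{\mc{F}}$.

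For sufficiency, assume each $f_{i}$ is differentiable at $x_{0}$ with differential $u_{i}\in\mc{L}(E,F_{i})$, and define $u\coloneqq\boxtimes_{i}u_{i}:E\to F$. This map is linear, and it is continuous since each component is continuous (\cite[I.25 Prp.1]{BourGT}), so $u\in\mc{L}(E,F)$. The difference quotient $\bigl(f(x_{0}+h)-f(x_{0})-u(h)\bigr)\|h\|^{-1}$ has $i$-th component equal to the corresponding quotient for $f_{i}$, which tends to $\ze$. Convergence in the product topology is componentwise, so the whole quotient tends to $\ze$, and $f$ is differentiable at $x_{0}$ with $f^{\prime}(x_{0})=u$. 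Uniqueness of the differential then gives $\Pr_{i}^{\mc{F}}\circ f^{\prime}(x_{0})=u_{i}=f_{i}^{\prime}(x_{0})$.

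There is no real obstacle. The only point needing care is that the limits are taken over $h\to\ze$ with $h\neq\ze$ in a fixed ball contained in $A-x_{0}$. That domain is the same for $f$ and for all the $f_{i}$, so the componentwise limits combine without any issue.
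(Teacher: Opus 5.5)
Your argument is correct. The paper gives no proof of this proposition; it lists it in the Appendix among the standard facts of the bounded calculus. Your direct verification from the definition is exactly the standard proof: for necessity you use linearity and continuity of $\Pr_{i}^{\mc{F}}$, and for sufficiency you use componentwise convergence in the product topology, continuity of $\boxtimes_{i}u_{i}$, and uniqueness of the differential (the finite product being Hausdorff).

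One correction to your aside. Theorem \ref{821} genuinely cannot be invoked here, because it requires the intermediate space to be normed and the inner map to be continuous on $A$, and neither holds for $f$. Your direct check works for a different reason than the one you give (that the chain rule ``only uses the codomain of the outer map''). It works because the outer map $\Pr_{i}^{\mc{F}}$ is linear, so it commutes with the difference quotient and adds no remainder term of its own.
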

\begin{proposition}
[Differentiability implies directional derivability]
\label{08091051}
Let $E$ be a Hausdorff locally convex space, $X$ a normed space, $A$ be an open set of $X$, $x\in A$ and $f:A\to E$. 
If $f$ is differentiable, then for every $v\in X$ such that $\|v\|=1$ we have that $f$ is directionally derivable at $x$ in the 
direction $v$ and 
\begin{equation*}
D_{v}(f)(x)=f^{(1)}(x)(v).
\end{equation*}
\end{proposition}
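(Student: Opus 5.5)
The plan is to reduce the statement to scalar differentiability along a line and then use the definition of the differential at $x$. Fix $x\in A$ and $v\in X$ with $\|v\|=1$. Since $A$ is open, choose $r>0$ such that $x+B_{r}^{X}(\ze)\subseteq A$. Consider the curve $c:\,]-r,r[\,\to A$, $t\mapsto x+tv$; directional derivability of $f$ at $x$ in the direction $v$ means that the limit
\begin{equation*}
\lim_{t\to0,\,t\neq0}\frac{f(x+tv)-f(x)}{t}
\end{equation*}
exists in $E$, and $D_{v}(f)(x)$ denotes that limit. So it suffices to show that this limit equals $f^{(1)}(x)(v)$.

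By hypothesis $f$ is differentiable at $x$ with $f^{1}(x)\in\mc{L}(X,E)$. The defining limit therefore gives, for every continuous seminorm $q$ on $E$ and every $\epsilon>0$, a $\delta\in\,]0,r[$ such that
\begin{equation*}
q\bigl(f(x+h)-f(x)-f^{1}(x)(h)\bigr)\leq\epsilon\|h\|
\end{equation*}
whenever $0<\|h\|<\delta$. Specialise this to $h=tv$ with $0<|t|<\delta$, so that $\|h\|=|t|$. By linearity of $f^{1}(x)$ we have $f^{1}(x)(tv)=t\,f^{1}(x)(v)$, and dividing by $|t|$ yields
\begin{equation*}
q\Bigl(\frac{f(x+tv)-f(x)}{t}-f^{1}(x)(v)\Bigr)\leq\epsilon.
\end{equation*}
Because the seminorms generate the topology of $E$, this shows that the difference quotient converges to $f^{1}(x)(v)$. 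Since $E$ is Hausdorff, the limit is unique, so $D_{v}(f)(x)$ exists and equals $f^{(1)}(x)(v)$.

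Alternatively, one can apply Thm.\ref{821} to $f\circ c$ together with Prp.\ref{813} (the map $c$ is affine, with differential $t\mapsto tv$). This gives $(f\circ c)^{(1)}(0)(1)=f^{1}(x)(v)$, and the derivative of a map defined on an interval of $\R$ coincides with the limit of its difference quotient.

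The only point needing care is the convention behind $D_{v}$. If directional derivatives are defined only through one-sided limits, or only for unit vectors $v$, the argument is unchanged: the case $t\to0^{+}$ is contained in the two-sided estimate above. Since every step is an immediate consequence of the definition of the differential, no real obstacle is expected.
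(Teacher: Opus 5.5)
Your proof is correct. Taking $h=tv$ with $0<|t|<\delta<r$ in the defining limit of $f^{1}(x)$ keeps $x+tv\in A$ and gives $\|h\|=|t|$, and the homogeneity $q(\lambda y)=|\lambda|\,q(y)$ then shows that the difference quotient converges to $f^{1}(x)(v)$ in every continuous seminorm; Hausdorffness gives uniqueness of the limit. The paper gives no proof of this proposition, since it collects it in the Appendix as standard background, and your direct argument is the standard one.

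Two small remarks. Your argument uses only differentiability at $x$, and the hypothesis $\|v\|=1$ is inessential: for $v\neq\ze$ you just get $\|h\|=|t|\,\|v\|$. Your alternative route through Thm.\ref{821} also works, because Thm.\ref{821} requires $f$ to be continuous, and differentiability of $f$ supplies that.
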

\begin{corollary}
[Higher differentiation commutes with projection]
\label{8126}
Let $m,p\in\Z_{>0}$, $\mc{F}=\{F_{k}\}_{k=1}^{m}$ be a family of Hausdorff locally convex spaces, $F=\prod_{k=1}^{m}F_{k}$
be endowed by the product topology, let $A$ be an open set of $E$ and $f:A\to F$. In order that $f$ be $p$ times differentiable, 
it is necessary and sufficient that for every $k\in[1,m]\cap\Z$ the map $\Pr_{k}^{\mc{F}}\circ f$ be $p$ times differentiable.
\end{corollary}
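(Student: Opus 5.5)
The statement to be proved is Cor.\ref{8126}: a map $f:A\to F=\prod_{k=1}^{m}F_{k}$ is $p$ times differentiable if and only if each $\Pr_{k}^{\mc{F}}\circ f$ is. The plan is an induction on $p$, using Prp.\ref{815} as the base case together with an identification of the spaces in which the higher differentials live.

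\textbf{Base case $p=1$.} This is exactly Prp.\ref{815}, applied at every point of $A$. It also gives the formula
\begin{equation*}
f^{(1)}=\Theta\circ\bigl((\Pr_{1}^{\mc{F}}\circ f)^{(1)}\boxtimes\dots\boxtimes(\Pr_{m}^{\mc{F}}\circ f)^{(1)}\bigr),
\end{equation*}
where $\Theta:\prod_{k}\mc{L}_{b}(E,F_{k})\to\mc{L}_{b}(E,F)$ sends $(g_{k})_{k}$ to the map $h\mapsto(g_{k}(h))_{k}$.

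\textbf{Key lemma.} The map $\Theta$ is an isomorphism of topological linear spaces, with inverse $g\mapsto(\Pr_{k}^{\mc{F}}\circ g)_{k}$. To check this, take a fundamental set of seminorms of $F$ consisting of the maxima of finitely many seminorms of the factors. For a bounded set $B\subseteq E$ and such a seminorm $q=\max_{k}q_{k}\circ\Pr_{k}^{\mc{F}}$,
\begin{equation*}
q_{B}(\Theta(g))=\max_{k}(q_{k})_{B}(g_{k}).
\end{equation*}
This identity shows that $\Theta$ and $\Theta^{-1}$ are continuous. Linearity is clear, and continuous linear maps are smooth by Prp.\ref{813}.

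\textbf{Inductive step.} Assume the claim for $p$, for every normed space $E$ and every finite family of Hausdorff locally convex spaces. Suppose first that $f$ is $p+1$ times differentiable, i.e.\ $f$ is differentiable and $f^{(1)}$ is $p$ times differentiable. Then $\Theta^{-1}\circ f^{(1)}=\bigl((\Pr_{k}^{\mc{F}}\circ f)^{(1)}\bigr)_{k}$ is $p$ times differentiable, using Thm.\ref{821} and Prp.\ref{813} for the composition with the continuous linear map $\Theta^{-1}$. The inductive hypothesis, applied to the family $\{\mc{L}_{b}(E,F_{k})\}_{k}$, then says each $(\Pr_{k}^{\mc{F}}\circ f)^{(1)}$ is $p$ times differentiable, so each component is $p+1$ times differentiable.

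Conversely, if each component is $p+1$ times differentiable, then by the base case $f$ is differentiable and $f^{(1)}=\Theta\circ(\dots)$. The inner map is a $\boxtimes$ of $p$ times differentiable maps, so it is $p$ times differentiable by the inductive hypothesis. Composing with $\Theta$ preserves this, again by Thm.\ref{821} and Prp.\ref{813}, applied inductively.

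\textbf{Main obstacle.} The delicate point is the topological identification $\Theta$: the bounded topology must commute with finite products. If one wants continuity of the derivatives as well (the $\mc{C}^{p}$ version), the same identification transports continuity, since $\boxtimes$ preserves continuity into a product.
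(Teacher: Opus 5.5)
The paper states this corollary in the appendix without proof, among the standard results it collects, evidently as a consequence of Prp.~\ref{815}. Your argument is the expected one and it is correct: base case Prp.~\ref{815}, the topological isomorphism $\Theta:\prod_{k}\mc{L}_{b}(E,F_{k})\to\mc{L}_{b}(E,F)$ established through the seminorm identity $q_{B}(\Theta(g))=\max_{k}(q_{k})_{B}(g_{k})$, and induction on $p$ carried out over arbitrary finite families of Hausdorff locally convex spaces.

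One citation needs adjusting. Thm.~\ref{821} and Prp.~\ref{813} are stated for maps whose source (and, in the chain rule, intermediate) space is normed. Your $\Theta^{\pm1}$, however, act on the non-normed spaces $\mc{L}_{b}(E,F)$ and $\prod_{k}\mc{L}_{b}(E,F_{k})$, where those results do not literally apply, and "smooth" is not even defined there in the paper's sense. What you actually use is the following fact. For $T\in\mc{L}(G,H)$ between Hausdorff locally convex spaces and $g:A\to G$ differentiable, $T\circ g$ is differentiable with $(T\circ g)^{(1)}=T_{\star}\circ g^{(1)}$, where $T_{\star}:S\mapsto T\circ S$. The first part is immediate from the definition, because $T$ is linear and continuous. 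Moreover $T_{\star}$ is continuous linear from $\mc{L}_{b}(E,G)$ to $\mc{L}_{b}(E,H)$: if $r\circ T\le c\,q$, then $r_{B}(T\circ S)\le c\,q_{B}(S)$. Applying the same fact to $T_{\star}$ and $g^{(1)}$ gives, by induction, that post-composition with $T$ preserves $p$-fold differentiability.

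With that lemma in place, the "only if" direction does not need the inductive hypothesis at all: apply it to $T=\Pr_{k}^{\mc{F}}$.
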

\begin{theorem}
[Continuous higher differential of a composition]
\label{81210}
Let $E,F$ be normed spaces, $A$ be an open set of $E$, $p\in\Z_{>0}$, $f:A\to F$, $B$ an open set of $F$ such that 
$f(A)\subseteq B$, $G$ be a Hausdorff locally convex space, $g:B\to G$ and let $h=g\circ\imath_{f(A)}^{B}\circ f_{\natural}$. 
If $f\in\mc{C}^{p}(A,F)$ and $g\in\mc{C}^{p}(B,G)$, then $h\in\mc{C}^{p}(A,G)$.
\end{theorem}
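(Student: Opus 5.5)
The plan is induction on $p$, using only the first-order chain rule Thm.\ref{821} together with the bilinear-continuity facts of Lemma \ref{10221256}.

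\textbf{Base case $p=1$.} Theorem \ref{821} gives the differentiability of $h$ at every $x\in A$, with
\begin{equation*}
h^{(1)}(x)=g^{(1)}(f(x))\circ f^{(1)}(x),
\end{equation*}
namely
\begin{equation*}
h^{(1)}=\mf{t}_{E,F,G}\circ\bigl((g^{(1)}\circ\imath_{f(A)}^{B}\circ f_{\natural})\boxtimes f^{(1)}\bigr).
\end{equation*}
The map $g^{(1)}\circ f$ is continuous, being a composition of continuous maps. The map $f^{(1)}:A\to\mc{L}_{b}(E,F)$ is continuous by hypothesis. Since $\mf{t}_{E,F,G}$ is continuous by Lemma \ref{10221256}, $h^{(1)}$ is continuous, so $h\in\mc{C}^{1}(A,G)$.

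\textbf{Inductive step.} Assume the statement for $p-1$, for all spaces, and let $f,g$ be $C^{p}$. We need $h^{(1)}\in\mc{C}^{p-1}(A,\mc{L}_{b}(E,G))$. By the formula above this amounts to showing that $\mf{t}\circ(a\boxtimes b)$ is $C^{p-1}$, where
\begin{itemize}
\item $a=g^{(1)}\circ f$ is $C^{p-1}$ by the induction hypothesis applied to $f\in\mc{C}^{p-1}$ and $g^{(1)}\in\mc{C}^{p-1}(B,\mc{L}_{b}(F,G))$, the target being Hausdorff locally convex;
\item $b=f^{(1)}$ is $C^{p-1}$.
\end{itemize}
The remaining ingredient is that a continuous bilinear map applied to $C^{q}$ maps gives a $C^{q}$ map; this is Prp.\ref{01101646}, the Leibniz rule already used throughout the paper. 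If I could not cite it, I would prove it by a secondary induction on $q$. The derivative
\begin{equation*}
(\mf{t}\circ(a\boxtimes b))^{(1)}(x)(k)=\mf{t}(a^{(1)}(x)k,b(x))+\mf{t}(a(x),b^{(1)}(x)k)
\end{equation*}
is again expressed through continuous bilinear maps composed with $C^{q-1}$ data.

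\textbf{Main obstacle.} The one point needing care is the codomain of $a\boxtimes b$. The product $\mc{L}_{b}(F,G)\times\mc{L}_{b}(E,F)$ contains the non-normed factor $\mc{L}_{b}(F,G)$, so the induction hypothesis cannot be applied to $\mf{t}$ as the outer map, since that theorem requires a normed intermediate space. I would therefore avoid composing with $\mf{t}$ as a $C^{p}$ map and argue via the Leibniz rule directly. Its proof uses only the domain being normed together with the joint continuity of $\mf{t}$ and of the evaluations, and these are supplied by Lemma \ref{10221256} and Rmk.\ref{10231117}.
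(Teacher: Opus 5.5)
The paper states this theorem in its Appendix without proof, so I am assessing your argument on its own. Your induction scheme is the standard one: the chain rule gives $h^{(1)}=\mf{t}_{E,F,G}\circ\bigl((g^{(1)}\circ f)\boxtimes f^{(1)}\bigr)$, and you apply the induction hypothesis to $g^{(1)}\circ f$, whose target $\mc{L}_{b}(F,G)$ is Hausdorff locally convex. The base case and that application are correct.

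The gap is the last step, $\mf{t}\circ(a\boxtimes b)\in\mc{C}^{p-1}$. Prp.\ref{01101646}, and likewise the first-order rule Prp.\ref{03241344}, are stated only for normed factor spaces $F_{1},F_{2}$. Here $F_{1}=\mc{L}_{b}(F,G)$ is not normable. Moreover, in that normed form Prp.\ref{01101646} follows directly from the theorem you are proving: a continuous bilinear map on a normed space is $C^{\infty}$, then compose. So citing it is at best inapplicable and at worst circular.

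You correctly diagnose the non-normed factor, but your fallback is not supported by the results you name. You claim the Leibniz argument needs only the normed domain plus Lemma \ref{10221256} and Rmk.\ref{10231117}. Differentiating $\mf{t}\circ(a\boxtimes b)$ produces the term $x\mapsto\bigl(k\mapsto a^{(1)}(x)(k)\circ b(x)\bigr)$. This is the bilinear map $(S,w)\mapsto(k\mapsto S(k)\circ w)$ on $\mc{L}_{b}(E,\mc{L}_{b}(F,G))\times\mc{L}_{b}(E,F)$. Lemma \ref{10221256} does not give its continuity, because that lemma requires the middle space to be normed. Each further differentiation creates new non-normable factors of the form $\mc{L}_{b}(E,\cdot)$. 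Continuity of single evaluations $\mr{ev}_{h}$ plays no role.

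What you need is the Leibniz lemma for arbitrary Hausdorff locally convex factors. Let $E$ be normed, $A\subseteq E$ open, $P,Q,R$ Hausdorff locally convex, $\beta\in\mc{L}(P,Q;R)$, $a\in\mc{C}^{q}(A,P)$ and $b\in\mc{C}^{q}(A,Q)$. Then $\beta\circ(a\boxtimes b)\in\mc{C}^{q}(A,R)$. Prove it by induction on $q$, simultaneously for all $(P,Q,R,\beta)$. Two facts drive it.

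\emph{First-order rule.} Joint continuity gives, for each continuous seminorm $r$ of $R$, continuous seminorms $p_{r},q_{r}$ with $r(\beta(u,v))\leq p_{r}(u)\,q_{r}(v)$. Combined with $\overset{1}{p_{r}}(a^{(1)}(x))<\infty$, this yields differentiability with $(\beta\circ(a\boxtimes b))^{(1)}=\beta_{1}\circ(a^{(1)}\boxtimes b)+\beta_{2}\circ(a\boxtimes b^{(1)})$. Here $\beta_{1}(S,v)\coloneqq\beta(S(\cdot),v)$ and $\beta_{2}(u,T)\coloneqq\beta(u,T(\cdot))$.

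\emph{Lifted maps.} The maps $\beta_{1}:\mc{L}_{b}(E,P)\times Q\to\mc{L}_{b}(E,R)$ and $\beta_{2}:P\times\mc{L}_{b}(E,Q)\to\mc{L}_{b}(E,R)$ are continuous bilinear. Indeed $\overset{1}{r}(\beta_{1}(S,v))\leq\overset{1}{p_{r}}(S)\,q_{r}(v)$ and $\overset{1}{r}(\beta_{2}(u,T))\leq p_{r}(u)\,\overset{1}{q_{r}}(T)$. This is exactly where the normed domain is used.

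The induction hypothesis applied to $\beta_{1}$ and $\beta_{2}$ closes the lemma. With $\beta=\mf{t}_{E,F,G}$, your inductive step then goes through.
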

\begin{corollary}
[Necessary and sufficient conditions for $f$ to be $N$ times continuously differentiable]
\label{04221226}
Let $F$ be a Hausdorff locally convex space, $n\in\Z_{>0}$, $N\in\Z_{>0}$, $\mc{E}=\{E_{i}\}_{i=1}^{n}$ be a sequence of 
normed spaces, let $E=\prod_{i=1}^{n}E_{i}$, $A$ be an open set of $E$ and $f:A\to F$. In order that $f$ be $N$ times continuously 
differentiable it is necessary and sufficient that for every $r\in[1,n]\cap\Z$ the map $\mr{D}_{r}f:A\to\mc{L}(E_{r},F)$ 
be $N-1$ times continuously differentiable. 
\end{corollary}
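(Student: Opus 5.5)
The plan is to reduce to first-order information, using the characterization of continuous differentiability in Cor.\ref{02211942} by induction on $N$. Throughout, $E=\prod_{i=1}^{n}E_{i}$ carries the max-norm.

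\emph{Base case $N=1$.} By Cor.\ref{02211942}, $f$ is continuously differentiable on $A$ if and only if each partial differential $\mr{D}_{r}f$ exists at every point and $\mr{D}_{r}f\in\mc{C}(A,\mc{L}_{b}(E_{r},F))$. This is exactly the statement that each $\mr{D}_{r}f$ is $0$ times continuously differentiable.

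\emph{Structural identities.} Both directions of the inductive step use the identities \eqref{02281719}, namely $f^{(1)}=\sum_{i}\Pr_{i}^{\dagger}\circ\mr{D}_{i}f$ and $\mr{D}_{k}f=\mf{i}_{k}^{\dagger}\circ f^{(1)}$. The needed fact is that the maps
\begin{equation*}
\Pr_{i}^{\dagger}:\mc{L}_{b}(E_{i},F)\to\mc{L}_{b}(E,F),\,S\mapsto S\circ\Pr_{i},
\qquad
\mf{i}_{k}^{\dagger}:\mc{L}_{b}(E,F)\to\mc{L}_{b}(E_{k},F),\,T\mapsto T\circ\mf{i}_{k}
\end{equation*}
are continuous and linear. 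This follows from Lemma \ref{10221256}: $\mf{t}_{E,E_{i},F}$ is continuous bilinear, and fixing its second argument at $\Pr_{i}$ (respectively $\mf{i}_{k}$) gives a continuous linear map. Alternatively, directly, $\overset{1}{p}(S\circ\Pr_{i})\le\overset{1}{p}(S)$ since $\Pr_{i}$ maps the unit ball into the unit ball, and likewise for $\mf{i}_{k}$.

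\emph{Inductive step, necessity.} Suppose $f$ is $N$ times continuously differentiable with $N\ge 2$. Then $f^{(1)}\in\mc{C}^{N-1}(A,\mc{L}_{b}(E,F))$. Composing with the continuous linear map $\mf{i}_{r}^{\dagger}$, which is smooth with constant differential by Prp.\ref{813}, and applying Thm.\ref{81210}, gives $\mr{D}_{r}f=\mf{i}_{r}^{\dagger}\circ f^{(1)}\in\mc{C}^{N-1}$. Here I would use the elementary fact that composing a $C^{q}$ map on the left with a continuous linear map preserves $C^{q}$, which follows inductively from Thm.\ref{821} and Prp.\ref{813}. This is the version of Thm.\ref{81210} needed, since the target is only locally convex.

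\emph{Inductive step, sufficiency.} Suppose each $\mr{D}_{r}f$ is $N-1$ times continuously differentiable. In particular each $\mr{D}_{r}f$ is continuous, so by Cor.\ref{02211942} $f$ is $C^{1}$ and $f^{(1)}=\sum_{i}\Pr_{i}^{\dagger}\circ\mr{D}_{i}f$. Each summand is the composition of a $C^{N-1}$ map with a continuous linear map, hence $C^{N-1}$. A finite sum of $C^{N-1}$ maps is $C^{N-1}$, since the differential is linear and sums commute with differentials, by induction on the order. Therefore $f^{(1)}\in\mc{C}^{N-1}(A,\mc{L}_{b}(E,F))$, i.e.\ $f$ is $N$ times continuously differentiable. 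The case $N=\infty$ follows by applying this for every finite $N$.

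\emph{Main obstacle.} The only delicate point is the continuity of $\Pr_{i}^{\dagger}$ and $\mf{i}_{k}^{\dagger}$ in the topology of bounded convergence when $F$ is merely locally convex. The rest is bookkeeping with the definition of higher differentials as iterated differentials valued in $\mc{L}_{b}$-spaces.
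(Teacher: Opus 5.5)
The paper states this appendix corollary without proof, as standard material, so there is no proof to compare against. Your argument is the standard one: reduce to Cor.~\ref{02211942} through the identities \eqref{02281719}, using that $S\mapsto S\circ\Pr_{i}$ and $T\mapsto T\circ\mf{i}_{k}$ are continuous linear for the $\mc{L}_{b}$-topologies, and your seminorm estimate for this is correct. The outline is sound, and the ``induction'' is not actually needed, since the same direct argument works for every $N\geq 1$. However, two steps rely on results whose hypotheses you do not meet.

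First, Cor.~\ref{02211942} is stated for a \emph{continuous} $f$, but the corollary you are proving does not assume continuity. For necessity this is harmless, because differentiability implies continuity. For sufficiency you must either adopt the (Dieudonn\'e) convention that $f$ is continuous, or show directly that existence and continuity of the $\mr{D}_{r}f$ give differentiability. To do the latter:
\begin{itemize}
\item On a max-norm ball $B_{\rho}^{E}(x)\subseteq A$, set $y_{i}\coloneqq x+\sum_{j\leq i}\mf{i}_{j}(h_{j})$ and telescope $f(x+h)-f(x)-\sum_{i}\mr{D}_{i}f(x)(h_{i})$ along the coordinate segments.
\item Apply to each $\gamma_{i}(t)\coloneqq f(y_{i-1}+t\,\mf{i}_{i}(h_{i}))-t\,\mr{D}_{i}f(x)(h_{i})$ the mean value inequality $p(\gamma_{i}(1)-\gamma_{i}(0))\leq\sup_{t\in[0,1]}p(\gamma_{i}^{\prime}(t))$, which holds for every continuous seminorm $p$ of $F$ by Hahn--Banach.
\item The resulting bound $\|h\|\sum_{i}\sup_{z\in\ov{B}_{\|h\|}^{E}(x)}\overset{1}{p}\bigl(\mr{D}_{i}f(z)-\mr{D}_{i}f(x)\bigr)$ is $o(\|h\|)$ by continuity of each $\mr{D}_{i}f$.
\end{itemize}

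Second, Thm.~\ref{821} and Thm.~\ref{81210} require the intermediate space to be normed. They therefore do not give your left-composition lemma when $g$ takes values in $\mc{L}_{b}(E,F)$. Prove it from the definition instead. For $u\in\mc{L}(G,H)$ between Hausdorff locally convex spaces, one has $(u\circ g)^{(1)}=u_{\ast}\circ g^{(1)}$, where $u_{\ast}:\mc{L}_{b}(E,G)\to\mc{L}_{b}(E,H)$, $T\mapsto u\circ T$. This map is continuous linear, because choosing $p$ with $q\circ u\leq p$ gives $\overset{1}{q}(u\circ T)\leq\overset{1}{p}(T)$. Then induct on the order of differentiability, with the claim quantified over all such $u$.

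With these two repairs the proof is complete.
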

\begin{proposition}
[Partial differential and directional derivative]
\label{08091052}
Let $E$ be a Hausdorff locally convex space, $n\in\Z_{>0}$, $X_{i}$ be a normed space for any $i\in[1,n]\cap\Z$, 
$X=\prod_{i=1}^{n}X_{i}$, $A$ be an open set of $X$, $x\in A$, $f:A\to E$ be differentiable at $x$, and $e_{k}\in X_{k}$ with 
$k\in[1,n]\cap\Z$. 
Then 
\begin{equation*}
(\mr{D}_{k}f)(x)(e_{k})=(D_{\mf{i}_{k}(e_{k})}f)(x).
\end{equation*}
Here $(\mr{D}_{k}f)(x)\in\mc{L}(X_{k},F)$ is the partial differential of $f$ at $x$ with respect to the space $X_{k}$, while 
$(D_{\mf{i}_{k}(e_{k})}f)(x)\in F$ is the directional derivative of $f$ at $x$ in the direction $\mf{i}_{k}(e_{k})$.
\end{proposition}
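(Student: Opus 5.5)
The plan is to reduce the statement to the definition of the partial differential together with the already established relation between differentials and directional derivatives, Prp.\ref{08091051}. Let $x\in A$ and $e_{k}\in X_{k}$. The case $e_{k}=\ze$ is trivial: both sides are $\ze$, the left side by linearity of $(\mr{D}_{k}f)(x)$ and the right side because the directional derivative in the null direction vanishes, the difference quotient being identically zero. So the real work is for $e_{k}\neq\ze$.

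For $e_{k}\neq\ze$, the first step is Cor.\ref{02211942} applied at the point $x$. It gives $(\mr{D}_{k}f)(x)=f^{(1)}(x)\circ\mf{i}_{k}$, hence
\begin{equation*}
(\mr{D}_{k}f)(x)(e_{k})=f^{(1)}(x)(\mf{i}_{k}(e_{k})).
\end{equation*}
The second step converts the right-hand side into a directional derivative. Writing $v=\mf{i}_{k}(e_{k})$, I note that $v\neq\ze$. Since $f$ is differentiable at $x$, the limit $\lim_{t\to0}t^{-1}(f(x+tv)-f(x))$ exists and equals $f^{(1)}(x)(v)$. This follows from the definition of differentiability: take $h=tv$ and divide the remainder by $\|tv\|=|t|\,\|v\|$, which shows the remainder is $o(|t|)$. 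It is exactly the content of Prp.\ref{08091051}, which is stated there for unit vectors only.

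So I would either rescale or redo that argument for an arbitrary direction. If rescaling, set $w=v/\|v\|$. Then $D_{v}f(x)=\|v\|D_{w}f(x)$ by the change of variable $t\mapsto t\|v\|$ in the defining limit, and $f^{(1)}(x)(v)=\|v\|f^{(1)}(x)(w)$ by linearity. Combining the two steps yields $(\mr{D}_{k}f)(x)(e_{k})=(D_{\mf{i}_{k}(e_{k})}f)(x)$.

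The main obstacle is bookkeeping rather than analysis. Prp.\ref{08091051} is stated assuming $f$ differentiable on all of $A$ and $\|v\|=1$, whereas here $f$ is only differentiable at $x$ and $v$ is arbitrary. I expect to handle this by reproving the one-line limit computation directly at the single point $x$ for arbitrary $v$, rather than invoking that proposition verbatim. Likewise, Cor.\ref{02211942} is stated for continuous $f$; I would only use the pointwise part, namely that differentiability at $x$ gives the partial differential and the first formula in \eqref{02281718}, which is obtained by composing with the affine map $e\mapsto x+\mf{i}_{k}(e)$ via Thm.\ref{821}.
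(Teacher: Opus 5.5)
The paper states this proposition in the Appendix without proof, as standard material. Your argument is the natural one given the paper's toolkit: take $(\mr{D}_kf)(x)=f^{(1)}(x)\circ\mf{i}_k$ as in Cor.~\ref{02211942}, then obtain $D_vf(x)=f^{(1)}(x)(v)$ by putting $h=tv$ in the defining limit. That step correctly covers arbitrary directions, including the null one, not only the unit directions of Prp.~\ref{08091051}, so the proof is correct.

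One bookkeeping slip: Thm.~\ref{821} also requires the outer map to be continuous on a neighbourhood of $x$, so it does not avoid the continuity hypothesis of Cor.~\ref{02211942}. For the first step, check the remainder directly instead, using $\|\mf{i}_k(h)\|=\|h\|_{X_k}$ in the max-norm; this is a one-line verification.
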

\begin{proposition}
[$n$-times continuous differentiability of $b\circ(h\boxtimes g)$]
\label{01101646}
Let $n\in\Z_{>0}$, $E,F_{1},F_{2}$ be normed spaces, $G$ be a Hausdorff locally convex space, $B$ be an open set of $E$, 
$h:B\to F_{1}$, $g:B\to F_{2}$ and $b:F_{1}\times F_{2}\to G$ be continuous and bilinear. Assume that $h\in\mc{C}^{n}(B,F_{1})$ and 
$g\in\mc{C}^{n}(B,F_{2})$, thus $b\circ(h\boxtimes g)\in\mc{C}^{n}(B,G)$.
\end{proposition}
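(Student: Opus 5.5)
The statement to prove is Prp.\ref{01101646}. I would prove it by induction on $n$, first establishing the case $n=1$ directly and then bootstrapping with the differential of a bilinear composition.

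\textbf{Base case $n=1$.} Since $b$ is continuous and bilinear on the normed product $F_{1}\times F_{2}$, it is differentiable, with
\begin{equation*}
b^{(1)}(y_{1},y_{2})(k_{1},k_{2})=b(y_{1},k_{2})+b(k_{1},y_{2}).
\end{equation*}
Indeed, the remainder is $b(k_{1},k_{2})$. For every continuous seminorm $q$ of $G$ there is $c_{q}>0$ with $q(b(k_{1},k_{2}))\leq c_{q}\|k_{1}\|\|k_{2}\|$. This bound follows from continuity of $b$ at the origin and a rescaling argument, exactly as in the proof of Lemma \ref{10221256}, so the remainder is $o(\|(k_{1},k_{2})\|)$. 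The same estimate shows that the map $(y_{1},y_{2})\mapsto b^{(1)}(y_{1},y_{2})$ is continuous, indeed linear and continuous, into $\mc{L}_{b}(F_{1}\times F_{2},G)$. Hence $b\in\mc{C}^{\infty}$. Then $h\boxtimes g\in\mc{C}^{n}(B,F_{1}\times F_{2})$ by Prp.\ref{815} and Cor.\ref{8126}. Thm.\ref{81210} then gives $b\circ(h\boxtimes g)\in\mc{C}^{n}(B,G)$ directly, for every $n$. This is already the full statement, provided Thm.\ref{81210} is available with $F=F_{1}\times F_{2}$ normed by the max-norm.

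\textbf{Induction, if Thm.\ref{81210} cannot be used.} Should I prefer not to rely on smoothness of $b$ in the bounded calculus, I would argue by induction. The chain rule Thm.\ref{821} gives
\begin{equation*}
(b\circ(h\boxtimes g))^{(1)}=\mf{b}_{1}\circ(h\boxtimes g^{(1)})+\mf{b}_{2}\circ(h^{(1)}\boxtimes g),
\end{equation*}
where $\mf{b}_{1}:F_{1}\times\mc{L}_{b}(E,F_{2})\to\mc{L}_{b}(E,G)$ sends $(y,T)\mapsto b(y,T(\cdot))$, and $\mf{b}_{2}$ is defined analogously. Both are continuous bilinear maps, by the seminorm estimate $\overset{1}{q}(b(y,T\cdot))\leq c_{q}\|y\|\|T\|$ as in Lemma \ref{10221256}. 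Since $h^{(1)}$ and $g^{(1)}$ are $\mc{C}^{n-1}$ with values in the normed spaces $\mc{L}_{b}(E,F_{i})$, the induction hypothesis applied to each summand, together with the fact that sums of $\mc{C}^{n-1}$ maps are $\mc{C}^{n-1}$, shows that the differential is $\mc{C}^{n-1}$. Hence $b\circ(h\boxtimes g)$ is $\mc{C}^{n}$.

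\textbf{Main obstacle.} The only delicate point is the uniform seminorm bound $q(b(k_{1},k_{2}))\leq c_{q}\|k_{1}\|\|k_{2}\|$ when $G$ is merely locally convex. By continuity at $0$, there is $\delta>0$ such that $q(b(x_{1},x_{2}))\leq1$ whenever $\|x_{i}\|\leq\delta$. Homogeneity in each variable then yields the bound with $c_{q}=\delta^{-2}$.
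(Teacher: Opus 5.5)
Your argument is correct. The paper gives no proof of Prp.\ref{01101646} to compare against: it lists it in the Appendix among the ``more or less standard'' facts of the bounded calculus. So your proposal has to stand on its own, and it does.

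Both of your routes are the standard ones.

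\textbf{The induction route.} The differential formula you obtain from Thm.\ref{821} is exactly the paper's Prp.\ref{03241344}. You also identified what makes the induction close: $h^{(1)}$ and $g^{(1)}$ take values in the \emph{normed} spaces $\mc{L}_{b}(E,F_{i})$. Hence $\mf{b}_{1}$ and $\mf{b}_{2}$ again fit the hypotheses (normed $\times$ normed $\to$ Hausdorff locally convex). The induction hypothesis, quantified over all such spaces, therefore applies.

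\textbf{The route via Thm.\ref{81210}.} It is shorter, but it leans on the composition theorem. When the target is only locally convex, that theorem is typically proved by an induction that itself invokes a bilinear-composition lemma of this kind, applied to $(S,T)\mapsto S\circ T$. So if the appendix were built from scratch, the induction argument is the one to prefer, to avoid circularity.

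Two small corrections.
\begin{enumerate}
\item To get $h\boxtimes g\in\mc{C}^{n}(B,F_{1}\times F_{2})$ you need the continuous version, Cor.\ref{04121839}. Cor.\ref{8126} concerns only $n$-fold differentiability, without continuity of the differentials.
\item In the self-contained route, the case $n=1$ should not appeal to Thm.\ref{81210}. It is just the chain rule, using the differentiability of $b$ you established, together with the observation that $\mf{b}_{1}\circ(h\boxtimes g^{(1)})+\mf{b}_{2}\circ(h^{(1)}\boxtimes g)$ is continuous as a composite of continuous maps.
\end{enumerate}
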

\begin{lemma}
\label{12311713}
Let $H$ be a locally convex space, $E$ be a quasi-complete Hausdorff locally convex space, 
and $\mf{G}$ be a cover of $H$ made of bounded sets, thus $\mc{L}_{\mf{G}}(H,E)$ is a quasi-complete and Hausdorff
locally convex space.
\end{lemma}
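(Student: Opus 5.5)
The plan is to verify the two properties separately, working throughout with the fundamental system of seminorms $p_{M}(f)\coloneqq\sup_{x\in M}p(f(x))$, $M\in\mf{G}$, $p\in\Gamma_{E}$, which generates the topology of $\mc{L}_{\mf{G}}(H,E)$. This is the same family as the $p_{B}$ of Notation \ref{12152204}, and it consists of finite-valued seminorms because every $M\in\mf{G}$ is bounded.

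\emph{Hausdorff.} Let $f\neq\ze$. Pick $x$ with $f(x)\neq\ze$, pick $M\in\mf{G}$ containing $x$ (possible because $\mf{G}$ covers $H$), and pick $p\in\Gamma_{E}$ with $p(f(x))>0$ (possible because $E$ is Hausdorff). Then $p_{M}(f)>0$.

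\emph{Quasi-completeness: building the candidate limit.} Let $\mc{B}$ be a bounded closed subset of $\mc{L}_{\mf{G}}(H,E)$ and $\Phi$ a Cauchy filter on $\mc{B}$.
\begin{enumerate}
\item For each $x\in H$, choose $M\in\mf{G}$ with $x\in M$. Since $\mc{B}$ is bounded, $\sup_{f\in\mc{B}}p_{M}(f)<\infty$ for every $p$, so $\mr{ev}_{x}(\mc{B})$ is bounded in $E$.
\item The image filter $\mr{ev}_{x}(\Phi)$ is Cauchy, because $p(f(x)-g(x))\leq p_{M}(f-g)$. It lives on the bounded set $\mr{ev}_{x}(\mc{B})$, hence on its closure, which is closed and bounded. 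Quasi-completeness of $E$ therefore gives a limit $f_{0}(x)$.
\item Linearity of $f_{0}$ follows by passing to the limit in $f(\alpha x+\beta y)=\alpha f(x)+\beta f(y)$, using that $E$ is Hausdorff.
\item Fix $M\in\mf{G}$, $p$ and $\varepsilon>0$. Choose $F\in\Phi$ with $p_{M}(f-g)\leq\varepsilon$ for all $f,g\in F$. Letting $g$ run along $\Phi$ and using lower semicontinuity of $p$ gives $p(f(x)-f_{0}(x))\leq\varepsilon$ for every $x\in M$ and $f\in F$. So $\Phi$ converges to $f_{0}$ uniformly on each member of $\mf{G}$.
\end{enumerate}

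\emph{Quasi-completeness: the obstacle.} It remains to show $f_{0}\in\mc{L}(H,E)$, i.e.\ that $f_{0}$ is continuous. Once that is known, $f_{0}\in\ov{\mc{B}}=\mc{B}$ and we are done. This is the step I expect to be the main obstacle. Uniform convergence on the sets of $\mf{G}$ only yields continuity of $f_{0}$ restricted to each $M\in\mf{G}$, and in addition the boundedness of $f_{0}$ on bounded sets. For a general locally convex $H$ this does not by itself give continuity.

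I would try two routes, in this order.
\begin{enumerate}
\item Prove that $\mc{B}$ is equicontinuous. A simple-limit argument then finishes: the pointwise limit of an equicontinuous family is continuous. This is the approach of Bourbaki's treatment of completeness of $\mc{L}_{\mf{G}}$ in \cite{EVT}, and there equicontinuity of bounded sets is guaranteed by a barrelledness hypothesis on $H$.
\item If that fails, use that $f_{0}$ maps bounded sets to bounded sets. This gives continuity whenever bounded linear maps on $H$ are continuous, i.e.\ when $H$ is bornological, in particular normed.
\end{enumerate}
In every application in the paper $H$ is one of the normed spaces $T$, $N$ or $\mc{Y}_{\R}$ (e.g.\ $\mc{L}_{b}(T,\mc{Z}_{\R})$). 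So I would carry out the argument via the bornological route and check carefully whether the general locally convex case really needs such an extra hypothesis on $H$. If it does, the statement would have to be read with $H$ normed, or at least bornological or barrelled.
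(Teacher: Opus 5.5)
Your Hausdorff argument and the construction of the pointwise limit $f_{0}$ are correct: existence via quasi-completeness of $E$, linearity, and uniform convergence on each $M\in\mf{G}$. You have also located the real crux, the continuity of $f_{0}$. That step cannot be completed, however, because the lemma is false in the stated generality. The paper gives no proof of it; it is only listed among the ``standard'' facts of the Appendix.

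Here is a counterexample. Take $H=c_{00}$, the finitely supported real sequences with the sup norm, and $E=\R$. Let $\mf{G}$ be the family of finite subsets of $H$; these are bounded and cover $H$, and $\mc{L}_{\mf{G}}(H,\R)=\mc{L}_{s}(H,\R)$.
\begin{itemize}
\item The maps $f_{n}(x)\coloneqq\sum_{k=1}^{n}kx_{k}$ are continuous.
\item For each $x$, the sequence $(f_{n}(x))_{n}$ is eventually constant, equal to $g(x)\coloneqq\sum_{k}kx_{k}$.
\item Hence $(f_{n})$ is a Cauchy sequence in $\mc{L}_{s}(H,\R)$. It lies in the closure $\mc{B}$ of $\{f_{n}\,\vert\,n\in\Z_{>0}\}$, which is closed and bounded.
\item Any limit would have to equal $g$. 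But $g(e_{k})=k$ for the unit sequences $e_{k}$, so $g$ is not continuous and $\mc{B}$ is not complete.
\end{itemize}

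The same example shows that your route 2 is misstated. Boundedness of $\mc{B}$ in $\mc{L}_{\mf{G}}(H,E)$, together with uniform convergence on members of $\mf{G}$, gives boundedness of $f_{0}$ only on members of $\mf{G}$ (and finite unions of them). It does not give boundedness on all bounded subsets of $H$. In the example $H$ is normed, hence bornological, and the conclusion still fails. So ``read the statement with $H$ normed or bornological'' is not a valid repair.

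Correct versions are the following.
\begin{itemize}
\item $H$ barrelled and $\mf{G}$ arbitrary (your route 1). A bounded subset of $\mc{L}_{\mf{G}}(H,E)$ is pointwise bounded since $\mf{G}$ covers $H$. It is therefore equicontinuous by Banach--Steinhaus, and a pointwise limit of an equicontinuous family is continuous.
\item $H$ bornological and $\mf{G}$ the family of all bounded sets (your route 2).
\end{itemize}

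The second version covers every use in the paper, which only invokes the lemma for $\mc{L}_{b}(T,\mc{Z}_{\R})$ with $T$ normed. There your argument closes at once: for every $q\in\Gamma$,
\begin{equation*}
\sup_{x\in\ov{B}_{1}^{T}(\ze)}q(f_{0}(x))\leq\sup_{f\in\mc{B}}\overset{1}{q}(f)<\infty,
\end{equation*}
so $f_{0}$ is bounded on the unit ball and thus continuous. You should state and prove that restricted version instead of the lemma as written.
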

\begin{corollary}
\label{06162059}
Let $T$ be a locally compact space, $\mu$ be a complex valued measure on $T$ and $f\in\mc{C}(T,E)$. Then the map
\begin{equation*}
\mc{H}(T;\C)\ni g\mapsto\int g\cdot f\,d\mu\in E,
\end{equation*}
is well-defined and belongs to $\mc{M}(T,E)$.
\end{corollary}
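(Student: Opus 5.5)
The plan is to reduce the claim to the scalar case by composing with continuous linear functionals on $E$, and then to use the Riesz–Markov identification of scalar measures. Here $T$ is a locally compact space, $\mu$ is a complex Radon measure on $T$, and $f\in\mc{C}(T,E)$ with $E$ a quasi-complete Hausdorff locally convex space, which is the standing setting in which the corollary is used.

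\textbf{Existence of the integral.} Fix $g\in\mc{H}(T;\C)$ with compact support $K$. The map $g\cdot f$ is continuous and compactly supported with values in $E$. Its image $(g\cdot f)(T)\subseteq (g\cdot f)(K)\cup\{\ze\}$ is compact. Since $E$ is quasi-complete, the closed convex balanced hull of this compact set is compact. By \cite[III.38 Cor.2]{IntBourb}, applied to $|\mu|$ and then to the decomposition of $\mu$ into real and imaginary, positive and negative parts, the weak integral $\int g\cdot f\,d\mu$ exists in $E$. It is characterized by
\begin{equation*}
\gamma\Bigl(\int g\cdot f\,d\mu\Bigr)=\int g\,(\gamma\circ f)\,d\mu \qquad\text{for all } \gamma\in E'.
\end{equation*}
So the map $I:g\mapsto\int g\cdot f\,d\mu$ is well defined, and it is linear because $\gamma\circ I$ is linear for every $\gamma\in E'$ and $E'$ separates points of the Hausdorff space $E$.

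\textbf{Continuity on $\mc{H}(T;\C)$.} By the definition of the final locally convex topology, it suffices to show that $I$ restricted to each $\mc{H}(T,K;\C)$ is continuous for the sup norm. Let $p$ be a continuous seminorm on $E$. By \cite[VI.5, Prp.6]{IntBourb},
\begin{equation*}
p\bigl(I(g)\bigr)\le\int |g|\,(p\circ f)\,d|\mu|\le \|g\|_{\infty}\,\sup_{K}(p\circ f)\;|\mu|(K).
\end{equation*}
The right-hand side is finite because $p\circ f$ is continuous on the compact set $K$ and $|\mu|(K)<\infty$. Hence $I\in\mc{L}(\mc{H}(T;\C),E)$, which means $I\in\mc{M}(T,E)$, as in \cite[VI.18 Def.1]{IntBourb}. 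Alternatively one could invoke \cite[VI.19 Rmk.3]{IntBourb}: each $\gamma\circ I$ is the scalar measure $(\gamma\circ f)\cdot\mu$. This is the route taken in Lemma \ref{10161358}.

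\textbf{Main obstacle.} The delicate point is the existence of the weak integral in $E$ rather than merely in the completion $\widehat{E}$. This is exactly where quasi-completeness is needed: it guarantees that the closed convex hull of a compact set is compact, so \cite[III.38 Cor.2]{IntBourb} applies. Passing from complex $\mu$ to positive measures is routine, using linearity in $\mu$.
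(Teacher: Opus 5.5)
The paper states this corollary in its appendix as standard material and gives no proof, so there is no argument of its own to compare against. Your proof is correct, and it uses the same tools the paper relies on wherever it applies the result:
\begin{itemize}
\item existence of the integral in a quasi-complete space via \cite[III.38 Cor.2]{IntBourb}, as in Lemma~\ref{07221116};
\item continuity on each $\mc{H}(T,K;\C)$ via the seminorm estimate \cite[VI.5, Prp.6]{IntBourb}, bounded by a supremum over the compact support, as in Theorem~\ref{07222143} and Proposition~\ref{09251745b};
\item your alternative through \cite[VI.19 Rmk.3]{IntBourb}, which is exactly the route of Lemma~\ref{10161358}.
\end{itemize}

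You are right to add quasi-completeness of $E$. The printed statement omits it, but without it the claim fails, because the integral is only guaranteed to exist in $\widehat{E}$. For example, take $E=c_{00}$ with the sup norm and $\mu=\lambda$ on $T=\R$. Let $f(t)=\sum_{n\geq 1}\phi_{n}(t)e_{n}$, where $\phi_{n}$ is a continuous bump of height $1/n$ supported in $[1/(n+1),1/n]$. Then $f$ is continuous with support in $[0,1]$. Every coordinate functional nevertheless gives $\int\phi_{n}\,d\lambda>0$, so the integral lies in $c_{0}\setminus c_{00}$. The paper always secures this hypothesis before invoking the corollary, either through Lemma~\ref{12311713} or through the standing assumption that $\mc{Z}$ is quasi-complete, so your reading matches its actual use.
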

\begin{corollary}
\label{08111037}
Let $X,Y$ be topological spaces, $E$ be a topological linear space, $T:X\to Y$ be a homeomorphism of topological spaces and 
$Q\subset Y$. Then $T^{\ast}\mc{H}(Y,Q;E)=\mc{H}(X,T^{-1}(Q);E)$, where $T^{\ast}:g\mapsto g\circ T$.
Furthermore let $\rho_{Q}^{Y}$ be the injection of $\mc{H}(Y,Q;E)$ into $\mc{H}(Y;E)$, 
if either $Q$ is compact or $Q=Y$, then $T^{\ast}\circ\rho_{Q}^{Y}\in\mc{L}(\mc{H}(Y,Q;E),\mc{H}(X;E))$. 
\end{corollary}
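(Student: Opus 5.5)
The plan is to prove both parts directly from the definitions: first a support identity under pullback by a homeomorphism, then the continuity claim, using the defining seminorms of $\mc{H}(Y,K;E)$ and the universal property of the locally convex final topology \cite[II.29 Prp.5]{EVT}. The spaces $\mc{H}(\cdot)$ only make sense for locally compact spaces, and $X$ is locally compact exactly when $Y$ is, because $T$ is a homeomorphism; so we may use these spaces freely.

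\emph{Step 1: support identity.} For any $g:Y\to E$ we have $\{x\,\vert\,(g\circ T)(x)\neq\ze\}=T^{-1}(\{y\,\vert\,g(y)\neq\ze\})$. Since $T$ is a homeomorphism, $T^{-1}$ commutes with closures, and therefore
\begin{equation*}
\mr{supp}(g\circ T)=T^{-1}(\mr{supp}(g)).
\end{equation*}
Three consequences follow.
\begin{enumerate}
\item
$g\circ T$ is continuous whenever $g$ is.
\item
$\mr{supp}(g\circ T)$ is compact whenever $\mr{supp}(g)$ is, since $T^{-1}$ is continuous and maps compacts to compacts.
\item
$\mr{supp}(g)\subseteq Q$ implies $\mr{supp}(g\circ T)\subseteq T^{-1}(Q)$.
\end{enumerate}
Together these give $T^{\ast}\mc{H}(Y,Q;E)\subseteq\mc{H}(X,T^{-1}(Q);E)$.

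For the reverse inclusion, apply the same argument to the homeomorphism $T^{-1}$ and the set $T^{-1}(Q)$. This gives $(T^{-1})^{\ast}\mc{H}(X,T^{-1}(Q);E)\subseteq\mc{H}(Y,Q;E)$. Since $T^{\ast}\circ(T^{-1})^{\ast}=\mr{Id}$, every $f\in\mc{H}(X,T^{-1}(Q);E)$ can be written as $f=T^{\ast}((T^{-1})^{\ast}f)$, which yields equality.

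\emph{Step 2: $Q$ compact.} Here $T^{-1}(Q)$ is compact, and $T^{\ast}$ is linear. Let $p$ be any continuous seminorm of $E$. Because $T$ maps $T^{-1}(Q)$ onto $Q$,
\begin{equation*}
\sup_{x\in T^{-1}(Q)}p(g(T(x)))=\sup_{y\in Q}p(g(y)).
\end{equation*}
Hence $T^{\ast}$ is continuous, indeed isometric for each seminorm, from $\mc{H}(Y,Q;E)$ into $\mc{H}(X,T^{-1}(Q);E)$, both carrying the topology of uniform convergence on the respective compacts. The canonical injection $\imath_{\mc{H}(X,T^{-1}(Q);E)}^{\mc{H}(X;E)}$ is continuous by the definition of the final topology on $\mc{H}(X;E)$. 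Composing the two shows that $T^{\ast}\circ\rho_{Q}^{Y}$ is continuous.

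\emph{Step 3: $Q=Y$.} Now $\rho_{Y}^{Y}$ is the identity of $\mc{H}(Y;E)$, so the claim is that $T^{\ast}:\mc{H}(Y;E)\to\mc{H}(X;E)$ is continuous. Since $\mc{H}(Y;E)$ carries the locally convex final topology of the family $\{\mc{H}(Y,K;E)\}_{K\in\mr{Cmp}(Y)}$, \cite[II.29 Prp.5]{EVT} says a linear map out of it is continuous iff each composite $T^{\ast}\circ\imath_{\mc{H}(Y,K;E)}^{\mc{H}(Y;E)}$ is continuous. Each such composite is exactly the map $T^{\ast}\circ\rho_{K}^{Y}$ from Step 2 with $Q=K$ compact, so it is continuous, and the conclusion follows.

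The only delicate point is the support identity in Step 1, which needs both directions of $T$ being continuous. Everything else is routine bookkeeping with the seminorms and the universal property of the final topology.
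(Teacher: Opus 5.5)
Your proof is correct, and there is no proof in the paper to compare it with: this corollary sits in the Appendix as standard material and is stated without proof. Your route is the natural one and is complete. It uses the support identity $\mr{supp}(g\circ T)=T^{-1}(\mr{supp}(g))$, which is exactly what Prp.~\ref{12061416} gives when applied to both $T$ and $T^{-1}$. It then uses the fact that $T^{\ast}$ preserves each seminorm of uniform convergence on $Q$, resp.\ $T^{-1}(Q)$. Finally, for $Q=Y$ it uses the universal property of the locally convex final topology.

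Two small remarks. First, your use of continuous seminorms assumes $E$ is locally convex. That matches how the paper actually defines $\mc{H}(S,K;E)$ and $\mc{H}(S;E)$, namely for Hausdorff locally convex $E$. For a general topological linear space, Step 2 still goes through if you replace a seminorm by a neighbourhood $V$ of $\ze$, since $T^{\ast}$ maps $\{g\,\vert\,g(Q)\subseteq V\}$ onto $\{f\,\vert\,f(T^{-1}(Q))\subseteq V\}$. Second, in Step 3 you rightly give $\mc{H}(Y,Y;E)$ the topology of $\mc{H}(Y;E)$. If $Y$ were noncompact and the sup topology were used instead, the continuity claim would be false.
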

\begin{corollary}
[Symmetry of higher partial differentials]
\label{06300307}
Let $F$ be a Hausdorff locally convex space, $n\in\Z_{>0}$, $N\in\Z_{>1}$, $\mc{E}=\{E_{i}\}_{i=1}^{n}$ be a sequence of normed 
spaces, let $E=\prod_{i=1}^{n}E_{i}$, $A$ be an open set of $E$ and $f:A\to F$ be $N$ times differentiable. Thus for 
every $r\in([1,n]\cap\Z)^{[1,N]\cap\Z}$ and every $p\in\mathbb{S}_{N}$ the group of permutations of $N$ elements, we have 
\begin{equation*}
\mr{D}_{r_{1}}(\mr{D}_{r_{2}}(\dots(\mr{D}_{r_{N}}f)\dots))
=
\mr{D}_{r_{p(1)}}(\mr{D}_{r_{p(2)}}(\dots(\mr{D}_{r_{p(N)}}f)\dots)).
\end{equation*}
\end{corollary}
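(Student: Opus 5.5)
The plan is to reduce the statement to the symmetry of the full $N$-th differential $f^{(N)}(x)$, and then to prove that symmetry by scalarizing with continuous linear functionals on $F$. Since the two iterated partial differentials in the statement take values in different iterated spaces $\mc{L}_{b}(E_{r_{1}},\mc{L}_{b}(E_{r_{2}},\dots))$, I read the claimed equality as follows. For every $x\in A$ and every $h_{j}\in E_{r_{j}}$, evaluating the left-hand side at $(h_{1},\dots,h_{N})$ gives the same element of $F$ as evaluating the right-hand side at $(h_{p(1)},\dots,h_{p(N)})$. Equivalently, this is equality after the canonical identification of both sides with continuous $N$-linear maps.

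\textbf{Step 1: iterated partials are slices of $f^{(N)}$.} By induction on $N$ I will show that
\begin{equation*}
\mr{D}_{r_{1}}(\cdots(\mr{D}_{r_{N}}f)\cdots)(x)(h_{1})\cdots(h_{N})
=
f^{(N)}(x)(\mf{i}_{r_{1}}h_{1})\cdots(\mf{i}_{r_{N}}h_{N}).
\end{equation*}
The case $N=1$ is the first relation of \eqref{02281718} in Cor.\ref{02211942}. For the inductive step, set $g=\mr{D}_{r_{2}}(\cdots)$, which by the induction hypothesis equals $(\mf{i}_{r_{2}}\text{-slicing})\circ f^{(N-1)}$, namely the composition of $f^{(N-1)}$ with a continuous linear map of the form $T\mapsto T\circ\mf{i}_{r_{2}}$ applied at each level; such maps are continuous by Lemma \ref{10221256}. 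Then Thm.\ref{821} and Prp.\ref{813} give $g^{(1)}(x)=(\text{slicing})\circ f^{(N)}(x)$. A further application of Cor.\ref{02211942} to $g$ produces the factor $\mf{i}_{r_{1}}$ in the first slot.

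\textbf{Step 2: symmetry of $f^{(N)}(x)$.} Let $\gamma\in\mc{L}(F,\R)$. By Thm.\ref{821} and Prp.\ref{813}, applied inductively, $\gamma\circ f$ is $N$ times differentiable and $(\gamma\circ f)^{(N)}(x)(v_{1})\cdots(v_{N})=\gamma\bigl(f^{(N)}(x)(v_{1})\cdots(v_{N})\bigr)$. Now $\gamma\circ f$ is a real-valued map on an open set of the normed space $E$, so the classical symmetry theorem for $N$ times differentiable maps between normed spaces applies (Dieudonné 8.12.3 for $N=2$, then induction on $N$, writing a permutation as a product of transpositions of consecutive slots and applying the $N=2$ case to $(\gamma\circ f)^{(N-2)}$ at the appropriate level). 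This shows that $(\gamma\circ f)^{(N)}(x)$ is symmetric. Since $F$ is Hausdorff and locally convex, $\mc{L}(F,\R)$ separates its points by Hahn–Banach, so $f^{(N)}(x)$ is symmetric.

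\textbf{Step 3: conclusion.} Apply Step 1 to both sequences $r$ and $r\circ p$ and use the symmetry from Step 2. This gives the equality of the evaluations and hence the statement.

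The main obstacle is Step 2. The hypothesis is only that $f$ is $N$ times differentiable, with no continuity of $f^{(N)}$ assumed, so I cannot use a mean-value or integral argument in the locally convex target directly. Scalarization reduces the problem to the normed, real-valued setting, where the symmetry theorem holds without continuity assumptions. The point to check carefully there is that the inductive use of the $N=2$ case only requires twice differentiability of $(\gamma\circ f)^{(N-2)}$ at the point $x$.
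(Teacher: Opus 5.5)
The paper gives no proof of this corollary. It is listed in the Appendix among the standard facts quoted without proof, so there is no argument to compare yours against.

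Your proof is correct, and its structure is the natural one behind the label ``Corollary''. You reduce, via the first relation of \eqref{02281718}, to the symmetry of the full differential $f^{(N)}(x)$. You then get that symmetry by scalarizing and invoking the classical theorem, which needs no continuity of the $N$-th differential. Your reading of the formally ill-typed equality, as equality after evaluating at permuted arguments, is exactly how the paper uses it, e.g.\ in the last step of \eqref{08031059b}.

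A few points of precision follow; none of them is a gap.
\begin{itemize}
\item Thm.~\ref{821} as stated requires the intermediate space to be normed. When you differentiate $\Sigma\circ f^{(N-1)}$ and $\gamma\circ f^{(k)}$, you are really using an elementary fact: $L\circ g$ is differentiable at $x$ with differential $L\circ g^{(1)}(x)$ whenever $L$ is continuous linear between Hausdorff locally convex spaces. This is immediate from the definition, and the paper cites Thm.~\ref{821} and Prp.~\ref{813} in the same loose way in Prp.~\ref{08060629a}.
\item Lemma~\ref{10221256} only gives continuity of $T\mapsto T\circ\mf{i}_{r}$ at the outermost level. At inner levels your slicing map is post-composition with a continuous linear map $L$. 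That map is continuous on $\mc{L}_{b}$ because, for every continuous seminorm $q'$ of the target, there is a continuous seminorm $q$ of the source with $q'\circ L\leq q$.
\item In Step 2, transferring a symmetry obtained at lower order through further differentiations uses that multilinear maps symmetric in fixed slots form a closed subspace.
\item The $N=2$ case needs differentiability on a neighbourhood of $x$, not only at $x$. Both this and the previous point are available because $f$ is $N$ times differentiable on all of $A$.
\item If $\K=\C$, scalarize with $\mc{L}(F_{\R},\R)$. This matches the paper's convention that differentiability is always meant in the real sense.
\end{itemize}
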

\begin{corollary}
[Change of variable in multiple integrals.]
\label{09261537}
Let $n\in\Z_{>0}$, $U$ and $U^{\prime}$ be open sets of $\R^{n}$, $u:U\to U^{\prime}$ be a $C^{1}$ diffeomorphism.
Let $J$ be the Jacobian map of $\imath_{U^{\prime}}^{\R^{n}}\circ u$, let $\lambda_{U}$ and 
$\lambda_{U^{\prime}}$ be the restrictions at $U$ and $U^{\prime}$ respectively of the Lebesgue measure $\lambda$ on $\R^{n}$. 
Assume in addition that $F$ is a quasi-complete Hausdorff locally convex space and $\mr{f}:U^{\prime}\to F$. Thus 
\begin{equation*}
\mr{f}\in\mc{L}_{F}^{1}(\lambda_{U^{\prime}})
\Leftrightarrow
(\mr{f}\circ u)\cdot|J|
\in\mc{L}_{F}^{1}(\lambda_{U}),
\end{equation*}
and in such a case we have 
\begin{equation*}
\int\mr{f}\,d\lambda_{U^{\prime}}=\int(\mr{f}\circ u)\cdot|J|\,d\lambda_{U}.
\end{equation*}
\end{corollary}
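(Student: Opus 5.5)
The plan is to derive the statement from the scalar change-of-variables theorem through the measure-theoretic machinery of \cite{IntBourb}, rather than re-proving anything by hand. Set $\mu\coloneqq|J|\cdot\lambda_{U}$, the measure on $U$ with continuous, strictly positive density $|J|$ with respect to $\lambda_{U}$. The proof then has three steps: identify $\lambda_{U^{\prime}}$ as the image of $\mu$ under $u$; transfer integrability and the integral along $u$ for $F$-valued maps; and absorb the density $|J|$ back into the integrand.

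\emph{Step 1.} Since $u$ is a $C^{1}$ diffeomorphism, it is a homeomorphism of $U$ onto $U^{\prime}$. Hence $u$ is $\mu$-proper, because the preimage of a compact set is compact and $u$ is continuous, hence measurable. The classical Jacobian theorem for Lebesgue measure, i.e.\ the scalar case of the statement applied to $g\in\mc{H}(U^{\prime})$, gives
\begin{equation*}
\int g\,d\lambda_{U^{\prime}}=\int(g\circ u)\,|J|\,d\lambda_{U}.
\end{equation*}
In the notation of Lemma \ref{10161358} this says $u(\mu)=\lambda_{U^{\prime}}$.

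\emph{Step 2.} I apply the theorem on integration with respect to an image measure \cite[V, \S 6]{IntBourb} to vector-valued functions. For a $\mu$-proper map $u$, a function $\mr{f}:U^{\prime}\to F$ is $u(\mu)$-integrable iff $\mr{f}\circ u$ is $\mu$-integrable, and then $\int\mr{f}\,du(\mu)=\int(\mr{f}\circ u)\,d\mu$. I will cite the version valid for $F$ a Banach space, or more generally for $F$ Hausdorff locally convex with the integrable classes $\mc{L}_{F}^{1}$. Combined with Step 1, this gives the equivalence and the equality with $\mu$ in place of $|J|\cdot\lambda_{U}$.

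\emph{Step 3.} I use the result on measures with density \cite[V, \S 5]{IntBourb}. Because $|J|$ is continuous and strictly positive, hence locally $\lambda_{U}$-integrable, a map $\mr{g}:U\to F$ is $(|J|\cdot\lambda_{U})$-integrable iff $\mr{g}\cdot|J|$ is $\lambda_{U}$-integrable, and the two integrals coincide. Taking $\mr{g}=\mr{f}\circ u$ yields both the claimed equivalence and the equality of integrals.

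The main obstacle is Step 2 together with the passage from Banach-valued to quasi-complete-valued integrands. The Bourbaki statements are proved first for Banach spaces, and for a general Hausdorff locally convex $F$ integrability is defined seminorm by seminorm, via the upper integrals $N_{1}(p\circ\mr{f})$ and approximation by $\mc{H}$-functions. My first attempt is to check that the image-measure and density theorems hold seminorm-wise. This should be immediate, because for each $p\in\Gamma_{F}$ one has $N_{1}^{u(\mu)}(p\circ\mr{f})=N_{1}^{\mu}(p\circ\mr{f}\circ u)$ and $N_{1}^{|J|\lambda}(p\circ\mr{g})=N_{1}^{\lambda}(p\circ\mr{g}\cdot|J|)$. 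These identities are scalar statements. Moreover $g\mapsto g\circ u$ maps $\mc{H}(U^{\prime};F)$ bijectively onto $\mc{H}(U;F)$ by Cor.\ref{08111037}, so the approximating classes correspond. The equality of the integrals then follows by applying every $\gamma\in\mc{L}(F,\R)$ and using that the dual separates points, as in the proof of Cor.\ref{08110158}. Quasi-completeness of $F$ is used only to guarantee that the integrals lie in $F$ rather than in its completion.
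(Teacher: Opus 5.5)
The paper gives no proof of this statement. It is listed in the Appendix among the ``more or less standard'' facts. In the body it is invoked only with $F=\R$ (proof of Lemma \ref{07221116}), and the vector-valued conclusion there is then obtained through the definition of the weak integral. So there is no argument in the paper to compare with, and your reduction is a valid proof of the full statement. With $\mu=|J|\cdot\lambda_{U}$, Step 1 needs only the classical formula on $\mc{H}(U^{\prime})$, and Steps 2 and 3 are Bourbaki's image-measure and density theorems. Those theorems are phrased for ``essentially integrable'' maps. You should add that open subsets of $\R^{n}$ are $\sigma$-compact, so essential integrability coincides with integrability and the theorems deliver exactly $\mc{L}^{1}$.

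In the passage to non-Banach $F$, four points should be made explicit.
\begin{enumerate}
\item You need $N_{1}^{u(\mu)}(p\circ\mr{f})=N_{1}^{\mu}(p\circ\mr{f}\circ u)$, and also with $\mr{f}-k$ in place of $\mr{f}$, where measurability is not known in advance. The image-measure theorem does not give this, since it concerns measurable or integrable functions. It holds here because $u$ is a homeomorphism: $\ell\mapsto\ell\circ u$ is a bijection between lower semicontinuous majorants on $U^{\prime}$ and on $U$, and it preserves upper integrals.
\item Similarly, $N_{1}^{|J|\cdot\lambda_{U}}(h)=N_{1}^{\lambda_{U}}(h\cdot|J|)$ holds for every $h\geq 0$ because $|J|$ is continuous and strictly positive. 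Hence $\ell\mapsto\ell\cdot|J|$ is a bijection of lower semicontinuous functions.
\item The approximating classes must correspond under the whole map $k\mapsto(k\circ u)\cdot|J|$. This map is a bijection of $\mc{H}(U^{\prime};F)$ onto $\mc{H}(U;F)$ because $|J|^{-1}$ is continuous; Cor.\ref{08111037} covers only the composition with $u$.
\item The paper never fixes what $\mc{L}_{F}^{1}$ means for a quasi-complete non-Banach $F$. Your seminorm-wise reading is one admissible choice. Under the scalar (weak) reading that the paper actually relies on, the vector case follows at once by applying the scalar case to $\gamma\circ\mr{f}$ for each $\gamma\in\mc{L}(F_{\R},\R)$.
\end{enumerate}
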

\begin{proposition}
[Differential of $b\circ(h\boxtimes g)$] 
\label{03241344}
Let $E,F_{1},F_{2}$ be normed spaces, $B$ be an open set of $E$, $G$ be a Hausdorff locally convex space, $h:B\to F_{1}$, 
$g:B\to F_{2}$ and $b:F_{1}\times F_{2}\to G$ be continuous and bilinear.
If $h$ and $g$ are differentiable at $x$, then $b\circ(h\boxtimes g)$ is differentiable at $x$ and for every $x\in B$ and 
$v\in E$ we have
\begin{equation*}
(b\circ(h\boxtimes g))^{1}(x)(v)=b\left(h(x),g^{1}(x)v\right)+b\left(h^{1}(x)v,g(x)\right).
\end{equation*}
\end{proposition}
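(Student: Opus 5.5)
The argument is the standard product rule, reduced to the scalar-times-vector situation via the bilinear maps already available. Fix $x\in B$ and $v\in E$, and write $\Delta h(t)\coloneqq h(x+t)-h(x)$ and $\Delta g(t)\coloneqq g(x+t)-g(x)$ for $t$ small. Bilinearity of $b$ gives the algebraic identity
\begin{equation*}
b(h(x+t),g(x+t))-b(h(x),g(x))
=
b(h(x),\Delta g(t))+b(\Delta h(t),g(x))+b(\Delta h(t),\Delta g(t)).
\end{equation*}
The candidate differential is $\ell(t)\coloneqq b(h(x),g^{1}(x)t)+b(h^{1}(x)t,g(x))$. It is linear in $t$, and it is continuous because $b$ is continuous and $h^{1}(x)$, $g^{1}(x)$ are continuous linear maps into the normed spaces $F_{1}$, $F_{2}$. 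So $\ell\in\mc{L}(E,G)$, and the only real task is to check the defining limit seminorm by seminorm.

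Fix $q\in\Gamma_{G}$. Since $F_{1}$ and $F_{2}$ are normed and $b$ is continuous bilinear, there is $c_{q}>0$ with $q(b(y,z))\leq c_{q}\|y\|\|z\|$ for all $(y,z)\in F_{1}\times F_{2}$. This follows from continuity at $\ze$ together with homogeneity, as in the proof of Lemma \ref{10221256}. Subtracting $\ell(t)$ from the identity above and dividing by $\|t\|$, the numerator $q$ is bounded by
\begin{equation*}
c_{q}\|h(x)\|\,\frac{\|\Delta g(t)-g^{1}(x)t\|}{\|t\|}
+c_{q}\|g(x)\|\,\frac{\|\Delta h(t)-h^{1}(x)t\|}{\|t\|}
+c_{q}\|\Delta h(t)\|\,\frac{\|\Delta g(t)\|}{\|t\|}.
\end{equation*}

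The first two terms tend to $0$ by the differentiability of $g$ and of $h$ at $x$. For the third term, $\|\Delta g(t)\|/\|t\|\leq\|g^{1}(x)\|+o(1)$ stays bounded, while $\|\Delta h(t)\|\to0$ because differentiability at $x$ implies continuity at $x$ (and $h$ is in any case assumed continuous). So the third term also tends to $0$.

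Since this holds for every $q\in\Gamma_{G}$, $b\circ(h\boxtimes g)$ is differentiable at $x$ with differential $\ell$, which is the stated formula. The only step needing care is the seminorm estimate for $b$, since $G$ is merely locally convex. It is resolved by continuity of $b$ at the origin: for each $q$ there are $r,s>0$ with $q(b(y,z))\leq1$ whenever $\|y\|\leq r$ and $\|z\|\leq s$, and rescaling gives $c_{q}=(rs)^{-1}$.
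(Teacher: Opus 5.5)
Your proof is correct and complete. The bilinear expansion, the bound $q(b(y,z))\leq c_{q}\|y\|\|z\|$ obtained from continuity of $b$ at the origin together with homogeneity, and the seminorm-by-seminorm control of the three remainder terms are exactly what the paper's notion of differentiability into a Hausdorff locally convex space requires.

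The paper gives no proof of this appendix proposition; it is listed among the standard facts collected there. So there is no argument of the paper's to compare against.

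The other natural route would be to show that $b$ is differentiable with $b^{1}(y,z)(k,l)=b(y,l)+b(k,z)$, and then combine Prp.~\ref{815} for $h\boxtimes g$ with the chain rule Thm.~\ref{821}. Proving differentiability of $b$, however, needs the same estimate $q(b(k,l))\leq c_{q}\|k\|\|l\|$. Your direct argument is therefore just as economical and avoids the detour.
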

\begin{corollary}
\label{09292048}
Let $n\in\Z_{>0}$, $X$ be a topological space, $E_{i}$ be a topological linear space, $A_{i}\subseteq E_{i}$ for every 
$i\in[1,n]\cap\Z$, $F$ be a topological linear space and $b:\prod_{i\in[1,n]\cap\Z}E_{i}\to F$ be multilinear and continuous. 
If there exists $k\in[1,n]\cap\Z$ such that $f_{k}\in\mc{H}(X,A_{k};E_{k})$ and $f_{j}\in\mc{C}(X,E_{j})$ such that 
$\mr{supp}(f_{j})\subseteq A_{j}$ for every $j\in\complement_{[1,n]\cap\Z}\{k\}$, then $f\in\mc{H}(X,A,F)$ where 
$f=b\circ\prod_{i\in[1,n]\cap\Z}^{\boxtimes}f_{i}$ and $A=\bigcap_{i\in[1,n]\cap\Z}A_{i}$. 
\end{corollary}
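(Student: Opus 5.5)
The plan is to verify the two defining properties of $\mc{H}(X,A;F)$ separately for $f=b\circ\prod_{i\in[1,n]\cap\Z}^{\boxtimes}f_{i}$: continuity, and compactness of the support together with the inclusion $\mr{supp}(f)\subseteq A$.

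\emph{Continuity.} Each $f_{i}$ is continuous: for $i=k$ because $f_{k}\in\mc{H}(X,A_{k};E_{k})$, and for $i\neq k$ by hypothesis. Hence $\prod_{i}^{\boxtimes}f_{i}:X\to\prod_{i}E_{i}$ is continuous by the universal property of the product topology \cite[I.25 Prp.1]{BourGT}. Since $b$ is continuous, so is $f$, that is $f\in\mc{C}(X,F)$.

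\emph{Support.} The key observation uses multilinearity of $b$: if $f_{j}(x)=\ze$ for some $j$, then $f(x)=b(f_{1}(x),\dots,f_{n}(x))=\ze$. Therefore
\begin{equation*}
\{x\in X\,\vert\,f(x)\neq\ze\}\subseteq\bigcap_{i\in[1,n]\cap\Z}\{x\in X\,\vert\,f_{i}(x)\neq\ze\}.
\end{equation*}
Taking closures, and using that the closure of an intersection is contained in the intersection of the closures, we get
\begin{equation*}
\mr{supp}(f)\subseteq\bigcap_{i\in[1,n]\cap\Z}\mr{supp}(f_{i})\subseteq\bigcap_{i\in[1,n]\cap\Z}A_{i}=A.
\end{equation*}
In particular $\mr{supp}(f)\subseteq\mr{supp}(f_{k})$. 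Now $\mr{supp}(f_{k})$ is compact by assumption, and $\mr{supp}(f)$ is closed in $X$, so $\mr{supp}(f)$ is a closed subset of a compact set. A closed subset of a compact set is compact, with no separation axiom needed, so $\mr{supp}(f)$ is compact. Combining this with continuity gives $f\in\mc{H}(X,A;F)$.

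There is no real obstacle here. The only point needing care is to pass to supports through the sets where the functions are nonzero, rather than arguing pointwise on the closures directly.
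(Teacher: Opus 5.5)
Your argument is correct and complete: continuity follows from that of $\prod^{\boxtimes}_{i}f_{i}$ and of $b$. Multilinearity gives $\{f\neq\ze\}\subseteq\bigcap_{i}\{f_{i}\neq\ze\}$, hence $\mr{supp}(f)\subseteq\bigcap_{i}\mr{supp}(f_{i})\subseteq A$; here you rightly read the hypothesis $A_{i}\subseteq E_{i}$ as the evidently intended $A_{i}\subseteq X$. Finally, $\mr{supp}(f)$ is compact, being closed and contained in the compact set $\mr{supp}(f_{k})$.

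The paper states this Appendix result without proof, treating it as standard, so there is no proof of its own to compare against. Your route is the natural multilinear analogue of Prp.~\ref{12031719}.
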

\begin{corollary}
[\textbf{Differential commutes with integral. Compactly supported continuous maps and $F$ quasi-complete space}]
\label{01031245b}
Let $\mu$ be a measure on a locally compact space $T$, $G$ be a normed space, $F$ be a Hausdorff locally convex space,
$A$ be an open set of $G$ and $h:A\times T\to F$. Assume that 
\begin{enumerate}
\item
$F$ is quasi-complete;
\item
$(\exists W\subset T)((|\mu|^{\bullet}(\complement_{T}W)=0)$;
\item
$h^{t}$ is differentiable for every $t\in W$, where $h^{t}:A\to F$, $x\mapsto h(x,t)$. Let us define the map 
\begin{equation*}
\begin{aligned}
\frac{\partial h}{\partial G}:A\times W&\to\mc{L}_{b}(G,F),
\\
(x,t)&\mapsto\mr{d}(h^{t})(x).
\end{aligned}
\end{equation*}
\item
$(\exists\,\mr{S}:A\to\mc{P}(W))(\forall x\in A)(|\mu|^{\bullet}(\complement_{T}\mr{S}_{x})=0)$;
\item
$\mr{S}_{x}$ is $\mu$-measurable for every $x\in A$;
\item
for every $x\in A$ there exists a map $r_{x}\in\mc{C}_{c}(T,F)$ such that 
$h_{x}\circ\imath_{\mr{S}_{x}}^{T}=r_{x}\circ\imath_{\mr{S}_{x}}^{T}$;
\label{01031245hy7b}
\item
for every $x\in A$ there exists a map $j_{x}\in\mc{C}_{c}(T,\mc{L}_{b}(G,F))$ such that 
$\left(\frac{\partial h}{\partial G}\right)_{x}\circ\imath_{\mr{S}_{x}}^{W}=j_{x}\circ\imath_{\mr{S}_{x}}^{T}$;
\label{01031245hy8b}
\item
$(\exists\,\mr{T}:\Gamma_{F}\times A\to\mc{P}(W))(\forall(q,x)\in\Gamma_{F}\times A)(|\mu|^{\bullet}(\complement_{T}\mr{T}_{q,x})=0)$,
where $\Gamma_{F}$ is a fundamental set of seminorms of $F$.
\end{enumerate}
Thus 
\begin{equation*}
f:A\to F,\,x\mapsto\int h_{x}\,d\mu,
\end{equation*}
and 
\begin{equation*}
z:A\to\mc{L}_{b}(G,F),\,x\mapsto\int\left(\frac{\partial h}{\partial G}\right)_{x}\,d\mu,
\end{equation*}
are well-defined maps, $f$ is differentiable and 
\begin{equation*}
(\forall x\in A)(f^{1}(x)=z(x)).
\end{equation*}
\end{corollary}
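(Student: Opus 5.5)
This is the differentiation-under-the-integral result for vector measures. The plan is to reduce it to the scalar case via continuous linear functionals and then recover the vector statement using quasi-completeness.

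\emph{Step 1: well-definedness of $f$ and $z$.} Fix $x\in A$. Hypothesis \eqref{01031245hy7b} says $h_{x}$ coincides $\mu$-almost everywhere (on $\mr{S}_{x}$, whose complement is $|\mu|$-negligible) with $r_{x}\in\mc{C}_{c}(T,F)$. Since $F$ is quasi-complete Hausdorff, \cite[III.38 Cor.2]{IntBourb} shows that $r_{x}$ is $\mu$-integrable with integral in $F$. Therefore $h_{x}$ is integrable and $\int h_{x}\,d\mu=\int r_{x}\,d\mu$.

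The same argument works for $(\partial h/\partial G)_{x}$ through $j_{x}$ of \eqref{01031245hy8b}. Here the target is $\mc{L}_{b}(G,F)$, which is quasi-complete Hausdorff by Lemma \ref{12311713}, so $z$ is well defined.

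\emph{Step 2: reduction to functionals and seminorms.} Fix $x\in A$ and $\delta$ small with $x+\delta\in A$. We must show that for every $q\in\Gamma_{F}$,
\[
q\Bigl(\int \bigl(h_{x+\delta}-h_{x}-\tfrac{\partial h}{\partial G}(x,\cdot)\delta\bigr)\,d\mu\Bigr)=o(\|\delta\|).
\]
Here the integral of $(\partial h/\partial G)_{x}\delta$ equals $z(x)\delta$, because evaluation at $\delta$ is continuous (Rmk.\ref{10231117}) and commutes with weak integrals.

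\emph{Step 3: pointwise mean value estimate.} Apply the seminorm inequality \cite[VI.5 Prp.6]{IntBourb}, so it suffices to bound $\int q(\cdots)\,d|\mu|$. Pointwise, for $t$ in $\mr{S}_{x}\cap\mr{S}_{x+\delta}\cap\mr{T}_{q,x}$ (whose complement is negligible), the mean value inequality for $h^{t}$ gives
\[
q\bigl(h^{t}(x+\delta)-h^{t}(x)-\mr{d}h^{t}(x)\delta\bigr)\le\|\delta\|\sup_{s\in[0,1]}\overset{1}{q}\bigl(\mr{d}h^{t}(x+s\delta)-\mr{d}h^{t}(x)\bigr).
\]
One proves this by composing with functionals in the polar of $q$ and using the scalar mean value theorem.

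\emph{Step 4: the main obstacle, uniform smallness.} We need the right-hand supremum to tend to $0$ uniformly in $t$ on a set of finite $|\mu|$-measure. My first attempt would use continuous maps of compact support: the integrands are dominated by the compactly supported continuous maps $j_{x+s\delta}$ on the good sets, so all relevant supports sit inside the union of the supports of the $j$'s. The plan is to show that $(s,t)\mapsto j_{x+s\delta}(t)$ is jointly continuous on the compact set $[0,1]\times\mr{supp}$, so that compactness gives uniform continuity in $t$. Joint continuity is precisely what the hypotheses do not directly assert. I expect the argument to need that the representatives $j_{x'}$ can be chosen with supports in a fixed compact set for $x'$ near $x$, with uniform equicontinuity.

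Failing that, I would combine dominated convergence in each scalar seminorm with the fact that $\mr{T}_{q,x}$ is designed to carry exactly this control. Once the supremum is shown to be $o(1)$ uniformly on a set of finite measure, integrating the bound of Step 3 gives $o(\|\delta\|)$. That yields differentiability of $f$ with $f^{1}(x)=z(x)$, and $z(x)\in\mc{L}(G,F)$ by the same boundedness estimate used in Thm.\ref{07222143}.
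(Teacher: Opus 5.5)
\textbf{Step 4 is a genuine gap, and it cannot be closed from the hypotheses as printed.} Steps 1--3 are sound.

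Hypothesis (8) is vacuous: $\mr{T}_{q,x}\coloneqq W$ satisfies it by (2). So your fallback, that $\mr{T}_{q,x}$ ``is designed to carry exactly this control'', supplies nothing. Hypotheses (1)--(7) only give information at each fixed $x$. In fact the statement as written is false:
\begin{itemize}
\item Take $T=G=A=F=\R$, $\mu=\lambda_{\R}$, and $\psi\in\mc{C}_{c}^{\infty}(\R)$ with $\psi\geq 0$, $\mr{supp}(\psi)\subseteq[1,2]$, $\int\psi\,d\lambda_{\R}=1$.
\item Put $h(x,t)\coloneqq\psi(t/x)$ for $x>0$ and $h(x,t)\coloneqq 0$ for $x\leq 0$.
\item Every $h^{t}$ is smooth: it vanishes identically if $t\leq 0$, and on $(-\infty,t/2)$ if $t>0$.
\item For $x>0$, both $h_{x}$ and $t\mapsto\frac{\partial h}{\partial G}(x,t)=-tx^{-2}\psi^{\prime}(t/x)$ are continuous with support in $[x,2x]$. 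Both vanish for $x\leq 0$.
\item Hence (1)--(8) hold with $W=\mr{S}_{x}=\mr{T}_{q,x}=\R\setminus\{0\}$.
\item Yet $f(x)=\max\{x,0\}$ is not differentiable at $0$, whereas $z(0)=0$.
\end{itemize}
In this example the supports of the $j_{x^{\prime}}$, $0<x^{\prime}<1/2$, already lie in the fixed compact set $[0,1]$. So a fixed compact support is not the missing piece. What fails is any local bound, since $\sup_{t}|j_{x^{\prime}}(t)|$ is of order $1/x^{\prime}$.

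\textbf{The missing ingredient is a local domination hypothesis,} presumably what (8) was meant to contain. For every $(q,x)\in\Gamma_{F}\times A$ one needs a ball $B\subseteq A$ centred at $x$ and a $|\mu|$-integrable $g\geq 0$ such that $\overset{1}{q}\bigl(\frac{\partial h}{\partial G}(y,t)\bigr)\leq g(t)$ for all $y\in B$ and $t\in\mr{T}_{q,x}$. In the paper's main use, Thm.\ref{07222143}, this follows by a tube-lemma argument: $D_{1}G_{u}$ is continuous on $A\times U$, and the integrands there share the compact support $\xi(\mr{supp}(f))$.

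Given such a $g$, do not try to make $\sup_{s\in[0,1]}\overset{1}{q}\bigl(\mr{d}h^{t}(x+s\delta)-\mr{d}h^{t}(x)\bigr)$ small. That would need continuity of $\mr{d}h^{t}$, which is not assumed. Routing the bound through the representatives $j_{x+s\delta}$ would also force the uncountable intersection $\bigcap_{s\in[0,1]}\mr{S}_{x+s\delta}$, which need not be conull. Instead, let $R_{\delta}(t)\coloneqq h^{t}(x+\delta)-h^{t}(x)-\mr{d}h^{t}(x)\delta$. Then:
\begin{itemize}
\item For $t\in W$, $q(R_{\delta}(t))/\|\delta\|\to 0$ by differentiability of $h^{t}$ at $x$ alone.
\item For $t\in\mr{T}_{q,x}$ and $x+\delta\in B$, the mean value inequality gives $q(R_{\delta}(t))\leq 2\|\delta\|g(t)$.
\item $q\circ R_{\delta}$ coincides a.e.\ with $q$ composed with the continuous map $t\mapsto r_{x+\delta}(t)-r_{x}(t)-j_{x}(t)\delta$, hence is measurable.
\end{itemize}
Dominated convergence along an arbitrary sequence $\delta_{k}\to\ze$ then gives $\int q(R_{\delta_{k}})\,d|\mu|=o(\|\delta_{k}\|)$; sequences suffice because $G$ is a metric space. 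The seminorm inequality from your Step 3 concludes.
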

\begin{remark}
\label{08190319}
Under the hypothesis of Cor.\ref{01031245b} let $w\in G$. Thus, $\mr{ev}_{w}\in\mc{L}(\mc{L}_{b}(G,F),F)$ therefore we have that 
\begin{equation*}
(\forall x\in A)\left(z(x)(w)=\int\mr{ev}_{w}\circ\bigl(\frac{\partial h}{\partial G}\bigr)_{x}\,d\mu\right).
\end{equation*}
\end{remark}
\begin{proposition}
[Partial differential and directional derivative. Numerical case]
\label{08091052p}
Let $E$ be a Hausdorff locally convex space, $n\in\Z_{>0}$, $X=\prod_{i=1}^{n}X_{i}$ with $X_{i}=\R$ for every $i\in[1,n]\cap\Z$, 
$A$ be an open set of $X$, $x\in A$ and $f:A\to E$ be differentiable at $x$, let $e[n]$ be the standard basis of $X$. 
Then for every $k\in[1,n]\cap\Z$ we have 
\begin{equation*}
(\mr{D}_{k}f)(x)(1)=(D_{e_{k}[n]}f)(x).
\end{equation*}
\end{proposition}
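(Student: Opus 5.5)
We take $f:A\to E$ differentiable at $x$ with $A$ open in $X=\R^{n}$, identified as the product of $n$ copies of $\R$, and reduce the claim to the general statement Prp.\ref{08091052}. That result gives
\begin{equation*}
(\mr{D}_{k}f)(x)(e_{k})=(D_{\mf{i}_{k}(e_{k})}f)(x)
\end{equation*}
for every $e_{k}\in X_{k}$. In the present case $X_{k}=\R$, so we choose $e_{k}=1\in X_{k}$. The whole proof then amounts to checking that $\mf{i}_{k}(1)$ equals the $k$-th standard basis vector $e_{k}[n]$ of $X$.

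\textbf{Identifying the basis vector.} By the definition of $\mf{i}_{k}$ in Notation \ref{12152204}, $\Pr_{i}\circ\mf{i}_{k}$ is the identity of $X_{k}$ when $i=k$ and the zero map when $i\neq k$. Hence $\Pr_{i}(\mf{i}_{k}(1))=\delta_{i,k}$ for every $i\in[1,n]\cap\Z$. A point of $X=\R^{n}$ is determined by its coordinates, so $\mf{i}_{k}(1)=e_{k}[n]$.

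\textbf{Conclusion.} Substituting into the displayed identity gives $(\mr{D}_{k}f)(x)(1)=(D_{e_{k}[n]}f)(x)$. Since $\|e_{k}[n]\|=1$ in the $\max$-norm, the directional derivative on the right is the one provided by Prp.\ref{08091051} when $f$ is differentiable. Alternatively, Cor.\ref{02211942} gives $(\mr{D}_{k}f)(x)(1)=f^{(1)}(x)(\mf{i}_{k}(1))=f^{(1)}(x)(e_{k}[n])$, and this equals $(D_{e_{k}[n]}f)(x)$ by Prp.\ref{08091051}.

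I expect no real obstacle. The only care needed is in the identification $\mf{i}_{k}(1)=e_{k}[n]$, and in checking that the normed structure on $X$, with the $\max$-norm, agrees with the one under which Prp.\ref{08091052} and Prp.\ref{08091051} are stated, so that $e_{k}[n]$ has unit norm.
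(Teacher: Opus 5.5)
Your argument is correct and is the intended one: the paper lists this proposition in the appendix without proof, as the immediate specialization of Prp.~\ref{08091052} to $X_{k}=\R$ and $e_{k}=1$, using $\mf{i}_{k}(1)=e_{k}[n]$. Your check that $\|e_{k}[n]\|_{\max}=1$, so that the right-hand side is the directional derivative of Prp.~\ref{08091051}, covers the only point that needs care.
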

\begin{corollary}
[Divergence in a chart. Semi-Riemannian case]
\label{03290108}
If $(U,\phi)$ is a \emph{positively oriented} chart of a semi-Riemannian manifold $(M,g)$, then for every $X\in\mr{X}_{U}$ by 
letting $X_{\phi}^{i}\coloneqq\widehat{dx_{i}^{\phi}}(X)$ we have 
\begin{equation*}
\mr{div}_{g[U]}(X)
=
\sum_{i\in\mathbb{I}_{n}}\partial_{i}^{\phi}X_{\phi}^{i}
+
\frac{(-1)^{s}}{2}
\sum_{i\in\mathbb{I}_{n}}
\frac{X_{\phi}^{i}}{\mr{det}(M^{\phi}(g))}
\partial_{i}^{\phi}(\mr{det}(M^{\phi}(g))).
\end{equation*}
\end{corollary}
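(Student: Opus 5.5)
The plan is to work directly from the defining property of the divergence, namely $\mr{L}_{X}v_{g}=\mr{div}_{g}(X)\,v_{g}$, where $v_{g}$ is the Riemannian volume form of $(M,g)$. I will combine this with Cartan's formula $\mr{L}_{X}=d\circ\iota_{X}+\iota_{X}\circ d$. Since $v_{g}$ is a top-degree form, $dv_{g}=\ze$, and so $\mr{L}_{X}v_{g}=d(\iota_{X}v_{g})$. Everything therefore reduces to a computation in the chart $(U,\phi)$.

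\textbf{Step 1: coordinate expression of the volume form.} Because $(U,\phi)$ is positively oriented, on $U$ we have
\[
v_{g}=f\,dx_{1}^{\phi}\wedge\dots\wedge dx_{n}^{\phi},\qquad f\coloneqq\sqrt{|\mr{det}(M^{\phi}(g))|}.
\]
Here $s$ is the index of $g$. Since the index is constant on $U$, $\mr{det}(M^{\phi}(g))$ has constant sign $(-1)^{s}$, so $|\mr{det}(M^{\phi}(g))|=(-1)^{s}\mr{det}(M^{\phi}(g))$ and $f$ is smooth and nowhere zero.

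\textbf{Step 2: compute the contraction and its exterior derivative.} Write $X=\sum_{i}X_{\phi}^{i}\partial_{i}^{\phi}$. Then
\[
\iota_{X}v_{g}=\sum_{i}(-1)^{i-1}fX_{\phi}^{i}\,dx_{1}^{\phi}\wedge\dots\wedge\widehat{dx_{i}^{\phi}}\wedge\dots\wedge dx_{n}^{\phi}.
\]
Applying $d$, only the $\partial_{i}^{\phi}$-derivative survives in the $i$-th term, and the sign $(-1)^{i-1}$ is absorbed when $dx_{i}^{\phi}$ is moved back into position $i$. This gives
\[
d(\iota_{X}v_{g})=\sum_{i}\partial_{i}^{\phi}(fX_{\phi}^{i})\,dx_{1}^{\phi}\wedge\dots\wedge dx_{n}^{\phi}.
\]
Comparing with $\mr{div}_{g}(X)\,f\,dx_{1}^{\phi}\wedge\dots\wedge dx_{n}^{\phi}$ and dividing by $f$, I obtain
\[
\mr{div}_{g[U]}(X)=\sum_{i}\partial_{i}^{\phi}X_{\phi}^{i}+\sum_{i}X_{\phi}^{i}\,\frac{\partial_{i}^{\phi}f}{f}.
\]

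\textbf{Step 3: the logarithmic derivative of $f$.} Writing $f=\bigl((-1)^{s}\mr{det}(M^{\phi}(g))\bigr)^{1/2}$, I get
\[
\frac{\partial_{i}^{\phi}f}{f}=\frac{1}{2}\,\frac{(-1)^{s}\,\partial_{i}^{\phi}\mr{det}(M^{\phi}(g))}{(-1)^{s}\,\mr{det}(M^{\phi}(g))}.
\]
Substituting this into Step 2 gives the claimed formula.

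\textbf{Main obstacle: the sign factor.} I expect the only delicate point to be the bookkeeping of the factor $(-1)^{s}$. In Step 3 it cancels between numerator and denominator, so the logarithmic derivative equals $\tfrac{1}{2}\,\partial_{i}^{\phi}\mr{det}/\mr{det}$ with no sign. The factor $(-1)^{s}$ appearing in the stated formula therefore has to be reconciled with whatever convention for $\mr{det}(M^{\phi}(g))$ is fixed where Cor.\ref{03290108} is quoted from.

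I would first check whether that source uses $|\mr{det}|$ (or a signed variant) in the denominator. If it does, the stated formula follows verbatim, since $|\mr{det}|=(-1)^{s}\mr{det}$. If it does not, the sign is harmless in the paper's use at Cor.\ref{05112017}: there $g$ is Euclidean, so $s=0$ and $\mr{det}(M^{\phi}(g))=1$, and the second sum vanishes in any case.
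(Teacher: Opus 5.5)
The paper gives no proof of this corollary: it is listed in the Appendix as standard background material, so there is no argument of the paper's to compare yours with. Your Steps 1--3 are the standard derivation: $\mr{L}_{X}v_{g}=d(\iota_{X}v_{g})$, the coordinate form of $v_{g}$ in a positively oriented chart, and the logarithmic derivative of $\sqrt{|\mr{det}(M^{\phi}(g))|}$. Each step is correct, and together they establish
\[
\mr{div}_{g[U]}(X)=\sum_{i}\partial_{i}^{\phi}X_{\phi}^{i}+\frac{1}{2}\sum_{i}\frac{X_{\phi}^{i}}{\mr{det}(M^{\phi}(g))}\,\partial_{i}^{\phi}\bigl(\mr{det}(M^{\phi}(g))\bigr).
\]

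What you should fix is the closing hedge. It needs to be a definite verdict, not an open question about conventions. The paper's Notation fixes $\mr{det}$ as the ordinary signed determinant on $\mathbb{M}_{n}(\R)$. With that reading, the displayed statement is false whenever the index $s$ is odd, so no argument can prove it as written.

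Here is a counterexample. Take $\R^{2}$ with $g=-e^{2a(x_{1})}dx_{1}^{2}+dx_{2}^{2}$ (so $s=1$), the identity chart, and $X=\partial_{1}$.
\begin{itemize}
\item The true value: $v_{g}=e^{a}\,dx_{1}\wedge dx_{2}$, so $d(\iota_{X}v_{g})=d(e^{a}dx_{2})=a'e^{a}\,dx_{1}\wedge dx_{2}$ and $\mr{div}(X)=a'$.
\item The stated right-hand side: $\mr{det}=-e^{2a}$, so it gives $-\frac{1}{2}\cdot\frac{-2a'e^{2a}}{-e^{2a}}=-a'$.
\end{itemize}

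Your own Step 3 shows that $\partial_{i}^{\phi}|\mr{det}|/|\mr{det}|=\partial_{i}^{\phi}\mr{det}/\mr{det}$. Hence the factor $(-1)^{s}$ is correct only if exactly one of the two occurrences of $\mr{det}(M^{\phi}(g))$ is replaced by $|\mr{det}(M^{\phi}(g))|$. An example is $\frac{(-1)^{s}}{2}X_{\phi}^{i}\,\partial_{i}^{\phi}\mr{det}/|\mr{det}|$, which is presumably how the printed formula arose.

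Your sentence about ``$|\mr{det}|$ in the denominator'' is therefore right only under that proviso. With absolute values in both places, or signed determinants in both, the $(-1)^{s}$ must be dropped.

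So record the corollary in the corrected form above. As you already observed, the paper's only use of it is unaffected: in Cor.~\ref{05112017} the metric on $V$ is Euclidean and the chart is the identity, so $s=0$, $\mr{det}=1$, and the second sum vanishes.
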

\begin{corollary}
[Divergence Theorem]
\label{03161712}
Assume that $M$ is oriented and has boundary and that either $\zeta\in(\mr{X}_{M})_{c}$ and $\omega\in\mr{Vl}(M)$ or 
$\zeta\in\mr{X}_{M}$ and $\omega\in\mr{Vl}_{c}(M)$ thus, 
\begin{equation*}
\int\mr{div}_{\omega}(\zeta)\omega=\int((\imath_{\partial M}^{M})^{\ast}\circ\mf{i}_{\zeta})(\omega).
\end{equation*}
Here $\mr{Vl}(M)$ (respectively $\mr{Vl}_{c}(M)$) is the set of volume forms on $M$ (respectively compactly supported).
\end{corollary}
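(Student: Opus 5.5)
The plan is to reduce the statement to Stokes' theorem for compactly supported $(n-1)$-forms on an oriented manifold with boundary, using the defining property of the divergence with respect to a volume form.

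\textbf{Step 1: $\mr{div}_{\omega}(\zeta)\omega$ as an exact form.} By definition of $\mr{div}_{\omega}$ (the same notion used in the divergence formula Cor.\ref{03290108}), for $\omega\in\mr{Vl}(M)$ and $\zeta\in\mr{X}_{M}$ we have $\mathcal{L}_{\zeta}\omega=\mr{div}_{\omega}(\zeta)\omega$. Cartan's magic formula gives $\mathcal{L}_{\zeta}\omega=d(\mf{i}_{\zeta}\omega)+\mf{i}_{\zeta}(d\omega)$. Since $\omega$ is a top-degree form, $d\omega=\ze$. Hence
\begin{equation*}
\mr{div}_{\omega}(\zeta)\omega=d(\mf{i}_{\zeta}\omega).
\end{equation*}

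\textbf{Step 2: compact support in both cases.} The $(n-1)$-form $\mf{i}_{\zeta}\omega$ has support contained in $\mr{supp}(\zeta)\cap\mr{supp}(\omega)$. In the first case $\zeta\in(\mr{X}_{M})_{c}$, so $\mr{supp}(\zeta)$ is compact. In the second case $\omega\in\mr{Vl}_{c}(M)$, so $\mr{supp}(\omega)$ is compact. In either case $\mf{i}_{\zeta}\omega$ is a smooth compactly supported $(n-1)$-form. Consequently $\mr{div}_{\omega}(\zeta)\omega$ is compactly supported, and the left-hand side integral is well defined.

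\textbf{Step 3: apply Stokes' theorem.} Apply Stokes' theorem to $\mf{i}_{\zeta}\omega$ on the oriented manifold with boundary $M$, with $\partial M$ carrying the induced (Stokes) orientation. This gives
\begin{equation*}
\int d(\mf{i}_{\zeta}\omega)=\int_{\partial M}(\imath_{\partial M}^{M})^{\ast}(\mf{i}_{\zeta}\omega).
\end{equation*}
Combining this with Step 1 yields exactly the stated identity.

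\textbf{Main obstacle.} The only delicate point is making sure that the definition of $\mr{div}_{\omega}$ in force is the Lie-derivative one, $\mathcal{L}_{\zeta}\omega=\mr{div}_{\omega}(\zeta)\omega$. If instead the divergence is defined through the local formula of Cor.\ref{03290108} (the metric-volume case), I would first check this equivalence in a positively oriented chart. Writing $\omega=f\,dx_{1}\wedge\dots\wedge dx_{n}$ with $f>0$, one computes $d(\mf{i}_{\zeta}\omega)=\sum_{i}\partial_{i}(f\zeta^{i})\,dx_{1}\wedge\dots\wedge dx_{n}$. This is $f^{-1}\sum_{i}\partial_{i}(f\zeta^{i})$ times $\omega$, which matches the chart formula when $f=\sqrt{|\mr{det}(M^{\phi}(g))|}$. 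Once this identification is in place, the rest is routine.
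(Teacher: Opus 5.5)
Your argument is correct and is the standard one: first $\mr{div}_{\omega}(\zeta)\omega=d(\mf{i}_{\zeta}\omega)$, obtained from Cartan's formula with $d\omega=\ze$ or directly from your chart computation (which also avoids any question about flows near $\partial M$), and then Stokes' theorem applied to the compactly supported $(n-1)$-form $\mf{i}_{\zeta}\omega$. The paper gives no proof of its own here; it lists this corollary in the Appendix as standard background, so there is no different route to compare against.
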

\begin{definition}
\label{03011632}
Let $n\in\Z_{>0}$, $\mc{E}=\{E_{i}\}_{i=1}^{n}$ and $\mc{G}=\{G_{i}\}_{i=1}^{n}$ be sequences of normed spaces, 
$F$ be a Hausdorff locally convex space, and $Z=\bigtimes_{i=1}^{n}Q_{i}$ with $Q\in\prod_{i=1}^{n}\mc{L}(E_{i},G_{i})$. 
Thus we can define
\begin{equation*}
\begin{cases}
Z^{\intercal}:\mc{L}(G_{1},\dots,G_{n};F)\to\mc{L}(E_{1},\dots,E_{n};F);
\\
f\mapsto f\circ Z,
\end{cases}
\end{equation*}
where $Z:\prod_{i\in[1,n]\cap\Z}E_{i}\to\prod_{i\in[1,n]\cap\Z}G_{i}$, $(x_{1},\dots,x_{n})\mapsto(Q_{1}(x_{1}),\dots,Q_{n}(x_{n}))$, while 
$\mc{L}(E_{1},\dots,E_{n};F)$ is the space of continuous $n$-linear maps from $\prod_{i\in[1,n]\cap\Z}E_{i}$ to $F$.
\end{definition}
\begin{proposition}
\label{06301252}
Let $n\in\Z_{>0}$, $\mc{E}=\{E_{i}\}_{i=1}^{n}$ and $\mc{G}=\{G_{i}\}_{i=1}^{n}$ be sequences of normed 
spaces, $F$ be a Hausdorff locally convex space. Thus Def. \ref{03011632} is well-set. 
Moreover if $Q\in\prod_{i=1}^{n}\mc{L}(E_{i},G_{i})$, then for every continuous seminorm $\|\cdot\|$ of $F$ 
we have $\|(\bigtimes_{i=1}^{n}Q_{i})^{\intercal}(f)\|_{\sup}\leq\|f\|_{\sup}\prod_{i=1}^{n}\|Q_{i}\|$ and 
$(\bigtimes_{i=1}^{n}Q_{i})^{\intercal}$ is linear therefore by the above inequality continuous and 
$\|(\bigtimes_{i=1}^{n}Q_{i})^{\intercal}\|\leq\prod_{i=1}^{n}\|Q_{i}\|$. 
Here by letting $E=\prod_{i\in[1,n]\cap\Z}E_{i}$ we set for every continuous seminorm $\|\cdot\|$ of $F$ 
\begin{equation*}
\begin{cases}
\|\cdot\|_{\sup}:\mc{L}(E_{1},\dots,E_{n};F)\to\R_{+};
\\
f\mapsto\sup\bigl\{\|f(x)\|\,\vert\,x\in E\wedge(\forall i\in[1,n]\cap\Z)(\|\Pr_{i}x\|_{E_{i}}\leq 1)\bigr\}.
\end{cases}
\end{equation*}
\end{proposition}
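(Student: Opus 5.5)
We take the items of the statement in order: well-definedness of $Z^{\intercal}$, the seminorm inequality, and then linearity and continuity, which follow from the inequality.

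\emph{Well-definedness.} Let $f\in\mc{L}(G_{1},\dots,G_{n};F)$ and $Z=\bigtimes_{i=1}^{n}Q_{i}$. We check that $f\circ Z\in\mc{L}(E_{1},\dots,E_{n};F)$.
\begin{itemize}
\item Multilinearity: fix all coordinates except the $k$-th. The map $x_{k}\mapsto(f\circ Z)(x)$ is the composition of the linear map $Q_{k}$ with the map obtained from $f$ by freezing all entries except the $k$-th, which is linear. Hence it is linear.
\item Continuity: $Z$ is continuous from $\prod_{i}E_{i}$ to $\prod_{i}G_{i}$, since each $\Pr_{i}\circ Z=Q_{i}\circ\Pr_{i}$ is continuous (\cite[I.25 Prp.1]{BourGT}). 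Composing with the continuous $f$ gives a continuous map.
\end{itemize}
So $Z^{\intercal}$ is well-defined.

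\emph{The inequality.} Fix a continuous seminorm $\|\cdot\|$ of $F$ and $x\in E$ with $\|\Pr_{i}x\|_{E_{i}}\leq 1$ for all $i$. We split into two cases.
\begin{itemize}
\item If $\|Q_{k}\|=0$ for some $k$, then $Q_{k}=\ze$. Since $f$ vanishes when one entry is $\ze$, we get $f\circ Z=\ze$ and the inequality is trivial.
\item Otherwise, multilinearity gives
\begin{equation*}
f(Q_{1}x_{1},\dots,Q_{n}x_{n})=\Bigl(\prod_{i=1}^{n}\|Q_{i}\|\Bigr)\,f\bigl(\|Q_{1}\|^{-1}Q_{1}x_{1},\dots,\|Q_{n}\|^{-1}Q_{n}x_{n}\bigr).
\end{equation*}
Each argument $\|Q_{i}\|^{-1}Q_{i}x_{i}$ lies in the closed unit ball of $G_{i}$. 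Applying $\|\cdot\|$ and taking the supremum over $x$ yields $\|Z^{\intercal}(f)\|_{\sup}\leq\|f\|_{\sup}\prod_{i}\|Q_{i}\|$.
\end{itemize}

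\emph{Linearity and continuity.} Linearity of $Z^{\intercal}$ is immediate, since $(\alpha f+\beta g)\circ Z=\alpha(f\circ Z)+\beta(g\circ Z)$. For continuity we use the fact recalled in Notation \ref{12152204}: on multilinear maps between normed spaces, the topology of uniform convergence on bounded sets is generated by the seminorms $\overset{n}{p}$, $p\in\Gamma_{F}$. These are exactly the $\|\cdot\|_{\sup}$ above, taken over closed unit balls. The inequality then says that each generating seminorm of the target, composed with the linear map $Z^{\intercal}$, is dominated by a constant times the same seminorm on the source. This gives continuity, and the operator bound $\|Z^{\intercal}\|\leq\prod_{i}\|Q_{i}\|$ seminorm by seminorm.

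\emph{Main obstacle.} The only delicate point is the case where some $Q_{i}$ has norm zero. This is the case that prevents dividing by $\|Q_{i}\|$ and is handled separately above.
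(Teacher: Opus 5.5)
Your proof is correct and complete. The checks are the right ones:
\begin{itemize}
\item the componentwise verification that $f\circ Z$ is multilinear and continuous;
\item the rescaling by $\prod_{i}\|Q_{i}\|$, with the degenerate case $Q_{k}=\ze$ handled separately;
\item the domination $\overset{n}{p}(Z^{\intercal}f)\leq\prod_{i}\|Q_{i}\|\,\overset{n}{p}(f)$ of the generating seminorms of $\mc{L}_{b}^{n}(E_{1},\dots,E_{n};F)$, which gives continuity.
\end{itemize}

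The paper lists this proposition in its Appendix as standard background and gives no proof, so there is no alternative argument to compare against.

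The one point you leave implicit is that $\|f\|_{\sup}<\infty$. This is built into the paper's Notation, but it follows directly. Continuity of $f$ at $\ze$ gives $\delta>0$ with $\|f(y)\|\leq 1$ whenever $\|\Pr_{i}y\|\leq\delta$ for all $i$. Multilinearity then gives $\|f\|_{\sup}\leq\delta^{-n}$.
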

\begin{theorem}
[Taylor expansion]
\label{8143}
Let $p\in\Z_{\geq 2}$, $E$ be a normed space, $F$ be a Hausdorff locally convex space, $A$ be an open set of $E$ and
$f:A\to F$ be $p$ times continuously differentiable. Thus for every $x\in A$ and 
$t\in E$ such that $[x,x+t]\subset A$, we have
\begin{equation}
\label{8143a}
f(x+t)=f(x)+\sum_{k=1}^{p-1}\frac{(\Upgamma_{\lr{E}{k,F}}\circ f^{k})(x)(t^{k})}{k!}
+
\left(\int_{0}^{1}\frac{(1-\zeta)^{p-1}}{(p-1)!}
(\Upgamma_{\lr{E}{p,F}}\circ f^{p})(x+\zeta t)\,d\zeta\right)(t^{p});
\end{equation}
where $t^{k}\in E^{k}$ such that $\Pr_{s}(t^{k})=t$ for every $s\in[1,k]\cap\Z$ and $k\in[1,p-1]\cap\Z$.
Moreover we have 
\begin{equation}
\label{8143b}
\lim_{t\to\ze,\,t\neq\ze}\|t\|^{-p}(f(x+t)-f(x)-\sum_{k=1}^{p}\frac{(\Upgamma_{\lr{E}{k,F}}\circ f^{k})(x)(t^{k})}{k!})=\ze.
\end{equation}
Here for every $r\in\Z_{>0}$ we let $\Upgamma_{\lr{E}{r,F}}$ be the inverse of the following linear continuous isomorphism
\begin{equation*}
\Uptheta_{\mc{E},F}\in\mc{L}\bigl(\mc{L}_{b}(E_{1},\dots,E_{r};F),\mc{L}_{b}(E_{1},\mc{L}(E_{2},\dots,\mc{L}(E_{r},F)\dots))\bigr),
\end{equation*}
defined by $\Uptheta_{\mc{E},F}(f)(x_{1})(x_{2})\dots(x_{r})=f(x_{1},x_{2},\dots,x_{r})$ for every $x\in\prod_{i=1}^{r}E_{i}$
with $E_{i}=E$ for every $i\in[1,r]\cap\Z$.
\end{theorem}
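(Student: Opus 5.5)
We prove the Taylor formula \eqref{8143a} by reducing it to the scalar, one-variable Taylor formula through continuous linear functionals, and then derive the little-$o$ estimate \eqref{8143b} from \eqref{8143a} by a seminorm estimate on the remainder.

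\emph{Step 1: reduction to one real variable.} Fix $x\in A$ and $t\in E$ with $[x,x+t]\subset A$. Since $[x,x+t]$ is compact and $A$ is open, there is $\epsilon>0$ such that $x+\zeta t\in A$ for all $\zeta\in I_{\epsilon}\coloneqq\,]-\epsilon,1+\epsilon[$. Define
\begin{equation*}
g:I_{\epsilon}\to F,\quad \zeta\mapsto f(x+\zeta t).
\end{equation*}
We show by induction on $k\in[1,p]\cap\Z$ that $g$ is $k$ times continuously differentiable and that
\begin{equation*}
g^{(k)}(\zeta)=(\Upgamma_{\lr{E}{k,F}}\circ f^{k})(x+\zeta t)(t^{k}).
\end{equation*}
For $k=1$ this is Thm.\ref{821} applied to the affine map $\zeta\mapsto x+\zeta t$, whose differential is $s\mapsto st$, together with Prp.\ref{813}. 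For the inductive step, write $g^{(k)}=\mr{ev}_{t}\circ\cdots\circ\mr{ev}_{t}\circ f^{k}\circ(x+\cdot\,t)$ with $k$ evaluations. Each $\mr{ev}_{t}$ is continuous and linear by Rmk.\ref{10231117}, so differentiating once more with Thm.\ref{821} and Prp.\ref{813} gives the formula for $k+1$. Continuity of each $g^{(k)}$ follows from continuity of $f^{k}$ and of the evaluations.

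\emph{Step 2: the integral formula \eqref{8143a}.} The map
\begin{equation*}
\zeta\mapsto\frac{(1-\zeta)^{p-1}}{(p-1)!}(\Upgamma_{\lr{E}{p,F}}\circ f^{p})(x+\zeta t)
\end{equation*}
is continuous on the compact interval $[0,1]$ with values in $\mc{L}_{b}^{p}(E,\dots,E;F)$. We take its integral in the weak sense, as throughout the paper. Its existence is the point I expect to be the main obstacle. The plan is to obtain it from \cite[III.38 Cor.2]{IntBourb}, using that $\mc{L}_{b}$ inherits quasi-completeness (Lemma \ref{12311713}) whenever $F$ is quasi-complete. If $F$ is not assumed quasi-complete, I would instead read the identity in the completion $\widehat{F}$ and then observe that its remaining terms lie in $F$. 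Evaluation at $t^{p}$ is continuous and linear, so it commutes with the weak integral.

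Now fix $\varphi\in\mc{L}(F_{\R},\R)$. The function $\varphi\circ g$ is real-valued and $C^{p}$, with $(\varphi\circ g)^{(k)}=\varphi\circ g^{(k)}$. The classical Taylor formula with integral remainder on $[0,1]$ gives
\begin{equation*}
\varphi(g(1))=\sum_{k=0}^{p-1}\frac{\varphi(g^{(k)}(0))}{k!}+\int_{0}^{1}\frac{(1-\zeta)^{p-1}}{(p-1)!}\varphi(g^{(p)}(\zeta))\,d\zeta .
\end{equation*}
By Step 1 and the commutation of $\varphi$ and $\mr{ev}_{t^{p}}$ with the weak integral, this is exactly $\varphi$ applied to both sides of \eqref{8143a}. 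Since $F$ is Hausdorff, $\mc{L}(F_{\R},\R)$ separates points, and \eqref{8143a} follows.

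\emph{Step 3: the estimate \eqref{8143b}.} Use $\int_{0}^{1}(1-\zeta)^{p-1}/(p-1)!\,d\zeta=1/p!$ to absorb the $k=p$ term of \eqref{8143b} into the integral. The quantity to control then becomes
\begin{equation*}
\left(\int_{0}^{1}\frac{(1-\zeta)^{p-1}}{(p-1)!}\Bigl[(\Upgamma_{\lr{E}{p,F}}\circ f^{p})(x+\zeta t)-(\Upgamma_{\lr{E}{p,F}}\circ f^{p})(x)\Bigr]d\zeta\right)(t^{p}).
\end{equation*}
For $q\in\Gamma_{F}$, apply the seminorm estimate for weak integrals \cite[VI.5, Prp.6]{IntBourb}; alternatively use Hahn--Banach with functionals $|\varphi|\le q$. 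Combined with $q(B(t^{p}))\le\|t\|^{p}\,\overset{p}{q}(B)$, this bounds $q$ of the quantity above by
\begin{equation*}
\frac{\|t\|^{p}}{p!}\sup_{\zeta\in[0,1]}\overset{p}{q}\Bigl((\Upgamma_{\lr{E}{p,F}}\circ f^{p})(x+\zeta t)-(\Upgamma_{\lr{E}{p,F}}\circ f^{p})(x)\Bigr).
\end{equation*}
The supremum tends to $0$ as $t\to\ze$, because $f^{p}$ is continuous at $x$ and $\Upgamma_{\lr{E}{p,F}}$ is continuous. Dividing by $\|t\|^{p}$ yields \eqref{8143b}.
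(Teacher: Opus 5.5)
The paper gives no proof of this statement. It sits in the Appendix among the results described as ``more or less standard'' and is only stated, so there is no argument of the paper to compare yours with. Your proof is correct and is the standard route:
\begin{itemize}
\item restrict $f$ to the line $g(\zeta)=f(x+\zeta t)$;
\item identify $g^{(k)}(\zeta)$ with $(\Upgamma_{\lr{E}{k,F}}\circ f^{k})(x+\zeta t)(t^{k})$ via the chain rule and the continuity of the evaluations;
\item scalarize with functionals in $\mc{L}(F_{\R},\R)$ to import the one-variable Taylor formula with integral remainder;
\item bound the remainder seminorm by seminorm, using $q(B(t^{p}))\leq\|t\|^{p}\,\overset{p}{q}(B)$ and the continuity of $\Upgamma_{\lr{E}{p,F}}\circ f^{p}$ at $x$.
\end{itemize}

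A few details should be made explicit.
\begin{itemize}
\item For \eqref{8143b} you apply \eqref{8143a} to every sufficiently small $t$. This is legitimate because $A$ contains a ball around $x$, and balls are convex, so $[x,x+t]\subset A$.
\item Lemma \ref{12311713} concerns spaces of \emph{linear} maps. To get quasi-completeness of $\mc{L}_{b}^{p}(E,\dots,E;F)$, pass through the topological isomorphism $\Uptheta$ and iterate the lemma. The same applies with $\widehat{F}$ in place of $F$.
\item Your completion device is a sound way to give meaning to the $\mc{L}_{b}^{p}$-valued integral in \eqref{8143a}, which the statement does not justify since it does not assume $F$ quasi-complete.
\item Your Hahn--Banach variant of Step 3, with functionals satisfying $|\varphi|\leq q$, avoids vector-valued integrals in the estimate altogether.
\item In the only place the paper uses the theorem, Thm.\ref{09142108}, the target $\mc{Z}$ is quasi-complete. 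So the quasi-complete case you treat first is the one that actually matters there.
\end{itemize}
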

\begin{proposition}
\label{12031719}
Let $X$ be a topological space, $E$ be a topological linear space and $\mc{A}$ be a topological algebra. 
Thus for every $f,g:X\to E$ and every $a,b:X\to\mc{A}$ we have 
\begin{equation*}
\begin{aligned}
\mr{supp}(f+g)&\subseteq\mr{supp}(f)\cup\mr{supp}(g);
\\
\mr{supp}(a\cdot b)&\subseteq\mr{supp}(a)\cap\mr{supp}(b).
\end{aligned}
\end{equation*}
\end{proposition}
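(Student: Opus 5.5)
The statement is Prp.\ref{12031719}: for $f,g:X\to E$ and $a,b:X\to\mc{A}$ we want $\mr{supp}(f+g)\subseteq\mr{supp}(f)\cup\mr{supp}(g)$ and $\mr{supp}(a\cdot b)\subseteq\mr{supp}(a)\cap\mr{supp}(b)$. I will argue pointwise on the zero sets and then pass to closures, using that the support of a map $h$ is the closure of $\{x\,\vert\,h(x)\neq\ze\}$.

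For the sum, the key pointwise observation is that if $f(x)=\ze$ and $g(x)=\ze$ then $(f+g)(x)=\ze$. Equivalently,
\begin{equation*}
\{x\,\vert\,(f+g)(x)\neq\ze\}\subseteq\{x\,\vert\,f(x)\neq\ze\}\cup\{x\,\vert\,g(x)\neq\ze\}.
\end{equation*}
Taking closures, and using that the closure of a union of two sets is the union of their closures (it is monotone and finite unions of closed sets are closed), gives $\mr{supp}(f+g)\subseteq\mr{supp}(f)\cup\mr{supp}(g)$.

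For the product, $(a\cdot b)(x)=a(x)b(x)$ vanishes whenever either factor vanishes, because multiplication in the algebra $\mc{A}$ is bilinear. Hence
\begin{equation*}
\{x\,\vert\,(a\cdot b)(x)\neq\ze\}\subseteq\{x\,\vert\,a(x)\neq\ze\}\cap\{x\,\vert\,b(x)\neq\ze\}.
\end{equation*}
The closure of an intersection is contained in the intersection of the closures, so $\mr{supp}(a\cdot b)\subseteq\mr{supp}(a)\cap\mr{supp}(b)$.

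None of this uses the topology on $E$ or $\mc{A}$ beyond the definition of support. The only point needing care is the closure of the intersection, where equality can fail, but only the inclusion is required.
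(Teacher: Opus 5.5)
Your proof is correct. You pass from the pointwise inclusions of the nonvanishing sets to supports via $\overline{A\cup B}=\overline{A}\cup\overline{B}$ and $\overline{A\cap B}\subseteq\overline{A}\cap\overline{B}$, and that is exactly the standard argument. The paper gives no proof of Prp.~\ref{12031719}; it only lists it in the Appendix among standard facts.

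Your product step uses nothing beyond bilinearity of the multiplication, so it also covers a scalar function times a vector-valued function. That is the form in which the paper actually invokes the result in Lemma~\ref{10302024}.
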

\begin{proposition}
\label{12061416}
Let $X,Y$ be topological spaces, $E$ be a topological linear space, $T:X\to Y$ and $f:Y\to E$, thus
$\mr{supp}(f\circ T)\subseteq\ov{\overset{-1}{T}(\mr{supp}(f))}$, in particular 
if $T$ is continuous, then $\mr{supp}(f\circ T)\subseteq\overset{-1}{T}(\mr{supp}(f))$.
\end{proposition}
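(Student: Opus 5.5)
The plan is to argue directly from the definition of support, $\mr{supp}(g)\coloneqq\ov{\{x\,\vert\,g(x)\neq\ze\}}$, for any map $g$ from a topological space into a topological linear space. No earlier result of the paper is needed beyond elementary properties of preimages and closures.

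First I will compare the non-vanishing sets. For $x\in X$ we have $(f\circ T)(x)\neq\ze$ iff $f(T(x))\neq\ze$, so
\begin{equation*}
\{x\in X\,\vert\,(f\circ T)(x)\neq\ze\}=\overset{-1}{T}\bigl(\{y\in Y\,\vert\,f(y)\neq\ze\}\bigr).
\end{equation*}
Since $\{y\in Y\,\vert\,f(y)\neq\ze\}\subseteq\mr{supp}(f)$ and preimages preserve inclusions, the left-hand side is contained in $\overset{-1}{T}(\mr{supp}(f))$.

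Next I will take closures in $X$. Closure is monotone, and $\mr{supp}(f\circ T)$ is by definition the closure of the left-hand side, so $\mr{supp}(f\circ T)\subseteq\ov{\overset{-1}{T}(\mr{supp}(f))}$. This is the first claim.

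For the particular case I will use that $\mr{supp}(f)$ is closed in $Y$. If $T$ is continuous, then $\overset{-1}{T}(\mr{supp}(f))$ is closed in $X$, so it equals its own closure, and the second inclusion follows.

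There is no real obstacle in this argument. The only point needing care is to take the closure in $X$, not in $Y$, and to invoke continuity only in the final step, where it turns the preimage of a closed set into a closed set.
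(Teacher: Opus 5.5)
Your proof is correct and complete: the preimage identity for the non-vanishing sets, followed by monotonicity of closure in $X$ and, in the continuous case, closedness of the preimage of the closed set $\mathrm{supp}(f)$, gives both inclusions. The paper states this proposition in the Appendix as standard material without proof, and yours is the standard direct argument from the definition of support.
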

\begin{corollary}
\label{03110819}
Let $E$ be a normed space, $F$ be a Hausdorff locally convex space, $u\in\mc{L}(E,F)$, $A$ be an open set of $E$, then
$u\circ\imath_{A}^{E}$ is differentiable and $(u\circ\imath_{A}^{E})^{\prime}(x)=u$ for every $x\in A$.
\end{corollary}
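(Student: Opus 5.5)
The statement is Cor.\ref{03110819}: for $u\in\mc{L}(E,F)$ and $A$ open in the normed space $E$, the map $u\circ\imath_{A}^{E}$ is differentiable, with differential $u$ at every point of $A$. The plan is to reduce it to two results already in the Appendix, Prp.\ref{813} and Prp.\ref{05231704}.

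First, I apply Prp.\ref{813} to $u$ regarded as a map on all of $E$. This gives that $u:E\to F$ is differentiable and that $u^{\prime}(x)=u$ for every $x\in E$. Next, I fix $x_{0}\in A$ and apply Prp.\ref{05231704} with the open set $A$ of that proposition taken to be $E$, the open set $B$ taken to be our $A$, and $f=u$. Since $A\subseteq E$ is open and $x_{0}\in A$, the proposition says $u\circ\imath_{A}^{E}$ is differentiable at $x_{0}$. It also says its differential there equals $u^{\prime}(x_{0})=u$. Since $x_{0}$ was arbitrary, the statement follows.

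As a self-contained check, independent of those citations, one can verify the definition directly. Choose $r>0$ with $x_{0}+B_{r}(\ze)\subseteq A$. By linearity, for every $h\in B_{r}(\ze)\setminus\{\ze\}$,
\begin{equation*}
\frac{(u\circ\imath_{A}^{E})(x_{0}+h)-(u\circ\imath_{A}^{E})(x_{0})-u(h)}{\|h\|}=\ze .
\end{equation*}
So the limit in the definition of differentiability is trivially $\ze$, and $u\in\mc{L}(E,F)$ is the required continuous linear map. Uniqueness of the differential in a Hausdorff $F$ then identifies it as $(u\circ\imath_{A}^{E})^{\prime}(x_{0})$.

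There is no real obstacle here. The only point needing care is bookkeeping: the symbol $A$ plays different roles in Prp.\ref{05231704} and in this corollary, so the roles of the two open sets must be matched correctly. One should also note that $B\subset A$ in that proposition allows equality, or else rely on the direct verification above.
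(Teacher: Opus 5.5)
Your proof is correct and matches what the paper intends. The paper lists this corollary in the Appendix without proof, as a standard consequence of Prp.\ref{813} and Prp.\ref{05231704}; that is exactly your combination, and the paper itself uses the two results together in the proofs of Lemma \ref{10181658} and Lemma \ref{08280331}. Your direct check from the definition is also sound, and it covers the case where the corollary's $A$ equals $E$, in case $\subset$ in Prp.\ref{05231704} is read as strict inclusion.
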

\begin{corollary}
\label{04121839}
Let $E$ be a normed space, $A$ be an open set of $E$, $m,p\in\Z_{>0}$, $\mc{F}=\{F_{k}\}_{k=1}^{m}$ be a family of 
Hausdorff locally convex spaces, $F=\prod_{k=1}^{m}F_{k}$ be endowed by the product topology
and $f:A\to F$. In order that $f$ be $p$ times continuously differentiable, it is necessary and sufficient that 
for every $k\in[1,m]\cap\Z$ the map $\Pr_{k}^{\mc{F}}\circ f$ be $p$ times continuously differentiable.
\end{corollary}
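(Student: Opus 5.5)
The statement to be proved is Corollary~\ref{04121839}: a map $f:A\to F=\prod_{k=1}^{m}F_{k}$ is $p$ times continuously differentiable iff every component $\Pr_{k}^{\mc{F}}\circ f$ is. I would argue by induction on $p$, using Prp.\ref{815} for the first-order step and Cor.\ref{8126} as a model, which already gives the analogous statement for plain $p$-fold differentiability. The key structural fact is a canonical identification
\begin{equation*}
\Theta:\mc{L}_{b}(E,F)\to\prod_{k=1}^{m}\mc{L}_{b}(E,F_{k}),\qquad S\mapsto(\Pr_{k}^{\mc{F}}\circ S)_{k}.
\end{equation*}
I would first check that $\Theta$ is an isomorphism of topological linear spaces. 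Linearity and bijectivity are clear, with inverse $(S_{k})_{k}\mapsto\boxtimes_{k}S_{k}$. For the topology, a fundamental system of seminorms on $F$ is given by finite maxima of $q_{k}\circ\Pr_{k}^{\mc{F}}$. Then $\sup_{x\in B}\max_{k}q_{k}(\Pr_{k}S(x))=\max_{k}\sup_{x\in B}q_{k}(\Pr_{k}S(x))$ for bounded $B$, so the seminorms on both sides coincide.

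For the base case $p=1$, Prp.\ref{815} gives that $f$ is differentiable iff each $f_{k}:=\Pr_{k}^{\mc{F}}\circ f$ is. In that case $\Theta\circ f^{(1)}=\boxtimes_{k}f_{k}^{(1)}$. A map into a product is continuous iff its components are, and $\Theta$ is a homeomorphism, so $f^{(1)}$ is continuous iff every $f_{k}^{(1)}$ is. The case $p=0$ is just continuity into the product topology.

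For the inductive step, suppose the claim holds for $p-1$ and for arbitrary finite families of Hausdorff locally convex targets. Then $f\in\mc{C}^{p}$ iff $f$ is $C^{1}$ and $f^{(1)}\in\mc{C}^{p-1}(A,\mc{L}_{b}(E,F))$. Composing with the linear homeomorphism $\Theta$ preserves $\mc{C}^{p-1}$ in both directions; this follows from Prp.\ref{813} and the chain rule Thm.\ref{821}, iterated. So $f^{(1)}\in\mc{C}^{p-1}$ iff $\Theta\circ f^{(1)}=\boxtimes_{k}f_{k}^{(1)}\in\mc{C}^{p-1}(A,\prod_{k}\mc{L}_{b}(E,F_{k}))$. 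By the inductive hypothesis, applied to the family $\{\mc{L}_{b}(E,F_{k})\}_{k}$, this holds iff each $f_{k}^{(1)}\in\mc{C}^{p-1}$, that is, iff each $f_{k}\in\mc{C}^{p}$. The case $p=\infty$ follows by taking the conjunction over all finite $p$.

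The main obstacle is the claim that composing with a continuous linear map $\Theta$ preserves $\mc{C}^{p-1}$ when the target is only locally convex. Thm.\ref{81210} is stated with a normed intermediate space, so it does not apply directly. Instead I would run a small separate induction: for $u$ continuous linear, $(u\circ g)^{(1)}=u_{\ast}\circ g^{(1)}$ with $u_{\ast}:S\mapsto u\circ S$. Here $u_{\ast}$ is again continuous linear between the corresponding $\mc{L}_{b}$ spaces, because $u$ maps bounded sets to bounded sets, so the induction closes.
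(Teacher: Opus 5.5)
Your argument is correct. The paper gives no proof of Cor.~\ref{04121839} to compare against: it lists the statement in the Appendix among the ``more or less standard'' facts. Its label as a corollary suggests it is meant to follow from Cor.~\ref{8126} (the same statement for $p$-fold differentiability, without continuity) together with a componentwise identification of the $p$-th differential.

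Your route is organized differently. You never use Cor.~\ref{8126}. Instead you run an induction on $p$ through the first differential only, using Prp.~\ref{815} and the topological isomorphism $\Theta:\mc{L}_{b}(E,F)\to\prod_{k}\mc{L}_{b}(E,F_{k})$. The induction hypothesis is quantified over all finite families of Hausdorff locally convex targets, so it can be applied to $\{\mc{L}_{b}(E,F_{k})\}_{k}$. This proves the content of Cor.~\ref{8126} and the continuity statement at the same time. It also avoids identifying the iterated spaces $\mc{L}_{b}(E,\mc{L}_{b}(E,\dots,F))$ with products all at once. The price is the auxiliary lemma that post-composition with a continuous linear map preserves $\mc{C}^{r}$ for locally convex targets, which you correctly flag.

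Three small points to tighten.
\begin{enumerate}
\item In the auxiliary lemma, the identity $(u\circ g)^{(1)}(x)=u\circ g^{(1)}(x)$ cannot be taken from Thm.~\ref{821}, which you cite in your third paragraph. That theorem also requires a normed intermediate space. The identity follows directly from the definition: $\|h\|^{-1}\bigl(u(g(x+h))-u(g(x))-u(g^{(1)}(x)h)\bigr)=u\bigl(\|h\|^{-1}(g(x+h)-g(x)-g^{(1)}(x)h)\bigr)\to\ze$ by linearity and continuity of $u$.
\item Continuity of $u_{\ast}:S\mapsto u\circ S$ on $\mc{L}_{b}$ comes from continuity of $u$, not from $u$ preserving bounded sets. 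That property is what precomposition needs, and for $E=\R$ boundedness alone would not suffice. The correct reason: for a continuous seminorm $q$ of $Q$ one has $q\circ u\leq c\max_{i}p_{i}$ for finitely many seminorms $p_{i}$ of $P$, hence $q_{B}(u\circ S)\leq c\max_{i}(p_{i})_{B}(S)$.
\item On $\prod_{k}\mc{L}_{b}(E,F_{k})$ the basic seminorms are $\max_{k}(q_{k})_{B_{k}}$ with possibly different bounded sets $B_{k}$. To get the coincidence of seminorms you claim, replace all $B_{k}$ by their bounded union, or simply by $\ov{B}_{1}^{E}(\ze)$ since $E$ is normed.
\end{enumerate}
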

\begin{proposition}
[Derivative at a point of a product]
\label{05091652}
Let $J\subseteq\R$, $\xi_{0}\in J$, $n\in\Z_{\geq 2}$ and for every $i\in[1,n]\cap\Z$, let $E_{i}$ be a Hausdorff topological 
vector space over $\R$, $f_{i}:J\to E_{i}$ and $[\cdot,\dots,\cdot]:\prod_{i=1}^{n}E_{i}\to E$ be $n$-linear and continuous. 
If $f_{i}$ is derivable at $\xi_{0}$ for every $i\in[1,n]\cap\Z$, then 
$[\cdot,\dots,\cdot]\circ\boxtimes_{i=1}^{n}f_{i}$ is derivable at $\xi_{0}$ and by defining for every 
$(i,r)\in([1,n]\cap\Z)^{2}$ the map $u_{i}^{r}:\{\xi_{0}\}\to E_{i}$ such that 
\begin{equation*}
u_{i}^{r}(\xi_{0})\coloneqq
\begin{cases}
f_{i}(\xi_{0}),\,r\neq i,
\\
\mr{D}(f_{i})(\xi_{0}),\,r=i;
\end{cases}
\end{equation*}
we have 
\begin{equation*}
\mr{D}([\cdot,\dots,\cdot]\circ\boxtimes_{i=1}^{n}f_{i})(\xi_{0})
=
\sum_{r=1}^{n}([\cdot,\dots,\cdot]\circ\boxtimes_{i=1}^{n}u_{i}^{r})(\xi_{0}).
\end{equation*}
\end{proposition}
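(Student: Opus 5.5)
The statement to prove is Prp.\ref{05091652}: the Leibniz rule for a continuous $n$-linear map $[\cdot,\dots,\cdot]$ composed with $\boxtimes_{i=1}^{n}f_{i}$, at a point $\xi_{0}$ where each $f_{i}$ is derivable. I would argue by a telescoping decomposition of the difference quotient. Set $F(\xi)\coloneqq[f_{1}(\xi),\dots,f_{n}(\xi)]$. For $t\neq 0$ with $\xi_{0}+t\in J$, multilinearity gives the identity
\begin{equation*}
F(\xi_{0}+t)-F(\xi_{0})
=
\sum_{r=1}^{n}
[f_{1}(\xi_{0}+t),\dots,f_{r-1}(\xi_{0}+t),\,f_{r}(\xi_{0}+t)-f_{r}(\xi_{0}),\,f_{r+1}(\xi_{0}),\dots,f_{n}(\xi_{0})].
\end{equation*}
This is proved by inserting and subtracting the mixed terms, so that consecutive terms cancel.

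Next I divide by $t$ and pull the scalar $t^{-1}$ into the $r$-th slot using linearity in that slot. The $r$-th summand becomes $[\,a_{1}(t),\dots,a_{n}(t)\,]$, where the $a_{i}$ are as follows:
\begin{enumerate}
\item $a_{i}(t)=f_{i}(\xi_{0}+t)$ for $i<r$;
\item $a_{r}(t)=t^{-1}(f_{r}(\xi_{0}+t)-f_{r}(\xi_{0}))$;
\item $a_{i}(t)=f_{i}(\xi_{0})$ for $i>r$.
\end{enumerate}

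As $t\to 0$ each slot converges in $E_{i}$. For $i<r$, derivability implies continuity at $\xi_{0}$, because $f_{i}(\xi_{0}+t)-f_{i}(\xi_{0})=t\cdot(\text{difference quotient})$, and the difference quotient converges while the scalar product is continuous in a topological vector space. So $a_{i}(t)\to f_{i}(\xi_{0})$ for $i<r$, and $a_{r}(t)\to\mr{D}(f_{r})(\xi_{0})$ by definition of the derivative. Hence $(a_{1}(t),\dots,a_{n}(t))\to(u_{1}^{r}(\xi_{0}),\dots,u_{n}^{r}(\xi_{0}))$ in the product topology.

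By continuity of $[\cdot,\dots,\cdot]$ on $\prod_{i}E_{i}$, the $r$-th summand converges to $([\cdot,\dots,\cdot]\circ\boxtimes_{i}u_{i}^{r})(\xi_{0})$. Summing the finitely many limits, using continuity of addition in $E$, gives derivability of $F$ at $\xi_{0}$ together with the stated formula.

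The main subtle point is that the $n$-linear map is only assumed jointly continuous, not continuous uniformly on bounded sets. This is why I use a pure limit argument rather than seminorm estimates, and why I need continuity of each $f_{i}$ at $\xi_{0}$, derived above from derivability. If $\xi_{0}$ is not an accumulation point of $J$, the derivative is vacuous or undefined; I would adopt the standing convention that the limits are taken along $J\setminus\{\xi_{0}\}$, and the argument is unchanged.
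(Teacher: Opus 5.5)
Your argument is correct and complete. The telescoping identity, the continuity of each $f_i$ at $\xi_0$ (deduced from derivability via joint continuity of scalar multiplication), and the joint continuity of $[\cdot,\dots,\cdot]$ at $(u_1^r(\xi_0),\dots,u_n^r(\xi_0))$ are exactly what is needed. A pure limit argument is the right tool here, since the $E_i$ are only Hausdorff topological vector spaces and no seminorms are available.

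The paper gives no proof of its own for this proposition; it is collected in the Appendix as standard material. Your telescoping argument is the usual proof of this Leibniz rule. The only tacit hypothesis, which the statement itself also needs for $\mr{D}$ of the composite to be well defined, is that $E$ is a Hausdorff topological vector space.
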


\bibliography{references}
\bibliographystyle{amsplain}
\end{document}